\documentclass{amsart}
\usepackage{appendix}
\usepackage{amssymb}

\usepackage{thmtools}
\usepackage{thm-restate}
\usepackage{hyperref}
\usepackage{cleveref}

\usepackage{mathrsfs}

\usepackage{graphicx}

\usepackage[cmtip,all]{xy}

\usepackage{amsthm}
\newtheorem{theorem}{Theorem}[section]

\newtheorem{lem}[theorem]{Lemma}
\newtheorem{lemma}[theorem]{Lemma}
\newtheorem{cor}[theorem]{Corollary}
\newtheorem{prop}[theorem]{Proposition}
\newtheorem*{conjecture*}{Conjecture}

\theoremstyle{definition}
\newtheorem{defn}[theorem]{Definition} 
\newtheorem{exmp}[theorem]{Example}

\newtheorem{remark}[theorem]{Remark}
\newtheorem*{question*}{Question}

\newtheorem{construction}[theorem]{Construction}

\newcommand{\Id}{\mathrm{Id}}
\newcommand{\lf}{\mathrm{lf}}
\newcommand{\Aut}{\mathrm{Aut}}
\newcommand{\Mat}{\mathrm{Mat}}
\newcommand{\End}{\mathrm{End}}
\newcommand{\PSL}{\mathrm{PSL}}

\newcommand{\PGL}{\mathrm{PGL}}

\newcommand{\id}{\mathrm{id}}
\newcommand{\al}{\alpha}

\newcommand{\CQ}{\mathcal Q}
\newcommand{\CalC}{\mathcal C}

\newcommand{\SA}{\mathcal A}
\newcommand{\NN}{\mathbb N}
\newcommand{\RR}{\mathbb R}
\newcommand{\ZZ}{\mathbb Z}
\newcommand{\CC}{\mathbb C}
\newcommand{\rank}{\operatorname{rank}}

\newcommand{\Qbb}{{\mathbb Q}}
\newcommand{\Rbb}{{\mathbb R}}
\newcommand{\Zbb}{{\mathbb Z}}
\newcommand{\Nbb}{{\mathbb N}}
\newcommand{\Cbb}{{\mathbb C}}

\makeatletter
\def\l@subsection{\@tocline{2}{0pt}{2.5pc}{5pc}{}}
\makeatother

\usepackage[style=alphabetic,
]{biblatex}

\usepackage{booktabs}  
\usepackage{adjustbox}
\usepackage{tikz}
\usepackage{quiver}
\usepackage{hyperref}
\usepackage{mathtools}
\usepackage{enumitem}
\usepackage{soul}

\numberwithin{equation}{section}

\begin{document}

\title{{Quantum Cellular Automata:}\\  The Group, the Space, and the Spectrum }

%    Only \author and \address are required; other information is
%    optional.  Remove any unused author tags.

%    author one information
\author{Mattie Ji}
\address{University of Pennsylvania}
\curraddr{}
\email{mji13@sas.upenn.edu}
\thanks{}

%    author two information
\author{Bowen Yang}
\address{Harvard University}
\curraddr{}
\email{bowen\_yang@g.harvard.edu}
\thanks{}

%    \subjclass is required.
\subjclass[2020]{Primary: 19D23, 81P45. Secondary: 55P47.}

\date{}

\dedicatory{}

\begin{abstract}
Over an arbitrary commutative ring $R$, we develop a theory of quantum cellular automata. We then use algebraic K-theory to construct a space $\mathbf{Q}(X)$ of quantum cellular automata (QCA) on a given metric space $X$. In most cases of interest, $\pi_0 \mathbf{Q}(X)$ classifies QCA up to quantum circuits and stabilization. Notably, the QCA spaces are related by homotopy equivalences $\mathbf{Q}(*) \simeq \Omega^n \mathbf{Q}(\Zbb^n)$ for all $n$, which shows that the classification of QCA on Euclidean lattices is given by an $\Omega$-spectrum indexed by the dimension $n$. As a corollary, we also obtain a non-connective delooping of the K-theory of Azumaya $R$-algebras, which may be of independent interest. We also include a section leading to the $\Omega$-spectrum for QCA over $C^*$-algebras with unitary circuits.
\end{abstract}

\maketitle

\renewcommand\contentsname{Table of Contents}
\tableofcontents

\section{Introduction}

\subsection{Quantum Cellular Automata}\label{subsec::intro}A quantum spin system~\cite{ Nachtergaele2004QuantumSpinSystems, naaijkens2017quantum} is a mathematical model in quantum many-body physics in which degrees of freedom are arranged on the points of a lattice $X$ and interact locally. Such systems are typically described in the language of operator algebras~\cite{bratteli2012operator1,bratteli2012operator2}. One associates to the lattice $X$ a $C^*$-algebra of observables, which for the purposes of this introduction we denote by $\SA(X)$. A state $\omega$ on $\SA(X)$ is  a positive normalized linear functional\footnote{That is, $\omega : \SA(X) \to \CC$ is a linear functional satisfying $\omega(I)=1$ and $\omega(a^*a)\ge 0$ for all $a\in \SA(X)$, where $I$ is the identity in $\SA(X)$.} encoding expectation values. A state is called \emph{gapped}~\cite{HastingsKoma2006,NachtergaeleSims2006} if it satisfies a certain energy stability condition.

A gapped \textit{phase} of matter~\cite{beaudry2024homotopical, kapustin2022local, artymowicz2024mathematical} in spatial dimension $d$ is an equivalence class of gapped states on the lattice $X=\ZZ^d$. Two gapped states are said to lie in the same phase if they can be connected by a continuous path of gapped states. This definition is motivated by the fact that states in the same phase share the same stable low-energy behavior.

Two gapped states can be stacked into a single gapped state by taking the tensor product of their operator algebras and multiplying the linear functionals. This induces a structure of commutative monoids on gapped phases. The identity element is called the \textit{trivial phase}. A state is called \textit{invertible} if there exists another state whose stack with it lies in the trivial phase. For each $d$, the set of invertible gapped phases, or simply invertible phases, form an abelian group $\mathcal I(\ZZ^d)$.

Over the past two decades, our understanding of invertible phases has been profoundly reshaped. A central conjecture, due to Alexei Kitaev~\cite{Kitaev2011,,Kitaev2013, Kitaev2015,Kitaev2019, Kitaev2023}, points to a surprising connection to stable homotopy theory.
\begin{conjecture*}[Kitaev's conjecture]
For each spatial dimension $d$, there is a space-level construction 
$\mathrm{IP}(\ZZ^d)$ of invertible states with a canonical isomorphism
\[
\pi_0(\mathrm{IP}(\ZZ^d))\cong \mathcal I(\ZZ^d).
\]
Moreover, these spaces assemble naturally into an $\Omega$-spectrum
$\mathbb{IP}=\{\mathrm{IP}(\ZZ^d)\}_{d\ge 0}$.
Equivalently, they come equipped with homotopy equivalences
\[
\mathrm{IP}(\ZZ^d) \simeq \Omega( \mathrm{IP}(\ZZ^{d+1})).
\]
\end{conjecture*}

Substantial evidence has accumulated and significant progress has been made for Kitaev's conjecture, from topological field theory \cite{kapustin2014bosonic, freed2021reflection}, functional analysis \cite{ogata2021classification,ogata2021invariant,kapustin2020hall,kapustin2021classification,sopenko2021index}, symmetry-protected topological phases \cite{chen2011classification,chen2012symmetry,chen2013symmetry}, and matrix product states \cite{qi2025charting,beaudry2025classifying}. Recently, \cite{kubota2025stable} provides a construction of $\mathbb{IP}$. The above list is far from complete; see the introduction of \cite{kubota2025stable} for a more comprehensive bibliography and further discussion of the conjecture. 

A closely related topic is the study of \textit{quantum cellular automata} (QCA), which may be viewed as dynamical counterparts of invertible phases. A QCA in $d$ spatial dimensions is a strictly locality-preserving automorphism of the algebra of observables $\SA(\ZZ^d)$ governing a $d$-dimensional quantum spin system. The set of QCA $\CQ^*(\ZZ^d)$ form a group under composition. It contains a normal subgroup $\CalC^*(\ZZ^d)$ called the group of quantum circuits. \cite{freedman2020classification} showed that $\CQ^*(\ZZ^d)/ \CalC^*(\ZZ^d)$ is an abelian group.

A QCA $\al$ acts on the collection of invertible states taking one invertible state to another. Specifically, given an invertible state $\omega : \SA(\ZZ^d)\longrightarrow \CC,$ the composition $$\omega \circ \al: \SA(\ZZ^d)\longrightarrow \CC$$ is another invertible state. Furthermore, automorphisms in $ \CalC^*(\ZZ^d)$ do not alter the phase of an invertible state. This defines a group homomorphism \begin{equation}
    \CQ^*(\ZZ^d)/ \CalC^*(\ZZ^d)\longrightarrow \mathcal I(\ZZ^d)
\end{equation} sending a QCA to its image of a system in the trivial phase. Thus, understanding the structure of QCA has become an important way to probe invertible phases, and vice versa~\cite{haah2023nontrivial, fidkowski2024qca, sun2025clifford}. The expectation that invertible phases assemble into an $\Omega$-spectrum suggests that QCA themselves should assemble to a similar $\Omega$-spectrum \cite{long2025topological, tu2025anomalies}.

\begin{conjecture*}[The QCA conjecture]
For each spatial dimension $d$, there is a space-level construction $\mathrm{QCA}(\ZZ^d)$ of the QCA classification group with a canonical isomorphism
\[
\pi_0(\mathrm{QCA}(\ZZ^d))\cong \CQ^*(\ZZ^d)/ \CalC^*(\ZZ^d).
\]
Moreover, these spaces assemble naturally into an $\Omega$-spectrum
$\mathbb{QCA}=\{\mathrm{QCA}(\ZZ^d)\}_{d\ge 0}$.
\end{conjecture*}

In this paper, we prove the QCA conjecture using algebraic $K$-theory. In fact, our results go significantly beyond the original formulation in two directions.
\begin{enumerate}
    \item We develop an algebraic theory of QCA over an arbitrary commutative base ring $R$. For each such ring we construct an associated $\Omega$-spectrum
\[
\{\mathbf{Q}(\ZZ^d)\}_{d\ge 0}.
\]
In fact, we show that $\mathbf{Q}(\mathbb{Z}) = K(\operatorname{Az}({R}))$ is the $K$-theory of Azumaya $R$-algebras, so the sequence provides a \textit{non-connective delooping} of $K(\operatorname{Az}(R))$.

\item We prove a similar delooping theorem for QCA defined on a class of more general metric spaces $X$ that depends only on the large-scale (coarse) geometry of $X$. More precisely, we construct an $\Omega$-spectrum
\[
\{\mathbf{Q}(X \times \ZZ^d)\}_{d\ge 0},
\] 
\end{enumerate}
The spectrum $\mathbb{QCA}$ appearing in the QCA conjecture is obtained from a modification of the special case when $R=\CC$ and $X$ is a point (see Section~\ref{sec::star}).

\subsection{Algebraic K-Theory}
The strategy we use to construct the QCA spaces $\mathbf{Q}(\mathbb{Z}^n)$ is via \textit{algebraic K-theory}. Algebraic K-theory, on a high level, is the study of how to break part and assemble objects linearly, which makes the field amenable to classification questions.

Given a ring $R$, its $K_0$-group $K_0(R)$ is defined as the algebraic group completion of the additive monoid of finitely generated projective modules over $R$. Quillen \cite{quillen_plus,quillen_1, quillen_2} defined the higher K-groups as homotopy groups $\pi_i(K(R))$ of a space $K(R)$, with $K_0(R) = \pi_0(K(R))$. The construction of $K(R)$ is a special case of a general construction that takes a symmetric monoidal category $\mathcal{C}$ and produces its K-theory space $K(\mathcal{C})$ as the topological group completion of the classifying space $B\mathcal{C}$. The group completion of a space can have very dramatic effects on its homotopy groups. For example, if $\mathcal{C}$ is the groupoid of finite sets with the symmetric monoidal operation as disjoint union, the Barratt–Priddy–Quillen Theorem states that $\pi_i K(\mathcal{C})$ are the stable homotopy groups of spheres.

Instead of going up, Bass \cite{bass1968algebraic} defined the lower K-groups (also known as negative K-groups) $K_{-i}(R)$ of a ring $R$. Pedersen showed \cite{PEDERSEN1984461} that $K_{-i}(R)$ can be recovered as $\pi_1 K(C_{i+1}(R))$, where $C_{i+1}(R)$ is a symmetric monoidal category associated to $R$. To give an informal description of this category $C_{i+1}(R)$ (see \cite{PEDERSEN1984461, pedersen_weibel}), an object in $C_{i+1}(R)$ is created by placing a finitely generated free $R$-module on each point of the lattice $\Zbb^{i+1} \subset \Rbb^{i+1}$. A morphism between two objects is an $R$-linear isomorphism that is ``uniformly bounded" with respect to the metric on $\Zbb^{i+1}$, i.e., there is a uniformly bounded propagation. The category $C_{i+1}(R)$ is symmetric monoidal under pointwise direct sum. Pedersen and Weibel \cite{pedersen_weibel} then showed that these negative K-groups are the homotopy groups of a non-connective delooping of $K(R)$.

As we will see more formally in Section~\ref{sec::group}, the precise set-up of QCA is  a multiplicative analog of the additive set-up for negative K-theory. Indeed, for the lattice $\Zbb^{i+1} \subset \Rbb^{i+1}$, a quantum spin system is a placement of a matrix algebra over $R$ on each point of the lattice. A morphism between two objects is a $R$-algebra isomorphism that is ``uniformly bounded" (this is called \textit{locality-preserving}) with respect to the metric on $\Zbb^{i+1}$, and a quantum cellular automaton is an automorphism of such quantum spin system. Our perspective is that we can form a category $\textbf{C}(\Zbb^{i+1})$ (see Section~\ref{sec::space}) equipped with a symmetric monoidal structure of pointwise tensor product. The associated loop space $\Omega K(\textbf{C}(\Zbb^{i+1}))$ of the K-theory space produced by Segal's machinery is what we refer to as $\textbf{Q}(\Zbb^{i+1})$, the \textit{space of QCA}. In fact, our setting applies to a general metric space with reasonable assumptions.

In the main text, we develop our construction on a general class of metric spaces $X$. We show in Section~\ref{sec::spectra} that $\textbf{Q}(X)$ can be delooped into an $\Omega$-spectrum. Given the analogy to negative K-theory, the groups $\pi_{0} \textbf{Q}(\Zbb^n) = \mathcal{Q}(\Zbb^n)/\mathcal{C}(\Zbb^n)$ for $n > 1$ can be viewed as ``negative homotopy groups" of the $K$-theory of Azumaya algebras. In fact, we also show that these groups can be thought of as the \textit{higher Brauer groups} over a field (see Example~\ref{example::Brauer_ring} and \ref{example::higher_Brauer}).

\subsection{Related Work and a Conjecture}

Early studies of QCA led to a complete determination of the QCA group in one dimension~\cite{gross2012index}. This was followed by a period of rapid progress, of which we mention only a few.

The group-theoretic foundations of QCA were developed in~\cite{freedman2020classification,freedman2022group}. Highly nontrivial higher-dimensional QCA were constructed in~\cite{haah2023nontrivial,chen2023exactly,shirley2022three}. A version of the QCA conjecture has been established for the special class of Clifford QCA. The resulting $\Omega$-spectrum is identified with algebraic $L$-theory~\cite{haah2025topological,yang2025categorifying}. Connections with topological field theory have begun to emerge~\cite{fidkowski2024qca,sun2025clifford}, and a bulk–boundary correspondence relating QCA to commuting Hamiltonian models was established in~\cite{ruba2025witt,haah2023invertible,haah2021clifford}.

The classification of lattice symmetry anomalies~\cite{tu2025anomalies, kapustinXu2025higher, kapustin2025higher, kawagoe2025anomaly, feng2025onsiteability, czajka2025anomalies} provides additional evidence for the QCA conjecture. More recently, several proposals for the homotopy type of QCA have appeared in the physics literature, including formulations in terms of crossed $n$-cubes~\cite{kapustin2025higher,kapustinXu2025higher} and in the language of $\infty$-categories~\cite{czajka2025anomalies,tu2025anomalies}.

Regarding the computation of QCA groups, Haah~\cite{haah2021clifford} raised the question of whether
\[
\CQ^*(\ZZ^3)/\CalC^*(\ZZ^3)=\pi_0\left(\mathrm{QCA(\ZZ^3)}\right)
\]
agrees with, or is closely related to, the Witt group of braided fusion $\CC$-linear categories~\cite{davydov2010witt,davydov2013structure}. This suggests a connection between QCA and tensor categories. Motivated by Haah's question and our generalized theory, we propose a conjecture for future work. 

\begin{conjecture*}
Let $R=k$ be a field. There exists a natural group homomorphism
\[
\pi_0 \mathbf{Q}(\ZZ^3) \longrightarrow \mathrm{Witt}_k,
\]
where $\mathrm{Witt}_k$ denotes the Witt group of braided fusion $k$-linear categories.
\end{conjecture*} 

\subsection{Main Results and Outline} The rest of the paper is organized as follows. In Section~\ref{sec::group}, we introduce the total QCA group $\CQ(X)$ over a ring $R$ and its subgroup of quantum circuits $\mathcal{C}(X)$. The quotient $\CQ(X)/\CalC(X)$ is called the \textit{QCA classification group}. Let $X=\ZZ$ with the usual metric; our first theorem connects the classification of QCA to that of Azumaya algebras. 
\begin{restatable}{introthm}{KAz}\label{thm::k0_qz}
        There is an explicit surjective homomorphism 
        \begin{equation}
            b: \CQ(\ZZ)\longrightarrow K_0(\mathrm{Az(R)})\text{ with $\ker b = \mathcal{C}(\ZZ)$. }
        \end{equation}
\end{restatable}

In Section~\ref{sec::space}, we define a symmetric monoidal category $\mathbf{C}(X)$ of quantum spin systems and use Segal's $K$-theory to obtain the QCA space $\mathbf{Q}(X):=\Omega(K(\mathbf{C}(X)))$. We show the homotopy type of $K(\mathbf{C}(X))$ admits a description in terms of Quillen's plus-construction \cite{quillen_plus} and the total QCA group. 

\begin{restatable}{introthm}{plus}\label{thm::plus_construction}
   The commutator subgroup $[\mathcal{Q}(X), \mathcal{Q}(X)]$ of $\mathcal{Q}(X)$ is perfect. $B\mathcal{Q}(X)^+$, the plus-construction at the commutator subgroup, is a weakly simple space. Furthermore, the space $K(\mathbf{C}(X))$ is equivalent to $K_0(\mathbf{C}(X)) \times B\mathcal{Q}(X)^+$.
\end{restatable}
Using Theorem~\ref{thm::k0_qz}, some results in Section~\ref{subsec::abelianization}, and (optionally) Theorem~\ref{thm::plus_construction}, we show $\mathbf{Q}_i(X):=\pi_i\mathbf{Q}(X)$ and the QCA classification group $\CQ(X)/\CalC(X)$ agree in many cases of interest.%, thereby justifying the name ``QCA space''.
\begin{restatable}{introcor}{Qzero}\label{cor::qca_space_properties} Let $R$ be a commutative ring. 
\begin{enumerate}
    \item For $n > 0$, $\mathbf{Q}_0(\Zbb^n) = K_1(\mathbf{C}(\Zbb^n)) = \mathcal{Q}(\Zbb^n)^{ab} = \mathcal{Q}(\Zbb^n)/\mathcal{C}(\Zbb^n)$.
    \item $\mathbf{Q}_0(\Zbb) = K_0(\operatorname{Az}(R))$.
    \item If $R$ is a field satisfying $(R^\times)^n=R^\times$ for all $n$, $\mathbf{Q}_0(X) = \CQ(X)/\CalC(X).$
    \item For $n > 0$, $\mathbf{Q}_1(\mathbb{Z}^n) = H_2(\mathcal{C}(\Zbb^n); \Zbb)$.%\mathbf{Q}_0(\mathbb{Z}^{n-1})\simeq\
\end{enumerate}
\end{restatable}

For a suitably well-behaved commutative ring $R$, we also compute $K_0(\mathbf{C}(X))$ and show that it is related to the $0$-th ordinary coarse homology group of $X$.

\begin{restatable}{introthm}{coarse}\label{thm::pi0_is_coarse}
Let $R$ be a ring with no non-trivial idempotent elements. The group $K_0(\mathbf{C}(X))$ is isomorphic to $CH_0(X, \Zbb^{\oplus \omega})$, the $0$-th coarse homology group of $X$ with coefficients in $\Zbb^{\oplus \omega}$. Here, $\Zbb^{\oplus \omega}$ is the countably infinite direct sum of $\Zbb$.
\end{restatable}

In fact, following the proof of Theorem~\ref{thm::pi0_is_coarse}, one can show that $K(\textbf{C}(\Zbb^n))$ is connected for $n > 0$ over any commutative ring $R$ (see Remark~\ref{rmk:coarse_connected}). We then verify in Section~\ref{sec::spectra} that the QCA space we construct on Euclidean lattices $\ZZ^n$ satisfies the desired delooping relation.

\begin{restatable}{introthm}{delooping}\label{thm::higher_delooping}
    There is a homotopy equivalence \begin{equation}\label{eq::higher_delooping_relations}
        \mathbf{Q}(\Zbb^{n-1}) \simeq \Omega \mathbf{Q}(\Zbb^{n}), \quad \text{  for } n > 0.
    \end{equation} 
\end{restatable}
The proof of Theorem~\ref{thm::higher_delooping} also adapts to similar delooping for $\mathbf{Q}(X) \simeq \Omega \mathbf{Q}(X \times \Zbb)$ (see Remark~\ref{rmk::more_generally}). 

Section~\ref{sec::star} applies our strategy to construct an $\Omega$-spectrum for QCA over $C^*$-algebras with $R=\CC$. In this setting, automorphisms are additionally required to be $C^*$-algebra maps. \textit{This resolves the original formulation of the QCA conjecture.}

Finally, in Section~\ref{sec::compute}, we compute the homotopy groups of $\mathbf{Q}(*)$ and $\mathbf{Q}(\mathbb{Z})$ over a field. As we showed that $\mathbf{Q}(\ZZ) = K(\operatorname{Az}(R))$, we note the latter has been computed in \cite{c80f3555-900f-35b5-ab05-6d397e6a44e6}. Here, we provide an expository account of this over a field. Appendix~\ref{sec::coarse} discusses some background on coarse homology groups, and Appendix~\ref{sec::group_complete} proves a technical lemma on group completion used in the main paper.\\
 
\noindent \textbf{Conventions and Notations.} Throughout the paper, the Cartesian product of metric spaces is equipped with the $L^\infty$-metric. All commutative rings $R$ are assumed to be unital. Any subalgebra is understood to be unital, i.e., to share the same unit as the ambient algebra. We write $\operatorname{Mat}(R^n)$ for the algebra of $n \times n$ matrices over $R$, which we refer to as a \emph{full matrix algebra} (or simply a \emph{matrix algebra}). We denote by $\operatorname{Mat}_R$ the groupoid of full matrix algebras over $R$, and by $\operatorname{Az}(R)$ the groupoid of Azumaya $R$-algebras.
\\

\noindent \textbf{Acknowledgements.} We thank Jonathan Block for his hospitality at Penn during BY’s visit and for introducing the two authors to each other. We are grateful to Michael Freedman for his foundational work on QCA \cite{freedman2022immersions}, which strongly shaped our perspective. We thank Daniel Krashen for helpful discussions on algebraic details, especially concerning Lemma~\ref{lem::centralizer}, Lemma~\ref{lem::tensor_splitting}, Corollary~\ref{cor::tensor_factor_splitting}, and Lemma~\ref{lem::tensor_complement_exist}. We thank Agn\`{e}s Beaudry for extensive comments and suggestions on the early draft. We thank Ulrich Bunke, Dan Freed, Matthias Ludewig, and Tomer Schlank for helpful conversations.

MJ thanks Mark Behrens and Mona Merling for many helpful conversations and their extensive support throughout this project. MJ also thanks Michael Freedman for the invitation to speak at Harvard’s Freedman Seminar, Shane Kelly for discussions related to Section~\ref{sec::compute} and for pointing out Remark~\ref{rmk:when_sk_vanish}, and Peter May for discussions concerning \cite{May1977EInfinityRingSpaces}. MJ thanks Charles Weibel for helpful conversations, particularly regarding \cite{bass1968algebraic, May1977EInfinityRingSpaces, c80f3555-900f-35b5-ab05-6d397e6a44e6, 951cf383-5a4d-3775-b8b2-0bafd2f162fe, pedersen_weibel}, and for helpful comments on the early draft.

BY thanks Michael Hopkins and Anton Kapustin for extensive discussions and encouragement throughout this project. BY is indebted to Shmuel Weinberger for introducing him to the work of Pedersen and Weibel during a visit to Chicago, which Shmuel kindly arranged. Last but not least, BY benefited from Mike’s chocolate and jokes on numerous occasions.

MJ is partially supported by the National Science Foundation Graduate Research Fellowship (DGE-2236662). BY acknowledges support from the Simons Foundation through the Simons Collaboration on Global Categorical Symmetries.

\section{The Quantum Cellular Automata Group}\label{sec::group}
In this section, we introduce a setup that leads to a notion of QCA over any commutative ring \(R\). Our definitions are new, though strongly inspired by the original formulations in physics. When \(R = \CC\), the resulting notion differs from the original definition in that we do not impose a \(*\)-algebra structure. In Section~\ref{sec::star}, we discuss the original definition of QCA.

\subsection{Quantum Spin System}

Let $R$ be a commutative ring with identity and $(X, \rho)$ be a metric space. We use $\NN$ to denote the set of positive integers. 

\begin{defn}
A  functions $q: X \rightarrow \NN$ is called locally finite if 
\begin{equation}
    \Lambda=\{x\in X: q_x>1\}
\end{equation} is a locally finite subset of $X$. That is, the intersection between $\Lambda$ and any bounded subset of $X$ is finite\footnote{In particular, $\Lambda$ is finite if $X$ is bounded.}. Frequently, $\Lambda$ is called a `lattice'. We denote the collection of such functions by $\NN^X_\mathrm{lf}$. This forms a partially ordered set with $q\leq r$ if and only if $q_x\mid r_x$ for all $x\in X$.
\end{defn}

To motivate the next definition, we note that $\NN$ under multiplication is a commutative monoid. Moreover, $\NN$ can be identified with the monoid of isomorphism classes of full matrix algebras over $R$ under tensor product by 
$$q\in \NN \mapsto \Mat(R^q),$$
where $\Mat(R^q)$ is the ring of $q \times q$ matrices over $R$.
\begin{defn}
    An \textit{$R$-quantum spin system} consists of a commutative ring $R$, a metric space $(X, \rho)$, and a locally finite function $q\in \NN^X_\lf$. Its \textit{algebra of local observables} is defined as
    \begin{equation}
  \SA(X,q) = \bigotimes_{x\in X} \Mat(R^{q_x})= \varinjlim_{B\subset X \text{ bounded }} \bigotimes_{x\in B} \Mat(R^{q_x}). 
\end{equation}
The tensor products are taken over $R$.
    
\end{defn}
We elaborate on the definition. Let $\mathcal{P}_0(\Lambda)$ denote the set of finite subsets of $\Lambda$. For 
$\Gamma \in\mathcal{P}_0(\Lambda)$ define
\begin{equation}
  \SA(\Gamma,q) := \bigotimes_{x\in \Gamma} \Mat(R^{q_x}),
\end{equation}
which is a finite tensor product over $R$.
For any $Y\subset X$ bounded, $Y\cap \Lambda\in \mathcal P_0(\Lambda)$ because of local finiteness of $q$. Define the algebra of \emph{observables supported on $Y$} as 
\begin{equation}
    \SA(Y, q)= \SA(Y\cap \Lambda, q).
\end{equation}
If $\Gamma\subset \Gamma'$ are finite subsets, define the inclusion
\begin{equation}
  \iota_{\Gamma,\Gamma'}\colon \SA(\Gamma, q) \longrightarrow \SA(\Gamma',q),\qquad 
  \iota_{\Gamma,\Gamma'}(A) = A \otimes \Id_{\Gamma'\setminus \Gamma},  
\end{equation}
where $\Id_{\Gamma'\setminus \Gamma}$ is the identity element in $\SA({\Gamma'\setminus \Gamma},q)$.  
Then $\{\SA(\Gamma,q)\}_{\Gamma\in\mathcal{P}_0(\Lambda)}$ forms a directed system.  

For any $Z\subset X$ not necessarily bounded, define the inductive limit
\begin{equation}
      \SA(Z,q) = \varinjlim_{\Gamma\in \mathcal{P}_0( \Lambda\cap Z)} \SA(\Gamma,q). 
\end{equation}
In particular, 
  
\begin{equation}
\SA(X,q) = \varinjlim_{\Gamma\in \mathcal{P}_0(\Lambda)} \SA(\Gamma,q).
\end{equation}

\begin{defn}
Let \(X\) be a metric space and let \(Y\subseteq X\). We say that \(Y\) is \emph{coarsely dense} in \(X\) if there exists $r>0$ such that every point \(x\in X\) satisfies
\[
\rho(x,Y)\leq r.
\]
\end{defn}

In physical applications, the index set
\(\Lambda\) is always a countable, uniformly discrete, and locally finite of bounded geometry. For the rest of the article we always assume a \textit{metric space \(X\) contains a countable, coarsely dense subspace that is uniformly discrete, locally finite metric space of bounded geometry}. See Appendix~\ref{sec::coarse} for relevant definitions. Typical examples include bounded metric spaces, $\ZZ^n$, $\RR^n$ and complete Riemannian manifolds of bounded geometry.
\begin{remark}
Algebras of this type have been studied under the name \emph{supernatural matrix algebras}
\cite{bar2023locally, bar2023algebra}. 
Under our assumption, the algebra \(\SA(X,q)\) may be realized as a \emph{countable}
direct limit. Fix an enumeration
\[
\Lambda=\{\lambda_1,\lambda_2,\lambda_3,\dots\}.
\]
Then the increasing chain of finite subsets
\[
\{\lambda_1\}
\subset
\{\lambda_1,\lambda_2\}
\subset
\{\lambda_1,\lambda_2,\lambda_3\}
\subset
\cdots
\]
is cofinal in \(\mathcal{P}_0(\Lambda)\).
Consequently, the directed system defining \(\SA(X,q)\) may be computed along this
sequence, and \(\SA(X,q)\) is the corresponding countable direct limit. Section~9
of \cite{bar2023locally} is especially relevant for the analysis of this countable
direct-limit.
\end{remark}

\begin{prop}\label{prop::free_module}
    For metric space $X$ and $q\in \NN^X_\lf$, $\SA(X, q)$ is a free $R$-module. 
\end{prop}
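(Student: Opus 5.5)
The plan is to write down an explicit $R$-basis of $\SA(X,q)$ built from matrix units, and to check that it is compatible with the directed system. For each $x\in\Lambda$, the matrix algebra $\Mat(R^{q_x})$ is free on the matrix units $e^{x}_{ij}$, $1\le i,j\le q_x$. We choose the basis so that $e^x_{11}$ is not singled out. Instead we use a basis containing the identity: $\{\Id_x\}\cup\{e^x_{ij}: (i,j)\neq(1,1)\}$. This is a basis because $\Id_x = \sum_i e^x_{ii}$, so the change of basis from the matrix units is unitriangular, hence invertible over any commutative $R$. Write $\mathcal{B}_x$ for this basis and $\mathcal{B}_x^\circ=\mathcal{B}_x\setminus\{\Id_x\}$.

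For a finite $\Gamma\in\mathcal P_0(\Lambda)$, the tensor product of free modules is free, with basis the elementary tensors of basis elements. Grouping by which sites carry a non-identity factor, $\SA(\Gamma,q)$ has basis
\[
\mathcal{B}_\Gamma=\Bigl\{\textstyle\bigotimes_{x\in S} b_x\otimes \Id_{\Gamma\setminus S} : S\subseteq\Gamma,\ b_x\in\mathcal{B}_x^\circ\Bigr\}.
\]
The key observation is that $\iota_{\Gamma,\Gamma'}$ sends each element of $\mathcal{B}_\Gamma$ to the element of $\mathcal{B}_{\Gamma'}$ with the same pair $(S,(b_x))$. So $\iota_{\Gamma,\Gamma'}$ maps $\mathcal{B}_\Gamma$ injectively into $\mathcal{B}_{\Gamma'}$, and the inclusions are split injections identifying $\SA(\Gamma,q)$ with a direct summand spanned by a subset of the basis.

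Finally, we pass to the colimit. The directed system is indexed by the directed poset $\mathcal P_0(\Lambda)$, or along the countable cofinal chain from the preceding remark. It is therefore the free-module functor applied to a directed system of sets $\mathcal{B}_\Gamma$ with injective transition maps. Filtered colimits commute with the free-module functor, being a left adjoint, so $\SA(X,q)$ is free on
\[
\varinjlim_\Gamma \mathcal{B}_\Gamma=\{(S,(b_x)_{x\in S}) : S\in\mathcal P_0(\Lambda),\ b_x\in\mathcal{B}^\circ_x\}.
\]
One can also check this directly. Spanning holds because every element lies in the image of some $\SA(\Gamma,q)$. For independence, any finite relation already holds in some $\SA(\Gamma,q)$, since the colimit is filtered and the maps are injective, and there the elements are distinct members of $\mathcal{B}_\Gamma$.

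The only step needing care is the choice of basis containing $\Id_x$: with the plain matrix-unit basis the transition maps do not send basis elements to basis elements. After that, the remaining steps are routine bookkeeping. The degenerate cases $\Lambda=\emptyset$ (giving $\SA=R$, free on the empty tensor) and $q_x=1$ (where $\mathcal{B}_x^\circ=\emptyset$) are handled uniformly by this description.
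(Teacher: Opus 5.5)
Your proposal is correct and follows the same route as the paper: choose a basis of each $\Mat(R^{q_x})$ containing $\Id$, take the elementary tensors that are $\Id$ at all but finitely many sites, and check spanning and independence at a finite stage $\SA(\Gamma,q)$. Your version is slightly more complete than the paper's on two points it takes for granted. You show, via the unitriangular change from matrix units, that a basis containing $\Id$ exists over an arbitrary commutative $R$, and you verify explicitly that the transition maps $\iota_{\Gamma,\Gamma'}$ send basis elements to basis elements.
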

\begin{proof}
    For each $x\in X$, choose a basis $B_x$ for $\Mat(R^{q_x})$. Assume $\Id\in B_x$ for all $x$. The set
    \begin{equation}
S = \left\{
\bigotimes_{x \in X} b_x \;\middle|\;
b_x \in B_x \text{ and } b_x = \Id \text{ for all but finitely many } x
\right\}
\end{equation}
    is a $R$-linear basis. Indeed, any element $a \in \mathcal{A}(X, q)$ has finite support and is hence contained in some finite dimensional matrix algebra, which is free and whose chosen basis is contained in $S$. This shows that $S$ is spanning. Linear independence follows similarly a finite linear combination of elements in $S$ still has finite support. Note this basis is always countable because of our assumption on $(X, q)$. 
\end{proof}
Any two systems given by $q$ and $q'$ \textit{stack} to a new system given by their point-wise product $q\cdot q'$. Furthermore, we fix a concrete algebra isomorphism $\Phi$ between $\SA(X, q)\otimes \SA(X, q')$ and $\SA(X, q \cdot q')$ given point-wise by the Kronecker product. 
\begin{defn}\label{def::pointwise_stacking}
For positive integers \(q_x, q'_x\), the \emph{Kronecker product isomorphism}
\begin{equation}
   \Phi_x :
\Mat(R^{q_x}) \otimes \Mat(R^{q'_x})
\;\longrightarrow\;
\Mat(R^{q_x q'_x}) 
\end{equation}
is defined by
\begin{equation}
   \Phi_x(A \otimes B)
:=
\begin{pmatrix}
a_{11} B & a_{12} B & \cdots & a_{1 q_x} B \\
a_{21} B & a_{22} B & \cdots & a_{2 q_x} B \\
\vdots   & \vdots   & \ddots & \vdots   \\
a_{q_x 1} B & a_{q_x 2} B & \cdots & a_{q_x q_x} B
\end{pmatrix},
\end{equation}
where
\begin{equation*}
 A = (a_{ij}) \in \Mat(R^{q_x}),\;
B \in \Mat(R^{q'_x}).
\end{equation*}
We let $\Phi_x$ be the identity if either $q_x=1$ or $q'_x=1$.
\end{defn}

\begin{remark}[Inverse map]
Given \(C \in \Mat(R^{q_x q'_x})\), write it as a
\(q_x\times q_x\) block matrix
\[
C =
\begin{pmatrix}
C_{11} & C_{12} & \cdots & C_{1 q_x} \\
C_{21} & C_{22} & \cdots & C_{2 q_x} \\
\vdots & \vdots & \ddots & \vdots \\
C_{q_x 1} & C_{q_x 2} & \cdots & C_{q_x q_x}
\end{pmatrix},
\qquad
C_{ij} \in \Mat(R^{q'_x}).
\]
Then the inverse map
\(\Phi_x^{-1} : \Mat(R^{q_x q'_x})
\to \Mat(R^{q_x})\otimes \Mat(R^{q'_x})\)
is given by
\[
\Phi_x^{-1}(C)
=
\sum_{i,j=1}^{q_x} e^{(q_x)}_{ij} \otimes C_{ij},
\]
where \(e^{(q_x)}_{ij}\) is the matrix unit in $\Mat(R^{q_x})$.
\end{remark}

To define the equivalence of quantum spin systems, we need the notion of locality-preserving isomorphism. 

\begin{defn}\label{def:locality_preserve_iso}
Given two $R$-quantum spin systems $q, q'$ over a metric space $(X,\rho)$, a 
\emph{locality-preserving homomorphism} (respectively, \emph{isomorphism}) 
is an $R$-algebra homomorphism (respectively, isomorphism)
\[
\alpha : \SA(X,q) \longrightarrow \SA(X,q')
\]
of finite spread. That is, there exists $\ell >0$ such that for every $x\in X$ and every
\[
A \in \SA(\{x\},q) = \mathrm{Mat}(R^{q_x}),
\]
we have
\[
\alpha(A) \in \SA(D_\ell(x),q'),
\qquad
D_\ell(x) := \{y\in X : \rho(x,y)\le \ell\}.
\]

A spin system equipped with a locality-preserving automorphism $(q,\alpha)$ is called a 
\emph{quantum cellular automaton} (QCA). For fixed $q$, the set of QCA forms a group under composition, denoted $\CQ(X,q)$.
\end{defn}

Locality-preserving isomorphisms define an equivalence relation between spin systems. Whether two spin systems are equivalent is closely tied to the coarse homology of the underlying metric space $(X,\rho)$ and, in many cases, is independent of the choice of $R$. The precise relationship is established in Section~\ref{sec::space}, and Appendix~\ref{sec::coarse} provides a brief overview of coarse homology theory.

In practice, this perspective allows us to identify a quantum spin system on a space $X$ with one defined on a countable subset of $X$ having the same large-scale geometry. Here, we illustrate this principle with several representative examples.

\begin{exmp}\label{exp::quantum_spin_system_example}
Let $X$ be a bounded metric space and fix a point $x_0 \in X$. 
Any spin system $q$ on $X$ is equivalent to the spin system $q'$ defined by
\[
q'_{x_0} = \prod_{x\in X} q_x < \infty,
\qquad
q'_{x'} = 1 \quad \text{for } x' \ne x_0.
\]
\end{exmp}

\begin{exmp}\label{exp::real}
Let $X=\RR$. Any spin system $q$ on $\RR$ is equivalent to the spin system $q'$ defined by
\[
q'_n = \prod_{\lfloor x \rfloor = n} q_x < \infty
\quad \text{for each } n\in\ZZ,\text{ and }
q'_{x'} = 1 \quad \text{for } x' \notin \ZZ.
\]
\end{exmp}

\begin{exmp}
For any coarsely dense subspace $\Lambda \subseteq X$, we can map any spin system $q$ on $X$ to a spin system $q'$ on $\Lambda$. As $\Lambda$ is coarsely dense, there exists a partition $\{X_\lambda: \lambda \in \Lambda\}$ with $\lambda \in X_\lambda$ and $X_\lambda$ is uniformly bounded in diameter. Let 
\begin{equation}
q'_\lambda=\prod_{x\in X_\lambda} q_x. 
\end{equation}
\end{exmp}
\begin{remark}
By our assumption on \(X\), any spin system under consideration may be taken to be supported on a subspace
\[
\Lambda \subseteq X
\]
which is countable, uniformly discrete, locally finite, and of bounded geometry.
\end{remark}

\begin{lem}\label{lem: stabilization}
    The Kronecker product induces a canonical inclusion
\begin{equation}
    \iota_{q\to r}:\CQ(X,q)\hookrightarrow \CQ(X,r),
\end{equation}
for any $q, r\in \NN^X_\mathrm{lf}$ with $q\mid r$. 
\end{lem}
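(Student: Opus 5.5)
Write $s = r/q$ for the pointwise quotient, so $s_x = r_x/q_x \in \NN$. Since $s_x \le r_x$, the set $\{x : s_x>1\}$ lies inside $\{x : r_x>1\}$, hence is locally finite, and $s \in \NN^X_\lf$. The plan is to define $\iota_{q\to r}(\al)$ by tensoring $\al$ with the identity of $\SA(X,s)$ and transporting the result to $\SA(X,r)$ via the Kronecker product isomorphism $\Phi$ of Definition~\ref{def::pointwise_stacking}. Explicitly, I would set
\[
\iota_{q\to r}(\al) := \Phi \circ (\al \otimes \Id_{\SA(X,s)}) \circ \Phi^{-1},
\]
where $\Phi : \SA(X,q)\otimes \SA(X,s) \to \SA(X,r)$ is the pointwise Kronecker isomorphism. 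The first step is to check that $\Phi$ is a well-defined $R$-algebra isomorphism. On each finite $\Gamma\in\mathcal P_0(\Lambda)$ it is a finite tensor product of the $\Phi_x$, and these maps are compatible with the inclusions $\iota_{\Gamma,\Gamma'}$, because $\Phi_x(\Id\otimes\Id)=\Id$. One then passes to the direct limit, using that $\otimes$ commutes with filtered colimits.

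Next I would verify that $\iota_{q\to r}(\al)$ is an $R$-algebra automorphism. This is formal: $\al\otimes\Id$ is an algebra automorphism of $\SA(X,q)\otimes\SA(X,s)$ with inverse $\al^{-1}\otimes \Id$, and conjugation by $\Phi$ preserves this.

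The main obstacle is finite spread. Take $\al$ of spread $\ell$ and $A\in \Mat(R^{r_x})$. Write $\Phi_x^{-1}(A)=\sum_{i} a_i\otimes b_i$ with $a_i \in \Mat(R^{q_x})$ and $b_i\in\Mat(R^{s_x})$; the remark on the inverse map gives this decomposition explicitly. Then
\[
(\al\otimes\Id)\bigl(\Phi^{-1}(A)\bigr) = \sum_i \al(a_i)\otimes b_i ,
\]
where each $\al(a_i)\in \SA(D_\ell(x),q)$ and each $b_i$ is supported at $x\in D_\ell(x)$. It remains to see that $\Phi$ carries $\SA(D_\ell(x),q)\otimes \SA(D_\ell(x),s)$ into $\SA(D_\ell(x),r)$. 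This holds because $\Phi$ is defined sitewise, so it respects supports. Hence $\iota_{q\to r}(\al)$ has spread at most $\ell$.

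Finally, I would check that the map is an injective group homomorphism. It is a homomorphism because $(\al\beta)\otimes\Id=(\al\otimes\Id)(\beta\otimes\Id)$ and the conjugations by $\Phi$ cancel. For injectivity, suppose $\al\otimes \Id = \Id$. Restricting to the subalgebra $\SA(X,q)\otimes 1$ gives $\al(a)\otimes 1 = a\otimes 1$. The map $a\mapsto a\otimes 1$ is injective, since $\SA(X,s)$ is free with $\Id$ a basis element (Proposition~\ref{prop::free_module}), so $1$ spans a direct summand. Therefore $\al=\Id$.

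Canonicity comes from the fixed choice of $\Phi$, and I would also note the compatibility $\iota_{r\to t}\circ\iota_{q\to r}=\iota_{q\to t}$ up to this choice. Strictly, this identity requires associativity of the Kronecker product, which does hold on the nose for block matrices.
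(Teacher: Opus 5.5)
Your proposal is correct and uses the same construction as the paper. The paper's proof only writes $r=q\cdot q'$ and defines $\tilde\al=\Phi\circ(\al\otimes\id)\circ\Phi^{-1}$, leaving every verification implicit. Your checks fill in exactly what it omits, and all are sound: well-definedness of $\Phi$, preservation of spread, the homomorphism property, injectivity (since $\Id$ spans a direct summand of the free module $\SA(X,s)$), and the transitivity $\iota_{r\to t}\circ\iota_{q\to r}=\iota_{q\to t}$ needed for the direct limit.
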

\begin{proof}
    Write $r=q\cdot q'$. Given a QCA $\alpha: \SA(X,q)\rightarrow \SA(X,q)$, define \begin{equation}
        \tilde \alpha=\Phi\circ(\alpha\otimes \id)\circ \Phi^{-1}: \SA(X,q\cdot q')\rightarrow \SA(X,q\cdot q'), 
    \end{equation}
    where $\id$ is the identity automorphism on $\SA(X, q').$ 
\end{proof}
This inclusion, called \textit{stabilization}, allows composition between QCA defined on spin systems with distinct $q$. One could also stack two locality-preserving isomorphisms $\al$ and $\beta$ and create an isomorphism on the stacked systems. We will often suppress the conjugation by \( \Phi \), writing \( \alpha \otimes \beta \) in place of \( \Phi \circ (\alpha \otimes \beta) \circ \Phi^{-1} \) when stacking two locality-preserving isomorphisms.

\begin{defn}\label{def::qca_group}
The \emph{total QCA group} is the direct limit in the category of groups:
\[
\CQ(X)\ :=\ \varinjlim_{q\in \NN^X_\mathrm{lf}}\ \CQ(X,q).
\]
Concretely, $\CQ(X)$ is the set of equivalence classes $[q,\alpha]$ with
$\alpha\in\CQ(X,q)$, modulo the condition that
\[
[q,\alpha]\sim [r,\beta] \text{ if and only if there exists } s\ \text{with}\ q\mid s,\ r\mid s\ \text{such that}\
\iota_{q\to s}(\alpha)=\iota_{r\to s}(\beta).
\]
The product is
\[
[q,\alpha]\cdot[r,\beta]\;\coloneqq\;
\big[s,\ \iota_{q\to s}(\alpha)\circ \iota_{r\to s}(\beta)\big],
\qquad s_x \coloneqq \mathrm{lcm}(q_x,r_x).
\]
\end{defn}
Note that $\mathcal{Q}(X)$ is in particular a filtered colimit of groups. Sometimes, we write $\CQ(X;R)$ to emphasize the base ring $R$.

\subsection{Quantum Circuits}\label{subsec::circuits}
We record several important subgroups of~$\CQ(X)$. Although their elements admit simple descriptions, they represent a rich class of QCA and are widely studied in quantum information and quantum many-body physics. We also observe that each of these subgroups contains the commutator subgroup $[\CQ(X), \CQ(X)]$ of~$\CQ(X)$, a fact that will be particularly relevant in later sections.
\begin{defn}
Let $q \in \NN^{X}_{\mathrm{lf}}$ be a spin system.  
A \emph{single-layer circuit} consists of:

\begin{enumerate}
\item A uniformly bounded partition
\[
X = \coprod_{j} X_j,
\qquad
\sup_j \mathrm{diam}(X_j) < \infty,
\]
and we define
\[
q(X_j) := \prod_{x\in X_j} q_x.
\]

\item For each $X_j$, a choice of automorphism $\alpha_j$ of one of the following types:
\begin{itemize}
    \item \textit{General automorphism:}
    \[
    \alpha_j \in \Aut_R \SA(X_j,q)
    \cong
    \Aut_R \Mat(R^{q(X_j)}),
    \]

    \item \textit{Inner automorphism:}
    \[
    \alpha_j \in \PGL_{q(X_j)}(R),
    \]
    i.e.\ conjugation by an invertible matrix.

    \item \textit{Special linear automorphism:}
    \[
    \alpha_j \in \PSL_{q(X_j)}(R),
    \]
    i.e.\ conjugation by a matrix of determinant one.

    \item \textit{Elementary automorphism:}
    \[
    \alpha_j(A)=EAE^{-1}
    \quad \text{for some elementary matrix } E.
    \]
\end{itemize}
\end{enumerate}

The product $\bigotimes_j \alpha_j$ defines a locality-preserving automorphism of $\SA(X,q)$ by acting independently on the disjoint tensor factors of $\SA(X,q)$.

Figure~\ref{fig::qca_example} illustrates a single block in a single-layer circuit.

\medskip

We denote by
\[
\CQ(X,q)\supseteq
\CalC(X,q)\supseteq
\CalC^{\mathrm{inn}}(X,q)\supseteq
\CalC^{\mathrm{sp}}(X,q)\supseteq
\CalC^{\mathrm{el}}(X,q)
\]
the subgroups generated by single-layer circuits of the corresponding gate types.
Passing to the direct limit in $q$, we obtain
\[
\CQ(X) \supseteq 
\CalC(X) \supseteq 
\CalC^{\mathrm{inn}}(X) \supseteq 
\CalC^{\mathrm{sp}}(X) \supseteq 
\CalC^{\mathrm{el}}(X).
\]
\end{defn}

\begin{figure}[htb]
    \centering
\[\scalebox{1.2}{
\begin{tikzpicture}[x=0.75pt,y=0.75pt,yscale=-1,xscale=1]
%uncomment if require: \path (0,300); %set diagram left start at 0, and has height of 300

%Shape: Circle [id:dp41529935480977775] 
\draw  [fill={rgb, 255:red, 0; green, 0; blue, 0 }  ,fill opacity=1 ] (227,143.75) .. controls (227,141.4) and (228.9,139.5) .. (231.25,139.5) .. controls (233.6,139.5) and (235.5,141.4) .. (235.5,143.75) .. controls (235.5,146.1) and (233.6,148) .. (231.25,148) .. controls (228.9,148) and (227,146.1) .. (227,143.75) -- cycle ;
%Shape: Circle [id:dp02921931563067326] 
\draw  [fill={rgb, 255:red, 0; green, 0; blue, 0 }  ,fill opacity=1 ] (291,143.75) .. controls (291,141.4) and (292.9,139.5) .. (295.25,139.5) .. controls (297.6,139.5) and (299.5,141.4) .. (299.5,143.75) .. controls (299.5,146.1) and (297.6,148) .. (295.25,148) .. controls (292.9,148) and (291,146.1) .. (291,143.75) -- cycle ;
%Straight Lines [id:da9875214711309649] 
\draw    (231.25,143.75) -- (295.25,143.75) ;
%Shape: Circle [id:dp012908570789427531] 
\draw  [fill={rgb, 255:red, 0; green, 0; blue, 0 }  ,fill opacity=1 ] (355,143.75) .. controls (355,141.4) and (356.9,139.5) .. (359.25,139.5) .. controls (361.6,139.5) and (363.5,141.4) .. (363.5,143.75) .. controls (363.5,146.1) and (361.6,148) .. (359.25,148) .. controls (356.9,148) and (355,146.1) .. (355,143.75) -- cycle ;
%Straight Lines [id:da8757635654383464] 
\draw    (295.25,143.75) -- (359.25,143.75) ;
%Shape: Circle [id:dp9070261925340832] 
\draw  [fill={rgb, 255:red, 0; green, 0; blue, 0 }  ,fill opacity=1 ] (419,143.75) .. controls (419,141.4) and (420.9,139.5) .. (423.25,139.5) .. controls (425.6,139.5) and (427.5,141.4) .. (427.5,143.75) .. controls (427.5,146.1) and (425.6,148) .. (423.25,148) .. controls (420.9,148) and (419,146.1) .. (419,143.75) -- cycle ;
%Straight Lines [id:da9545262904283817] 
\draw    (359.25,143.75) -- (423.25,143.75) ;
%Shape: Circle [id:dp9863378920217708] 
\draw  [fill={rgb, 255:red, 0; green, 0; blue, 0 }  ,fill opacity=1 ] (165,143.75) .. controls (165,141.4) and (166.9,139.5) .. (169.25,139.5) .. controls (171.6,139.5) and (173.5,141.4) .. (173.5,143.75) .. controls (173.5,146.1) and (171.6,148) .. (169.25,148) .. controls (166.9,148) and (165,146.1) .. (165,143.75) -- cycle ;
%Straight Lines [id:da8264524118896682] 
\draw    (169.25,143.75) -- (233.25,143.75) ;
%Straight Lines [id:da023278627104373784] 
\draw    (123.25,143.75) -- (187.25,143.75) ;
%Straight Lines [id:da7395466100344348] 
\draw    (407.25,143.75) -- (471.25,143.75) ;
%Straight Lines [id:da9496181767563671] 
\draw [color={rgb, 255:red, 208; green, 2; blue, 27 }  ,draw opacity=1 ]   (295,107) -- (295,71.5) ;
\draw [shift={(295,69.5)}, rotate = 90] [color={rgb, 255:red, 208; green, 2; blue, 27 }  ,draw opacity=1 ][line width=0.75]    (10.93,-3.29) .. controls (6.95,-1.4) and (3.31,-0.3) .. (0,0) .. controls (3.31,0.3) and (6.95,1.4) .. (10.93,3.29)   ;
%Straight Lines [id:da6551590460897346] 
\draw [color={rgb, 255:red, 208; green, 2; blue, 27 }  ,draw opacity=1 ]   (260,55) -- (172.78,99.59) ;
\draw [shift={(171,100.5)}, rotate = 332.92] [color={rgb, 255:red, 208; green, 2; blue, 27 }  ,draw opacity=1 ][line width=0.75]    (10.93,-3.29) .. controls (6.95,-1.4) and (3.31,-0.3) .. (0,0) .. controls (3.31,0.3) and (6.95,1.4) .. (10.93,3.29)   ;
%Straight Lines [id:da6336377952448666] 
\draw [color={rgb, 255:red, 208; green, 2; blue, 27 }  ,draw opacity=1 ]   (325,56.5) -- (362.74,102.95) ;
\draw [shift={(364,104.5)}, rotate = 230.91] [color={rgb, 255:red, 208; green, 2; blue, 27 }  ,draw opacity=1 ][line width=0.75]    (10.93,-3.29) .. controls (6.95,-1.4) and (3.31,-0.3) .. (0,0) .. controls (3.31,0.3) and (6.95,1.4) .. (10.93,3.29)   ;
%Straight Lines [id:da5137613482563] 
\draw [color={rgb, 255:red, 208; green, 2; blue, 27 }  ,draw opacity=1 ]   (164,80) -- (237.24,40.45) ;
\draw [shift={(239,39.5)}, rotate = 151.63] [color={rgb, 255:red, 208; green, 2; blue, 27 }  ,draw opacity=1 ][line width=0.75]    (10.93,-3.29) .. controls (6.95,-1.4) and (3.31,-0.3) .. (0,0) .. controls (3.31,0.3) and (6.95,1.4) .. (10.93,3.29)   ;
%Straight Lines [id:da6752848213629531] 
\draw [color={rgb, 255:red, 208; green, 2; blue, 27 }  ,draw opacity=1 ]   (374,96) -- (339.31,56.01) ;
\draw [shift={(338,54.5)}, rotate = 49.06] [color={rgb, 255:red, 208; green, 2; blue, 27 }  ,draw opacity=1 ][line width=0.75]    (10.93,-3.29) .. controls (6.95,-1.4) and (3.31,-0.3) .. (0,0) .. controls (3.31,0.3) and (6.95,1.4) .. (10.93,3.29)   ;

% Text Node
\draw (218,154.4) node [anchor=north west][inner sep=0.75pt]    {$-1$};
% Text Node
\draw (289,155.4) node [anchor=north west][inner sep=0.75pt]    {$0$};
% Text Node
\draw (353,155.4) node [anchor=north west][inner sep=0.75pt]    {$1$};
% Text Node
\draw (417,155.4) node [anchor=north west][inner sep=0.75pt]    {$2$};
% Text Node
\draw (156,154.4) node [anchor=north west][inner sep=0.75pt]    {$-2$};
% Text Node
\draw (98,130.4) node [anchor=north west][inner sep=0.75pt]  [font=\large]  {$...$};
% Text Node
\draw (478,135.4) node [anchor=north west][inner sep=0.75pt]  [font=\large]  {$...$};
% Text Node
\draw (447,26.4) node [anchor=north west][inner sep=0.75pt]    {$X\ =\ \mathbb{Z}$};
% Text Node
\draw (271,117.4) node [anchor=north west][inner sep=0.75pt]  [font=\small,color={rgb, 255:red, 208; green, 2; blue, 27 }  ,opacity=1 ]  {$\operatorname{Mat}(R^{q_0})$};
% Text Node
\draw (332,109.4) node [anchor=north west][inner sep=0.75pt]  [font=\small,color={rgb, 255:red, 208; green, 2; blue, 27 }  ,opacity=1 ]  {$\operatorname{Mat}(R^{q_1})$};
% Text Node
\draw (401,109.4) node [anchor=north west][inner sep=0.75pt]  [font=\small,color={rgb, 255:red, 208; green, 2; blue, 27 }  ,opacity=1 ]  {$\operatorname{Mat}(R^{q_2})$};
% Text Node
\draw (138,111.4) node [anchor=north west][inner sep=0.75pt]  [font=\small,color={rgb, 255:red, 208; green, 2; blue, 27 }  ,opacity=1 ]  {$\operatorname{Mat}(R^{q_{-2}})$};
% Text Node
\draw (205,111.4) node [anchor=north west][inner sep=0.75pt]  [font=\small,color={rgb, 255:red, 208; green, 2; blue, 27 }  ,opacity=1 ]  {$\operatorname{Mat}(R^{q_{-1}})$};
% Text Node
\draw (245,28.4) node [anchor=north west][inner sep=0.75pt]  [font=\small,color={rgb, 255:red, 208; green, 2; blue, 27 }  ,opacity=1 ]  {$\underbrace{\operatorname{Mat}(R^a)} \otimes \underbrace{\operatorname{Mat}(R^b)}$};
% Text Node
\draw (305,85) node [anchor=north west][inner sep=0.75pt]    {$\textcolor[rgb]{0.82,0.01,0.11}{=}$};
% Text Node
\draw (76,180) node [anchor=north west][inner sep=0.75pt]  [font=\small,color={rgb, 255:red, 208; green, 2; blue, 27 }  ,opacity=1 ]  {$\operatorname{Mat}(R^{q_{-2}}) =\operatorname{Mat}(R^a) \ \otimes \operatorname{Mat}(R^c)$};
% Text Node
\draw (287,179) node [anchor=north west][inner sep=0.75pt]  [font=\small,color={rgb, 255:red, 208; green, 2; blue, 27 }  ,opacity=1 ]  {$\operatorname{Mat}(R^{q_1}) =\operatorname{Mat}(R^b) \ \otimes \operatorname{Mat}(R^d)$};
\end{tikzpicture}
}
\]
\caption{An example of a single block in a single-layer circuit. Here we break $\operatorname{Mat}(R^{q_0})$ into its two tensor factors $\operatorname{Mat}(R^a)$ and $\operatorname{Mat}(R^b)$, we then swap out the correspondent tensor components $\operatorname{Mat}(R^a) \subseteq \operatorname{Mat}(R^{q_{-2}})$ and $\operatorname{Mat}(R^b) \subseteq \operatorname{Mat}(R^{q_1})$ to perform the automorphism. }
\label{fig::qca_example}
\end{figure}
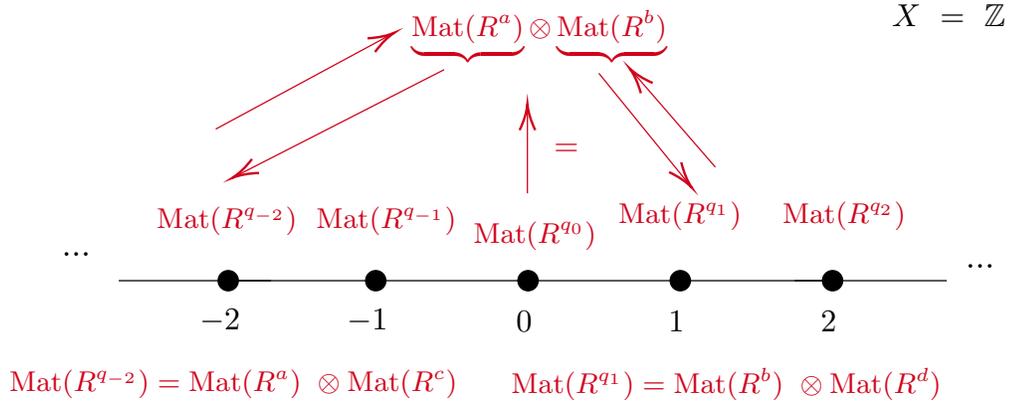

\begin{remark}
    Over the complex numbers, one could also consider circuits built from unitary matrices. In fact, these are what quantum circuits commonly refer to. See Section~\ref{sec::star} for details. 
\end{remark}

\begin{exmp}
   For $X = *$ is a point, we have that $\CQ(*) = \CalC(*)$.
\end{exmp}

\begin{exmp}\label{example::swap}
    Here we give an example of elementary quantum circuits. For a spin system $q\cdot r$, there are two factorizations based on the (inverse) Kronecker product
    \begin{equation}
\SA(X, q\cdot r)\cong \SA(X, q)\otimes \SA(X, r)\cong \SA(X, r)\otimes \SA(X, q),        \end{equation} one can define the $\mathrm{SWAP}_{q,r}: \mathcal{A}(X, q) \otimes \mathcal{A}(X, r) \to \mathcal{A}(X, r) \otimes \mathcal{A}(X, q)$ circuit which switches between these two orderings above. Specifically, it is the partition of $X$ into individual points, and on each point we have an automorphism by the following formula
\begin{equation}
    \begin{pmatrix}
a_{11} B & a_{12} B & \cdots & a_{1 q_x} B \\
a_{21} B & a_{22} B & \cdots & a_{2 q_x} B \\
\vdots   & \vdots   & \ddots & \vdots   \\
a_{q_x 1} B & a_{q_x 2} B & \cdots & a_{q_x q_x} B
\end{pmatrix}\mapsto \begin{pmatrix}
b_{11} A & b_{12} A & \cdots & b_{1 r_x} A \\
b_{21} A & b_{22} A & \cdots & b_{2 r_x} A \\
\vdots   & \vdots   & \ddots & \vdots   \\
b_{r_x 1} A & b_{r_x 2} A & \cdots & b_{r_x r_x} A
\end{pmatrix}
\end{equation}
for each $A=(a_{ij})\in \mathrm{Mat}(R^{q_x})$ and $B=(b_{ij})\in \mathrm{Mat}(R^{r_x})$. After stabilization, a SWAP automorphism is always equal to a conjugation by an elementary matrix. Indeed, the SWAP automorphism is the conjugation by a permutation matrix that is stably even (see \cite{Liu2024} for a proof of this), and even permutation matrices are elementary (see III.1.2.1 of \cite{weibel2013k}).
\end{exmp}

\begin{prop}
    Each of the four circuit subgroups is normal in $\CQ(X)$.
\end{prop}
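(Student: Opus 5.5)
It suffices to show that each circuit subgroup is stable under conjugation by an arbitrary QCA. A circuit subgroup of $\CQ(X)$ is the direct limit of the subgroups $\CalC^{\bullet}(X,q)$, and it is generated by images of single-layer circuits. So it is enough to check one thing: for any $[q,\alpha]\in\CQ(X)$ and any single-layer circuit $\gamma=\bigotimes_j\gamma_j$ on some $r$, the conjugate $\alpha\gamma\alpha^{-1}$ is a finite product of single-layer circuits of the same gate type. First I stabilize both to a common $s$ with $q\mid s$ and $r\mid s$, using Lemma~\ref{lem: stabilization}. I then need to check that stabilization sends circuits to circuits of the same type. This holds because $\gamma_j\otimes\id$ on $\SA(X_j,s)$ is again conjugation by $g\otimes I$, where $g$ is the matrix for $\gamma_j$. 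The matrix $g\otimes I$ is invertible, has determinant $\det(g)^{k}$ (so determinant one stays determinant one), and is elementary if $g$ is. So I may assume $\alpha,\gamma\in\CQ(X,s)$.

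The key construction uses the spread $\ell$ of $\alpha^{-1}$ and the uniform diameter bound $d$ of the partition $\{X_j\}$. The conjugate $\alpha\gamma_j\alpha^{-1}$ acts as an automorphism of the subalgebra $\alpha(\SA(X_j,s))$, and this subalgebra is supported in the $\ell$-neighborhood $N_j$ of $X_j$. The difficulty is that the $N_j$ overlap, so I cannot directly read off a single-layer circuit. The plan is to color the blocks. Since $X$ has a coarsely dense subspace of bounded geometry, and the $N_j$ have uniformly bounded diameter, the overlap graph of the $N_j$ has bounded degree. Hence the index set splits into finitely many classes $J_1,\dots,J_m$ such that within each class the neighborhoods $N_j$ are pairwise disjoint. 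Then $\gamma=\gamma^{(1)}\cdots\gamma^{(m)}$, where $\gamma^{(k)}=\bigotimes_{j\in J_k}\gamma_j$ (the factors commute since the blocks are disjoint). Consequently $\alpha\gamma\alpha^{-1}=\prod_k \alpha\gamma^{(k)}\alpha^{-1}$.

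It remains to show that each $\alpha\gamma^{(k)}\alpha^{-1}$ is a single-layer circuit of the same type on the partition consisting of the $N_j$ for $j\in J_k$ together with singletons elsewhere. Write $B_j=\alpha(\SA(X_j,s))\subseteq \SA(N_j,s)$. This is the main obstacle: I need that $\SA(N_j,s)\cong B_j\otimes C_j$, where $C_j$ is the centralizer of $B_j$, with $C_j$ again a matrix algebra over $R$. Then $\alpha\gamma_j\alpha^{-1}$ restricted to $\SA(N_j,s)$ is $(\alpha\gamma_j\alpha^{-1})|_{B_j}\otimes\id_{C_j}$, and it is the identity outside $N_j$ because it commutes with everything supported away from $N_j$. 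I would prove the splitting via the double centralizer theorem for Azumaya algebras: $B_j$ is a matrix algebra, hence Azumaya, and it sits inside the Azumaya algebra $\SA(N_j,s)$, so $\SA(N_j,s)\cong B_j\otimes C_j$ with $C_j$ Azumaya. Since $C_j$ is only stably a matrix algebra, I first enlarge $s$ on $N_j$ if necessary so that it becomes one; this enlargement is harmless by the same stabilization argument.

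Finally I check the gate types. If $\gamma_j$ is conjugation by $g$, then $\alpha\gamma_j\alpha^{-1}$ is conjugation by $\alpha(g)\otimes 1\in\SA(N_j,s)$, which is invertible. Its determinant equals the reduced norm of $\alpha(g)$ raised to a power, so it is one whenever $\det g=1$; this is seen by computing after identifying $B_j$ with a matrix algebra. For the elementary type, $\alpha(E)$ is a product of elements of the form $1+\lambda\alpha(e_{ab})$, where $\alpha(e_{ab})$ is a system of matrix units in $\SA(N_j,s)$. This is conjugate to a standard elementary matrix by some invertible $h$, so it lies in the elementary group after stabilizing with the Whitehead lemma ($h\oplus h^{-1}$ trick). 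I expect this last point, together with the Azumaya splitting, to require the most care.
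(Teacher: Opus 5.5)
Your proposal is correct and follows the paper's own argument: each conjugated gate $\alpha\gamma_j\alpha^{-1}$ is supported on a bounded neighborhood of $X_j$, and bounded geometry lets you regroup these commuting but overlapping gates into finitely many layers of pairwise disjoint blocks. The paper only asserts that conjugation preserves the gate type, so your reduced-norm check for $\PSL$ and your stabilization-plus-Whitehead argument for the elementary type go beyond it. One correction there: the embedding $\alpha|_{\SA(X_j,s)}$ can fail to be conjugate to the standard one, because of a Picard-group obstruction, even when $C_j$ is a matrix algebra. So the stabilization that makes it conjugate should be by the block size $s(X_j)$, not merely whatever makes $C_j$ a matrix algebra.
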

\begin{proof}
    Let $\alpha=\bigotimes_{j\in J} \alpha_j$ be a single-layer (inner, special, or elementary) circuit, with each $\alpha_j$ an automorphism of $\SA(X_j,q)$ belonging to the corresponding gate-type. Recall that  
    \begin{equation}
    X=\coprod_{j\in J} X_j \text{ and } \sup_{j\in J}\mathrm{diam}(X_j)< \infty.
\end{equation}
For any $\beta\in \CQ(X,q)$ of spread $l$, $\beta\circ \alpha_j \circ \beta^{-1}$ is an automorphism of the same gate-type on $\SA(X_j^l, q)$ with 
\begin{equation}
    X_j^l=\{x\in X: d(x, X_j)\leq l\}.
\end{equation}
Above, we are regarding $\alpha_j$ as an automorphism of the entire algebra $\SA(X, q)$ which acts as identity outside $X_j$. 
In other words, 
\begin{equation}
\beta\circ \alpha \circ \beta^{-1}=\prod_j \beta\circ \alpha_j \circ \beta^{-1}
\end{equation} where the factors are mutually commuting but may overlap in support. Though not a single-layer circuit anymore, $\beta\circ \alpha \circ \beta^{-1}$ is still a (multi-layer) circuit of the same type. Indeed, one can stagger the terms in the product and regroup them so that operators with disjoint support are assigned to the same layer. The argument works because of bounded geometry.\footnote{This was pointed out by Matthias Ludewig.}
\end{proof}

\begin{prop}
    The quotient group $\CQ(X)/\CalC^{\mathrm{el}}(X)$ is abelian. In other words, $[\CQ(X), \CQ(X)]\subset \CalC^{\mathrm{el}}(X).$
\end{prop}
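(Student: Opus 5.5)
The plan is the classical Eilenberg--swindle style argument, as in the proof that $[GL(R),GL(R)]=E(R)$ via Whitehead's lemma, adapted multiplicatively with tensor products in place of direct sums. Let $\alpha,\beta\in\CQ(X,q)$. After stabilizing to $q\cdot q$ (Lemma~\ref{lem: stabilization}), identify $\iota_{q\to q^2}(\alpha)$ with $\alpha\otimes\id$ acting on $\SA(X,q)\otimes\SA(X,q)$. The key identity I aim for is that $\alpha\otimes\id$ and $\id\otimes\alpha$ agree modulo $\CalC^{\mathrm{el}}(X)$, because they are conjugate by the SWAP circuit:
\[
\id\otimes\alpha=\mathrm{SWAP}_{q,q}\circ(\alpha\otimes\id)\circ\mathrm{SWAP}_{q,q}^{-1}.
\]
Example~\ref{example::swap} shows that $\mathrm{SWAP}_{q,q}$, after stabilization, is an elementary circuit: it is a single-layer circuit on singletons whose gates are conjugations by stably even permutation matrices.

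With this in hand, the commutator computation proceeds as follows. In $\CQ(X)$ we have $[\alpha,\beta]=[\alpha\otimes\id,\beta\otimes\id]$. Since $\CalC^{\mathrm{el}}(X)$ is normal (by the previous proposition), we may replace $\beta\otimes\id$ by $\id\otimes\beta$ modulo $\CalC^{\mathrm{el}}$. Concretely, write $\beta\otimes\id=s(\id\otimes\beta)s^{-1}$ with $s$ the SWAP. Then
\[
[\alpha\otimes\id,\ s(\id\otimes\beta)s^{-1}]\equiv[\alpha\otimes\id,\ \id\otimes\beta]\pmod{\CalC^{\mathrm{el}}(X)}.
\]
This holds because in the quotient group $s$ becomes trivial. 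The right-hand commutator is trivial, since $\alpha\otimes\id$ and $\id\otimes\beta$ act on commuting tensor factors. Hence $[\alpha,\beta]\in\CalC^{\mathrm{el}}(X)$.

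More cleanly, the argument only needs that the image of $s$ in $\CQ(X)/\CalC^{\mathrm{el}}(X)$ is the identity. This reduces everything to checking that the images of $\alpha\otimes\id$ and $\id\otimes\alpha$ coincide in the quotient, and that in the quotient $[\bar a,\bar b]=[\bar a,\bar b']$ whenever $\bar b=\bar b'$.

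The main obstacle is making the SWAP step rigorous in the direct-limit setting. First, we must check that $\id\otimes\alpha$ (that is, $\Phi\circ(\id\otimes\alpha)\circ\Phi^{-1}$) is a genuine element of $\CQ(X,q^2)$ with finite spread. This holds because the tensor factor decomposition is pointwise, so the spread is unchanged. Second, the SWAP must lie in $\CalC^{\mathrm{el}}(X)$ after a uniform stabilization. At each site, the permutation matrix implementing the swap of $\Mat(R^{q_x})\otimes\Mat(R^{q_x})$ may be odd. I would handle this by stabilizing once more by a factor of $2$ at every site (or by tensoring with an extra identity factor) so that every local permutation becomes even, citing \cite{Liu2024}. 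Even permutation matrices are products of elementary matrices (III.1.2.1 of \cite{weibel2013k}), so each gate is elementary. The partition into singletons is uniformly bounded, so this is a single-layer elementary circuit.

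Finally, we must verify that stabilizing $\alpha$ and $\beta$ to a common $s$ with $s_x=\mathrm{lcm}(q_x,r_x)$, and then to $s^2$ (times $2$), is compatible with the group structure of Definition~\ref{def::qca_group}. This is routine since the $\iota$ maps are injective homomorphisms.
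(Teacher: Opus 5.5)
Your proposal is correct and is essentially the paper's proof. Both arguments stabilize, use that $\mathrm{SWAP}$ conjugates $\beta\otimes\id$ to $\id\otimes\beta$ and is stably an elementary circuit, use that $\alpha\otimes\id$ and $\id\otimes\beta$ commute, and use normality of $\CalC^{\mathrm{el}}(X)$ to absorb the conjugated $\mathrm{SWAP}$. The only differences are cosmetic: you stabilize to a common $q$ and use $\mathrm{SWAP}_{q,q}$ rather than $\mathrm{SWAP}_{q,r}$ on $qr$, and your extra factor-of-$2$ stabilization should be done only on $\{x : q_x>1\}$ so the spin system stays locally finite.
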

\begin{proof}
     Let $\alpha\in \CQ(X,q)$ and $\beta \in \CQ(X, r)$. Within the total QCA group $\CQ(X)$, 
    \begin{align*}
        &[q,\alpha]  [r, \beta]\\
        =& [qr,\alpha\otimes \id_r] [qr, \mathrm{SWAP}_{q, r}] [qr, \id_q\otimes \beta][qr, \mathrm{SWAP}_{q, r}]\\
        =& [qr,\alpha\otimes \id_r][qr, \id_q\otimes \beta][qr, \id_q\otimes \beta^{-1}] [qr, \mathrm{SWAP}_{q, r}] [qr, \id_q\otimes \beta][qr, \mathrm{SWAP}_{q, r}]\\
        =&[qr, \id_q\otimes \beta][qr,\alpha\otimes \id_r][qr, \id_q\otimes \beta^{-1}] [qr, \mathrm{SWAP}_{q, r}] [qr, \id_q\otimes \beta][qr, \mathrm{SWAP}_{q, r}]\\
        =& [r, \beta][q,\alpha][qr, \id_q\otimes \beta^{-1}] [qr, \mathrm{SWAP}_{q, r}] [qr, \id_q\otimes \beta][qr, \mathrm{SWAP}_{q, r}].
    \end{align*} 
Since both $[qr, \id_q\otimes \beta^{-1}] [qr, \mathrm{SWAP}_{q, r}] [qr, \id_q\otimes \beta]$ and $[qr, \mathrm{SWAP}_{q, r}]$ are in $\CalC^{\mathrm{el}}(X)$ we are done. 
\end{proof}

\begin{cor}
    The groups $\CQ(X)/\CalC(X)$, $\CQ(X)/\CalC^{\mathrm{inn}}(X)$, and $\CQ(X)/\CalC^{\mathrm{sp}}(X)$ are also abelian.
\end{cor}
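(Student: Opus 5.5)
The plan is a formal group-theoretic reduction to the preceding proposition, which gives $[\CQ(X),\CQ(X)]\subseteq \CalC^{\mathrm{el}}(X)$. From the definitions of the gate types we have the chain
\[
[\CQ(X),\CQ(X)]\subseteq \CalC^{\mathrm{el}}(X)\subseteq \CalC^{\mathrm{sp}}(X)\subseteq \CalC^{\mathrm{inn}}(X)\subseteq \CalC(X)\subseteq \CQ(X).
\]
We first justify these inclusions at the level of single-layer circuits, and then pass to the direct limit over $q$.

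The single-layer inclusions hold because every elementary gate is special linear, since elementary matrices have determinant one. Every special linear gate is inner. Every inner gate is an $R$-algebra automorphism of $\Mat(R^{q(X_j)})$. Hence each generating set is contained in the next one, and the same holds for the subgroups they generate in $\CQ(X,q)$. These inclusions are compatible with the stabilization maps $\iota_{q\to r}$ of Lemma~\ref{lem: stabilization}, because stacking with the identity preserves each gate type: $E\otimes I$ is again elementary, and likewise for the other types. Therefore the inclusions survive in the direct limit.

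Each of these subgroups is normal by the earlier proposition. So for $N\in\{\CalC(X),\CalC^{\mathrm{inn}}(X),\CalC^{\mathrm{sp}}(X)\}$ the quotient $\CQ(X)/N$ is a group. It is the image of $\CQ(X)/\CalC^{\mathrm{el}}(X)$ under the natural surjection, which exists because $\CalC^{\mathrm{el}}(X)\subseteq N$. A quotient of an abelian group is abelian. Equivalently, $[\CQ(X),\CQ(X)]\subseteq N$, and a quotient by a normal subgroup containing the commutator subgroup is abelian.

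There is no real obstacle here. The only point needing care is the compatibility of gate types with stabilization in the direct limit, which is the Kronecker-product observation above.
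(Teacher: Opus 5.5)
Your argument is correct and is the one the paper intends: the corollary is stated without separate proof, as an immediate consequence of $[\CQ(X),\CQ(X)]\subseteq\CalC^{\mathrm{el}}(X)$ together with the chain $\CalC^{\mathrm{el}}(X)\subseteq\CalC^{\mathrm{sp}}(X)\subseteq\CalC^{\mathrm{inn}}(X)\subseteq\CalC(X)$ built into the definitions. Two small remarks: any subgroup containing the commutator subgroup is automatically normal, and the levelwise inclusions already pass to the limit, so your stabilization check is not needed here (besides, $E\otimes I$ is ``elementary'' only in the sense of lying in the elementary subgroup, since it is a product of transvections).
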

\begin{prop} \label{prop::conjugation}
    Let $\beta$ be a locality-preserving isomorphism between $q$ and $q'$. A QCA $[q, \al]\in \CQ(X)$ and its conjugate $[q', \beta \al \beta^{-1}]$ are equivalent in $\CQ(X)^\mathrm{ab}$. Hence they are also equivalent in $\CQ(X)/\CalC(X)$, $\CQ(X)/\CalC^{\mathrm{inn}}(X)$, $\CQ(X)/\CalC^{\mathrm{sp}}(X)$, and $\CQ(X)/\CalC^{\mathrm{el}}(X)$.
\end{prop}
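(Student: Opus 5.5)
The plan is to realize the conjugation by $\beta$ as a conjugation by a genuine QCA on a stacked system, so that both classes become conjugate inside $\CQ(X)$; conjugate elements of a group agree in its abelianization. The difficulty is that $\beta$ is an isomorphism $\SA(X,q)\to\SA(X,q')$ rather than an automorphism. The standard fix is to pair $\beta$ with $\beta^{-1}$ and a SWAP.

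\emph{Step 1 (inverse is local).} I first check that $\beta^{-1}:\SA(X,q')\to\SA(X,q)$ also has finite spread. This is the step I expect to be the main obstacle, since Definition~\ref{def:locality_preserve_iso} only controls $\beta$ itself. Suppose $\beta$ has spread $\ell$, and take $A\in\SA(\{y\},q')$. For every $x$ with $\rho(x,y)>\ell$ and every $B\in\SA(\{x\},q)$, the element $\beta(B)$ is supported in $D_\ell(x)$, which avoids $y$. Hence $\beta(B)$ commutes with $A$, so $B$ commutes with $\beta^{-1}(A)$. It follows that $\beta^{-1}(A)$ lies in the centralizer of $\SA(X\setminus D_\ell(y),q)$. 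I then need that this centralizer equals $\SA(D_\ell(y),q)$. This is a centralizer statement for tensor products of matrix algebras over $R$: the centralizer of $\SA(Z,q)$ in $\SA(X,q)$ is $\SA(X\setminus Z,q)\otimes Z(\SA(Z,q))$, with center $R$. I would prove it on finite supports using the free bases of Proposition~\ref{prop::free_module} built from matrix units, and then pass to the direct limit. The conclusion is that $\beta^{-1}$ has spread $\ell$.

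\emph{Step 2 (the conjugating QCA).} On the stacked system $q\cdot q'$ I define
\[
\gamma=\mathrm{SWAP}_{q',q}\circ(\beta\otimes\beta^{-1}):\ \SA(X,q)\otimes\SA(X,q')\to\SA(X,q')\otimes\SA(X,q)\to\SA(X,q)\otimes\SA(X,q'),
\]
with the Kronecker identifications $\Phi$ suppressed as usual. By Step 1, $\beta\otimes\beta^{-1}$ has finite spread, and $\mathrm{SWAP}$ is onsite, so $[qq',\gamma]\in\CQ(X)$. A direct computation gives
\[
\gamma\,(\alpha\otimes\id_{q'})\,\gamma^{-1}=\mathrm{SWAP}\,\bigl(\beta\alpha\beta^{-1}\otimes\id_q\bigr)\,\mathrm{SWAP}^{-1}=\id_q\otimes\beta\alpha\beta^{-1}.
\]

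\emph{Step 3 (identify with stabilizations).} By Lemma~\ref{lem: stabilization}, $[q,\alpha]=[qq',\alpha\otimes\id_{q'}]$. Similarly, $[q',\beta\alpha\beta^{-1}]=[q'q,\beta\alpha\beta^{-1}\otimes\id_q]$, which is the conjugate of $[qq',\id_q\otimes\beta\alpha\beta^{-1}]$ by $[qq',\mathrm{SWAP}_{q,q'}]\in\CalC^{\mathrm{el}}(X)$ (Example~\ref{example::swap}). Therefore $[q',\beta\alpha\beta^{-1}]$ is conjugate in $\CQ(X)$ to $[q,\alpha]$, via $\mathrm{SWAP}\circ\gamma$, and the two classes agree in $\CQ(X)^{\mathrm{ab}}$.

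Finally, the preceding proposition gives $[\CQ(X),\CQ(X)]\subset\CalC^{\mathrm{el}}(X)\subset\CalC^{\mathrm{sp}}(X)\subset\CalC^{\mathrm{inn}}(X)\subset\CalC(X)$. Hence each of the four quotients is a quotient of $\CQ(X)^{\mathrm{ab}}$, and the equality descends to all of them.
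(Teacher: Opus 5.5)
Your proposal is correct and is the paper's proof: the paper conjugates by the same QCA $\mathrm{SWAP}_{q',q}\circ(\beta\otimes\beta^{-1})$ on $qq'$ and uses the same identity $\tilde\beta\,(\alpha\otimes\Id_{q'})\,\tilde\beta^{-1}=\Id_q\otimes\beta\alpha\beta^{-1}$. Your Step 1 (the centralizer argument showing $\beta^{-1}$ automatically has spread $\ell$) and your explicit SWAP bookkeeping in Step 3 fill in points the paper leaves implicit: it assumes $\beta^{-1}$ is local, and it identifies $[qq',\Id_q\otimes\beta\alpha\beta^{-1}]$ with $[q',\beta\alpha\beta^{-1}]$ directly in the abelianization.
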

\begin{proof}
    The composition 
    $$\tilde \beta:=\mathrm{SWAP_{q', q}}\circ (\beta\otimes \beta^{-1}):\SA(X, q)\otimes \SA(X, q')\rightarrow \SA(X, q)\otimes \SA(X, q')$$ gives a QCA on $qq'.$ Notice, 
    \begin{align}
            &\tilde \beta \circ (\al\otimes \Id_q') \circ \tilde \beta^{-1}\\
            =&\mathrm{SWAP_{q', q}}\circ (\beta\otimes \beta^{-1})\circ (\al\otimes \Id_{q'}) \circ (\beta^{-1}\otimes \beta) \circ \mathrm{SWAP_{q, q'}}\\
            =& \Id_{q}\otimes (\beta \al \beta^{-1}),
    \end{align}
    where the first identity is on $\SA(X,q')$ and the last on $\SA(X, q).$ Therefore, in the abelianization $\CQ(X)^\mathrm{ab}$, 
    \begin{equation}
        [q, \al]=[qq', \al\otimes \Id_q']= [qq', \Id_{q}\otimes (\beta \al \beta^{-1})]=[q', \beta \al \beta^{-1}].
    \end{equation}
\end{proof}

\subsection{Relation to Azumaya algebra}
Recall that an \(R\)-algebra \(A\) is called an \emph{Azumaya algebra}
if there exists another \(R\)-algebra \(B\) such that
\[
A \otimes_R B \;\cong\; \mathrm{Mat}(R^n)
\]
for some \(n\).  Azumaya algebra over $R$ form a symmetric monoidal category $\mathrm{Az(R)}$ under tensor product. Let $K_0(\mathrm{Az(R)})$ be its Grothendieck group whose elements are represented by fractions of the form $\frac{A}{B}$. Here, $A, B$ are Azumaya algebras. If \(P\) is a faithfully projective\footnote{$P$ is a finitely generated projective $R$-module whose rank at every component of $\operatorname{Spec}(R)$ is non-zero. See II.5.4.2 of \cite{weibel2013k} for more details.} $R$-module, then \(\operatorname{End}_R(P)\)
is an Azumaya algebra. Since
\[
\operatorname{End}_R(P \otimes P') \;\cong\;
\operatorname{End}_R(P)\otimes \operatorname{End}_R(P'),
\]
there is a monoidal functor 
\[
\operatorname{End}_R : \mathrm{FP}(R) \longrightarrow \mathrm{Az}(R),
\]
where $\operatorname{FP}(R)$ denotes the monoidal category of finitely generated faithfully projective $R$-module with tensor product. Hence, we obtain a group homomorphism
\[
K_0(\mathrm{FP}(R)) \longrightarrow K_0(\mathrm{Az}(R)).
\]

The cokernel \(\operatorname{Br}(R)\) of this map is called the
\emph{Brauer group} of \(R\).
Thus \(\operatorname{Br}(R)\) is generated by classes \([A]\) of Azumaya
algebras, subject to the relations
\[
[A \otimes_R B] = [A] + [B], \qquad
[\operatorname{End}_R(P)] = 0.
\]

\begin{remark}
    A modern definition of Azumaya R-algebras typically states that $A$ is Azumaya if there exists an $R$-algebra $B$ such that $A \otimes_{R} B \cong \operatorname{End}_{R}(P)$, where $P$ is a faithfully projective $R$-module. As pointed out in Chapter IX.7 of \cite{bass1968algebraic}, the category of finitely generated free $R$-modules is actually cofinal in $\operatorname{FP}$ under tensor product. Hence, the two definitions agree with each other.
\end{remark}

For convenience, we also record a fact about Azumaya algebras that we will utilize in later sections.
\begin{lemma}[The Centralizer Theorem, See Theorem II.4.3 of \cite{DeMeyer1971}]\label{lem::centralizer}
    Let $R$ be a commutative ring. Let $A$ be a finite-dimensional Azumaya $R$-algebra, and let $B \subseteq A$ be a unital $R$-subalgebra which is also Azumaya over $R$. Let
\[
C := \{ a \in A \mid ab = ba \text{ for all } b \in B \}
\]
be the centralizer of $B$ in $A$. Then $C$ is an Azumaya $R$-algebra, and the natural multiplication map
\[
B \otimes_R C \longrightarrow A, 
\qquad b \otimes c \mapsto bc,
\]
is an isomorphism of $R$-algebras. Hence, $B$ and $C$ are mutual centralizers in $A$.
\end{lemma}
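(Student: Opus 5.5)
Following the standard route in \cite{DeMeyer1971}, I would reduce the statement to the double centralizer theorem for separable algebras, using faithful flatness and localization.

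The first step is to set up the two algebras. Since $B$ is Azumaya, $B^e = B\otimes_R B^{op}\cong \End_R(B)$, and $B$ is separable over $R$. I view $A$ as a left $B^e$-module via $(b\otimes b')\cdot a = bab'$. The $B$-centralizer $C$ is then exactly $A^B=\operatorname{Hom}_{B^e}(B,A)$, the invariants of this bimodule.

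The second step is a Morita argument. Because $B$ is Azumaya, the functor
\[
M\mapsto M^B,\qquad N\mapsto B\otimes_R N
\]
gives an equivalence between $B^e$-modules and $R$-modules. In particular, the multiplication map $\mu\colon B\otimes_R A^B\to A$ is an isomorphism of $B$-bimodules. Applied to $M=A$, this shows $\mu\colon B\otimes_R C\to A$ is bijective. It is an algebra map because the elements of $B$ and $C$ commute.

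The third step shows $C$ is Azumaya. Since $A\cong B\otimes C$ is finitely generated projective and $B$ is faithfully projective, $C$ is a direct summand of $B^{op}\otimes A$ under the Morita equivalence, so $C$ is finitely generated projective. Then I compare centers and enveloping maps: $Z(A)=R$ forces $Z(C)=R$, and the map $A^e\to\End_R(A)$ is the tensor product of $B^e\to\End_R(B)$ and $C^e\to\End_R(C)$. After localizing at a prime where $B$ has positive rank, $\End(B)$ is a free matrix algebra, and one cancels it using faithful flatness to conclude that $C^e\to\End_R(C)$ is an isomorphism.

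Finally, the mutual centralizer claim follows by applying the same argument to $C$ in place of $B$; the centralizer of $C$ in $B\otimes C$ is then $B\otimes Z(C)=B$.

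I expect the main obstacle to be this cancellation in the third step: deducing that $C^e\to\End(C)$ is an isomorphism from the corresponding statement for the tensor product. To handle it, I would localize to a local ring and reduce to a field via Nakayama's lemma and projectivity. Over a field, the classical centralizer theorem for central simple algebras finishes the argument.
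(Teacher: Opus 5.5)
Your proposal is correct, and it is essentially the standard DeMeyer--Ingraham argument that the paper only cites without giving its own proof: the Morita equivalence $M\mapsto M^B$, $N\mapsto B\otimes_R N$ gives $B\otimes_R C\cong A$, and cancelling the faithfully projective factor $\End_R(B)$ from $A^e\to\End_R(A)$ shows that $C$ is Azumaya. The cancellation you flag as the main obstacle needs no reduction to fields: $\End_R(B)$ is faithfully flat, so if $\End_R(B)\otimes_R\bigl(C^e\to\End_R(C)\bigr)$ is an isomorphism, then $C^e\to\End_R(C)$ already is one.
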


\KAz*    
    \begin{proof}
        Let $[q, \alpha]\in \CQ(\ZZ)$ for $q\in \NN^\ZZ_{\mathrm{lf}}$ and $\alpha\in \CQ(\ZZ, q)$ with spread $l$. 
        Choose any $\gamma\in \ZZ$. Informally, we would like to define $ b(\al)$ as
        \begin{equation}\frac{\al(\SA(\ZZ\cap (-\infty, \gamma], q))}{\SA(\ZZ\cap (-\infty, \gamma], q)} \in K_0\mathrm{Az(R)}.
        \end{equation}
    To make it well-defined, we use the finite spread condition, 
    \begin{equation}
       \SA(\ZZ\cap (-\infty, \gamma-l], q)) \subset\al(\SA(\ZZ\cap (-\infty, \gamma], q)) \subset \SA(\ZZ\cap (-\infty, \gamma+l], q)).
    \end{equation}
    Lemma~\ref{lem::tensor_splitting} below implies 
    \begin{equation}
        \al(\SA(\ZZ\cap (-\infty, \gamma], q))=\SA(\ZZ\cap (-\infty, \gamma-l], q)\otimes B,
    \end{equation}
        where $B\subset \SA(\ZZ\cap (\gamma-l, \gamma+l], q)$ is a unital subalgebra. 
    Similarly, 
        \begin{equation}
        \al(\SA(\ZZ\cap (\gamma, \infty), q))=C\otimes \SA(\ZZ\cap (\gamma+l,\infty), q),
    \end{equation}
        where $C\subset \SA(\ZZ\cap (\gamma-l, \gamma+l], q)$ is a subalgebra.

    As $\alpha$ is an algebra automorphism, $B\otimes C= \SA(\ZZ\cap (\gamma-l, \gamma+l],q)$, elements in $B$ and $C$ commute, and $B\cap C=R$. This implies $B$ is Azumaya. Now the expression for $b(\alpha)$ becomes
    \begin{equation} \frac{\SA(\ZZ\cap (-\infty, \gamma-l], q)\otimes B}{\SA(\ZZ\cap (-\infty, \gamma-l], q)\otimes \SA(\ZZ\cap (\gamma-l, \gamma], q)}= \frac{B}{\SA(\ZZ\cap (\gamma-l, \gamma], q)}\in K_0\mathrm{Az(R)}.
        \end{equation}
The right-hand-side is the definition of $b(\alpha)$. Using a different $l$ does not change the value as long as it is greater than the spread of $\alpha$. 
Since 
\begin{equation}
    \frac{\al(\SA(\ZZ\cap (-\infty, \gamma+1], q))}{\SA(\ZZ\cap (-\infty, \gamma+1], q))}=\frac{\al(\SA(\ZZ\cap (-\infty, \gamma], q))\otimes\al(\SA(\{\gamma+1\}, q))}{\SA(\ZZ\cap (-\infty, \gamma], q))\otimes\SA(\{\gamma+1\}, q)},
\end{equation}
and $\SA(\{\gamma+1\}, q)\cong \mathrm{Mat}(R^{q_{\gamma+1}})\cong\al(\SA(\{\gamma+1\}, q))$, $b(\al)$ does not depend on $\gamma$. Stabilization, or Kronecker product between $\al$ and identity, does not change $b(\al)$ either. Without loss of generality, let $[q, \al], [q, \beta]\in \CQ(\ZZ)$ have the same $q$. Verify that 
\begin{align*}
    b(\beta \al)&=\frac{\beta\al(\SA(\ZZ\cap (-\infty, \gamma], q))}{\SA(\ZZ\cap (-\infty, \gamma], q))}\\
    &=\frac{\beta(\SA(\ZZ\cap (-\infty, \gamma-l], q))\otimes \beta(B)}{\SA(\ZZ\cap (-\infty, \gamma-l], q)\otimes \SA(\ZZ\cap (\gamma-l, \gamma], q)}\\
    &=\frac{\beta(\SA(\ZZ\cap (-\infty, \gamma-l], q))}{\SA(\ZZ\cap (-\infty, \gamma-l], q)}\otimes \frac{ \beta(B)}{\SA(\ZZ\cap (\gamma-l, \gamma], q)}\\
    &=b(\beta)\otimes b(\alpha)\in K_0\mathrm{Az(R)},
\end{align*}
where, for the last equality, we used the independence of $\gamma$ and that $\beta(B)\cong B$. Thus, $b$ is a well-defined group homomorphism as claimed. 

To check that $\CalC(\ZZ)$ is in the kernel, simply take a single-layer circuit $\alpha = \bigotimes \al_j$ where $\al_j$'s are automorphisms of algebra supported on disjoint bounded intervals. Choose $\gamma$ to be the end point of one of the intervals so that $\al(\SA(\ZZ\cap (-\infty, \gamma], q))=\SA(\ZZ\cap (-\infty, \gamma], q)$ and $b(\al)=1.$

Conversely, if $b(\al)=1$ then $B\cong \SA(\ZZ\cap (\gamma-l, \gamma], q)$. Choose an automorphism $\beta_0\in \mathrm{Aut}(\SA(\ZZ\cap (\gamma-l, \gamma+l],q))$ which sends $B$ to $\SA(\ZZ\cap (\gamma-l, \gamma], q)$. Then $\beta_0\al$ is a product of automorphisms on $\SA(\ZZ\cap (-\infty, \gamma], q)$ and $\SA(\ZZ\cap (\gamma, \infty))$ respectively. Choose similarly for all $k\in \ZZ$ $$\beta_k\in \mathrm{Aut}(\SA(\ZZ\cap (\gamma-(2k-1)l, \gamma+(2k+1)l],q))$$ and notice $\beta=\bigotimes_k \beta_k$ and $\beta \al$ are both in $\CalC(\ZZ)$. Therefore, $\al\in \CalC(\ZZ).$

For surjectivity, consider the element $\frac{A}{B} \in K_0(\operatorname{Az}(R))$ where $A$ and $B$ are both Azumaya $R$-algebras. By definition, there exists $R$-algebras $A'$ and $B'$ such that $A \otimes A' \cong \operatorname{Mat}(R^n)$ and $B \otimes B' \cong \operatorname{Mat}(R^m)$. Now consider the particular QCA  $\alpha$ with spin system $q$ illustrated in Figure~\ref{fig::surjectivity}. Now observe that $\alpha$ has spread $\leq 2$, and 
\[\alpha(\mathcal{A}(\Zbb \cap (-\infty, -1], q)) = \mathcal{A}(\Zbb \cap (-\infty, -3], q) \otimes \underbrace{B'}_{\text{on -2}} \otimes \underbrace{A \otimes A'}_{\text{on -1}} \otimes \underbrace{A}_{\text{on 1}}.\]
Then we have that
$$b(\alpha) = \frac{B' \otimes A \otimes A' \otimes A}{\mathcal{A}(\Zbb \cap (-3, -1], q)} = \frac{B' \otimes A \otimes A' \otimes A}{B \otimes B' \otimes A \otimes A'} = \frac{A}{B}.$$
\end{proof}
\begin{cor}
   Every $\al\in \CalC(\ZZ)$ is equal to a composition of at most two single-layer circuits.    
\end{cor}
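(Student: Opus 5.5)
The plan is to extract the two-layer decomposition directly from the kernel part of the proof of Theorem~\ref{thm::k0_qz}. Let $\al\in\CalC(\ZZ)$ be represented by $[q,\al]$ with spread $l$. Since $\CalC(\ZZ)=\ker b$, we have $b(\al)=1$. We will work with the boundary points $\gamma_k:=\gamma+2kl$ for all $k\in\ZZ$, using a fixed $\gamma$.

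\emph{Step 1: local corrections at each cut.} By independence of $\gamma$, for every $k$ the algebra $\al(\SA(\ZZ\cap(-\infty,\gamma_k],q))$ splits as
\[
\SA(\ZZ\cap(-\infty,\gamma_k-l],q)\otimes B_k,
\qquad B_k\subset \SA(\ZZ\cap(\gamma_k-l,\gamma_k+l],q),
\]
by Lemma~\ref{lem::tensor_splitting}. The condition $b(\al)=1$ only says that $B_k$ and $\SA(\ZZ\cap(\gamma_k-l,\gamma_k],q)$ are equal in $K_0(\mathrm{Az}(R))$, which gives a stable isomorphism. After stabilizing $q$ (a Kronecker product with a suitable $q'$, which does not change the class in $\CQ(\ZZ)$), we may assume $B_k\cong\SA(\ZZ\cap(\gamma_k-l,\gamma_k],q)$ as $R$-algebras.

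\emph{Step 2: extending to an automorphism of the window.} By Lemma~\ref{lem::centralizer}, the centralizers of these two algebras in the matrix algebra $\SA(\ZZ\cap(\gamma_k-l,\gamma_k+l],q)$ are complementary Azumaya tensor factors. These centralizers must also be matched by an isomorphism so that the chosen isomorphism extends to an automorphism $\beta_k$ of $\SA(\ZZ\cap(\gamma_k-l,\gamma_k+l],q)$ sending $B_k$ onto $\SA(\ZZ\cap(\gamma_k-l,\gamma_k],q)$. This is implicitly what the proof of Theorem~\ref{thm::k0_qz} uses, possibly after a further stabilization. The windows $(\gamma_k-l,\gamma_k+l]$ are pairwise disjoint and uniformly bounded, so $\beta:=\bigotimes_k\beta_k$ is a single-layer circuit.

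\emph{Step 3: the composite is a single-layer circuit.} The automorphism $\beta\al$ preserves every $\SA(\ZZ\cap(-\infty,\gamma_k],q)$. Being an automorphism, it also preserves every $\SA(\ZZ\cap(\gamma_k,\infty),q)$, because that algebra is the centralizer and $\al$ maps the complementary half-line algebra onto the centralizer of the image. Hence $\beta\al$ preserves each block $\SA(\ZZ\cap(\gamma_k,\gamma_{k+1}],q)$ and is therefore itself the single-layer circuit $\bigotimes_k(\beta\al)|_{(\gamma_k,\gamma_{k+1}]}$, with blocks of diameter $2l$. Consequently $\al=\beta^{-1}\circ(\beta\al)$ is a composition of two single-layer circuits.

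The main obstacle is Step~2. We need to pass from the equality $b(\al)=1$, which lives in a Grothendieck group, to an actual isomorphism $B_k\cong\SA(\ZZ\cap(\gamma_k-l,\gamma_k],q)$, and then to an automorphism of the window, and we need to do this simultaneously for all $k$ with a single stabilization $q'$ that remains locally finite. My first attempt is to take a uniform stabilizing factor $q'_x=N$ on the window points. The difficulty is that the required $N$ could a priori depend on $k$. Since $q'$ only needs to be locally finite, letting $N$ vary with the window is acceptable, so I would choose the stabilization window by window.
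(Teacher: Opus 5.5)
Your proposal is correct and is essentially the paper's proof, which just reruns the kernel argument of Theorem~\ref{thm::k0_qz} with cuts at $\gamma+2kl$ to get a single-layer $\beta=\bigotimes_k\beta_k$ such that $\beta\al$ is single-layer. Your window-by-window stabilizations make explicit two steps the paper glosses over, and the centralizer matching in Step~2 does go through: with $q$ already stabilized as in Step~1, $\SA(\ZZ\cap(\gamma_k-l,\gamma_k],q)\cong\Mat(R^{m_k})$ and $C_k$ the centralizer of $B_k$ in the window, stabilizing at a point of $(\gamma_k,\gamma_k+l]$ by $\Mat(R^{m_k})$ leaves $B_k$ unchanged and replaces $C_k$ by $C_k\otimes\Mat(R^{m_k})$, where $C_k\otimes\Mat(R^{m_k})\cong C_k\otimes B_k\cong \SA(\ZZ\cap(\gamma_k-l,\gamma_k+l],q)\cong \SA(\ZZ\cap(\gamma_k,\gamma_k+l],q)\otimes\Mat(R^{m_k})$, so any isomorphism $B_k\cong\SA(\ZZ\cap(\gamma_k-l,\gamma_k],q)$ extends to the required $\beta_k$.
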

\begin{proof}
    If $b(\al)=1$, we can repeat the procedure in the proof above to find a single-layer circuit $\beta$ such that $\beta\al$ is a single-layer circuit itself. 
\end{proof}

\begin{figure}[htb]
    \centering
\[\begin{tikzpicture}[x=0.75pt,y=0.75pt,yscale=-1,xscale=1]
%uncomment if require: \path (0,300); %set diagram left start at 0, and has height of 300

%Shape: Circle [id:dp8192351982516747] 
\draw  [fill={rgb, 255:red, 0; green, 0; blue, 0 }  ,fill opacity=1 ] (247,163.75) .. controls (247,161.4) and (248.9,159.5) .. (251.25,159.5) .. controls (253.6,159.5) and (255.5,161.4) .. (255.5,163.75) .. controls (255.5,166.1) and (253.6,168) .. (251.25,168) .. controls (248.9,168) and (247,166.1) .. (247,163.75) -- cycle ;
%Shape: Circle [id:dp1494125308968981] 
\draw  [fill={rgb, 255:red, 0; green, 0; blue, 0 }  ,fill opacity=1 ] (311,163.75) .. controls (311,161.4) and (312.9,159.5) .. (315.25,159.5) .. controls (317.6,159.5) and (319.5,161.4) .. (319.5,163.75) .. controls (319.5,166.1) and (317.6,168) .. (315.25,168) .. controls (312.9,168) and (311,166.1) .. (311,163.75) -- cycle ;
%Straight Lines [id:da08880630242146059] 
\draw    (251.25,163.75) -- (315.25,163.75) ;
%Shape: Circle [id:dp5330459662045467] 
\draw  [fill={rgb, 255:red, 0; green, 0; blue, 0 }  ,fill opacity=1 ] (375,163.75) .. controls (375,161.4) and (376.9,159.5) .. (379.25,159.5) .. controls (381.6,159.5) and (383.5,161.4) .. (383.5,163.75) .. controls (383.5,166.1) and (381.6,168) .. (379.25,168) .. controls (376.9,168) and (375,166.1) .. (375,163.75) -- cycle ;
%Straight Lines [id:da5970158768558252] 
\draw    (315.25,163.75) -- (379.25,163.75) ;
%Shape: Circle [id:dp1555949704330255] 
\draw  [fill={rgb, 255:red, 0; green, 0; blue, 0 }  ,fill opacity=1 ] (439,163.75) .. controls (439,161.4) and (440.9,159.5) .. (443.25,159.5) .. controls (445.6,159.5) and (447.5,161.4) .. (447.5,163.75) .. controls (447.5,166.1) and (445.6,168) .. (443.25,168) .. controls (440.9,168) and (439,166.1) .. (439,163.75) -- cycle ;
%Straight Lines [id:da17772431741463846] 
\draw    (379.25,163.75) -- (443.25,163.75) ;
%Shape: Circle [id:dp05511724043407429] 
\draw  [fill={rgb, 255:red, 0; green, 0; blue, 0 }  ,fill opacity=1 ] (185,163.75) .. controls (185,161.4) and (186.9,159.5) .. (189.25,159.5) .. controls (191.6,159.5) and (193.5,161.4) .. (193.5,163.75) .. controls (193.5,166.1) and (191.6,168) .. (189.25,168) .. controls (186.9,168) and (185,166.1) .. (185,163.75) -- cycle ;
%Straight Lines [id:da5879365419273478] 
\draw    (189.25,163.75) -- (253.25,163.75) ;
%Straight Lines [id:da002743523416992133] 
\draw    (143.25,163.75) -- (207.25,163.75) ;
%Straight Lines [id:da6752757702228851] 
\draw    (427.25,163.75) -- (491.25,163.75) ;
%Straight Lines [id:da6920057393785505] 
\draw  [dash pattern={on 4.5pt off 4.5pt}]  (283,56.4) -- (285,248) ;
%Shape: Circle [id:dp6841835849951644] 
\draw  [fill={rgb, 255:red, 0; green, 0; blue, 0 }  ,fill opacity=1 ] (132,163.75) .. controls (132,161.4) and (133.9,159.5) .. (136.25,159.5) .. controls (138.6,159.5) and (140.5,161.4) .. (140.5,163.75) .. controls (140.5,166.1) and (138.6,168) .. (136.25,168) .. controls (133.9,168) and (132,166.1) .. (132,163.75) -- cycle ;
%Straight Lines [id:da35377716004810755] 
\draw    (121,163.75) -- (185,163.75) ;
%Curve Lines [id:da5406790158266856] 
\draw [color={rgb, 255:red, 208; green, 2; blue, 27 }  ,draw opacity=1 ]   (136,225.4) .. controls (150.48,232.64) and (202.2,237.64) .. (223.77,224.5) ;
\draw [shift={(226,223)}, rotate = 143.13] [fill={rgb, 255:red, 208; green, 2; blue, 27 }  ,fill opacity=1 ][line width=0.08]  [draw opacity=0] (8.93,-4.29) -- (0,0) -- (8.93,4.29) -- cycle    ;
%Curve Lines [id:da5729274898241261] 
\draw [color={rgb, 255:red, 208; green, 2; blue, 27 }  ,draw opacity=1 ]   (248,226.4) .. controls (262.48,233.64) and (327.24,238.08) .. (349.72,224.9) ;
\draw [shift={(352,223.4)}, rotate = 143.13] [fill={rgb, 255:red, 208; green, 2; blue, 27 }  ,fill opacity=1 ][line width=0.08]  [draw opacity=0] (8.93,-4.29) -- (0,0) -- (8.93,4.29) -- cycle    ;
%Curve Lines [id:da9234951334332673] 
\draw [color={rgb, 255:red, 208; green, 2; blue, 27 }  ,draw opacity=1 ]   (369,227.4) .. controls (383.48,234.64) and (448.24,239.08) .. (470.72,225.9) ;
\draw [shift={(473,224.4)}, rotate = 143.13] [fill={rgb, 255:red, 208; green, 2; blue, 27 }  ,fill opacity=1 ][line width=0.08]  [draw opacity=0] (8.93,-4.29) -- (0,0) -- (8.93,4.29) -- cycle    ;
%Curve Lines [id:da5420426269012757] 
\draw [color={rgb, 255:red, 208; green, 2; blue, 27 }  ,draw opacity=1 ]   (421,124.4) .. controls (379.84,95.98) and (343.48,95.41) .. (309.1,122.69) ;
\draw [shift={(307,124.4)}, rotate = 320.36] [fill={rgb, 255:red, 208; green, 2; blue, 27 }  ,fill opacity=1 ][line width=0.08]  [draw opacity=0] (8.93,-4.29) -- (0,0) -- (8.93,4.29) -- cycle    ;
%Curve Lines [id:da5977857369420386] 
\draw [color={rgb, 255:red, 208; green, 2; blue, 27 }  ,draw opacity=1 ]   (299,125.4) .. controls (257.84,96.98) and (221.48,96.41) .. (187.1,123.69) ;
\draw [shift={(185,125.4)}, rotate = 320.36] [fill={rgb, 255:red, 208; green, 2; blue, 27 }  ,fill opacity=1 ][line width=0.08]  [draw opacity=0] (8.93,-4.29) -- (0,0) -- (8.93,4.29) -- cycle    ;
%Curve Lines [id:da7011602105865165] 
\draw [color={rgb, 255:red, 208; green, 2; blue, 27 }  ,draw opacity=1 ]   (175,124.4) .. controls (133.84,95.98) and (97.48,95.41) .. (63.1,122.69) ;
\draw [shift={(61,124.4)}, rotate = 320.36] [fill={rgb, 255:red, 208; green, 2; blue, 27 }  ,fill opacity=1 ][line width=0.08]  [draw opacity=0] (8.93,-4.29) -- (0,0) -- (8.93,4.29) -- cycle    ;
%Curve Lines [id:da13725134166711317] 
\draw [color={rgb, 255:red, 208; green, 2; blue, 27 }  ,draw opacity=1 ]   (513,120.4) .. controls (471.84,91.98) and (465.25,93.33) .. (432.07,120.69) ;
\draw [shift={(430,122.4)}, rotate = 320.36] [fill={rgb, 255:red, 208; green, 2; blue, 27 }  ,fill opacity=1 ][line width=0.08]  [draw opacity=0] (8.93,-4.29) -- (0,0) -- (8.93,4.29) -- cycle    ;
%Curve Lines [id:da3509134115235506] 
\draw [color={rgb, 255:red, 208; green, 2; blue, 27 }  ,draw opacity=1 ]   (69,223.4) .. controls (83.4,230.6) and (100.56,239.27) .. (119.61,226.69) ;
\draw [shift={(122,225)}, rotate = 143.13] [fill={rgb, 255:red, 208; green, 2; blue, 27 }  ,fill opacity=1 ][line width=0.08]  [draw opacity=0] (8.93,-4.29) -- (0,0) -- (8.93,4.29) -- cycle    ;

% Text Node
\draw (238,174.4) node [anchor=north west][inner sep=0.75pt]    {$-1$};
% Text Node
\draw (309,175.4) node [anchor=north west][inner sep=0.75pt]    {$0$};
% Text Node
\draw (373,175.4) node [anchor=north west][inner sep=0.75pt]    {$1$};
% Text Node
\draw (437,175.4) node [anchor=north west][inner sep=0.75pt]    {$2$};
% Text Node
\draw (176,173.4) node [anchor=north west][inner sep=0.75pt]    {$-2$};
% Text Node
\draw (95,148.4) node [anchor=north west][inner sep=0.75pt]  [font=\large]  {$...$};
% Text Node
\draw (498,155.4) node [anchor=north west][inner sep=0.75pt]  [font=\large]  {$...$};
% Text Node
\draw (297,131.4) node [anchor=north west][inner sep=0.75pt]  [font=\small,color={rgb, 255:red, 208; green, 2; blue, 27 }  ,opacity=1 ]  {$\underbrace{B} \otimes B'$};
% Text Node
\draw (122,174.4) node [anchor=north west][inner sep=0.75pt]    {$-3$};
% Text Node
\draw (114,197.4) node [anchor=north west][inner sep=0.75pt]  [font=\small,color={rgb, 255:red, 208; green, 2; blue, 27 }  ,opacity=1 ]  {$\underbrace{A} \otimes A'$};
% Text Node
\draw (225,196.4) node [anchor=north west][inner sep=0.75pt]  [font=\small,color={rgb, 255:red, 208; green, 2; blue, 27 }  ,opacity=1 ]  {$\underbrace{A} \otimes A'$};
% Text Node
\draw (350,198.4) node [anchor=north west][inner sep=0.75pt]  [font=\small,color={rgb, 255:red, 208; green, 2; blue, 27 }  ,opacity=1 ]  {$\underbrace{A} \otimes A'$};
% Text Node
\draw (413,132.4) node [anchor=north west][inner sep=0.75pt]  [font=\small,color={rgb, 255:red, 208; green, 2; blue, 27 }  ,opacity=1 ]  {$\underbrace{B} \otimes B'$};
% Text Node
\draw (166,130.4) node [anchor=north west][inner sep=0.75pt]  [font=\small,color={rgb, 255:red, 208; green, 2; blue, 27 }  ,opacity=1 ]  {$\underbrace{B} \otimes B'$};
% Text Node
\draw (43,203.4) node [anchor=north west][inner sep=0.75pt]  [font=\large,color={rgb, 255:red, 208; green, 2; blue, 27 }  ,opacity=1 ]  {$...$};
% Text Node
\draw (501,115.4) node [anchor=north west][inner sep=0.75pt]  [font=\large,color={rgb, 255:red, 208; green, 2; blue, 27 }  ,opacity=1 ]  {$...$};
% Text Node
\draw (43,114.4) node [anchor=north west][inner sep=0.75pt]  [font=\large,color={rgb, 255:red, 208; green, 2; blue, 27 }  ,opacity=1 ]  {$...$};
% Text Node
\draw (476,203.4) node [anchor=north west][inner sep=0.75pt]  [font=\large,color={rgb, 255:red, 208; green, 2; blue, 27 }  ,opacity=1 ]  {$...$};

\end{tikzpicture}
\]
    \caption{A QCA $\alpha$ such that $b(\alpha) = \frac{A}{B} \in K_0(\operatorname{Az}_{R})$. Here we place an alternating sequence of $A \otimes A' \cong \operatorname{Mat}(R^n)$ and $B \otimes B' \cong \operatorname{Mat}(R^m)$ on $\mathbb{Z}$, and $\alpha$ simultaneously transports the tensor factor $B$ to the left and the tensor factor $A$ to the right. We divide the line between $-1$ and $0$ in the construction of the element $\beta(\alpha)$.}
    \label{fig::surjectivity}
\end{figure}
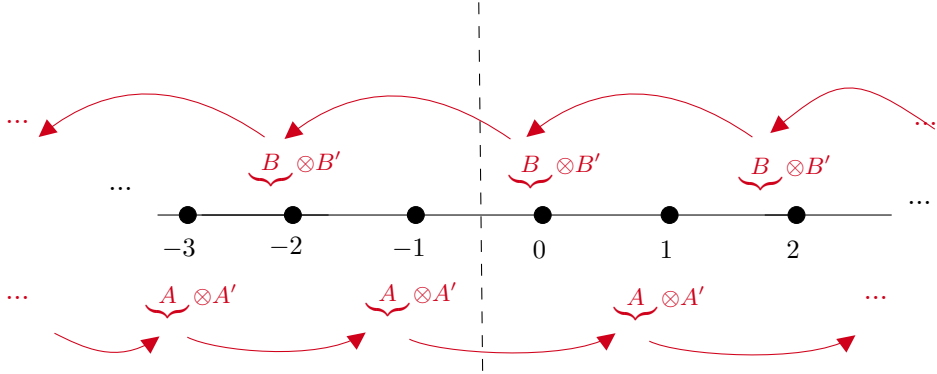

\begin{remark}\label{rmk::extend_factor}
Theorem~\ref{thm::k0_qz} shows that $\CQ(\ZZ)/\CalC(\ZZ)$ can be interpreted as an alternative definition of $K_0\mathrm{Az}(R)$. Intuitively, a QCA on $\ZZ$ can be viewed as pumping an Azumaya algebra along the chain. For a general metric space $X$, an analogous phenomenon occurs: a QCA defined on $X \times \ZZ$ produces a class of infinite-dimensional $R$-algebras. These play a role analogous to that of Azumaya algebras relative to matrix algebras in their relationship with $\SA(X,q)$. See Construction~\ref{defn::admissible} in Section~\ref{sec::spectra}.
\end{remark}

We end this subsection with some useful technical lemmas.

\begin{lemma}\label{lem::tensor_splitting}
Let $R$ be a commutative unital ring. Let $A$ and $C$ be unital $R$-algebras, and regard
$A$ as the unital subalgebra $A\otimes_R 1 \subseteq A\otimes_R C$.
Assume that $A$ is a \emph{locally matrix $R$-algebra}, i.e.\ there exists a directed system
of unital $R$-subalgebras $(A_\lambda)_{\lambda\in\Lambda}$ with
\[
A=\bigcup_{\lambda\in\Lambda} A_\lambda,
\qquad
A_\lambda \cong \Mat(R^{n(\lambda)})
\ \text{ as unital $R$-algebras.}
\]
Let $B\subseteq A\otimes_R C$ be a unital $R$-subalgebra such that
\[
A\otimes_R 1 \subseteq B
\quad\text{and}\quad
1_B = 1_{A\otimes C}.
\]
Define
\[
D \;:=\; \{\,c\in C \;:\; 1\otimes c \in B\,\} \ \subseteq\ C.
\]
Then $D$ is a unital $R$-subalgebra of $C$ and
\[
B \;=\; A\otimes_R D \ \subseteq\ A\otimes_R C.
\]
\end{lemma}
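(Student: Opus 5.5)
The plan is to reduce to the case where $A$ is a single matrix algebra, and there use matrix units to project elements of $A\otimes C$ onto their $C$-coefficients inside $B$. First, $D$ is a unital $R$-subalgebra of $C$: the map $c\mapsto 1\otimes c$ is a unital algebra homomorphism, $B$ is a unital subalgebra with $1_B=1_{A\otimes C}$, and $D$ is the preimage of $B$. The inclusion $A\otimes_R D\subseteq B$ is immediate, since $a\otimes d=(a\otimes 1)(1\otimes d)$ is a product of two elements of $B$. We should also note that $A\otimes_R D\to A\otimes_R C$ is injective, so the equality makes sense as subsets. This holds because $A$ is a directed union of the free modules $A_\lambda\cong\Mat(R^{n(\lambda)})$, hence flat, so tensoring the inclusion $D\hookrightarrow C$ with $A$ stays injective.

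For the reverse inclusion, take $b\in B$. Since $b$ is a finite sum of elementary tensors and the $A_\lambda$ are directed, there is a $\lambda$ with $b\in A_\lambda\otimes_R C$. Fix an isomorphism $A_\lambda\cong\Mat(R^n)$ and let $e_{ij}\in A_\lambda$ be the corresponding matrix units. Because $A_\lambda$ is free with basis $\{e_{ij}\}$, we can write $b=\sum_{i,j} e_{ij}\otimes c_{ij}$ with $c_{ij}\in C$ uniquely determined.

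The key step is to show each $c_{ij}$ lies in $D$. We use the standard identity
\[
\sum_{k=1}^n (e_{ki}\otimes 1)\, b\,(e_{jk}\otimes 1)=\sum_{k} e_{kk}\otimes c_{ij}=1\otimes c_{ij}.
\]
The left-hand side is in $B$, because $e_{ki}\otimes 1,\ e_{jk}\otimes 1\in A\otimes 1\subseteq B$ and $B$ is closed under multiplication. Hence $1\otimes c_{ij}\in B$, so $c_{ij}\in D$. Then $b=\sum_{i,j} e_{ij}\otimes c_{ij}$ lies in $A\otimes_R D$, giving $B\subseteq A\otimes_R D$.

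The only delicate point is the middle step: making sure the unit of $A_\lambda$ is the unit of $A$, so that $\sum_k e_{kk}=1$ holds in $A\otimes C$. This is guaranteed because the $A_\lambda$ are unital subalgebras. The other part that needs care is the coefficient-extraction identity, which relies on $e_{ki}e_{i'j'}e_{jk}=\delta_{ii'}\delta_{jj'}e_{kk}$. Everything else is routine.
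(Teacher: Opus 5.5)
Your proposal is correct and follows essentially the same argument as the paper. You both reduce to a single matrix subalgebra $A_\lambda$ containing the support of $b$ and then use matrix units to show each coefficient $c_{ij}$ lies in $D$.

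The differences are cosmetic. You extract $1\otimes c_{ij}$ in one step via $\sum_k (e_{ki}\otimes 1)\,b\,(e_{jk}\otimes 1)$, where the paper passes through $e_{11}\otimes c_{ij}$ and then spreads it out. You also justify $A\otimes_R D\subseteq A\otimes_R C$ by flatness of $A$, a point the paper leaves implicit.
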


\begin{proof}
It can be verified that $D$ is a unital $R$-subalgebra of $C$. Moreover, we have
\[
(A\otimes 1)\cdot (1\otimes D) = A\otimes D \subseteq B.
\]
Thus, it remains to prove the reverse inclusion $B\subseteq A\otimes D$.

\medskip
\noindent\textbf{Step 1: The finite matrix case.}
Here we give an elementary proof but note that this can be significantly shortened using ideas behind what are called ``Peirce decomposition". We explain how the shortened proof works in Remark~\ref{rmk::short} later. Here we prove the following claim. 

Let $A_0=\Mat(R^n)$ and let $B_0\subseteq A_0\otimes_R C$ be a unital $R$-subalgebra with
$A_0\otimes 1\subseteq B_0$. If
\[
D_0 := \{\,c\in C:\ 1\otimes c \in B_0\,\},
\]
then $B_0 = A_0\otimes_R D_0$.

Fix a system of matrix units $\{e_{ij}\}_{1\le i,j\le n}\subseteq A_0$.
(So $e_{ij}e_{k\ell}=\delta_{jk}e_{i\ell}$ and $\sum_{i=1}^n e_{ii}=1$.)
Since $A_0\otimes 1\subseteq B_0$, we have $e_{ij}\otimes 1\in B_0$ for all $i,j$.

Let $b\in B_0$ be arbitrary. For each $1\le i,j\le n$, consider the element
\[
x_{ij} := (e_{1i}\otimes 1)\, b \,(e_{j1}\otimes 1)\ \in\ B_0,
\]
since $B_0$ is a subalgebra. Because
\[
(e_{1i}\otimes 1)(A_0\otimes C)(e_{j1}\otimes 1)
\;=\;
(e_{1i}A_0e_{j1})\otimes C
\;=\;
e_{11}\otimes C,
\]
there exists a \emph{unique} element $c_{ij}\in C$ such that
\[
x_{ij} = e_{11}\otimes c_{ij}.
\]
In particular $e_{11}\otimes c_{ij}\in B_0$.

We now show that $c_{ij}\in D_0$.
For each $1\le k\le n$ we compute, using $e_{k1}e_{11}e_{1k}=e_{kk}$,
\[
(e_{k1}\otimes 1)\,(e_{11}\otimes c_{ij})\,(e_{1k}\otimes 1)
=
(e_{kk}\otimes c_{ij})\in B_0.
\]
Summing over $k$ yields
\[
1\otimes c_{ij}
=
\left(\sum_{k=1}^n e_{kk}\right)\otimes c_{ij}
=
\sum_{k=1}^n (e_{kk}\otimes c_{ij})
\in B_0,
\]
hence $c_{ij}\in D_0$.

Finally we reconstruct $b$ from the $c_{ij}$:
\begin{align*}
\sum_{i,j=1}^n e_{ij}\otimes c_{ij}
&=
\sum_{i,j=1}^n (e_{i1}\otimes 1)(e_{11}\otimes c_{ij})(e_{1j}\otimes 1) \\
&=
\sum_{i,j=1}^n (e_{i1}\otimes 1)\,(e_{1i}\otimes 1)\, b \,(e_{j1}\otimes 1)\,(e_{1j}\otimes 1) \\
&=
\left(\sum_{i=1}^n e_{i1}e_{1i}\otimes 1\right)\, b \,\left(\sum_{j=1}^n e_{j1}e_{1j}\otimes 1\right) \\
&=
\left(\sum_{i=1}^n e_{ii}\otimes 1\right)\, b \,\left(\sum_{j=1}^n e_{jj}\otimes 1\right) \\
&=
(1\otimes 1)\, b \,(1\otimes 1)
=
b.
\end{align*}
Since each $c_{ij}\in D_0$, this shows $b\in A_0\otimes D_0$, hence
$B_0\subseteq A_0\otimes D_0$.

The reverse inclusion $A_0\otimes D_0\subseteq B_0$ is immediate because
$A_0\otimes 1\subseteq B_0$ and $1\otimes D_0\subseteq B_0$, and their products
generate $A_0\otimes D_0$. Thus $B_0=A_0\otimes D_0$.

\medskip
\noindent\textbf{Step 2: Reduction to the finite case using the locally matrix hypothesis.}
Let $b\in B$ be arbitrary. As an element of the algebraic tensor product
$A\otimes_R C$, it is a \emph{finite} $R$-linear combination of pure tensors:
\[
b=\sum_{t=1}^N a_t\otimes c_t,
\qquad a_t\in A,\ c_t\in C.
\]
Since $A=\bigcup_\lambda A_\lambda$, there exists $\lambda$ such that
$a_t\in A_\lambda$ for all $t$, hence $b\in A_\lambda\otimes_R C$.

Define the (unital) subalgebra
\[
B_\lambda := B\cap (A_\lambda\otimes_R C)\ \subseteq\ A_\lambda\otimes_R C.
\]
Then $A_\lambda\otimes 1 \subseteq B_\lambda$ because $A\otimes 1\subseteq B$
and $A_\lambda\subseteq A$. Apply the claim (Step 1) to $A_0:=A_\lambda\cong \Mat(R^{n(\lambda)})$
and $B_0:=B_\lambda$. We obtain
\[
B_\lambda = A_\lambda\otimes_R D_\lambda,
\qquad
D_\lambda := \{\,c\in C:\ 1\otimes c \in B_\lambda\,\}.
\]

We now identify $D_\lambda$ with $D$.
Note that $1\otimes c\in A_\lambda\otimes C$ for \emph{every} $\lambda$ and every $c\in C$,
because $1\in A_\lambda$. Hence
\[
1\otimes c \in B_\lambda
\quad\Longleftrightarrow\quad
1\otimes c \in B\ \text{ and }\ 1\otimes c\in A_\lambda\otimes C
\quad\Longleftrightarrow\quad
1\otimes c \in B.
\]
Thus $D_\lambda = D$.

Therefore,
\[
b\in B_\lambda = A_\lambda\otimes_R D \subseteq A\otimes_R D.
\]
Since $b\in B$ was arbitrary, we conclude $B\subseteq A\otimes_R D$.

\end{proof}

\begin{remark}\label{rmk::short}
It was pointed out to us by Daniel Krashen that the following argument significantly shortens the proof of Step 1. Proposition 13.9 of \cite{rowen2008graduate} states that for $W$ any ring that contains matrix units $\{e_{ij}\}_{1 \leq i, j \leq n}$ satisfying the identities given in the proof, then $W \cong M_n(e_{11} W e_{11})$. In the context of Step 1, we apply to $B_0$ and obtain the $e_{11} B_0 e_{11} \otimes A_0 \cong M_n(e_{11} B_0 e_{11}) \to B_0$ is an isomorphism. One then note that $e_{11} B_0 e_{11}$ is canonically isomorphic to $D_0$.
\end{remark}

\begin{cor}\label{cor::tensor_factor_splitting}
    The same holds if $A$ is a tensor factor of a locally matrix $R$-algebra with the same identity element, i.e., there exists a unital $R$-algebra $A'$ with $A\otimes_R A'$ locally matrix, provided that the natural map $B \to A' \otimes_R B$ is injective.  
\end{cor}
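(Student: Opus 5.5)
The plan is to reduce to Lemma~\ref{lem::tensor_splitting} by tensoring everything with $A'$, and then to descend back using an $R$-linear retraction $A'\to R$.

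\textbf{Step 1: enlarge and apply the lemma.} Consider the ambient algebra
\[
A'\otimes_R(A\otimes_R C)\cong (A\otimes_R A')\otimes_R C.
\]
Let $B'$ be the image of $A'\otimes_R B$ in it. By the injectivity hypothesis, $B\to A'\otimes_R B$, $b\mapsto 1\otimes b$, is injective, so $B$ sits inside $B'$. Clearly $B'$ is a unital subalgebra. It contains $(A\otimes A')\otimes 1$, because $A'\otimes 1$ and $A\otimes 1\subseteq B$ both lie in it. Since $A\otimes A'$ is locally matrix, Lemma~\ref{lem::tensor_splitting} gives
\[
B'=(A\otimes A')\otimes D', \qquad D'=\{c\in C: 1\otimes 1\otimes c\in B'\}.
\]

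\textbf{Step 2: a retraction of the unit.} I want an $R$-linear map $\varphi$ from $A'$ (or from a large enough finitely generated piece of it) to $R$ with $\varphi(1)=1$. For any finitely many elements of $A'$, their images in $A\otimes A'$ lie in some $A_\lambda\cong\Mat(R^{n})$. On $\Mat(R^n)$ the coefficient functional $x\mapsto x_{11}$ sends $1\mapsto 1$. Precomposing with $A'\to A\otimes A'$ gives the desired $\varphi$ on the relevant elements.

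Then $\id_A\otimes\varphi\otimes\id_C$ retracts the inclusion $A\otimes C\to A\otimes A'\otimes C$, $x\mapsto 1\otimes x$. Similarly, $\varphi\otimes\id_B$ retracts $B\to A'\otimes B$. I will only ever apply these maps to finitely many elements at a time, so the element-wise choice of $\lambda$ is harmless. Still, one must check that the resulting identities do not depend on the choice; this is done by choosing $\lambda$ large enough for all elements involved at once.

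\textbf{Step 3: compare $D'$ and $D$.} The inclusion $D\subseteq D'$ is immediate. Conversely, suppose $1\otimes 1\otimes c\in B'$, i.e.\ $1\otimes(1\otimes c)$ is the image of some $\sum a'_t\otimes b_t\in A'\otimes B$. Apply $\varphi\otimes\id$ to this element, viewed in $A'\otimes B$. This yields $\sum\varphi(a'_t)b_t\in B$, whose image in $A\otimes C$ is $1\otimes c$. This uses that $\varphi\otimes\id$ commutes with the map $A'\otimes B\to A'\otimes A\otimes C$, by naturality. Hence $c\in D$, so $D'=D$.

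\textbf{Step 4: descend.} For $b\in B$ we have $1\otimes b\in B'=A\otimes A'\otimes D$. Applying $\id_A\otimes\varphi\otimes\id_C$ recovers $b$ and lands in $A\otimes D$. So $B\subseteq A\otimes D$, and the reverse inclusion is as in Lemma~\ref{lem::tensor_splitting}.

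The main obstacle is Step 2 together with the bookkeeping in Step 3. One has to produce the retraction $\varphi$ without any divisibility assumption on $R$; the $e_{11}$-coefficient functional is what avoids dividing by $n$. One also has to track exactly where the injectivity hypothesis is needed: it is what lets us identify $A'\otimes B$ with its image in $A'\otimes A\otimes C$ consistently.
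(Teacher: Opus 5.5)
Your proof is correct. Its skeleton is the paper's: tensor with $A'$, apply Lemma~\ref{lem::tensor_splitting} to $A\otimes_R A'$ and the image $B'$ of $A'\otimes_R B$, identify $D'$ with $D$, and descend.

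\textbf{Where you differ.} The difference is in the descent. The paper credits $D'=D$ to the injectivity of $B\to A'\otimes_R B$. It then simply asserts that $A'\otimes_R B=A'\otimes_R A\otimes_R D$ forces $B=A\otimes_R D$. Your $e_{11}$-coefficient retraction $\varphi$, with $\varphi(1)=1$ and no division by $n$, is exactly the mechanism that makes both steps rigorous.

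\textbf{The hypothesis is redundant.} Your approach buys more than you claim. Your own Step 2 shows that $\varphi\otimes\id_N$ retracts $N\to A'\otimes_R N$ for every $R$-module $N$. So the corollary's hypothesis holds automatically, and your argument never actually uses it. Two of your remarks should therefore be corrected:
\begin{enumerate}
\item Your closing remark about where the hypothesis is needed is off.
\item In Step 1, deriving ``$B$ sits inside $B'$'' from the hypothesis is a non sequitur. Injectivity of $B\to A'\otimes_R B$ says nothing about $A'\otimes_R B\to A'\otimes_R A\otimes_R C$. All you ever use is that $1\otimes b$ lies in the image $B'$, which holds by definition.
\end{enumerate}

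\textbf{One point to tighten.} $\varphi$ is defined only on $M_\lambda=\{a'\in A' : 1\otimes a'\in A_\lambda\}$, and these functionals are not compatible as $\lambda$ grows. To apply $\varphi\otimes\id_N$ to an identity valid in $A'\otimes_R N$, the identity itself must already hold in $M_\lambda\otimes_R N$, not just contain finitely many elements of $M_\lambda$. This is true for large $\lambda$, because $A'=\bigcup_\lambda M_\lambda$ is directed and $\otimes$ commutes with filtered colimits. That sentence should appear in the proof; ``choosing $\lambda$ large enough for all elements involved'' does not quite say it.

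\textbf{A cleaner packaging.} You can avoid choosing $\lambda$ altogether:
\begin{enumerate}
\item $R\to A\otimes_R A'$ is a filtered colimit of split monomorphisms $R\to\Mat(R^{n})$, hence universally injective.
\item Since this map factors through $A'$, the map $R\to A'$ is universally injective too.
\item Apply this to $N=(A\otimes_R C)/B$ and to $N=(A\otimes_R C)/(A\otimes_R D)$, and use right exactness of $A'\otimes_R -$. This gives $D'=D$ and $B\subseteq A\otimes_R D$ directly.
\end{enumerate}
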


In particular, the map $B \to A' \otimes_R B$ is injective if (a) $B$ is flat and $R \subseteq A'$ or (b) if $A'$ is faithfully flat (Lemma 10.82.15. of \cite{stacks-project}).
\begin{proof}
    Since $B\subseteq A\otimes_R C$, $A'\otimes_R B\subseteq A'\otimes_R A\otimes_R C$. Apply the lemma, $A'\otimes_R B=A'\otimes_R A\otimes_R D$, with $D=\{c\in C: 1\otimes 1\otimes c \in A'\otimes_R B\}=\{c\in C: 1\otimes c\in B\}$. The reverse containment is clear, and the forward containment holds precisely because the map $B \to A' \otimes_R B$ is injective. This then implies $B=A\otimes_R D.$
\end{proof}

\subsection{Classification is Abelianization}\label{subsec::abelianization}
Lastly, we show that, in most cases of interest, the classification group of QCA is given by abelianizing the total QCA group. This certifies that the classification of QCA arises canonically, rather than as an ad hoc construction.
\begin{lemma}\label{lem::roots_k_commutator}
    Let $k$ be a field satisfying $(k^\times)^n=k^\times$ for all $n$ (e.g. algebraically closed fields). The group $\mathcal{C}(X; k)$ is equal to the commutator subgroup of $\mathcal{Q}(X; k)$.
\end{lemma}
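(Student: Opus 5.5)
The containment $[\CQ(X;k),\CQ(X;k)]\subseteq \CalC^{\mathrm{el}}(X;k)\subseteq \CalC(X;k)$ is already established in the proposition stating that $\CQ(X)/\CalC^{\mathrm{el}}(X)$ is abelian. The work is therefore the reverse containment $\CalC(X;k)\subseteq[\CQ(X;k),\CQ(X;k)]$. The commutator subgroup is normal, and $\CalC(X;k)$ is generated by single-layer circuits. So it suffices to show that every single-layer circuit $\alpha=\bigotimes_j\alpha_j$, after some stabilization, is a product of boundedly many commutators of QCA.

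The first step is to reduce from general gates to special linear gates. Over a field, Skolem--Noether gives $\Aut_k\Mat(k^{m})=\PGL_m(k)$, so every gate is conjugation by some $g_j\in GL_{m_j}(k)$, with $m_j=q(X_j)$. Since $(k^\times)^{m_j}=k^\times$, we can pick $\lambda_j$ with $\lambda_j^{m_j}=\det g_j$. Replacing $g_j$ by $\lambda_j^{-1}g_j$ leaves the automorphism unchanged and gives determinant $1$. Hence $\CalC(X;k)=\CalC^{\mathrm{inn}}(X;k)=\CalC^{\mathrm{sp}}(X;k)$. Over a field, $SL_m(k)=E_m(k)$, so each gate is conjugation by a product of elementary matrices, and $\CalC^{\mathrm{sp}}=\CalC^{\mathrm{el}}$.

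The second step is to write elementary gates as commutators uniformly across blocks. For $m\ge 3$ every elementary matrix is a commutator of elementary matrices, $e_{il}(a)=[e_{ij}(a),e_{jl}(1)]$. Stabilizing by tensoring each block with $\Mat(k^3)$ ensures $m_j\ge 3$; this is harmless because stabilization is an injection into the colimit. The difficulty is that a single gate $g_j$ is a product of possibly unboundedly many elementary matrices as $j$ varies. Then $\alpha$ would be an infinite product with no uniform bound on the number of layers.

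I expect this uniformity to be the main obstacle, and I would handle it with a layer-by-layer trick. Bounded geometry gives $\sup_j m_j<\infty$, so the matrix sizes are uniformly bounded, and for fixed $m$ there is a uniform bound $N(m)$ on the number of elementary factors needed for $SL_m$ over a field (Gaussian elimination gives $N(m)=O(m^2)$). After padding with identity factors, write $g_j=\prod_{s=1}^{N}e^{(j)}_s$ with $N$ independent of $j$. Each $e^{(j)}_s=[u^{(j)}_s,v^{(j)}_s]$ is a commutator of elementary matrices supported in $X_j$. The blocks are disjoint, so tensor products over $j$ commute with commutators:
\[
\bigotimes_j \mathrm{Ad}\,e^{(j)}_s=\Bigl[\bigotimes_j \mathrm{Ad}\,u^{(j)}_s,\ \bigotimes_j \mathrm{Ad}\,v^{(j)}_s\Bigr].
\]
Both entries on the right are single-layer circuits, hence QCA. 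Therefore $\alpha$ is a product of $N$ commutators in $\CQ(X;k)$.

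Alternatively, one can avoid bounding $N$ altogether: conjugation by the block matrix $g\oplus 1$ in $\Mat(k^{m})\otimes\Mat(k^2)$ is a commutator by the Whitehead lemma, $\mathrm{diag}(g,g^{-1})=[\,\cdot\,,\,\cdot\,]$ with uniformly written factors. This also requires the determinant normalization from the first step.
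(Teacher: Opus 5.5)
Your skeleton is the paper's: reduce to single-layer circuits, write each gate as a product of $N$ commutators with $N$ independent of the block, then tensor the $s$-th commutators across blocks, $\bigotimes_j[u_j,v_j]=[\bigotimes_j u_j,\bigotimes_j v_j]$. You also correctly identify uniformity as the crux. The gap is in how you get the uniform $N$.

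The assertion ``bounded geometry gives $\sup_j m_j<\infty$'' is false in this setting. Bounded geometry together with $\sup_j\mathrm{diam}(X_j)<\infty$ only bounds the number of \emph{sites} in each $X_j$. A spin system $q\in\NN^X_{\lf}$ is only required to have locally finite support, and the local dimensions $q_x$ are unbounded in general. The swindle system $q^S=\prod_{m\ge0}T^mq$ in Lemma~\ref{lem::blending} is of exactly this kind. So $m_j=q(X_j)=\prod_{x\in X_j}q_x$ can tend to infinity.

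Once $m_j$ is unbounded, no uniform count of elementary factors exists. From $AB-I=(A-I)B+(B-I)$, a product $g$ of $N$ elementary matrices satisfies $\rank(g-I)\le N$. But $\mathrm{SL}_m(k)$ contains elements with $g-I$ invertible, and these need at least $m$ factors. Hence your padding to a common $N$ breaks down. The equality $\CalC^{\mathrm{sp}}=\CalC^{\mathrm{el}}$ you derive the same way fails for the same reason.

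The missing ingredient is a bound on commutator length in $\PSL_m(k)$, equivalently $\mathrm{SL}_m(k)$, that is independent of $m$. The paper supplies exactly this by citing a result that allows $N=2$. Thompson's theorem, that every element of $\mathrm{SL}_m(k)$ is a single commutator away from a few small cases, is of this type. With it, each normalized gate is $\mathrm{Ad}(g_j)=[\mathrm{Ad}(u_j),\mathrm{Ad}(v_j)]$, or a product of two such, and your layer-by-layer tensoring finishes the proof with no elementary matrices at all.

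Your alternative does not substitute for this input, for three reasons:
\begin{enumerate}
\item Stabilization in $\CQ(X)$ is by Kronecker product, so the gate becomes $\mathrm{Ad}(g\otimes I_2)$, which is conjugation by $\mathrm{diag}(g,g)$ up to a basis permutation. It is not $\mathrm{Ad}(g\oplus 1)$, and $\mathrm{Ad}(g\oplus1)$ is a different element of $\CQ(X)$.
\item Whitehead's lemma exhibits $\mathrm{diag}(g,g^{-1})=[\mathrm{diag}(g,1),P]$ as a commutator, with $P$ the block swap. It says nothing of this kind about $g\oplus1$ or $\mathrm{diag}(g,g)$.
\item The tensor analogue, $\mathrm{Ad}(g)\otimes\mathrm{Ad}(g^{-1})=[\mathrm{Ad}(g)\otimes\id,\mathrm{SWAP}]$, only recovers that $\CQ(X)/\CalC(X)$ is abelian.
\end{enumerate}
Showing that circuits themselves are commutators genuinely requires the ring-theoretic input: divisibility of $k^\times$ together with uniformly bounded commutator width in $\PSL_m(k)$.
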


\begin{proof}
    Since $k$ has $n$-th roots for all $n$, so $\operatorname{PGL}_n(k) = \operatorname{PSL}_n(k)$, and $\operatorname{PSL}_n(k)$ is simple and hence perfect for $n \geq 3$. The proposition above shows that $Q(X; k)/C(X; k)$ is abelian, so we have that $[Q(X; k), Q(X; k)] \subseteq C(X; k)$. As we will see later in the paper, $Q(X; k)$ may be constructed as $\operatorname{Aut}(S)$ of an appropriate symmetric monoidal category $S$, and hence its commutator subgroup is its maximal perfect subgroup. Thus, it suffices to show that $C(X; k)$ is perfect.\\

    Now, let $\bigotimes_j \alpha_j$ be a single-layer circuit, indexed over bounded $X_j$'s that form a disjoint partition of $X$. Up to stabilization, each $\alpha_j$ is without loss an automorphism of $\operatorname{Mat}(R^{q(X_j)})$ where $q(X_j) \geq 3$. Since $(k^\times)^n=k^\times$ for all $n$, $\operatorname{Aut}(\operatorname{Mat}(R^{q(X_j)})) \cong \operatorname{PGL}_{q(X_j)}(R) \cong \operatorname{PSL}_{q(X_j)}(R)$ is perfect. This means that we can write $\alpha_j = x_{j, 1} ... x_{j, n_j}$ where $x_{j, \bullet}$ is a commutator. Observe that if the $n_j$'s can be made to be uniformly bounded with respect to $j$, then we are done. Indeed, suppose we choose $N \geq n_j$ for all $j$, then without loss we can make $n_j = N$ for all $j$ by padding the identity element, which is also a commutator. Now for $1 \leq i \leq N$, write $x_{j, i} = z_{j, i}^{-1} y_{j, i}^{-1} z_{j, i} y_{j, i}$, we define
    \[\beta_i := \bigotimes_{j} x_{j, i} = (\bigotimes_{j} z_{j, i}^{-1}) (\bigotimes_{j} y_{j, i}^{-1}) (\bigotimes_{j} z_{j, i}) (\bigotimes_{j} y_{j, i})\]
    \[= (\bigotimes_{j} z_{j, i})^{-1} (\bigotimes_{j} y_{j, i})^{-1} (\bigotimes_{j} z_{j, i}) (\bigotimes_{j} y_{j, i}) \in [C(X; k), C(X; k)].\]
    Observe that $\bigotimes_j \alpha_j = \beta_1 ... \beta_N \in [C(X; k), C(X; k)]$. This shows that $C(X; k) \subseteq [C(X; k), C(X; k)]$, and hence $C(X; k)$ is perfect.\\
    
    As noted above, there is a subtlety that the $n_j$'s may not be uniformly bounded with respect to $j$. Fortunately, by Theorem 1 and 2 of \cite{2713f7c6-4827-3cbb-87c7-e44de1469d43} and the fact that $\operatorname{PSL}_n(k)$ is a quotient of $\operatorname{SL}_n(k)$, the $n_j$'s can be made to be uniformly bounded (in fact, $N$ can be chosen to be $2$).
\end{proof}

 The same proof above also shows the following.
 \begin{lemma}
     Let $k$ be a field. The group $C^{sp}(X; k)$ is equal to the commutator subgroup of $\mathcal{Q}(X; k)$.
 \end{lemma}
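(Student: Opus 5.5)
The plan is to run the argument of Lemma~\ref{lem::roots_k_commutator} verbatim, with $\mathcal{C}(X;k)$ replaced by $\mathcal{C}^{\mathrm{sp}}(X;k)$. We do not use any hypothesis on roots of unity, since special linear gates are already projectivized $\operatorname{SL}$ elements.

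\textbf{First inclusion.} By the proposition above, $\CQ(X)/\CalC^{\mathrm{el}}(X)$ is abelian. Hence
\[
[\CQ(X;k),\CQ(X;k)]\subseteq \CalC^{\mathrm{el}}(X;k)\subseteq \CalC^{\mathrm{sp}}(X;k).
\]

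\textbf{Reverse inclusion.} It suffices to show that $\CalC^{\mathrm{sp}}(X;k)$ is perfect, since every commutator of elements of $\CalC^{\mathrm{sp}}$ is a commutator in $\CQ$. (We do not even need the later fact that the commutator subgroup is the maximal perfect subgroup.) As $\CalC^{\mathrm{sp}}$ is generated by single-layer special linear circuits, it is enough to show that each such circuit $\bigotimes_j \alpha_j$ lies in $[\CalC^{\mathrm{sp}},\CalC^{\mathrm{sp}}]$.

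Here $\alpha_j$ is conjugation by some $g_j\in\operatorname{SL}_{q(X_j)}(k)$. After stabilizing (tensoring with identity on a factor of size $3$ at each site, which preserves the property that each gate is conjugation by a determinant-one matrix), we may assume $q(X_j)\ge 3$. Then $\operatorname{SL}_{q(X_j)}(k)$ is perfect.

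\textbf{Uniform bound.} The key input is the uniform commutator-width result cited above (Theorems 1 and 2 of the reference): every element of $\operatorname{SL}_n(k)$ is a product of at most $2$ commutators, uniformly in $n$. We apply it in $\operatorname{SL}$ and push down to $\operatorname{PSL}$. This gives
\[
g_j=\prod_{i=1}^{2}[y_{j,i},z_{j,i}], \qquad y_{j,i},z_{j,i}\in\operatorname{SL}_{q(X_j)}(k),
\]
padding with trivial commutators if fewer are needed.

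\textbf{Assembly.} For each $i$, the single-layer circuits $Y_i=\bigotimes_j \operatorname{Ad}(y_{j,i})$ and $Z_i=\bigotimes_j \operatorname{Ad}(z_{j,i})$ are special linear circuits on the same partition. Gates on disjoint blocks commute, so
\[
\bigotimes_j\alpha_j=[Y_1,Z_1]\,[Y_2,Z_2]\in[\CalC^{\mathrm{sp}},\CalC^{\mathrm{sp}}].
\]
Passing to the colimit over $q$ finishes the proof.

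\textbf{Main obstacle.} The one delicate point is the uniform bound on the number of commutators needed, independent of the block size $q(X_j)$ and valid over an arbitrary field $k$. This is exactly the role the cited commutator-width theorem plays in Lemma~\ref{lem::roots_k_commutator}, and it is needed here for $\operatorname{SL}_n(k)$ directly. A minor check is that the small cases $n=2$, where $\operatorname{SL}_2(\mathbb{F}_2)$ and $\operatorname{SL}_2(\mathbb{F}_3)$ fail to be perfect, are avoided; stabilizing to $q(X_j)\ge 3$ handles this.
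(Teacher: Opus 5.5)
Your proposal is correct and follows the paper's own argument. The paper proves this lemma by rerunning the proof of Lemma~\ref{lem::roots_k_commutator} with special linear gates, so no root-of-unity hypothesis is needed, together with the uniform commutator-width bound from the cited theorem. Your one departure is a small streamlining: perfectness of $\CalC^{\mathrm{sp}}(X;k)$ gives $\CalC^{\mathrm{sp}}=[\CalC^{\mathrm{sp}},\CalC^{\mathrm{sp}}]\subseteq[\CQ,\CQ]$ directly, which avoids the paper's appeal to the commutator subgroup being the maximal perfect subgroup.
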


 The identifications above rely on the ring $R$ we are working with. Now we derive an identification that relies on the space rather than the ring.
 \begin{defn}\label{def::blendId}
     On metric spaces of the form $X \times \Zbb$, we say a QCA $(q, \al)$ \textit{blends into the identity} if there exists $\ell \in \ZZ$ such that $\alpha$ restricts to identity on either $\SA(X\times \ZZ_{\leq l}, q)$ or $\SA(X\times \ZZ_{\geq l}, q)$.
 \end{defn}
\begin{lemma} \label{lem::blending}
    Any $[q,\al]\in \CQ(X\times \ZZ)$ which blends into identity is in $[\CQ(X\times \ZZ), \CQ(X\times \ZZ)].$
\end{lemma}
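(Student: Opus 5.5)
The plan is an Eilenberg swindle along the $\ZZ$-direction. By symmetry, assume $\alpha$ restricts to the identity on $\SA(X\times\ZZ_{\le \ell},q)$. After translating, take $\ell=0$. Write $\ell_0$ for the spread of $\alpha$.

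First, a preliminary observation. Let $H:=X\times \ZZ_{>0}$. Since $\alpha$ fixes $\SA(X\times\ZZ_{\le 0},q)$ pointwise, it preserves the centralizer of that subalgebra. By a centralizer argument (using Lemma~\ref{lem::tensor_splitting} with $A=\SA(X\times \ZZ_{\le0},q)$, which is locally matrix), this centralizer is $\SA(H,q)$. Hence $\alpha=\id\otimes\alpha_H$, where $\alpha_H$ is a locality-preserving automorphism of $\SA(H,q)$ with spread at most $\ell_0$.

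Next, build the infinite stack. Choose a translation length $L\gg \ell_0$ and consider the shifts $\tau_k$ by $kL$ in the $\ZZ$-coordinate, for $k\ge 0$. Form a new spin system $\tilde q$ on $X\times\ZZ$ by stacking $\tau_k(q|_H)$ for all $k\ge0$. At a point $(x,n)$ with $n>0$, only finitely many $k$ (namely those with $kL<n$) contribute. So $\tilde q_{(x,n)}=\prod_{0\le k<n/L} q_{(x,n-kL)}$ is finite and $\tilde q$ is locally finite, since bounded sets meet only finitely many layers. Define
\[
\Sigma:=\bigotimes_{k\ge 0}\tau_k\alpha_H\tau_k^{-1},\qquad \Sigma':=\bigotimes_{k\ge 1}\tau_k\alpha_H\tau_k^{-1}.
\]
Each is a well-defined QCA on $\tilde q$ (after stabilization) with spread $\le \ell_0$, because the factors act on disjoint tensor factors and each point sees finitely many of them. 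In $\CQ(X\times\ZZ)$ we then have $[q,\alpha]\cdot[\tilde q,\Sigma']=[\tilde q,\Sigma]$, modulo identifying $q$ as a tensor factor of $\tilde q$ via stabilization and $\Phi$.

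The key step is to show that $\Sigma'$ is conjugate to $\Sigma$ by a locality-preserving isomorphism. The shift $\tau_1$ carries the $k$-th layer to the $(k+1)$-st. Regarded as a map from the stacked system $\tilde q$ to the system $\tilde q'=\bigotimes_{k\ge1}\tau_k(q|_H)$, this reindexing acts pointwise (each layer at a point is sent to a layer at the same point, up to relabeling). It is therefore a locality-preserving isomorphism $\beta$ between $\bigotimes_{k\ge0}$ (suitably shifted) and $\bigotimes_{k\ge1}$, with $\beta\Sigma\beta^{-1}=\Sigma'$ up to a bounded translation. A cleaner alternative is to realize the shift as a genuine isomorphism of spin systems: $\tau_L$ itself has spread $L$, which is finite. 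Once $\Sigma'\sim\Sigma$ in $\CQ^{ab}$ is established, Proposition~\ref{prop::conjugation} gives $[\tilde q',\Sigma']=[\tilde q,\Sigma]$ in $\CQ(X\times\ZZ)^{ab}$. Then $[q,\alpha]$ is trivial in the abelianization, that is, $[q,\alpha]\in[\CQ(X\times\ZZ),\CQ(X\times\ZZ)]$.

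I expect the main obstacle to be the bookkeeping of the conjugation step. The map $\beta$ must be an honest locality-preserving isomorphism between spin systems on the same $X\times\ZZ$, and the shift by $L$ moves the supports of layer $k$ onto those of layer $k+1$, so finite spread must be checked. I would handle this by making the identification layer-wise: the $k$-th layer sits at $X\times \ZZ_{>kL}$, and $\tau_1$ moves each site by exactly $L$, so its spread is $L<\infty$. The first thing I would verify is that the Kronecker identifications $\Phi$ are compatible with this relabeling, using SWAP circuits from Example~\ref{example::swap} as needed; these are elementary circuits and so are harmless in the abelianization.
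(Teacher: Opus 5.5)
Your proposal is correct and is essentially the paper's own Eilenberg swindle. The paper likewise reduces to a representative with $q\equiv 1$ on $X\times\ZZ_{\le 0}$, forms the infinite stack $\bigotimes_{m\ge 0}T^m\alpha$, and cancels $[q,\alpha]$ using the fact that a QCA equals its translate in $\CQ(X\times\ZZ)^{\mathrm{ab}}$ (Proposition~\ref{prop::conjugation}). The differences are cosmetic: your centralizer argument spells out the paper's ``always possible'' reduction; shifting by $L\gg\ell_0$ instead of by $1$ is unnecessary, since the layers are already disjoint tensor factors; and the conjugating map is the translation itself (spread $L$), not a ``pointwise'' relabeling as you first suggested.
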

 \begin{proof}
 Given any QCA $[q, \al]$, define its \textit{translate} $[Tq, T\alpha]$ by
    \begin{equation}
(Tq)(x, n+1) =  q(x, n) \quad \text{for all } n\in \ZZ,
\end{equation}
with a locality-preserving isomorphism \begin{equation}
    \tau: \SA(X\times \ZZ, q)\rightarrow \SA(X\times \ZZ, Tq)
\end{equation} that identifies $\SA(X\times \{n\}, q)$ with  $\SA(X\times \{n+1\}, Tq)$. Then
\begin{equation}
    T\al := \tau \al \tau^{-1}.
\end{equation}
By Proposition~\ref{prop::conjugation}, a QCA and its translate are equal in $\CQ(X\times \ZZ)^\mathrm{ab}$.

Now, given a QCA that restricts to identity on $\SA(X\times \ZZ_{\leq 0}, q)$, choose a representative $[q, \al]\in \CQ(X)$ for it that satisfies $q\equiv 1$ on $X\times \ZZ_{\leq 0}$. This is always possible because of the assumption. 
       
The infinite product
\begin{equation}
    q^S:=\prod_{m=0}^\infty T^m q
\end{equation}
is a well-defined spin system because $q^S$ takes a finite value on every point in $X\times \ZZ$. Therefore,
\begin{equation}
    \left[q^S, \al^S:=\bigotimes_{m=0}^\infty T^m\al\right]\in \CQ(X\times \ZZ).
\end{equation}
see Figure~\ref{fig:swindle} for an illustration of $q^S$ and $\alpha^S$. We deduce that in $\CQ(X\times \ZZ)^\mathrm{ab}$,
\begin{equation}
    [Tq^S, T\al^S]=\left[\prod_{m=1}^\infty T^m q, \bigotimes_{m=1}^\infty T^m\al\right]=[q^S, \al^S],
\end{equation} 
Moreover, it is easy to check that  
\begin{equation}
    [q^S, \al^S]=[q, \al] \cdot  [Tq^S, T\al^S].
\end{equation}
In conclusion, $[q, \al]$ is trivial in $\CQ(X\times \ZZ)^\mathrm{ab}$.
 \end{proof}
\begin{prop}\label{prop::all_circuit_same}
     On metric spaces of the form $X \times \Zbb$ and over any ring $R$, the four circuit subgroups defined are all equal to the commutator subgroup of $\mathcal{Q}(X \times \Zbb)$. 
 \end{prop}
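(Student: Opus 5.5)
We have $\CalC(X\times\ZZ)\supseteq\CalC^{\mathrm{inn}}\supseteq\CalC^{\mathrm{sp}}\supseteq\CalC^{\mathrm{el}}\supseteq[\CQ(X\times\ZZ),\CQ(X\times\ZZ)]$. The last inclusion is the earlier proposition that $\CQ(X)/\CalC^{\mathrm{el}}(X)$ is abelian, applied to the space $X\times\ZZ$. So it suffices to show $\CalC(X\times\ZZ)\subseteq[\CQ(X\times\ZZ),\CQ(X\times\ZZ)]$. Since the commutator subgroup is a subgroup, it is enough to show that every single-layer circuit $\alpha=\bigotimes_j\alpha_j$, with general automorphism gates on a uniformly bounded partition $X\times\ZZ=\coprod_j X_j$, lies in the commutator subgroup.

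The plan is to cut $\alpha$ into two pieces, each of which blends into the identity, and then apply Lemma~\ref{lem::blending}. Let $D=\sup_j\mathrm{diam}(X_j)<\infty$. Say $X_j$ is \emph{negative} if it meets $X\times\ZZ_{\le 0}$, and let $J_-$ be the set of negative indices and $J_+$ the rest. Put $\alpha_-:=\bigotimes_{j\in J_-}\alpha_j$ and $\alpha_+:=\bigotimes_{j\in J_+}\alpha_j$, each extended by the identity on the remaining blocks. Each is again a single-layer circuit for the same partition, with identity gates on the blocks not selected. In particular each is a QCA of spread at most $D$ in $\CQ(X\times\ZZ,q)$. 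Since they act on disjoint tensor factors, $\alpha=\alpha_-\alpha_+$.

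Next check the blending. Every block in $J_-$ meets $X\times\ZZ_{\le0}$ and has diameter at most $D$ in the $L^\infty$-metric. So it lies in $X\times\ZZ_{\le D}$, and hence $\alpha_-$ restricts to the identity on $\SA(X\times\ZZ_{\ge \lceil D\rceil+1},q)$. Every block in $J_+$ lies in $X\times\ZZ_{\ge1}$, so $\alpha_+$ restricts to the identity on $\SA(X\times\ZZ_{\le 0},q)$. Thus both blend into the identity in the sense of Definition~\ref{def::blendId}.

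Lemma~\ref{lem::blending} then puts both in the commutator subgroup. One caveat is that its proof is written only for the case where the identity region is $\ZZ_{\le\ell}$. The other orientation follows in the same way, by running the swindle with $T^{-1}$ in place of $T$, or by conjugating with the reflection $n\mapsto -n$; the conjugation is harmless by Proposition~\ref{prop::conjugation}. Hence $\alpha\in[\CQ,\CQ]$, and all four circuit subgroups coincide with the commutator subgroup.

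The main obstacle I expect is a bookkeeping point inside the swindle. Lemma~\ref{lem::blending} asks for a representative with $q\equiv1$ on the half-space where $\alpha$ is trivial, but $q$ need not be $1$ on $X\times\ZZ_{\le0}$. The fix I would try is to write $\SA(X\times\ZZ,q)=\SA(X\times\ZZ_{\le0},q)\otimes\SA(X\times\ZZ_{\ge1},q)$. Then $\alpha_+=\id\otimes\alpha_+'$, where $\alpha_+'$ is a QCA on the spin system $q'$ that agrees with $q$ on $X\times\ZZ_{\ge1}$ and is $1$ elsewhere. By the definition of the direct limit in $\CQ(X\times\ZZ)$ (Lemma~\ref{lem: stabilization}), $[q,\alpha_+]=[q',\alpha_+']$ via stabilization by $q'':=q/q'$ (equal to $q$ on $X\times\ZZ_{\le0}$ and $1$ elsewhere), since $\alpha_+$ is identity on that factor. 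The analogous statement holds for $\alpha_-$. With this in place, the infinite product $q^S$ in the lemma is locally finite, and the argument goes through.
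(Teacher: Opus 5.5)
Your proposal is correct and follows the paper's own argument: the paper likewise splits a single-layer circuit into two circuits that each blend into the identity and then invokes Lemma~\ref{lem::blending}. The extra points you handle (the reversed orientation via reflection or $T^{-1}$, and stabilizing down to a representative with $q\equiv 1$ on the half-space where the circuit acts trivially) fill in details the paper leaves implicit.
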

 \begin{proof}
     It suffices to show that a single-layer circuit of any type is trivial in the abelianization $\mathcal{Q}(X \times \Zbb)^\mathrm{ab}.$ Clearly, a single-layer circuit $\al=\bigotimes_j\al_j$ is a product of two circuits that blend into the identity. The result follows from Lemma~\ref{lem::blending}.
 \end{proof}
Note that Proposition~\ref{prop::all_circuit_same} can alternatively follow from Lemma~\ref{lem::contractible} later, whose proof follows a similar theme here.

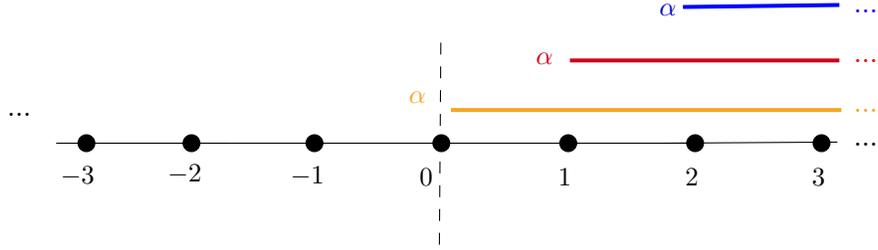
\begin{figure}[htb]
    \centering      
\[\begin{tikzpicture}[x=0.75pt,y=0.75pt,yscale=-1,xscale=1]
%uncomment if require: \path (0,300); %set diagram left start at 0, and has height of 300

%Shape: Circle [id:dp9779630109483365] 
\draw  [fill={rgb, 255:red, 0; green, 0; blue, 0 }  ,fill opacity=1 ] (287,203.75) .. controls (287,201.4) and (288.9,199.5) .. (291.25,199.5) .. controls (293.6,199.5) and (295.5,201.4) .. (295.5,203.75) .. controls (295.5,206.1) and (293.6,208) .. (291.25,208) .. controls (288.9,208) and (287,206.1) .. (287,203.75) -- cycle ;
%Shape: Circle [id:dp35766382947007025] 
\draw  [fill={rgb, 255:red, 0; green, 0; blue, 0 }  ,fill opacity=1 ] (351,203.75) .. controls (351,201.4) and (352.9,199.5) .. (355.25,199.5) .. controls (357.6,199.5) and (359.5,201.4) .. (359.5,203.75) .. controls (359.5,206.1) and (357.6,208) .. (355.25,208) .. controls (352.9,208) and (351,206.1) .. (351,203.75) -- cycle ;
%Straight Lines [id:da18615588937954652] 
\draw    (291.25,203.75) -- (355.25,203.75) ;
%Shape: Circle [id:dp7422276404918946] 
\draw  [fill={rgb, 255:red, 0; green, 0; blue, 0 }  ,fill opacity=1 ] (415,203.75) .. controls (415,201.4) and (416.9,199.5) .. (419.25,199.5) .. controls (421.6,199.5) and (423.5,201.4) .. (423.5,203.75) .. controls (423.5,206.1) and (421.6,208) .. (419.25,208) .. controls (416.9,208) and (415,206.1) .. (415,203.75) -- cycle ;
%Straight Lines [id:da45408200161951917] 
\draw    (355.25,203.75) -- (419.25,203.75) ;
%Shape: Circle [id:dp07869951523705676] 
\draw  [fill={rgb, 255:red, 0; green, 0; blue, 0 }  ,fill opacity=1 ] (479,203.75) .. controls (479,201.4) and (480.9,199.5) .. (483.25,199.5) .. controls (485.6,199.5) and (487.5,201.4) .. (487.5,203.75) .. controls (487.5,206.1) and (485.6,208) .. (483.25,208) .. controls (480.9,208) and (479,206.1) .. (479,203.75) -- cycle ;
%Straight Lines [id:da05365117772653849] 
\draw    (419.25,203.75) -- (483.25,203.75) ;
%Shape: Circle [id:dp7100941260974538] 
\draw  [fill={rgb, 255:red, 0; green, 0; blue, 0 }  ,fill opacity=1 ] (225,203.75) .. controls (225,201.4) and (226.9,199.5) .. (229.25,199.5) .. controls (231.6,199.5) and (233.5,201.4) .. (233.5,203.75) .. controls (233.5,206.1) and (231.6,208) .. (229.25,208) .. controls (226.9,208) and (225,206.1) .. (225,203.75) -- cycle ;
%Straight Lines [id:da7586259108071153] 
\draw    (229.25,203.75) -- (293.25,203.75) ;
%Straight Lines [id:da09001136273515431] 
\draw    (183.25,203.75) -- (247.25,203.75) ;
%Straight Lines [id:da7886738129632346] 
\draw    (467.25,203.75) -- (555,203.2) ;
%Shape: Circle [id:dp30174563517256725] 
\draw  [fill={rgb, 255:red, 0; green, 0; blue, 0 }  ,fill opacity=1 ] (172,203.75) .. controls (172,201.4) and (173.9,199.5) .. (176.25,199.5) .. controls (178.6,199.5) and (180.5,201.4) .. (180.5,203.75) .. controls (180.5,206.1) and (178.6,208) .. (176.25,208) .. controls (173.9,208) and (172,206.1) .. (172,203.75) -- cycle ;
%Straight Lines [id:da42054131263252403] 
\draw    (161,203.75) -- (225,203.75) ;
%Shape: Circle [id:dp8760156956688893] 
\draw  [fill={rgb, 255:red, 0; green, 0; blue, 0 }  ,fill opacity=1 ] (522.75+20,203.75) .. controls (522.75+20,201.4) and (524.65+20,199.5) .. (527+20,199.5) .. controls (529.35+20,199.5) and (531.25+20,201.4) .. (531.25+20,203.75) .. controls (531.25+20,206.1) and (529.35+20,208) .. (527+20,208) .. controls (524.65+20,208) and (522.75+20,206.1) .. (522.75+20,203.75) -- cycle ;
%Straight Lines [id:da6504326648291378] 
\draw [color={rgb, 255:red, 245; green, 166; blue, 35 }  ,draw opacity=1 ][line width=1.5]    (360.09,187) -- (557,187) ;
%Straight Lines [id:da1830163971608787] 
\draw  [dash pattern={on 4.5pt off 4.5pt}]  (355,153) -- (354.25,256.75) ;
%Straight Lines [id:da6417488381792528] 
\draw [color={rgb, 255:red, 208; green, 2; blue, 27 }  ,draw opacity=1 ][line width=1.5]    (420.09,162) -- (556,162) ;

\draw [color={rgb, 255:red, 2; green, 2; blue, 255 }  ,draw opacity=1 ][line width=1.5]    (477.09,135) -- (556,134) ;

% Text Node
\draw (278,214.4) node [anchor=north west][inner sep=0.75pt]    {$-1$};
% Text Node
\draw (343,215.4) node [anchor=north west][inner sep=0.75pt]    {$0$};
% Text Node
\draw (413,215.4) node [anchor=north west][inner sep=0.75pt]    {$1$};
% Text Node
\draw (477,215.4) node [anchor=north west][inner sep=0.75pt]    {$2$};
% Text Node
\draw (216,213.4) node [anchor=north west][inner sep=0.75pt]    {$-2$};
% Text Node
\draw (135,187) node [anchor=north west][inner sep=0.75pt]  [font=\large]  {$...$};
% Text Node
\draw (562,202) node [anchor=north west][inner sep=0.75pt]  [font=\large]  {$...$};
% Text Node
\draw (162,214.4) node [anchor=north west][inner sep=0.75pt]    {$-3$};
% Text Node
\draw (541,215.4) node [anchor=north west][inner sep=0.75pt]    {$3$};
% Text Node
\draw (337.73,176.24) node [anchor=north west][inner sep=0.75pt]  [color={rgb, 255:red, 245; green, 166; blue, 35 }  ,opacity=1 ]  {$\alpha $};
% Text Node
\draw (562,160) node [anchor=north west][inner sep=0.75pt]  [font=\large]  {$\textcolor[rgb]{0.96,0,0}{...}$};
% Text Node
\draw (401.73,157) node [anchor=north west][inner sep=0.75pt]  [color={rgb, 255:red, 208; green, 2; blue, 27 }  ,opacity=1 ]  {$\alpha $};
% Text Node
\draw (562,185) node [anchor=north west][inner sep=0.75pt]  [font=\large]  {$\textcolor[rgb]{0.96,0.65,0.14}{...}$};

\draw (464,132) node [anchor=north west][inner sep=0.75pt]  [color={rgb, 255:red, 2; green, 2; blue, 208 }  ,opacity=1 ]  {$\alpha $};
% Text Node
\draw (562,135) node [anchor=north west][inner sep=0.75pt]  [font=\large]  {$\textcolor[rgb]{0.14,0,0.96}{...}$};

\end{tikzpicture}
\]
    \caption{An illustration of $q^S$ and $\alpha^S$ appearing in the proof of Lemma~\ref{lem::blending}. The figure illustrates the case where $X$ is a point.}
    \label{fig:swindle}
\end{figure}

\section{The infinite loop space of spin systems and QCA}\label{sec::space}

In this section, we will construct a topological enrichment of the total QCA group (Definition~\ref{def::qca_group}). As hinted in the previous section, we will use algebraic K-theory to build suitable spaces that encode the QCA group. The specific approach we will take is to apply Segal's construction for the algebraic K-theory of a symmetric monoidal category \cite{Segal1974}.

\subsection{Constructing the QCA Space}

Before defining our QCA space, we should first specify the symmetric monoidal category we are working with.
\begin{defn}
 The category of quantum spin systems is a category $\mathbf{C}(X)$ whose:
 \begin{itemize}
     \item Objects are $\mathcal{A}(X, q)$ for each quantum spin system $q: X \to \Nbb_{> 0}$. Equivalently, the objects may be viewed as just the quantum spin systems.
     \item Morphisms are locality-preserving isomorphisms (recall Definition~\ref{def:locality_preserve_iso}).
 \end{itemize}
 $\mathbf{C}(X)$ is symmetric monoidal under the pointwise stacking operation defined around Definition~\ref{def::pointwise_stacking}.
\end{defn}

\begin{exmp}\label{exp::matrix_one_pt}
    Let $X = *$ be a point, then $\mathbf{C}(*)$ is the category $\operatorname{Mat}_{R}$ of matrix algebras $\operatorname{Mat}(R^n)$, with morphisms as $R$-algebra automorphisms, symmetric monoidal under tensor product. Note that $\pi_0(B \operatorname{Mat}_{R}) = \Zbb_{>0}$ is the multiplicative monoid of positive integers, whose group completion is the group of positive rationals $\Qbb_{>0}$ under multiplication. When $R$ is an algebraically closed field such as $\CC$, this coincides with $\operatorname{Az}(R)$.
\end{exmp}

Since there is a local finiteness requirement to quantum spin systems, we see $\mathbf{C}(\Rbb^n)$ and $\mathbf{C}(\Zbb^n)$ are equivalent as symmetric monoidal categories (e.g. Example~\ref{exp::real}). In general, we can restrict to a coarsely dense subset $X' \subset X$ such that $\mathbf{C}(X)$ and $\mathbf{C}(X')$ are equivalent. Thus, for the rest of this section, we will \textit{assume $X$ is a locally finite countable metric space of bounded geometry}. This is a general theme that our construction depends only on the large-scale (coarse) geometry of $X$, see Appendix~\ref{sec::coarse} for more details.

Let $\mathcal{C}$ be a (small) symmetric monoidal category, its classifying space $B\mathcal{C}$ has the natural structure of a homotopy commutative, homotopy associative $H$-space. In particular, $\pi_0(B\mathcal{C})$ is a commutative monoid.
\begin{defn}\label{def::k_theory_symmetric}
     The \textit{K-theory space} $K(\mathcal{C})$ is defined as the \textit{group completion} $\iota: B\mathcal{C} \to K(\mathcal{C})$ of $B\mathcal{C}$, which is characterized by the following property
    \begin{enumerate}
        \item $K(\mathcal{C})$ is a group-like H-space. In particular, $\pi_0(K(\mathcal{C}))$ is a group.
        \item The induced map $\iota_*: \pi_0(B\mathcal{C}) \to \pi_0(K(\mathcal{C}))$ is the algebraic group completion of the commutative monoid $\pi_0(B\mathcal{C})$.
        \item The homology of $H_*(K(\mathcal{C}); R)$ is the localization of $H_*(B\mathcal{C}; R)$ at $\pi_0(B\mathcal{C})$ for any commutative ring $R$. Here, the homologies are rings because the relevant spaces are H-spaces.
    \end{enumerate}
    The $i$-th $K$-theory of $\mathcal{C}$ is denoted $K_i(\mathcal{C}) \coloneqq \pi_i(K(\mathcal{C}))$. The construction is functorial with respect to lax symmetric monoidal functors.
\end{defn}

A group completion of $\mathcal{C}$ always exist (see Appendix of \cite{Thomason01011982}). When $\mathcal{C}$ satisfies additional properties, \cite{quillen_2} gave an explicit description for $K(\mathcal{C})$.
\begin{defn}[Definition IV.4.2 of \cite{weibel2013k}]
Given a symmetric monoidal category $\mathcal{S}$, the category $\mathcal{S}^{-1} \mathcal{S}$ has objects as $(m, n) \in \operatorname{obj}(\mathcal{S}) \times \operatorname{obj}(\mathcal{S})$, and a morphism $(m_1, n_1) \to (m_2, n_2)$ is given by an equivalence class of morphisms of the form
\[(m_1, n_1) \xrightarrow{s \otimes -} (s \otimes m_1, s \otimes n_1) \xrightarrow{(f, g)} (m_2, n_2). \]
This morphism is equivalent to 
\[(m_1, n_1) \xrightarrow{t \otimes -} (t \otimes m_1, t \otimes n_1) \xrightarrow{(f', g')} (m_2, n_2) \]
if there is an isomorphism $\alpha: s \to t$ such that $f = f' \circ (\alpha \otimes id_{m_1})$ and $g = g' \circ (\alpha \otimes id_{m_2})$. $\mathcal{S}^{-1} \mathcal{S}$ is symmetric monoidal under pointwise tensor product.
\end{defn}
\begin{theorem}[\cite{quillen_2}]\label{thm::s_inv_s}
For any symmetric monoidal category $\mathcal{S}$ that is (1) a groupoid and (2) tensoring by identity $\operatorname{Aut}(s) \to \operatorname{Aut}(s \otimes t)$ is an injection (e.g., $\mathcal{S} = \mathbf{C}(X)$), the natural map $B\mathcal{S} \to B(\mathcal{S}^{-1} \mathcal{S})$ is the group completion of $B\mathcal{S}$.
\end{theorem}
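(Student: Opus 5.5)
The plan is Quillen's group-completion theorem: verify that $B\mathcal{S} \to B(\mathcal{S}^{-1}\mathcal{S})$ satisfies properties (1)--(3) of Definition~\ref{def::k_theory_symmetric}. The homology localization in (3) is the real content; (1) and (2) will follow from it together with a direct inspection of components.

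First, I show $\pi_0 B(\mathcal{S}^{-1}\mathcal{S})$ is the algebraic group completion of $\pi_0 B\mathcal{S}$. An object of $\mathcal{S}^{-1}\mathcal{S}$ is a pair $(m,n)$, thought of as a formal difference $m - n$. A morphism $(m,n)\to(s\otimes m, s\otimes n)$ identifies $(m,n)$ with its stabilizations, and $(n,m)\otimes(m,n) = (n\otimes m, m\otimes n)$ is connected to the unit via the symmetry. Hence every component is invertible, $\pi_0$ is a group, and it is the Grothendieck group of the monoid of isomorphism classes. The $H$-space structure comes from the pointwise tensor product, so $B(\mathcal{S}^{-1}\mathcal{S})$ is group-like.

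For the homology statement I follow Quillen's argument via the functor $\mathcal{S} \times \mathcal{S}^{-1}\mathcal{S}\to \mathcal{S}^{-1}\mathcal{S}$ given by $(s,(m,n))\mapsto (s\otimes m, n)$. The steps are:
\begin{enumerate}
\item Show that the projection $\mathcal{S}^{-1}\mathcal{S}\to \langle\mathcal{S},\mathcal{S}\rangle$, sending $(m,n)$ to $n$, is a fibred category. Its fibre over $n$ is the transport category $\langle \mathcal{S},\mathcal{S}\rangle$ acting on the first coordinate, and $\langle\mathcal{S},\mathcal{S}\rangle$ is contractible because it has an initial object.
\item Identify the fibre with $\mathcal{S}$ stabilized along translations, so that its homology is $\varinjlim_{s} H_*(B\mathcal{S})$ under multiplication by $[s]$.
\item Show that $\pi_0\mathcal{S}$ acts by invertible maps on $H_*(B(\mathcal{S}^{-1}\mathcal{S}))$. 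This makes the map $H_*(B\mathcal{S})[\pi_0^{-1}]\to H_*(B(\mathcal{S}^{-1}\mathcal{S}))$ an isomorphism via a spectral sequence / Quillen's Theorem B type comparison.
\end{enumerate}

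The hypotheses enter exactly where fibre transport maps must be homology equivalences. In a groupoid, every morphism of $\langle\mathcal{S},\mathcal{S}\rangle$ is determined by an isomorphism $s\otimes m\cong n$. The faithfulness hypothesis, that $\operatorname{Aut}(s)\to\operatorname{Aut}(s\otimes t)$ is injective, ensures that the equivalence relation on morphisms in $\mathcal{S}^{-1}\mathcal{S}$ is well behaved: $\mathcal{S}$ acts on $\langle\mathcal{S},\mathcal{S}\rangle$ freely enough that automorphism groups do not collapse. I expect this to be the main obstacle, namely verifying that each base-change map induces an isomorphism on the localized homology $H_*(B\mathcal{S})[\pi_0^{-1}]$; this is where Theorem B needs invertibility.

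Finally, for $\mathbf{C}(X)$ I check the two hypotheses directly. It is a groupoid since morphisms are isomorphisms. Injectivity holds because $\alpha\mapsto \Phi\circ(\alpha\otimes\id)\circ\Phi^{-1}$ is injective: $\alpha\otimes\id$ restricted to $\SA(X,q)\otimes 1$ recovers $\alpha$, using that $\SA(X,q')$ is free by Proposition~\ref{prop::free_module}, so $\SA(X,q)\to\SA(X,q)\otimes\SA(X,q')$ is injective. This is the same injectivity as in Lemma~\ref{lem: stabilization}.
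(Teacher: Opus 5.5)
The paper gives no proof of this statement; it is quoted from Quillen. So the comparison is with Quillen's argument, whose ingredients you have collected. Your $\pi_0$ discussion and your check of the two hypotheses for $\mathbf{C}(X)$ are fine.

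\textbf{Where the argument breaks: the fibre.} Steps 1--2 misidentify the fibre of $p\colon(m,n)\mapsto n$.
A morphism $(u,g)\colon n\to n'$ of $\langle\mathcal S,\mathcal S\rangle$ lifts at $(m,n)$ to $(u,\mathrm{id},g)\colon(m,n)\to(u\otimes m,n')$. Injectivity of $\operatorname{Aut}(u)\to\operatorname{Aut}(u\otimes m)$ is exactly what makes this lift cocartesian (unique factorization). Consequently:
\begin{enumerate}
\item $p$ is \emph{cofibred}, not fibred.
\item Its fibre over $n$ is $\mathcal S$ itself --- neither $\langle\mathcal S,\mathcal S\rangle$ nor a stabilized $\mathcal S$.
\item The base-change maps are the translations $u\otimes-$.
\end{enumerate}
The fibre therefore has homology $H_*(B\mathcal S)$, not $\varinjlim_s H_*(B\mathcal S)$; already $H_0$ differs in general. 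So step 2 is false as stated. A telescope-type fibre belongs to the McDuff--Segal version of the theorem, which uses a different total space.

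Relatedly, the ``main obstacle'' you name is vacuous: $u\otimes-$ acts on $H_*(B\mathcal S)[\pi_0^{-1}]$ by the unit $[u]$ by construction. Theorem~B also cannot be invoked directly, since the translations are not homology equivalences before localizing.

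\textbf{The missing idea: how the localization is produced.} The spectral sequence of the cofibred functor is
$E^2_{pq}=H_p\big(\langle\mathcal S,\mathcal S\rangle;\,H_q(B\mathcal S)\big)\Rightarrow H_{p+q}(B\mathcal S^{-1}\mathcal S)$.
You need three steps.
\begin{enumerate}
\item \emph{Localize the whole spectral sequence.} The fibrewise action $(m,n)\mapsto(s\otimes m,n)$ is a functor over $\langle\mathcal S,\mathcal S\rangle$ that commutes with base change up to the symmetry. So $\pi_0\mathcal S$ acts on the entire spectral sequence, and since localization is exact, you may invert it.
\item \emph{The abutment is unchanged.} $[s]$ already acts invertibly on $H_*(B\mathcal S^{-1}\mathcal S)$. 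Composing with $(m,n)\mapsto(m,s\otimes n)$ gives $(m,n)\mapsto(s\otimes m,s\otimes n)$, which is joined to the identity by the natural transformation $(s,\mathrm{id},\mathrm{id})$.
\item \emph{The localized $E^2$ term collapses.} The localized coefficient system sends every morphism of $\langle\mathcal S,\mathcal S\rangle$ to an isomorphism. Via the initial object, it is therefore isomorphic to the constant system $H_q(B\mathcal S)[\pi_0^{-1}]$. Its homology over the contractible $B\langle\mathcal S,\mathcal S\rangle$ is concentrated in $p=0$.
\end{enumerate}
The localized spectral sequence then collapses to $H_*(B\mathcal S^{-1}\mathcal S)\cong H_*(B\mathcal S)[\pi_0^{-1}]$, which is property (3). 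Without this localize-then-collapse step, your step 3 does not yield the isomorphism.
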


The space $K(\mathcal{C})$ is the zeroth space of an $\Omega$-spectrum and is an infinite loop space. Historically, there were two ways to show that $K(\mathcal{C})$ is an infinite loop space - one is using the method of operads \cite{may_infinite_loop_space}, and the other is using the method of $\Gamma$-spaces \cite{Segal1974}. The two approaches were shown to be equivalent in \cite{MAY1978205}. In fact, \cite{MAY1978205} showed that all methods to construct infinite loop space structures that follow certain axioms are equivalent. For our purposes, we just note that such a structure exists on the group completion.

We will see later that the QCA group actually corresponds more closely to information at the level of $\pi_1$. This motivates\footnote{This was suggested by Tomer Schlank.} us to apply a loop space on top of the K-theory space, at the component corresponding to the identity element in $\pi_0$. Finally, we define the QCA space as follows.

\begin{defn}
 The \textit{space of QCA} (or the \textit{QCA space}) over $X$ is defined as
 \[\mathbf{Q}(X) \coloneqq \Omega K(\mathbf{C}(X)).\]
 We write $\mathbf{Q}_i(X) \coloneqq \pi_i \mathbf{Q}(X)$. Note that we have $\mathbf{Q}_i(X) = K_{i+1}(\mathbf{C}(X))$ for all $i \geq 0$.
\end{defn}
Although we posed restrictions on $X$ in this section, we remark again that this space of QCA is well-defined on a general metric space.
\begin{remark}
    For a ring $R$, its algebraic K-theory space is defined over a skeletal category. In our context, $\mathbf{C}(X)$ is usually not skeletal, unless $X$ is a single point.
\end{remark}

In the remainder of this section, we will discuss some properties of the space $K(\mathbf{C}(X))$, in particular on its $\pi_1$ and $\pi_0$ groups. Note that when moving to $\mathbf{Q}(X)$, we actually discard the information of $K_0(\mathbf{C}(X))$, but we included a discussion here due to an interesting connection to coarse homology groups.

\subsection{$\pi_1$ and Plus Constructions}

The first model for higher K-theory of rings was given by Quillen \cite{quillen_plus} with what is called the plus construction. In this section, we will derive a plus-construction model for $K(\mathbf{C}(X))$.
\begin{defn}
Let $X$ be a path-connected space and $N \trianglelefteq \pi_1(X)$ be a perfect normal subgroup. The plus construction of $X$ with respect to $N$ is the data $(X^+, i: X \to X^+)$ satisfying the properties that:
\begin{enumerate}
    \item $N = \ker(i_*: \pi_1(X) \to \pi_1(X^+))$.
    \item $i$ is an acyclic map, meaning its homotopy fiber has reduced integral homologies all equal to $0$.
    \item For any map $f: X \to Y$ such that $N \subseteq \ker(f_*: \pi_1(X) \to \pi_1(Y))$, there exists a map $f^+: X^+ \to Y$, unique up to based homotopy, such that $f^+ \circ i \simeq f$ up to based homotopy.
\end{enumerate}
If no $N$ is specified, then by convention we take $X^+$ with respect to the maximal perfect normal subgroup.
\end{defn}
Note that these conditions also implies that $i_*: \pi_1(X) \to \pi_1(X^+)$ is surjective, it then follows that $\pi_1(X^+) = \pi_1(X)/N$. If $X$ is CW, this plus construction always exists by attaching a sequence of cells. The plus-construction $X \to X^+$ at $N$ is initial among all maps $g: X \to Y$ such that $N \subseteq \ker(g_*: \pi_1(X) \to \pi_1(Y))$.

\begin{exmp}
    Let $\operatorname{GL}(R) = \operatorname{colim}_{i \geq 0} \operatorname{GL}_i(R)$ where $\operatorname{GL}_{i}(R) \to \operatorname{GL}_{i+1}(R)$ is given by sending to the block diagonal matrix $A \mapsto \begin{pmatrix}
        A & 0\\
        0 & 1
    \end{pmatrix}$. A lemma of Whitehead \cite{6d7f372b-468f-38fc-b338-2d803947c5dc} shows that the commutator subgroup of $\operatorname{GL}(R)$ is the maximal perfect normal subgroup (see also Chapter III.1 of \cite{weibel2013k}). For $i > 0$, the $i$-th K-theory of $R$ can be defined as\label{exp::algebraic_K_theory}
    \[K_i(R) \coloneqq \pi_i(\operatorname{BGL}(R)^+),\]
    where $\operatorname{BGL}(R)^+$ is the plus construction at the commutator subgroup. One defines the algebraic K-theory space of $R$ as
    \[K(R) = K_0(R) \times \operatorname{BGL}(R)^+\]
    where $K_0(R)$ is given the discrete topology. It was later shown in \cite{quillen_2} that $K(R)$ is equivalent Segal's algebraic K-theory of $R$, which is constructed on a category $\operatorname{Proj}(R)^{f.g., \cong}$. Here, $\operatorname{Proj}(R)^{f.g., \cong}$ denotes the category whose objects are isomorphism classes of finitely generated projective R-modules, morphisms are $R$-linear automorphisms, and is symmetric monoidal under direct sum.
\end{exmp}

The characterization of some Segal K-theory spaces as similar plus construction spaces has been discussed in \cite{951cf383-5a4d-3775-b8b2-0bafd2f162fe}, \cite{c80f3555-900f-35b5-ab05-6d397e6a44e6}, and Chapter VII.2 of \cite{bass1968algebraic}. The plus constructions are done on the classifying space of what is called the \textit{automorphism group} of $\mathcal{C}$. For our purposes, we take the following definition.
\begin{defn}
    Let $\mathcal{C}$ be a symmetric monoidal groupoid and symmetric monoidal operation $\otimes$. The \textbf{automorphism group} of $\mathcal{C}$ is
    \[\operatorname{Aut}(\mathcal{C}) \coloneqq \operatorname{colim}_{x \in \operatorname{obj}(\mathcal{C})} \operatorname{Aut}_{\mathcal{C}}(x),\]
    where $x, y \in \operatorname{obj}(\mathcal{C})$ has a morphism $x \to y$ if $y = x \otimes z$, and the corresponding map $\operatorname{Aut}_{\mathcal{C}}(x) \to \operatorname{Aut}_{\mathcal{C}}(y)$ is given by $- \otimes \operatorname{id}_{z}$.
\end{defn}

For example, $\operatorname{GL}(R) = \operatorname{Aut}(\operatorname{Proj}(R)^{f.g., \cong})$ and $\mathcal{Q}(X) = \operatorname{Aut}(\mathbf{C}(X))$.

We now obtain a plus construction description for $K(\mathbf{C}(X))$ in terms of $\Aut(\mathbf{C}(X)) = \mathcal{Q}(X)$. We say a path-connected space $X$ is \textit{weakly simple} if $\pi_1(X)$ acts trivially on the homology of its universal cover. Note that path-connected $H$-spaces are weakly simple.

\plus*

\begin{proof}
We first show that $[\mathcal{Q}(X), \mathcal{Q}(X)]$ is perfect. For a symmetric monoidal category $\mathcal{S}$, Lemma 1.8 on page 351 of \cite{bass1968algebraic} states that for any $\alpha_1, ..., \alpha_n \in \operatorname{Aut}_{\mathcal{S}}(x)$ such that $\alpha_n ... \alpha_1 = \operatorname{id}_{x}$, the element $\alpha_1 \otimes ... \otimes \alpha_n \in \operatorname{Aut}_{\mathcal{S}}(x^{\otimes n})$ is a commutator. We apply this to $\mathcal{S} = \mathbf{C}(X)$, following the arguments in Proposition 3 of \cite{c80f3555-900f-35b5-ab05-6d397e6a44e6}.\\

Clearly $[\mathcal{Q}(X), \mathcal{Q}(X)]$ Since $\mathcal{Q}(X)$ is a filtered colimit of the $\mathcal{Q}(X, q)$, and taking commutator subgroups commute with filtered colimits, we have that $[\mathcal{Q}(X), \mathcal{Q}(X)]$ is the filtered colimit of $[\mathcal{Q}(X, q), \mathcal{Q}(X, q)]$. Thus each element of $[\mathcal{Q}(X), \mathcal{Q}(X)]$ can be written as a finite product of commutators $[\alpha_i, \beta_i] \in \mathcal{Q}(X, r) = \operatorname{Aut}_{\mathbf{C}(X)}(\mathcal{A}(X, r))$ for some $r$. Now observe that $[\alpha_i, \beta_i] \otimes \operatorname{id} \otimes \operatorname{id} = [\alpha_i \otimes \alpha^{-1}_i \otimes \operatorname{id}, \beta_i \otimes \operatorname{id} \otimes \beta^{-1}_i]$ in $\mathcal{Q}(X, r^3)$. Now the aforementioned Lemma 1.8 on page 351 of \cite{bass1968algebraic} shows that both $\alpha_i \otimes \alpha^{-1}_i \otimes \operatorname{id}$ and $\beta_i \otimes \operatorname{id} \otimes \beta^{-1}_i$ are commutators themselves. Thus, the element $[\alpha_i, \beta_i]$ represents in $E = [\mathcal{Q}(X), \mathcal{Q}(X)]$ is an element in $[E, E]$. This shows that $E \subseteq [E, E]$. Since $[E, E] \subseteq E$, we conclude that $E = [E, E]$ is perfect.\\

To show $B\mathcal{Q}(X)^+$ is weakly simple, we use the following criterion in Section 1 of \cite{WAGONER1972349}: for a group $G$ with perfect $[G, G]$ such that for any $g_1,..., g_n \in G$ and $g \in G$, there exists some $h \in [G, G]$ with $g g_i g^{-1} = h g_i h^{-1}$, $BG^+$ is weakly simple.\\

In our case, we take $\alpha_1, ..., \alpha_n \in \mathcal{Q}(X)$ and $g \in \mathcal{Q}(X)$. Without loss we can take representatives $\alpha_1, ..., \alpha_n, g \in \mathcal{Q}(X, q)$. We can consider the elements
\[\alpha_1 \otimes id_{\mathcal{A}(X, q)}, ..., \alpha_n \otimes id_{\mathcal{A}(X, q)} \text{ and } g \otimes id_{\mathcal{A}(X, q)} \in \mathcal{A}(X, q^2).\]
Now we define
\[h = (g \otimes id_{\mathcal{A}(X, q)}) (\operatorname{id}_{\mathcal{A}(X, q)} \otimes g )^{-1}.\]
Observe that $\operatorname{id}_{\mathcal{A}(X, q)} \otimes g$ commutes with $\alpha_i \otimes id_{\mathcal{A}(X, q)}$ for all $i$. Thus, we have that
\[h (\alpha_i \otimes id_{\mathcal{A}(X, q)}) h^{-1} = (g \otimes id_{\mathcal{A}(X, q)}) (\operatorname{id}_{\mathcal{A}(X, q)} \otimes g )^{-1} (\operatorname{id}_{\mathcal{A}(X, q)} \otimes g ) (\alpha_i \otimes id_{\mathcal{A}(X, q)}) (g \otimes id_{\mathcal{A}(X, q)})^{-1} \]
\[= (g \otimes id_{\mathcal{A}(X, q)}) (\alpha_i \otimes id_{\mathcal{A}(X, q)}) (g \otimes id_{\mathcal{A}(X, q)})^{-1} = (g \alpha_i g^{-1}) \otimes id_{\mathcal{A}(X, q)}. \]
To show that $h$ is in the commutator, we note that
\[g \otimes id_{\mathcal{A}(X, q)} = \operatorname{SWAP}_{q,q} \circ (id_{\mathcal{A}(X, q)} \otimes g) \circ \operatorname{SWAP}_{q,q},\]
where $\operatorname{SWAP}_{q,q}$ is its own inverse and is stably a circuit, as noted in Example~\ref{example::swap}. Multiplying both sides by $(id_{\mathcal{A}(X, q)} \otimes g)^{-1}$ then shows that $h$ is a commutator and hence a circuit. This shows that $B\mathcal{Q}(X)^+$ is a weakly simple space.\\

We now use the telescope construction in \cite{quillen_2}. Write $\mathcal{S} = \mathbf{C}(X)$, Theorem~\ref{thm::s_inv_s} gives a category $\mathcal{S}^{-1} \mathcal{S}$ such that $K(\mathbf{C}(X)) \simeq B(\mathcal{S}^{-1} \mathcal{S})$. For each $\mathcal{A}(X, q)$, we define a functor $F_q$ from the groupoid $\mathcal{Q}(X, q)$ to $\mathcal{S}^{-1} \mathcal{S}$ by sending the unique object to $(\mathcal{A}(X, q), \mathcal{A}(X, q))$ and a morphism $u: \mathcal{A}(X, q) \to \mathcal{A}(X, q)$ to 
\[(\mathcal{A}(X, q), \mathcal{A}(X, q)) \xrightarrow{1 \otimes - } (\mathcal{A}(X, q), \mathcal{A}(X, q)) \xrightarrow{(\operatorname{id}, u)} (\mathcal{A}(X, q), \mathcal{A}(X, q)).\]
For another quantum spin system $r: X \to \Zbb_{> 0}$, we have the following diagram:
% https://q.uiver.app/#q=WzAsNCxbMCwwLCJcXG1hdGhjYWx7UX0oWCwgcSkiXSxbMiwwLCJcXG1hdGhjYWx7UX0oWCwgcXIpIl0sWzAsMSwiXFxtYXRoY2Fse1N9XnstMX0gXFxtYXRoY2Fse1N9Il0sWzIsMSwiXFxtYXRoY2Fse1N9XnstMX0gXFxtYXRoY2Fse1N9Il0sWzAsMSwiIC0gXFxvdGltZXMgXFxvcGVyYXRvcm5hbWV7aWR9X3tcXG1hdGhjYWx7QX0oWCwgcil9Il0sWzAsMiwiRl9xIiwyXSxbMSwzLCJGX3IiXSxbMiwzLCJIKHIpIl1d
\[\begin{tikzcd}
	{\mathcal{Q}(X, q)} && {\mathcal{Q}(X, qr)} \\
	{\mathcal{S}^{-1} \mathcal{S}} && {\mathcal{S}^{-1} \mathcal{S}}
	\arrow["{ - \otimes \operatorname{id}_{\mathcal{A}(X, r)}}", from=1-1, to=1-3]
	\arrow["{F_q}"', from=1-1, to=2-1]
	\arrow["{F_{qr}}", from=1-3, to=2-3]
	\arrow["{H(r)}", from=2-1, to=2-3]
\end{tikzcd},\]
where $H(r): \mathcal{S}^{-1} \mathcal{S} \to \mathcal{S}^{-1} \mathcal{S}$ sends $(\mathcal{A}(X, q_1), \mathcal{A}(X, q_2))$ to $(\mathcal{A}(X, q_1 r),$ $ \mathcal{A}(X, q_2 r))$ and tensor the corresponding morphisms by $\operatorname{id}_{\mathcal{A}(X, r)}$. Note that restricting the diagram above to the images of $F_q$ and $F_r$ in $\mathcal{S}^{-1} \mathcal{S}$ actually yields a natural transformation $F_q \implies F_{qr}$. This implies that the induced map $BF_q: BQ(X, q) \to B(\mathcal{S}^{-1} \mathcal{S})$ and $B(F_{qr} \circ - \otimes_{id}{\mathcal{A}(X, r)}): B\mathcal{Q}(X, q) \to B\mathcal{Q}(X, qr) \to B(\mathcal{S}^{-1} \mathcal{S})$ are homotopic. Furthermore, they land in the component at identity as there is a clear map from $(1, 1) \to (\mathcal{A}(X, q), \mathcal{A}(X, q))$ for any $q$. This gives a homotopy commutative diagram
% https://q.uiver.app/#q=WzAsMyxbMCwwLCJCXFxtYXRoY2Fse1F9KFgsIHEpIl0sWzEsMCwiQlxcbWF0aGNhbHtRfShYLCBxcikiXSxbMSwxLCJCKFxcbWF0aGNhbHtTfV57LTF9IFxcbWF0aGNhbHtTfSkiXSxbMCwxXSxbMSwyXSxbMCwyXV0=
\[\begin{tikzcd}
	{B\mathcal{Q}(X, q)} & {B\mathcal{Q}(X, qr)} \\
	& {B(\mathcal{S}^{-1} \mathcal{S})_{(1, 1)}}
	\arrow[from=1-1, to=1-2]
	\arrow[from=1-1, to=2-2]
	\arrow[from=1-2, to=2-2]
\end{tikzcd}.\]
Taking the homotopy colimit yields a map
\[\phi: B\mathcal{Q}(X) = B(\operatorname{colim}_q \mathcal{Q}(X, q)) = \operatorname{hocolim}_q (B\mathcal{Q}(X, q)) \to B(\mathcal{S}^{-1} \mathcal{S})_{(1,1)}.\]
Note here we used the fact that classifying space commutes with filtered colimits. Since $B\mathcal{S} \to B(\mathcal{S}^{-1} \mathcal{S})$ is the group-completion map, we have that
\[H_*(B(\mathcal{S}^{-1} \mathcal{S})) = (\pi_0 \mathcal{S})^{-1} H_*(B\mathcal{S}) = \operatorname{colim}_{q \in \pi_0(\mathcal{S})} H_*(B\mathcal{S}),\]
The identity component of $B(\mathcal{S}^{-1} \mathcal{S})$ is obtained by considering the trajectory of $(B\mathcal{S})_1$ under the colimit. Thus, when we restricting to the identity, we have that
\[H_*(B(\mathcal{S}^{-1} \mathcal{S})_{(1,1)}) = \operatorname{colim}_{[q] \in \pi_0(\mathcal{S})} H_*(B \mathcal{Q}(X, q)).\]
We remark here that the colimit here does not commute with $H_*(-)$ because a colimit of $\mathcal{Q}(X, q)$ over $[q] \in \pi_0(\mathcal{S})$ may not be well-defined due to ambiguities in the choice of the representatives. However, the choices are all equivalent up to conjugation, so a colimit of their homologies does make sense. On the other hand, the colimit $\mathcal{Q}(X) = \operatorname{colim}_{q \in \operatorname{obj}(\mathcal{S})} B\mathcal{Q}(X, q)$ is well-defined and its homology is the colimit of the diagram:
% https://q.uiver.app/#q=WzAsMyxbMCwwLCJcXG9wZXJhdG9ybmFtZXtvYmp9KFxcbWF0aGNhbHtTfSkiXSxbMSwwLCJcXHBpXzAoXFxtYXRoY2Fse1N9KSJdLFszLDAsIlxcb3BlcmF0b3JuYW1le0FifSJdLFswLDEsIlxccGk6cSBcXG1hcHN0byBbcV0iLDJdLFsxLDIsIkY6W3FdIFxcdG8gSF8qKFxcbWF0aGNhbHtRfShYLCBxKSkiLDIseyJsYWJlbF9wb3NpdGlvbiI6ODB9XSxbMCwyLCJxIFxcdG8gSF8qKFxcbWF0aGNhbHtRfShYLCBxKSkiLDAseyJjdXJ2ZSI6LTJ9XV0=
\[\begin{tikzcd}
	{\operatorname{obj}(\mathcal{S})} & {\pi_0(\mathcal{S})} && {\operatorname{Ab}}
	\arrow["{\pi:q \mapsto [q]}"', from=1-1, to=1-2]
	\arrow["{q \to H_*(\mathcal{Q}(X, q))}", curve={height=-12pt}, from=1-1, to=1-4]
	\arrow["{F:[q] \to H_*(\mathcal{Q}(X, q))}"'{pos=0.8}, from=1-2, to=1-4]
\end{tikzcd}.\]
The map $\phi$ realizes the natural map $$ H_*(\mathcal{Q}(X)) = \operatorname{colim}_{q \in \operatorname{obj}(\mathcal{S})} H_*(\mathcal{Q}(X, q)) = \operatorname{colim}(F \circ \pi) \to \operatorname{colim}(F) = H_*(\mathcal{B}(\mathcal{S}^{-1} \mathcal{S})_{(1,1)}),$$
which is an isomorphism as the functor $\pi: \operatorname{obj}(\mathcal{S}) \to \pi_0(\mathcal{S})$ is a cofinal functor (see Definition 4.17.1 of \cite{stacks-project}). To  show $\pi$ is cofinal, for any $[q] \in \pi_0(\mathcal{S})$, we need to check that the set $S$ of pairs $(x \in \operatorname{obj}(\mathcal{S}), b: [q] \to \pi(x))$ is non-empty, and every element in $S$ is equivalent to one another under the equivalence relation generated by $(x, b) \sim (x', b')$ if there is a morphism $a: x \to x'$ such that $b' = H(a) \circ b$. Clearly $S$ is non-empty by choosing $x = q$. For any $(x, b) \in \mathcal{S}$, we can without loss write think of $b$ as the map $- \otimes b$. Then for any $(x, b), (x', b') \in \mathcal{S}$, we have that $(x, b) \sim (x \otimes x', b \otimes x')$ and $(x', b') \sim (x' \otimes x = x \otimes x', b' \otimes x)$. It suffices to check that $b \otimes x'$ and $b' \otimes x$ are in the same isomorphism class to conclude that $(x, b) \sim (x', b')$. This is true in our case because $\otimes$ in $\mathcal{S}$ is cancellative.

Thus, $\phi$ is an integral homology isomorphism. $B(\mathcal{S}^{-1} \mathcal{S})$ is weakly simple since it is  an infinite loop space, so $B(\mathcal{S}^{-1} \mathcal{S})_{(1,1)}$ is also weakly simple. The map $\phi$ induces both a homology and $\pi_1$-isomorphism from $B\mathcal{Q}(X)^+ \to B(\mathcal{S}^{-1} \mathcal{S})_{(1,1)}$ between weakly simple spaces, which is an equivalence by Lemma 1.1 of \cite{WAGONER1972349}.
\end{proof}

Applying Theorem~\ref{thm::k0_qz}, Lemma~\ref{lem::roots_k_commutator}, Proposition~\ref{prop::all_circuit_same}, and (optionally) Theorem~\ref{thm::plus_construction} with Proposition IV.1.7 of \cite{weibel2013k}, we have another corollary.

\Qzero*

The corollary justifies calling $\mathbf{Q}(X)$ the space of QCA, because in many reasonable cases $\mathbf{Q}_0(X)$ is the classification of quantum cellular automata up to quantum circuits and stabilization. Recall also that any $K(\mathcal{C})$ is the zeroth space of an $\Omega$-spectrum, so $\textbf{Q}(X)$ is also part of an $\Omega$-spectrum. If one is only interested in producing an $\Omega$-spectrum that lifts $\mathcal{Q}(\Zbb^n)/\mathcal{C}(\Zbb^n)$, the construction here appears to be a natural way to lift this.

\begin{remark}
    The plus construction description is not necessary in obtaining Corollary~\ref{cor::qca_space_properties}(1) or (4). Corollary IV.4.8.1 of \cite{weibel2013k} shows that $K_1$ and $K_2$ of any $\mathcal{S}$ from Theorem~\ref{thm::s_inv_s} is same as Bass's definition of $K_1, K_2$ with $K_1(\mathcal{S}) \coloneqq \operatorname{colim}_{x \in \operatorname{obj}(\mathcal{S})} (\operatorname{Aut}_{\mathcal{S}}(x))^{ab}$ and $K_2(\mathcal{S}) \coloneqq \operatorname{colim}_{x \in \operatorname{obj}(\mathcal{S})} H_2([\operatorname{Aut}_{\mathcal{S}}(x), \operatorname{Aut}_{\mathcal{S}}(x)]; \Zbb)$. For $\mathcal{S} = \textbf{C}(X)$: since abelianization is left adjoint, commutator commutes with filtered colimits, we have $K_1(\mathbf{C}(X)) = (\mathcal{Q}(X))^{ab}$ and $K_2(\mathbf{C}(X)) = H_2(\mathcal{Q}(X); \Zbb)$..
\end{remark}

\subsection{$\pi_0$ and Coarse Homology Groups}

In the previous subsection, we obtained a description that
$$K(\textbf{C}(X)) \simeq K_0(\textbf{C}(X)) \times B\CQ(X)^+.$$
Here we would like to compute the term $K_0(\textbf{C}(X))$.\\

To understand the structure of $K_0(\textbf{C}(X))$, we should first understand the commutative monoid $\pi_0(B\textbf{C}(X))$, which involves understanding the isomorphism classes of objects in $\textbf{C}(X)$. This entails us to think about when two algebras of observables are isomorphic to each other. We now break the sad news that the pictures in Figure~\ref{fig::qca_example} and Figure~\ref{fig::surjectivity} may be somewhat misleading as to what QCA (and also locality-preserving isomorphisms) generally look like. In both figures, the map $\alpha: \mathcal{A}(X, q) \to \mathcal{A}(X, r)$ sends a tensor factor $T$ at a point $x \in X$ to a tensor factor at a point $y \in Y$. However, $\alpha(T)$, for a general $\alpha$, may not respect the pointwise decomposition of tensor factors in $\mathcal{A}(X, q)$ and thus can have support over multiple points. Fortunately, $\alpha$ having finite spread implies $\alpha(T)$ can only be supported at finitely many points, but it may very well be supported at more than 1 point.

We refer to the type of morphisms that appear in Figure~\ref{fig::qca_example} and Figure~\ref{fig::surjectivity} as \textit{shifts}.
\begin{defn}
    We say a locality-preserving isomorphism $\alpha: \mathcal{A}(X, q) \to \mathcal{A}(X, r)$ is a \textit{shift} if $\alpha$ breaks, moves, and merges prime-degree matrix tensor factors pointwise. That is, for $B = \operatorname{Mat}(R^{p})$ a tensor factor of $\operatorname{Mat}(R^{q_x}) \subseteq \mathcal{A}(X, q)$, where $p$ is a prime dividing $q_x$, $\alpha(B)$ is a tensor factor of $\operatorname{Mat}(R^{r_y})$ for some $y \in X$.
\end{defn}

The next proposition shows us that if a locality-preserving isomorphism $\alpha: \mathcal{A}(X, q) \to \mathcal{A}(X, r)$ exists, we can always find another isomorphism $\beta: \mathcal{A}(X, q) \to \mathcal{A}(X, r)$ that is a shift. Thus, we only need to be concerned about shifts when calculating $\pi_0(B\textbf{C}(X))$.
\begin{prop}\label{prop::replace_shift}
     Let $R$ be a ring with no non-trivial idempotent elements. Let $\alpha: \mathcal{A}(X, q) \to \mathcal{A}(X, r)$ be a locality-preserving isomorphism, then there exists an automorphism $f$ of $\mathcal{A}(X, r)$ such that $f \circ \alpha$ becomes a shift.
\end{prop}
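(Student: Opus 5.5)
The key observation is that it suffices to find \emph{some} locality-preserving isomorphism $\sigma\colon \mathcal{A}(X,q)\to\mathcal{A}(X,r)$ that is a shift. Given such a $\sigma$, put $f:=\sigma\circ\alpha^{-1}$. This is an $R$-algebra automorphism of $\mathcal{A}(X,r)$. It has finite spread: if $\alpha^{-1}$ has spread $\ell'$ and $\sigma$ has spread $\ell''$, then $f$ has spread at most $\ell'+\ell''$. Moreover $f\circ\alpha=\sigma$ is a shift. So the whole problem reduces to a combinatorial statement: whenever $q$ and $r$ are locality-preservingly isomorphic, there is a bounded-distance matching between the prime factors of the $q_x$ and the prime factors of the $r_y$.

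\textbf{Step 1: a Hall-type divisibility condition.} Let $\ell$ bound the spreads of both $\alpha$ and $\alpha^{-1}$. For a finite set $S\subseteq X$, write $N_\ell(S)$ for its closed $\ell$-neighbourhood and set $a=\prod_{x\in S}q_x$ and $b=\prod_{y\in N_\ell(S)}r_y$. Then $\alpha(\mathcal{A}(S,q))\cong\Mat(R^{a})$ is a unital subalgebra of $\mathcal{A}(N_\ell(S),r)\cong\Mat(R^{b})$.

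By Lemma~\ref{lem::centralizer}, $\Mat(R^a)\otimes_R C\cong \Mat(R^b)$, where $C$ is an Azumaya algebra, hence a finitely generated projective $R$-module. Since $R$ has no non-trivial idempotents, $\operatorname{Spec}R$ is connected, so $C$ has constant rank $c$. Comparing ranks gives $a^2c=b^2$. Because $c$ is an integer, $a^2\mid b^2$ in $\ZZ$, and therefore $a\mid b$.

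Applying the same argument to $\alpha^{-1}$ gives the symmetric statement. Now fix a prime $p$ and write $v_p$ for the $p$-adic valuation. For every finite $S$,
\[
\sum_{x\in S}v_p(q_x)\le\sum_{y\in N_\ell(S)}v_p(r_y),
\]
and the same inequality holds with the roles of $q$ and $r$ exchanged.

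\textbf{Step 2: matching, which I expect to be the main obstacle.} For each prime $p$, form a bipartite graph. The left vertices are ``slots'': $v_p(q_x)$ copies for each point $x$. The right vertices are $v_p(r_y)$ copies for each $y$. Join a left slot at $x$ to a right slot at $y$ whenever $\rho(x,y)\le\ell$.

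Bounded geometry and local finiteness make this graph locally finite, and Step~1 is exactly Hall's condition on both sides. I will apply the infinite Hall theorem for locally finite graphs, which follows from the finite version by a compactness (K\"onig) argument, on each side. This gives injections in both directions, and a Cantor--Schr\"oder--Bernstein argument for locally finite bipartite graphs, which alternates the two matchings along components, then gives a perfect matching using only edges of length at most $\ell$.

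The delicate points are the compactness step and the need for a perfect matching rather than just two injections. If the two-sided Hall argument becomes awkward, I will instead run the compactness argument directly on perfect matchings of the finite subgraphs induced on large balls, using the two-sided conditions to guarantee that these exist.

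\textbf{Step 3: building $\sigma$.} Fix prime factorizations $\Mat(R^{q_x})\cong\bigotimes_i\Mat(R^{p_i})$ and likewise for each $r_y$, using the Kronecker isomorphisms $\Phi$. Define $\sigma$ factorwise, sending each prime factor $\Mat(R^{p})$ at $x$ identically onto its matched factor $\Mat(R^{p})$ at $y$.

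Because the matching is a bijection on slots, $\sigma$ is an isomorphism of the restricted tensor products: on each finite set of factors it is an algebra isomorphism onto the tensor product of the image factors, and these are compatible with the directed limits. It has spread at most $\ell$, and by construction it is a shift. Setting $f=\sigma\circ\alpha^{-1}$ completes the proof.
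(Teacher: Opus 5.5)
Your argument is correct, but its combinatorial core differs from the paper's.

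\textbf{The paper's route.} The paper never isolates the existence of a shift. For each prime $p$ it builds a hypergraph whose hyperedges are the supports of $\alpha(\Mat(R^p))$, one for each prime factor of $\mathcal{A}(X,q)$. It uses the centralizer theorem to obtain pointwise and setwise counting inequalities for this hypergraph. It then assigns hyperedges to points by a greedy induction along an enumeration of the lattice, maintaining local invariants at each step. Finally, $f$ sends each image factor to a prime factor at its assigned point.

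\textbf{Your route.} You note at the outset that the statement is equivalent to the existence of some shift $\sigma\colon\mathcal{A}(X,q)\to\mathcal{A}(X,r)$, via $f=\sigma\circ\alpha^{-1}$. From the centralizer theorem you extract only the divisibility $\prod_{x\in S}q_x \mid \prod_{y\in N_\ell(S)}r_y$ and its mirror image for $\alpha^{-1}$. You recognize these as the two-sided Hall conditions for the bipartite graph of prime slots joined at distance at most $\ell$. You finish with the infinite Hall theorem, which applies because every slot has finite degree ($q$ and $r$ are locally finite), and the Mendelsohn--Dulmage (Schr\"oder--Bernstein) theorem for matchings.

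\textbf{Comparison.} Your route buys rigor and transparency in the infinite setting. A greedy procedure has to show that the setwise Hall-type inequalities survive every removal, and that the infinite process eventually assigns every factor. You get both from standard theorems. You also make clear that only the Hall inequalities on $\deg_p(q)$ and $\deg_p(r)$ matter, which is exactly what the coarse-homology computation of $K_0(\mathbf{C}(X))$ uses downstream. The paper's finer hypergraph adjacency gives a slightly stronger conclusion: each prime factor is sent to a point in the support of its own image under $\alpha$. The statement does not require this.

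Your fallback, taking perfect matchings of finite induced subgraphs on balls, would not work as stated, since such matchings generally fail to exist because of slots near the boundary. Your main argument does not need it.
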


\begin{exmp}
If $q = r$, we can take $f = \alpha^{-1}$ and $f \circ \alpha = \operatorname{id}$ is a shift.    
\end{exmp}

We defer the proof of Proposition~\ref{prop::replace_shift} to the end of this subsection. For now, we will see how the proposition enables us to connect to coarse homology groups. We only discuss the case for the $0$-th coarse homology group here, and Appendix~\ref{sec::coarse} gives a brief account of coarse homology theory in general. 
\begin{defn}\label{def::coarse_diagonal}
    Let $N$ be a commutative monoid. We define the 0-th and 1-st \textit{coarse chains} $CC_{i}(X, N), i = 0, 1$ as
    \begin{itemize}
        \item $CC_{0}(X, N)$ is the collection of locally finite formal $N$-combintations of points in $X$.
        \item $CC_1(X, N)$ consists of formal sums $\sum n_{(x_0, x_1)} [(x_0, x_1)]$ where $n_{(x_0, x_1)} \in N$ subject to two conditions, 
\begin{itemize}
    \item locally finite: each point $x \in X$ appears as a component only in finitely many terms, 
    \item bounded: there exists a uniform $l > 0$ so that every term $(x_0, x_1)$ with $n_{(x_0, x_1)} \neq 0$ is contained in the $l$-neighborhood of the diagonal in $X^{2}$. Here $X^2$ is equipped with the $L^{\infty}$-metric.
\end{itemize} 
    \end{itemize}
For an abelian group $A$, it is possible to define a boundary homomorphism $\partial: CC_{1}(X, A) \to CC_{0}(X, A)$ in the usual sense on summands and extend locally-finitely. The \textit{0-th coarse homology group} is defined as
\[CH_{0}(X, A) \cong CC_0(X, A)/\operatorname{im}(\partial).\]
We say that two $0$-coarse-chains $a, b$ are \textit{$l$-homologous} if $a-b$ is a boundary of a $1$-coarse-chain with bound $l$.
\end{defn}

Given an algebra of observables $\mathcal{A}(X, q)$, we observe there is a natural way to realize it as a 0-coarse-chain.
\begin{defn}
    Given a quantum spin system $q$ and prime number $p$, the $p$-degree of $q$ denoted by $\deg_p(q)$ is the formal sum 
    \[\sum_{x\in X} n_x [x], \ \text{where } p^{n_x}| q_x  \text{ and } p^{n_x+1}\nmid  q_x. \]
Note that $n_x$ is also sometimes written as $v_p(q_x)$. The $p$-degree takes value in $CC_0(X, \NN)\subset CC_0(X, \ZZ)$ of the metric space $X$. 

Collectively over all primes, we obtain an element $\operatorname{deg}(q)$ given by
\[\operatorname{deg}(q) = \sum_{x \in X} (v_2(q_x), v_3(q_x), v_5(q_x), v_7(q_x), ...) [x] \in CC_0(X, \NN^{\oplus\omega}) \subset CC_0(X, \Zbb^{\oplus \omega}).\]
Here $(\bullet)^{\oplus \omega}$ denotes the direct sum of countably infinite many copies of the same object. Furthermore since $X$ is assumed to be locally finite, $\deg$ provides a natural identification of $\NN^X_\mathrm{lf}$ with $CC_0(X, \NN^{\oplus \omega})$.
\end{defn}

\begin{lemma}\label{lem::gp_cc0}
    The group completion of $CC_0(M, \Nbb^{\oplus \omega})$ is $CC_0(M, \Zbb^{\oplus \omega})$ with natural inclusion $\iota$.
\end{lemma}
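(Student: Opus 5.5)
We verify the universal property of algebraic group completion directly. Write $M$ for the (locally finite, countable) metric space. The natural inclusion $\iota: CC_0(M, \Nbb^{\oplus\omega}) \to CC_0(M, \Zbb^{\oplus\omega})$ is a monoid homomorphism, since addition of locally finite formal sums is coefficientwise. Because the target is an abelian group, it suffices to show two things: first, that $\iota$ is injective (indeed, $CC_0(M,\Nbb^{\oplus\omega})$ is cancellative); second, that every element of $CC_0(M,\Zbb^{\oplus\omega})$ is a difference $\iota(a)-\iota(b)$ with $a,b \in CC_0(M,\Nbb^{\oplus\omega})$. Once both hold, the standard characterization of the Grothendieck group of a cancellative commutative monoid identifies the target with the group completion. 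Concretely, the induced map $\iota(a)-\iota(b)\mapsto f(a)-f(b)$ for any monoid map $f$ into an abelian group is then well defined and unique.

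Injectivity is immediate, because $\Nbb^{\oplus\omega} \subset \Zbb^{\oplus\omega}$ coefficientwise.

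For the decomposition, we work pointwise. Take $c = \sum_{x} c_x [x]$ with $c_x \in \Zbb^{\oplus\omega}$. Each $c_x$ is a finitely supported integer sequence, so we split it into positive and negative parts, $c_x = c_x^+ - c_x^-$, where $(c_x^{\pm})_i = \max(\pm (c_x)_i, 0)$. Both parts again lie in $\Nbb^{\oplus\omega}$, since their supports are contained in that of $c_x$. Set $a = \sum_x c_x^+[x]$ and $b=\sum_x c_x^-[x]$.

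The one point requiring care is that $a$ and $b$ are still locally finite chains. This holds because the set of $x$ with $c_x^{\pm}\neq 0$ is contained in the support of $c$, which meets every bounded set in finitely many points. Hence $c=\iota(a)-\iota(b)$.

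The final check is that the induced map out of $CC_0(M,\Zbb^{\oplus\omega})$ is well defined. Suppose $a-b=a'-b'$. Then $a+b'=a'+b$ holds in the cancellative monoid, so $f(a)-f(b)=f(a')-f(b')$.

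I expect no real obstacle here. The only subtlety is remembering that the infinite formal sums are controlled by local finiteness, and the positive/negative splitting preserves it.
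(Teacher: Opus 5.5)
Your proof is correct and follows essentially the same route as the paper: split each coefficient into its positive and negative truncations to write every chain as $\iota(a)-\iota(b)$, extend a monoid map $\phi$ by $\phi(a)-\phi(b)$, and get uniqueness from additivity. Your cancellation step ($\iota(a)-\iota(b)=\iota(a')-\iota(b')$ implies $a+b'=a'+b$) is a useful addition: the paper fixes the canonical split and never checks that the resulting extension is additive, and your argument supplies that.
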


\begin{proof}
    We prove this via universal property. Indeed, suppose $\phi: CC_0(X, \Nbb^{\oplus \omega}) \to A$ is a map of commutative monoids. For any sum $\sum_{x \in X} a_x [x] \in CC_0(M, \Zbb^{\oplus \omega})$, we decompose
    \[\sum_{x \in X} a_x [x] = \sum_{x \in X} (b_x - c_x) [x] = \sum_{x \in X} b_x [x] - \sum_{x \in X} c_x [x],\quad b_x, c_x \in \Nbb^{\oplus \omega},\]
   where $b_x$ is the truncation of $a_x$ with the positive entries and $c_x$ is the truncation of $a_x$ with the negative entries and inverted. We define $\phi': CC_0(M, \Zbb^{\oplus \omega}) \to A$ by $\phi'(\sum_{x \in X} a_x [x]) = \phi(\sum_{x \in X} b_x [x]) - \phi(\sum_{x \in X} c_x [x])$. Clearly we have that $\phi' \circ \iota = \phi$. Clearly, this is uniquely defined as well. Indeed for any $\psi$ such that $\psi \circ \iota = \phi$, we have that
   \begin{align*}
       \psi(\sum_{x \in X} a_x [x]) &= \psi(\sum_{x \in X} b_x [x] - \sum_{x \in X} c_x [x])\\
       &= \psi(\sum_{x \in X} b_x [x]) - \psi(\sum_{x \in X} c_x [x])\\
       &= \phi(\sum_{x \in X} b_x [x]) - \phi(\sum_{x \in X} c_x [x])\\
       &= \phi'(\sum_{x \in X} a_x [x]),
   \end{align*}
   where the second last line follows from $b_x, c_x \in \Nbb^{\oplus \omega}$.
\end{proof}

Let $\mathcal R$ denote the equivalence relations on $\NN^X_\mathrm{lf} = CC_0(X, \Nbb^{\oplus \omega})$ by locality-preserving isomorphisms, so $CC_0(X, \Nbb^{\oplus \omega})/\mathcal R = \pi_0(B\textbf{C}(X))$. We wish to now compute the group completion of $CC_0(X, \Nbb^{\oplus \omega})/\mathcal R$. To do this, we will use the following general lemma whose proof can be found in Appendix~\ref{sec::group_complete}.
\begin{lemma}\label{lem::gp_quotient_commute}
Let $M$ be a commutative monoid, $\iota: M \to M^{gp}$ denote the group completion map, and $\mathcal R \subseteq M \times M$ an equivalence relation on $M$ such that $M/\mathcal R$ is still a commutative monoid. The group completion of $M/\mathcal R$ is $M^{gp}/I$, where $I$ is the subgroup generated by $\iota(a) - \iota(b)$ for all $(a, b) \in \mathcal R$. 
\end{lemma}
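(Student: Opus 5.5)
We verify the universal property of group completion directly, arguing in the same style as Lemma~\ref{lem::gp_cc0}. Write $\pi: M \to M/\mathcal R$ and $\pi': M^{gp} \to M^{gp}/I$ for the quotient maps. The first step is to build the candidate comparison map $j: M/\mathcal R \to M^{gp}/I$, $[m] \mapsto \pi'(\iota(m))$. It is well defined because $(a,b) \in \mathcal R$ gives $\iota(a)-\iota(b) \in I$. It is a monoid map because the monoid structure on $M/\mathcal R$ is the one induced from $M$ (this is what the hypothesis that $M/\mathcal R$ is a commutative monoid with $\pi$ a monoid map means). The target $M^{gp}/I$ is an abelian group, so $j$ is a legitimate candidate.

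Next we check the universal property. Let $\phi: M/\mathcal R \to A$ be a monoid map into an abelian group $A$. The composite $\phi\circ\pi: M \to A$ is a monoid map, so by the universal property of $M^{gp}$ there is a unique group homomorphism $\psi: M^{gp} \to A$ with $\psi\circ\iota = \phi\circ\pi$. For $(a,b)\in\mathcal R$ we have
\[
\psi(\iota(a)-\iota(b)) = \phi(\pi(a)) - \phi(\pi(b)) = 0,
\]
so $\psi$ kills the generators of $I$ and hence all of $I$. Therefore $\psi$ descends to $\bar\psi: M^{gp}/I \to A$. By construction $\bar\psi \circ j \circ \pi = \psi\circ\iota = \phi\circ\pi$, and since $\pi$ is surjective this gives $\bar\psi\circ j = \phi$.

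For uniqueness, let $\chi: M^{gp}/I \to A$ be any homomorphism with $\chi\circ j = \phi$. Then $\chi\circ\pi'$ is a group homomorphism $M^{gp}\to A$ satisfying $(\chi\circ\pi')\circ\iota = \phi\circ\pi$. By uniqueness in the universal property of $M^{gp}$, $\chi\circ\pi' = \psi = \bar\psi\circ\pi'$. Since $\pi'$ is surjective, $\chi = \bar\psi$. Hence $(M^{gp}/I, j)$ satisfies the universal property of the group completion of $M/\mathcal R$.

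There is no serious obstacle here. The only point requiring care is the well-definedness of the induced maps: both that $j$ respects $\mathcal R$ and that $\psi$ vanishes on $I$. This reduces to $I$ being generated exactly by the differences $\iota(a)-\iota(b)$ and to $\pi$ being a surjective monoid map. We also note that $I$ need not be the full kernel-type congruence generated by $\mathcal R$; subgroup generation suffices because $M^{gp}$ is abelian, so every subgroup is normal and the quotient is a group.
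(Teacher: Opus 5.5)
Your argument is correct and complete, but it takes a different route from the paper's.

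\textbf{The paper's route.} The paper argues categorically. It observes that $M/\mathcal R$ is the coequalizer of the two projections $\mathcal R \rightrightarrows M$ in commutative monoids, viewing $\mathcal R$ as a submonoid of $M\times M$. Since group completion is a left adjoint, it preserves this coequalizer. So the answer is $M^{gp}/J$, where $J$ is generated by $\pi_1'(x)-\pi_2'(x)$ for $x\in\mathcal R^{gp}$. Most of the proof then goes into showing $J=I$, by writing each $x\in\mathcal R^{gp}$ as $\iota_{\mathcal R}(x_1)-\iota_{\mathcal R}(x_2)$ and reducing to generators $\iota(a)-\iota(b)$ with $(a,b)\in\mathcal R$.

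\textbf{Your route.} You write down the comparison map $j\colon M/\mathcal R\to M^{gp}/I$ explicitly and check the universal property by hand. For this you use only two facts: $\pi$ is a surjective monoid map whose fibres are the $\mathcal R$-classes, and $M^{gp}$ has its universal property, which you use for both existence and uniqueness.

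\textbf{What each buys.} The paper's version explains conceptually why the formula holds, as an instance of colimit preservation. It pays for this with the coequalizer identification in commutative monoids and the extra bookkeeping with $\mathcal R^{gp}$. Yours is more elementary and self-contained. It never needs to know what $\mathcal R^{gp}$ is, and it gives the isomorphism explicitly.
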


Using the lemma, we may successfully compute the group completion of $CC_0(X, \Nbb^{\oplus \omega})/\mathcal R$ as the 0-th coarse homology.

\coarse*

\begin{proof}
    By Lemma~\ref{lem::gp_cc0} and Lemma~\ref{lem::gp_quotient_commute}, we have that the group completion of $CC_0(X, \Nbb^{\oplus \omega})/\mathcal R$ is isomorphic to $CC_0(X, \Zbb^{\oplus \omega})/I$, where $I$ is generated by $\iota(a) - \iota(b)$ for all $(a, b) \in \mathcal R$. We claim that $I$ is the same as $I' = \operatorname{im}(\partial: CC_1(X, \Zbb^{\oplus \omega}) \to CC_0(X, \Zbb^{\oplus \omega}))$.\\

    Indeed, take $\iota(a) - \iota(b) \in I$. Since $a$ and $b$ are related by a locality-preserving isomorphism. By Proposition~\ref{prop::replace_shift}, we can assume $a$ and $b$ are related by a shift. This means that there is actually a bijection of prime divisors, with uniformly bounded spread, between $a$ and $b$ on $X$. It follows $\iota(a) - \iota(b)$ is the boundary of some locally finite formal sum $\sum a_{x_0 x_1} [x_0 x_1]$, possibly living in a larger space than $CC_1(X, \Zbb^{\oplus \omega})$ such that $[x_0 x_1]$ has uniformly bounded length. To check that this is in $CC_1(X, \Zbb^{\oplus \omega})$, it suffices for us to show that the collection $(x_0, x_1)$ that appears has uniformly bounded distance to the diagonal. Indeed, this follows from observing that $\max(d(x_0, x_1), d(x_0, x_0)) = d(x_0, x_1)$. Thus, we have that $\iota(a) - \iota(b) \in I'$.\\
    
    Now suppose we have $\sum a_{x_0x_1} [x_0, x_1] \in CC_1(X, \Zbb^{\oplus \omega})$. For $c \in \Zbb^{\oplus \omega}$, we say $c > 0$ if all of its non-zero entries are positive and similarly for $c < 0$. Observe that each $a_{x_0 x_1}$ admits a splitting
    \[a_{x_0 x_1} = a^+_{x_0 x_1} - a^-_{x_0 x_1} \text{ where } a^+_{x_0 x_1}, a^-_{x_0, x_1} > 0.  \]
    In this case, we have that
    \begin{align*}
        \partial(\sum a_{x_0x_1} [x_0, x_1]) &= (\sum a^+_{x_0 x_1} [x_1] - \sum a^+_{x_0 x_1} [x_0]) - (\sum a^-_{x_0 x_1} [x_1] - \sum a^-_{x_0 x_1} [x_0])\\
        &= \underbrace{(\sum a^+_{x_0 x_1} [x_1] + \sum a^-_{x_0 x_1} [x_0])}_{A} - \underbrace{(\sum a^+_{x_0 x_1} [x_0] + \sum a^-_{x_0 x_1} [x_1])}_{B}
    \end{align*}
    Note here we constructed so that $A, B \in CC_0(X, \Nbb^{\oplus \omega})$. It remains to check that $(A, B) \in \mathcal R$. Since the chains are bounded, triangle's inequality implies that $d(x_0, x_1)$ is uniformly bounded over all $(x_0, x_1)$ such that $a_{x_0 x_1} \neq 0$ above. The essential reason why is that
    \[d(x_0, x_1) < d(x_0, x) + d(x, x_1) \leq 2 \max(d(x_0, x), d(x, x_1)).\]
    Now we construct the locality-preserving isomorphism exactly guided by the equation above - for each weighting of $a^+_{x_0, x_1}$ on $x_1$ in $A$, we match it to the weighting of $a^+_{x_0x_1}$ on $x_0$ in $B$, and similarly for $a^-_{x_0 x_1}$. This shows that $(A, B) \in \mathcal R$ and hence that $\partial(\sum a_{x_0x_1} [x_0, x_1]) \in I$. This concludes the proof. 
\end{proof}

\begin{remark}\label{rmk:coarse_connected}
As elaborated in Appendix~\ref{sec::coarse}, the zeroth coarse homology of $\Zbb^n$ is trivial for $n > 0$, so $K(\textbf{C}(\Zbb^n))$ is all connected. This is one motivation for why we can discard the $\pi_0$-information. However, we note that proving $K(\textbf{C}(\Zbb^n))$ is connected actually does not need Proposition~\ref{prop::replace_shift}. Indeed, additional morphisms can only kill $\pi_0$, so if we know the quotient obtained by equivalence relations imposed only by shifts is already trivial, then adding all locality-preserving isomorphisms cannot reduce it further. Thus, we have that $K(\textbf{C}(\Zbb^n))$ is connected over any ring $R$, without needing to impose the constraints of not having non-trivial idempotents. This conclusion is reminiscent of Corollary 1.3.1 of \cite{pedersen_weibel} for the additive case.
\end{remark}

Finally, we now prove Proposition~\ref{prop::replace_shift}.
\begin{proof}[Proof of Proposition~\ref{prop::replace_shift}]
    Let $\alpha: \mathcal{A}(X, q) \to \mathcal{A}(X, r)$ be a locality-preserving isomorphism of spread such that $\alpha$ and $\alpha^{-1}$ have spread less than $\ell$.  We use $d$ to denote the metric on $X$ in this proof. Since the rank is locally constant on projective modules, and the condition on $R$ is equivalent to specifying that $\operatorname{Spec} R$ is connected (e.g., Exercise 1.22 of \cite{atiyah_macdonald_1969}), we have that $\operatorname{Mat}(R^p)$ is an irreducible tensor factor (i.e., no proper tensor factors).

    We will work one prime $p$ at a time, as we will show that the map we construct for each $p$ has an independent spread $< L$. Write $\operatorname{deg}_{p}(q) = \sum_{x \in X} a_x [x]$ and $\operatorname{deg}_{p}(r) = \sum_{x \in X} b_x [x]$.

    For the ease of notation, we will introduce the following combinatorial construction. Recall that a multi-hypergraph \cite{hypergraph_book} is a pair of sets $(V, \mathcal{E})$ where $V$ is thought of as ``vertices" and $\mathcal{E}$ is a multi-set of subsets of $V$, known as ``hyperedges". Here we construct a hypergraph $G = (X, \mathcal{E})$ whose vertices are $X$ and the hyperedges are given by the support of $\alpha(\operatorname{Mat}(R^p))$ for each irreducible tensor factor $\operatorname{Mat}(R^p)$ from $\mathcal{A}(X, q)$ (here the prime $p$ is fixed). For each $e \in \mathcal{E}$, we write $A_e$ to denote the image of said tensor copy over $e$. The $G$ satisfies the following properties:
    \begin{enumerate}
        \item For each $x \in X$, any hyperedge containing $x$ is contained in the disk of radius $L$ centered at $x$, where $L > 2 \ell$. This follows directly from $\alpha, \alpha^{-1}$ having spread $< \ell$.
        \item Let $B_x$ be the number of hyperedges containing $x$, then $B_x$ is finite and $B_x \geq b_x$.\\
        
        \noindent Indeed, $B_x$ being finite follows from the fact that all hyperedges containing $x$ must have originated from tensor copies $T$ on points within radius $\ell$ from $x$, and there are only finitely many of them. The algebra $\mathcal{A}(x, r)$, which has rank whose maximal $p$-power is $(p^2)^{b_x}$, is by construction contained in the algebra $F$ generated by images of $\alpha(T)$'s. Note however that the maximal $p$-power of $\rank F$ is $(p^2)^{B_x}$. Thus, the centralizer theorem (Lemma~\ref{lem::centralizer}) and the prime factorization theorem imply that $b_x \leq B_x$.%, which may have different primes
        
        \item Similarly, for a bounded subset $S \subseteq X$, the number of hyperedges that intersects non-trivially with $S$ is greater than or equal to $\sum_{s \in S} b_s$.
        \item For a hyperedge $e \in E$, there exists $x \in e$ such that $b_x > 0$.\\
        
        \noindent For clarity, we prove this in the generality without doing it ``one prime at a time". Indeed, $A_e$ is a sub-algebra of $\bigotimes_{y \in e} \operatorname{Mat}(R^{r_x})$, so the centralizer theorem implies that $p^2$ divides the latter's rank. If none of the $b_x$'s are non-zero, then the latter's rank cannot be divisible by $p$.
    \end{enumerate}

   We can make a collection $\mathcal{C}$ of maps $f: \alpha(\mathcal{A}(X, q)) \to \mathcal{A}(X, r_{f})$, where $r_f$ depends on $f$, given by sending each $A_e$ to some $x \in e$ that has $b_x > 1$. Observe that $\mathcal{C}$ is non-empty by (4) and $f$ must be of finite spread $< L$ by (1), so $\alpha \circ f$ is a locality-preserving isomorphism.

   In the remainder of this proof, we wish to construct a suitable $f$ such that $r = r_f$. We write $\Lambda \subseteq X$ to be all the points where $b_x > 0$. For a fixed $x \in \Lambda$, we write $B_{L}(x) = \{y \in X\ |\ d(x, y) < L \text{ and } b_y > 0\}$. We claim that we can assign $b_x$ hyperedges containing $x$ to the point $x$ such that each $y \neq x \in \Lambda$ has the desired conditions that:
   \begin{enumerate}[label=(\alph*)]
       \item The number of hyperedges un-assigned that contains $y$ is $\geq b_y$.
       \item The number of hyperedges un-assigned whose intersection with $\Lambda$ is either $\{y\}$ or $\{x, y\}$ is $\leq b_y$.
   \end{enumerate}
   Note that (1) implies we only need to check this for $y \in  B_{L}(x) - \{x\}$. Let us do this by inducting on the number $n$ of hyperedges that we can assigning without violating (a), for $n = 0, ..., b_x$. We will see how to ensure that we do not violate (b) either later.\\

   For $n = 0$, we are done. For $n = 1$, if there are any hyperedges $e$ that intersects with $\Lambda$ exactly at $\{x\}$, then we are done. Otherwise, we suppose no such hyperedge exists. Suppose for contradiction that any hyperedge we remove will violate (a). This implies that for any hyperedge $e$ containing $x$, there exists $y \neq x \in e$ such that $B_y = b_y > 0$. Let $Y$ record all such $y$'s where this occurs. From (3), we must have that
   \[N \coloneqq \# \{\text{hyperedges that intersects non-trivially with } \{x\} \cup Y\} \geq b_x + \sum_{y \in Y} b_y.\]
   On the other hand, clearly we also have that
   \[\sum_{y \in Y} B_y \geq |\bigcup_{y \in Y} \text{hyperedges containing y}| \geq N,\]
   where the left inequality follows from union-bound and the right inequality follows by noting that any hyperedge that intersects non-trivially with $\{x\} \cup Y$ must show up in the union in middle, as we have eliminated all hyperedges whose intersection with $\Lambda$ is only $\{x\}$. This implies that
   \[\sum_{y \in Y} B_y \geq b_x + \sum_{y \in Y} b_y.\]
   By construction of $y$, we also have that $B_y = b_y$ for $y \in Y$, so we have that $0 \geq b_x$, which is a contradiction as long as $b_x > 0$, at least before the induction ends. Evidently, it is also clear from the inequality why we can induct all the way until $n = b_x$.\\

   Now we seek to show that we could always choose hyperedges to remove in the induction so that (b) occurs. We first note that:
   \begin{enumerate}
   \setcounter{enumi}{4}
       \item The number $c_x$ of hyperedges whose intersection with $\Lambda$ is $\{x\}$ must be $\leq b_x$. Indeed, let $S$ be the union of elements in all such $e$'s. The tensor product $E'$ of $A_e$'s over all such $e$'s is a subalgebra of $F' \coloneqq \bigotimes_{s \in S} \operatorname{Mat}(R^{r_s})$. Note $\rank E' = (p^2)^{c_x}$, and $\rank F'$ has maximal prime power $(p^2)^{b_x}$. The centralizer theorem and the prime factorization theorem now imply that $c_x \leq b_x$.
   \end{enumerate}
   Since the induction prioritized removing hyperedges whose intersection with $\Lambda$ is $\{x\}$, all $c_x$ of them must have been used during the induction steps.\\

   Similarly, let $c_y \leq b_y$ be the number of hyperedges whose intersection with $\Lambda$ is $\{y\}$. We also write $d_y$ to be the number of hyperedges whose intersection with $\Lambda$ is $\{x, y\}$. If $c_y + d_y \geq b_y$, then certainly we are allowed to remove a hyperedge associated with $d_y$ in the inductive step after we have removed all hyperedges associated with the quantity $c_x$. Doing this does not violate (a) as the hyperedge being removed only intersects with $\Lambda$ at $\{x, y\}$.\\

   Thus, we see that showing (b) is possible amounts to showing we can remove $\leq b_x - c_x$ such hyperedges such that for all $y$, the quantity $c_y + d_y$ becomes $\leq b_y$. Let $T$ record all $y$'s where $c_y + d_y > b_y$, this amounts to showing that
   \[\sum_{y \in T} c_y + d_y - b_y \leq b_x - c_x.\]
   This is equivalent to showing that
   \[\sum_{y \in T} c_y + d_y \leq (b_x - c_x) + \sum_{y \in T} b_y.\]
   
   Recall $\mathcal{E}$ is the  collection of all hyperedges. We define the set
   \[Y' = \{y \in X\ |\ \{x, y\} = e \cap \Lambda \text{ for some } e \in \mathcal{E}\}.\]
   Then observe that
   \[\sum_{y \in T} c_y + d_y = |\bigcup_{y \in Y'} \text{hyperedge e s.t. }e \cap \Lambda = \{y\} \text{ or } \{x, y\} | \leq (b_x - c_x) + \sum_{y \in T} b_y,\]
   because an argument similar to that of (5) shows that
   \[|\bigcup_{y \in Y'} \text{hyperedge e s.t. }e \cap \Lambda = \{y\} \text{ or } \{x, y\} | + c_x \leq b_x + \sum_{y \in T} b_y.\]
   This shows that (b) can be satisfied as well.\\

   We have now shown that we can satisfy both (a) and (b). After we have done so, we discard the hyperedges we have already assigned and go to a new point $x' \in \Lambda - \{x\}$. We can assign hyperedges to $x'$ in a way that satisfies (a) and (b'), where condition (b') is modified from condition (b) by replacing $\Lambda$ with $\Lambda - \{x\}$. We can keep doing this inductively on a chosen ordering of points in $\Lambda$, and over each prime as well. This does give a well-defined $f$ because for each $A_e$ for some prime $p$, you can inductively define what that tensor factor is sent to. Clearly, by construction this $f$ is injective and has finite spread. It is also surjective because any potentially unassigned tensor factor will eventually intersect with the modified $\Lambda$ at only 1 point, which will be assigned as in (5). This shows that $f$ is a quantum cellular automaton and concludes the proof.
\end{proof}

\section{Delooping and the Omega-Spectrum}\label{sec::spectra}

In this section, we will discuss explicit deloopings of the QCA spaces $\mathbf{Q}(X)$ we constructed, which will give some descriptions for the underlying $\Omega$-spectra. The deloopings we hope for would be similar to those of Pedersen and Weibel in \cite{pedersen_weibel}, which was done for additive categories.

We give a very informal description of what Pedersen and Weibel did. For a small, filtered, idempotent complete additive category $A$, they constructed a category $C_1(A)$ by ``placing copies of $A$ on $\Zbb$" and showed that
\begin{equation}\label{additive_deloop}
\Omega K(C_1(A)^{\cong}) \simeq K(A^{\cong}).
\end{equation}
Pedersen and Weibel similarly built $C_{i}(A)$ by ``placing copies of $A$ on $\Zbb^i$", and it turns out there is a suitable notion of equivalence between $C_{1}(C_i(A))$ and $C_{i+1}(A)$, so one can inductively appeal to (\ref{additive_deloop}) to build a sequence of deloopings of $K(A^{\cong})$.

In this section, we will obtain an analogous delooping of the form 
\begin{equation}\label{actual_deloop}
\mathbf{Q}(*) \simeq \Omega \mathbf{Q}(\Zbb^1), \mathbf{Q}(\Zbb^1) \simeq \Omega \mathbf{Q}(\Zbb^2), ..., \mathbf{Q}(\Zbb^{n-1}) \simeq \Omega \mathbf{Q}(\Zbb^n), ... \quad .
\end{equation}

\begin{remark}
A natural way to try to prove this is perhaps to hope that there is a sequence of the following form and apply $\Omega(\bullet)$ to everything.
\begin{equation}\label{naive_deloop}
K(\textbf{C}(*)) \simeq \Omega K(\textbf{C}(\Zbb^1)), ..., K(\textbf{C}(\Zbb^{n-1})) \simeq \Omega K(\textbf{C}(\Zbb^n)), ... \quad.
\end{equation}
However, (\ref{naive_deloop}), as currently stated, is false. Here we outline some obstructions for (\ref{naive_deloop}) to hold.
\begin{enumerate}
    \item $K(\textbf{C}(*)) \simeq \Omega K(\textbf{C}(\Zbb^1))$ would imply that $K_0(\textbf{C}(*)) = K_1(\textbf{C}(\Zbb^1))$. Theorem~\ref{thm::k0_qz} implies $K_1(\textbf{C}(\Zbb^1)) = K_0(\operatorname{Az}(R))$, and Example~\ref{exp::matrix_one_pt} shows that $K_0(\textbf{C}(*)) = (\Zbb_{>0})^{gp} = \Qbb_{>0}$ is the positive rationals under multiplication. These two groups generally do not equal each other. Furthermore, Remark~\ref{rmk:coarse_connected} tells us that $K(\textbf{C}(\Zbb^n))$ is connected for $n > 0$, so this delooping happening would mean $K_1(\textbf{C}(\Zbb^{n}))$ for $n > 1$ are all $0$. This is also not expected since~\cite{haah2023nontrivial, fidkowski2024qca, sun2025clifford} all construct explicit QCA conjectured to correspond to nontrivial classes in $K_1(\textbf{C}(\Zbb^{3}))$ when $R=\CC$.
    \item The assumption that $A$ is idempotent complete for (\ref{additive_deloop}) suggests that our base categories may not be large enough. Note that the notion of idempotent completion does not really make sense here, as, for example, the inclusion functor $\operatorname{Mat}_R \to \operatorname{Az}(R)$ is not an idempotent completion. However, we see that Theorem~\ref{thm::k0_qz} implies that
    \[K_0(\operatorname{Az}(R)) = K_1(\textbf{C}(\Zbb^1))\]
    We can enlarge the objects of the relevant categories $\textbf{C}(X)$ to $\textbf{C}'(X)$ by allowing placements of Azumaya $R$-algebras at each point, rather than matrix algebras only. Observe that $\textbf{C}'(*) = \operatorname{Az}(R)$ and $\textbf{C}(X)$ is a full cofinal subcategory of $\textbf{C}'(X)$. It is a general fact (IV.4.11 of \cite{weibel2013k}) that for a full cofinal subcategory $S \subset S'$ of symmetric monoidal categories, the inclusion functor induces an isomorphism $K_i(S) \cong K_i(S')$ for $i > 0$. Furthermore, we actually do have now
    \[K_0(\textbf{C}'(*)) = K_0(\operatorname{Az}(R)) = K_1(\textbf{C}(\Zbb^1)) = K_1(\textbf{C}'(\Zbb^1)).\]
    This seems to suggest there might be a delooping $K(\textbf{C}'(*)) \simeq \Omega K(\textbf{C}'(\Zbb^1))$.
    \item Moving over to higher dimensions, however, simply placing an Azumaya algebra at points may not be large enough either, and we might need to enlarge further. Invertible subalgebras defined in~\cite{haah2023invertible} can be used as an attempt at this enlargement. We will see why this is the case in the proof of Theorem~\ref{thm::higher_delooping}).
\end{enumerate}
However, we see that our enlargement of the category should morally only change the $\pi_0$-information, i.e., we want to keep the original $\textbf{C}(\Zbb^n)$ to be full and cofinal. Thus, if we discard the $\pi_0$-information, as in the definition of QCA spaces, then changing the underlying categories does not affect the sequence in \eqref{actual_deloop}. This is the strategy we will take in proving~\eqref{actual_deloop}.
\end{remark}

Another ingredient we need, similar to the case of (\ref{additive_deloop}), is Thomason's simplified double mapping cylinder construction \cite{Thomason01011982}.
\begin{defn}\label{def::cylinder}
    Let $A, B, C$ be symmetric monoidal categories and $u: A \to B$ and $v: A \to C$ be lax symmetric monoidal functors. The simplified double mapping cylinder construction is a symmetric monoidal category $P$ such that:
    \begin{enumerate}
        \item Objects are $(b, a, c)$ for $b \in \operatorname{obj}(B), a \in \operatorname{obj}(A), c \in \operatorname{obj}(C)$.
        \item A morphism $(b, a, c) \to (b', a', c')$ is the equivalence class of maps of the form $(\psi, \psi^-, \psi^+, a^-, a^+)$ where $\psi: a \to a^- \otimes a' \otimes a^+$ is an isomorphism, and $\psi^-: b \otimes u(a^{-}) \to b'$ and $\psi^+: v(a^{+}) \otimes c \to c'$ are morphisms, up to equivalences given by isomorphsims of $a^-$ and $a^+$. For maps $$(\psi, \psi^-, \psi^+, a^-, a^+): (b, a, c) \to (b', a', c')\text{ and }$$ $$(\phi, \phi^-, \phi^+, (a')^-, (a')^+): (b', a', c') \to (b'', a'', c''),$$ their composition is given by $(\phi \circ \psi, \phi^- \circ \psi^-, \phi^+ \circ \psi^+, (a')^{-} \otimes a^-, a^+ \otimes (a')^+)$.
        \item The symmetric monoidal structure is given by applying the symmetric monoidal structure of $A, B,$ and $C$ pointwise.
    \end{enumerate}
\end{defn}

The main theorem of Thomason we will use is the following.
\begin{theorem}[Theorem 5.2 of \cite{Thomason01011982}]\label{thm::thomason_pullback}
    Following the set-up above, the data $(A, B, C, u, v)$ assembles to a homotopy pullback square of the form
% https://q.uiver.app/#q=WzAsNCxbMCwwLCJLKEEpIl0sWzEsMCwiSyhCKSJdLFswLDEsIksoQykiXSxbMSwxLCJLKFApIl0sWzAsMSwidV8qIl0sWzEsM10sWzAsMiwidl8qIiwyXSxbMiwzXV0=
\[\begin{tikzcd}
	{K(A)} & {K(B)} \\
	{K(C)} & {K(P)}
	\arrow["{u_*}", from=1-1, to=1-2]
	\arrow["{v_*}"', from=1-1, to=2-1]
	\arrow[from=1-2, to=2-2]
	\arrow[from=2-1, to=2-2]
\end{tikzcd}\]
This yields a long exact sequence of K-groups as follows:
\[... \to K_{i}(A) \to K_i(B) \oplus K_i(C) \to K_i(P) \to K_{i-1}(A) \to ... \]
\end{theorem}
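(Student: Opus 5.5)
The plan is to deduce the pullback square from a pushout square of connective spectra. The point is that the K-theory functor from symmetric monoidal categories to connective spectra, Segal's $\Gamma$-space machine, preserves homotopy colimits. In spectra, homotopy pushout squares and homotopy pullback squares coincide, and $\Omega^\infty$ preserves homotopy pullbacks, so the long exact sequence follows at once from the square.

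\textbf{Step 1: build the square.} I would first produce the square itself. There are symmetric monoidal functors $j_B\colon B\to P$, $b\mapsto(b,1,1)$, and $j_C\colon C\to P$, $c\mapsto(1,1,c)$. The two composites $j_B u$ and $j_C v$ from $A$ to $P$ are both connected to $i\colon a\mapsto(1,a,1)$ by monoidal natural transformations:
\[
(1,a,1)\to(u(a),1,1), \qquad \text{given by } (\mathrm{id}, \mathrm{id}, \mathrm{id}, a^-=a, a^+=1),
\]
and symmetrically $(1,a,1)\to(1,1,v(a))$, using $a^+=a$. After applying $K$, natural transformations become homotopies. This gives a homotopy commutative square, and hence a comparison map
\[
\Phi\colon \operatorname{hocolim}\bigl(K(B)\xleftarrow{u_*}K(A)\xrightarrow{v_*}K(C)\bigr)\longrightarrow K(P)
\]
in spectra.

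\textbf{Step 2: show $\Phi$ is an equivalence.} This is the heart of the argument. I would model the homotopy pushout categorically. By Thomason's homotopy colimit theorem, the homotopy colimit of a diagram of symmetric monoidal categories is computed by a Grothendieck-type construction, and on $\Gamma$-categories this commutes with $K$ after group completion. I would compare $P$ with the two-sided bar construction $B(B,A,C)$, in which $A$ acts on $B$ through $u$ and on $C$ through $v$. The definition of morphisms in $P$ does exactly this: a morphism is allowed to split off pieces $a^\pm$ of the middle coordinate and absorb them into $b$ or $c$. This identifies the nerve of $P$, level by level in the $\Gamma$-structure, with a homotopy colimit of the nerves of the categories $B\times A^{k}\times C$, that is, with the double mapping cylinder.

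I expect this identification to be the main obstacle, for two reasons. It must be carried out compatibly with the $\Gamma$-space structure, and it must respect the equivalence relation on $(a^-,a^+)$ given by isomorphisms. My first attempt would be to use Quillen's Theorem A on the evident functor from the Grothendieck construction to $P$, checking contractibility of comma categories by exhibiting initial objects built from the splitting $\psi$.

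\textbf{Step 3: pass from the pushout of spectra to the square and the long exact sequence.} Connective spectra are closed under homotopy colimits, so the pushout of Step 2 may be formed in all spectra. There it is also a pullback square, and applying $\Omega^\infty$ gives the homotopy pullback square of infinite loop spaces in the statement. The long exact sequence is the Mayer--Vietoris sequence of this square of spectra. It ends in $K_0(B)\oplus K_0(C)\to K_0(P)\to 0$, and surjectivity there is consistent with $\pi_0$ of a pushout of connective spectra.
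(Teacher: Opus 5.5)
The paper gives no proof of this statement; it quotes Thomason. Your argument therefore has to stand on its own. Steps 1 and 3 are fine: natural transformations give the homotopy-commutative square, cocartesian squares of spectra are cartesian, $\Omega^\infty$ preserves homotopy pullbacks, and the Mayer--Vietoris sequence follows. One caveat on Step 1: your two transformations need maps $v(1)\to 1$ and $u(1)\to 1$, so you need strong rather than merely lax unitality.

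But Step 2 is the entire content of the theorem, and it is not carried out. Two problems with the sketch:
\begin{enumerate}
\item The right comparison object for $BP$ is the bar construction $|[k]\mapsto N(B\times A^k\times C)|$, which models the pushout of $E_\infty$-spaces. It is not the double mapping cylinder of $BB\leftarrow BA\to BC$. For $B=C$ trivial the latter is the unreduced suspension of $BA$, which is in general not even an $H$-space. The plain Grothendieck construction over the span computes exactly this wrong object.
\item The proposed Theorem A argument fails. A morphism $(b,a,c)\to(b',a_1\otimes\cdots\otimes a_k,c')$ in $P$ may split $a\cong a^-\otimes a'\otimes a^+$ in a way incompatible with any fixed tensor decomposition of $a$. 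A morphism of the Grothendieck construction of $[k]\mapsto B\times A^k\times C$ can only push whole initial or final coordinates $a_i$ into $B$ or $C$. So, for example, $([1],(b,a,c))$ with the identity is not initial in the comma category under $(b,a,c)$, and there is no evident replacement.
\end{enumerate}

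More seriously, no argument can prove Step 2 without hypotheses on $A,u,v$, and your proposal uses none. Take $B=C$ to be the terminal category and $A$ the one-object symmetric monoidal category with $\operatorname{End}=G$, a nonzero abelian group, and $g\otimes h=gh$. Everything here is a groupoid, translations in $A$ are faithful, and $u,v$ are strict.
\begin{itemize}
\item $P$ has a single object. A morphism is the class of some $\psi\in G$ under $\psi\sim(\alpha\otimes 1\otimes\beta)\psi=\alpha\beta\psi$ for $\alpha,\beta\in\operatorname{Aut}(a^{\pm})=G$; compatibility with $\psi^{\pm}$ is vacuous because $B,C$ are trivial. So $P$ is the trivial category and $K(P)\simeq\ast$.
\item On the other hand $K(A)\simeq BG$ and $K(B)\simeq K(C)\simeq\ast$.
\item Hence the square is not cartesian, and $K_2(P)\to K_1(A)\to K_1(B)\oplus K_1(C)$ reads $0\to G\to 0$. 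The bar construction gives $B(BG)\simeq K(G,2)\not\simeq BP$.
\end{itemize}
So the statement as quoted needs additional hypotheses, and Step 2 is exactly where they must enter. Something like faithfulness of $u$ and $v$ is needed to stop automorphisms of $a^{\pm}$ from collapsing morphisms of $P$; this holds for the inclusion functors in the paper's delooping argument. You need to pin down Thomason's actual hypotheses and give a comparison argument for Step 2 that uses them.
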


The homotopy limit of two maps $p: * \to  X$ and $q: * \to X$ is equivalent to the based loop space of $X$ when $p = q$ (e.g. see Example 6.5.3 of \cite{Riehl_2014}). Thus, as a corollary of Theorem~\ref{thm::thomason_pullback}, we obtain the following.
\begin{cor}\label{cor::thomason_corollary}
    Suppose $K(B)$ and $K(C)$ are contractible, then $K(A) \simeq \Omega K(P)$.
\end{cor}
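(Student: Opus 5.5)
The plan is to read the corollary off the homotopy pullback square of Theorem~\ref{thm::thomason_pullback}. That theorem says the square
\[
K(A) \longrightarrow K(B), \qquad K(A)\longrightarrow K(C), \qquad K(B)\to K(P)\leftarrow K(C)
\]
is homotopy cartesian, so $K(A) \simeq K(B)\times^h_{K(P)} K(C)$. Since $K(B)$ and $K(C)$ are contractible, the inclusions $* \to K(B)$ and $* \to K(C)$ are weak equivalences. Homotopy pullbacks are invariant under objectwise weak equivalence of cospans, so
\[
K(A) \simeq \ast \times^h_{K(P)} \ast,
\]
where the two maps $\ast \to K(P)$ are the composites $\ast \simeq K(B) \to K(P)$ and $\ast \simeq K(C) \to K(P)$.

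Next I would check that these two points of $K(P)$ can be taken to be the same basepoint. The functors $B \to P$ and $C \to P$ are symmetric monoidal, sending $b \mapsto (b,1,1)$ and $c \mapsto (1,1,c)$. The induced maps on $K$-theory are therefore maps of group-like $H$-spaces, and they carry the unit component to the unit component. Because $K(B)$ and $K(C)$ are contractible, each map is determined up to homotopy by the image of the unit, and in both cases that image lies in the identity component of $K(P)$, namely the component of $(1,1,1)$. Two points in the same path component give equivalent homotopy pullbacks; one sees this by conjugating along a path, or by noting that the homotopy pullback depends only on the homotopy classes of the two maps. We may therefore take $p = q$ to be the basepoint.

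The cited fact (Example 6.5.3 of \cite{Riehl_2014}) then gives $\ast\times^h_{K(P)}\ast \simeq \Omega K(P)$, based at the unit, and hence $K(A) \simeq \Omega K(P)$. As a consistency check, the long exact sequence in Theorem~\ref{thm::thomason_pullback} with $K_i(B)=K_i(C)=0$ gives $K_i(P) \cong K_{i-1}(A)$, which matches $\pi_{i-1}\Omega K(P) = \pi_i K(P)$. This includes the statement that $K_0(A) \cong K_1(P)$.

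The only step needing care is identifying the two basepoints, i.e.\ confirming that both maps land in the same path component of $K(P)$. Monoidality handles this, since both maps send the unit to the unit, and everything else is formal homotopy theory.
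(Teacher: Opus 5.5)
Your proposal is correct and is essentially the paper's argument: replace the contractible corners $K(B)$ and $K(C)$ by points in Thomason's homotopy pullback square, then use that the homotopy pullback of $\ast \to K(P) \leftarrow \ast$ at a single point is $\Omega K(P)$. Your explicit check that both points lie in the unit component is a detail the paper leaves implicit; it also follows directly from homotopy commutativity of the square, since $K(A)$ is nonempty.
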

The construction of our desired delooping would be an application of Theorem~\ref{thm::thomason_pullback} and its corollary. We first identify spectra that will be contractible.

\begin{lem}\label{lem::contractible}
    For any metric space $X$, $K(\mathbf{C}(X \times \NN))$, and hence $\mathbf{Q}(X \times \NN)$ is contractible. Here $X \times \NN$ is equipped with the $L^{\infty}$-metric on the product.
\end{lem}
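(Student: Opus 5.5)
The plan is to run an Eilenberg swindle on the symmetric monoidal category $\mathbf{C}(X\times\NN)$ itself, at the level of $K$-theory spaces. I will construct a symmetric monoidal endofunctor $S$ of $\mathbf{C}(X\times\NN)$ together with a natural isomorphism $S \cong \mathrm{id}\otimes S$. Applying $K(-)$ and using the $H$-space structure, $K(S)\simeq K(\mathrm{id})+K(S)$ in the group-like $H$-space of self-maps of $K(\mathbf{C}(X\times\NN))$. Hence $K(\mathrm{id})\simeq 0$, so the infinite loop space $K(\mathbf{C}(X\times\NN))$ is contractible. Its loop space $\mathbf{Q}(X\times\NN)$ is then contractible as well.

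\textbf{The functor.} As in the proof of Lemma~\ref{lem::blending}, let $T$ be the translation $(x,n)\mapsto(x,n+1)$. Since $T$ maps $X\times\NN$ into itself, a spin system $q$ pushes forward to $Tq$, with $Tq\equiv 1$ on $X\times\{0\}$. A locality-preserving isomorphism $\alpha$ transports to $T\alpha=\tau\alpha\tau^{-1}$, which has the same spread, because $T$ is an isometry for the $L^\infty$ metric. Define
\[
S(q)=\prod_{m\ge 0} T^m q,\qquad S(\alpha)=\bigotimes_{m\ge 0} T^m\alpha .
\]
Three checks are needed.
\begin{enumerate}
\item $S(q)$ is locally finite: at a point $(x,n)$ only the factors with $m\le n$ contribute, so the product is finite.
\item $S(\alpha)$ has finite spread: all factors $T^m\alpha$ have the spread of $\alpha$.
\item $S$ is compatible with composition, identities, and the Kronecker-product monoidal structure, up to the canonical reordering isomorphisms $\Phi$ of Definition~\ref{def::pointwise_stacking}.
\end{enumerate}
Then $S(q)=q\cdot T(S(q))$ literally. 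What remains is a natural isomorphism $T\circ S\cong S$, or equivalently $T\cong\mathrm{id}$ as symmetric monoidal functors after restricting to the image of $S$.

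\textbf{The main obstacle: $T\cong\mathrm{id}$.} In general there is no locality-preserving isomorphism $\mathcal{A}(X\times\NN,q)\to\mathcal{A}(X\times\NN,Tq)$, because the counts of tensor factors differ at the boundary. I would avoid comparing $q$ with $Tq$ directly and instead work with the whole family $T\circ S$ versus $S$. A cleaner route is to rebuild the swindle so that the needed isomorphism is a shift. Set
\[
S'(q)=\prod_{m\ge0}T^m\Big(\prod_{k\le m}q\Big)\quad\text{(suitably indexed)},
\]
or, more simply, note that $S(q)$ and $q\cdot S(q)$ differ by a chain of shifts: in the tower, the $m$-th copy of $q$ (located at $T^m$) is moved to the $(m+1)$-st slot. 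Concretely, let $\sigma_q$ be the shift sending the tensor factor $\mathcal{A}(\{(x,n)\},T^m q)$ to position $(x,n+1)$ in the factor indexed by $m+1$. This identifies $S(q)$ with the part of $q\cdot S(q)$ coming from $T S(q)$, and the extra copy of $q$ sits in the $m=0$ slot. Each factor moves distance $1$, so $\sigma_q$ has spread $1$ and is a locality-preserving isomorphism
\[
q\cdot S(q)\;\cong\; S(q).
\]
Naturality in $\alpha$ holds because $\sigma_q$ intertwines $\alpha\otimes S(\alpha)$ with $S(\alpha)$ factor by factor; both sides are tensor products of the same $T^m\alpha$'s, only relabeled. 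Monoidality of $\sigma$ follows from the same relabeling.

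\textbf{Conclusion.} This gives a natural monoidal isomorphism $\mathrm{id}\otimes S\cong S$ of lax symmetric monoidal endofunctors. Since $K$ is functorial for such functors and sends naturally isomorphic functors to homotopic maps, we get $\mathrm{id}_{K}+K(S)\simeq K(S)$. Group-likeness then yields $\mathrm{id}_K\simeq 0$. I expect the careful bookkeeping of coherence in the previous step, namely that $\sigma$ is compatible with $\Phi$ and the symmetry, to be the only delicate point.
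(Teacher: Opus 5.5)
Your argument is essentially the paper's proof: the same swindle $S=\bigotimes_{m}T^m$, with a natural isomorphism $S\cong \mathrm{id}\otimes S$ built from the spread-one shift, giving $\mathrm{id}+s\simeq s$ on the group-like space $K(\mathbf{C}(X\times\NN))$. One correction: your ``main obstacle'' does not exist, since the shift $\tau_q\colon\mathcal{A}(X\times\NN,q)\to\mathcal{A}(X\times\NN,Tq)$ (which you already use to define $T\alpha$) is a locality-preserving isomorphism of spread $1$ for every $q$, and your $\sigma_q$ is exactly $\mathrm{id}_q\otimes\tau_{S(q)}$, so $T\cong\mathrm{id}$ holds directly, as the paper uses.
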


\begin{proof}
    Define a functor
    \begin{equation}
        T\colon \textbf{C}(X\times \NN) \rightarrow  \textbf{C}(X\times \NN).
    \end{equation}
For an object $q\in \NN^{X \times \NN}_\mathrm{lf}$
\begin{equation}
(Tq)(x, n) =
\begin{cases}
  1, & \text{if } x \in X,\ n = 1, \\[4pt]
  q(x, n-1), & \text{otherwise.}
\end{cases}
\end{equation}
Observe that there is an obvious locality-preserving isomorphism 

\begin{equation}
    \tau_q\colon\SA(X \times \NN,q)\xrightarrow{\sim}\SA(X \times \NN,Tq)
\end{equation}
which identifies $\SA(X\times \{n\},q)$ with $\SA(X\times \{n+1\},Tq)$. Since $\SA(X\times \{1\},Tq)=R$, the map is invertible (recall that $\Nbb$ here does not include $0$).

Given a locality-preserving isomorphism $\alpha: \SA(X\times \NN,q)\rightarrow \SA(X\times \NN,r)$, define
\begin{equation}
    T\al\colon \SA(X\times \NN,Tq)\xrightarrow{\tau_r\circ\alpha\circ\tau_q^{-1}}\SA(X\times \NN,Tr).
\end{equation}
It is easy to check that $T$ is a monoidal functor. 
Moreover, $S:=\bigotimes_{k=1}^\infty T^k$ and $\bigotimes_{k=0}^\infty T^k=I\otimes S$ are well-defined monoidal endofunctors on $\textbf{C}(X \times \NN)$. They are naturally isomorphic to each other. That is, for an object $q$, there is a natural isomorphism between $Sq$ and $q\otimes Sq$. On the group completion, $S$ induces a map $s\colon K(\textbf{C}(X \times \NN)) \rightarrow K(\textbf{C}(X \times \NN))$ with homotopy $\id+s\simeq s$, where $+$ is the $H$-space operation on $K(\textbf{C}(X \times \NN))$. It follows that $K(\textbf{C}(X \times \NN))$ is contractible.
\end{proof}

Motivated by the discussions in Remark~\ref{rmk::extend_factor}, we now construct an appropriate category. Since Azumaya algebras span all possible tensor factors of matrix algebras over a point $*$ (i.e., $\mathcal{A}(*, q)$), one might be tempted to include all possible tensor factors in $\mathcal{A}(X, q)$ for a general $X$. We will see in Remark~\ref{rmk::not_local} that this is not the desired notion. Instead, the appropriate category is defined via a construction analogous to that used in the proof of Theorem~\ref{thm::k0_qz}.
\begin{construction}\label{defn::admissible}
    Let $X$ be a metric space with assumptions as before, and $q$ be a quantum spin system on $X$. We define the \textit{admissible tensor factors} on \( X \) to be those tensor factors that arise from the following procedure:
    \begin{enumerate}
        \item Choose a quantum spin system $q$ on $X \times \Zbb$, equipped with the $L^{\infty}$-metric, and a QCA $\alpha$ of spread $< \ell$.
        \item Apply Lemma~\ref{lem::tensor_splitting} to the containments
        \[\mathcal{A}(X \times \Zbb_{\leq -\ell}, q) \subset \alpha(\mathcal{A}(X \times \Zbb_{\leq  0}, q)) \subset \mathcal{A}(X \times \Zbb_{\leq \ell}, q)\]
        to obtain $\mathcal B$ such that
        \[\mathcal{A}(X \times \Zbb_{\leq -\ell}, q) \otimes \mathcal B = \alpha(\mathcal{A}(X \times \Zbb_{\leq 0}, q)). \]
        Note that $\mathcal B = \mathcal{A}(X \times \Zbb_{(-\ell, \ell]}, q) \cap \alpha(\mathcal{A}(X \times \Zbb_{\leq 0}, q)) $, where we write $\ZZ_{(-\ell, \ell]}$ to mean $\ZZ\cap (-\ell, \ell]$.
        \item As $\ZZ_{(-\ell, \ell]}$ is bounded, one can define 
    $$q'(x):=\prod_{n\in \ZZ\cap(-\ell, \ell]} q(x, n)\in \NN^X_{\mathrm{lf}} $$ and regard $\mathcal{B}\subseteq\SA(X, q').$
    \end{enumerate}
    In general, there is no $r\in \NN^X_{\mathrm{lf}}$ with $\mathcal{B}\cong\SA(X, r)$ just like not every Azumaya algebra is a full matrix algebra.
    \end{construction}

\begin{exmp}
    When $X = \Zbb^0 = *$ (or if $X$ is bounded), the admissible tensor factors on a point is the same as Azumaya algebras. This is in part what Theorem~\ref{thm::k0_qz} is about.
\end{exmp}

We first deduce some core properties of admissible tensor factors.
\begin{lemma}\label{lem::admissible_faithfully_flat}
    An admissible tensor factor is faithfully flat as an $R$-module.
\end{lemma}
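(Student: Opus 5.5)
Our plan is to exhibit $\mathcal B$ as a tensor factor of a free module in a way that makes both flatness and faithfulness visible. Keep the notation of Construction~\ref{defn::admissible}. Write $\mathcal B'$ for the analogous piece obtained from the complementary half-line, namely
\[
\alpha(\SA(X\times\ZZ_{>0},q)) = \mathcal B'\otimes \SA(X\times \ZZ_{>\ell},q),
\qquad \mathcal B'\subseteq \SA(X\times\ZZ_{(-\ell,\ell]},q).
\]
Lemma~\ref{lem::tensor_splitting} supplies this factor exactly as it supplied $\mathcal B$; one applies it with the roles of left and right interchanged.

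First we show that $\mathcal B\otimes_R\mathcal B'\cong \SA(X\times\ZZ_{(-\ell,\ell]},q)=\SA(X,q')$. This is proved as in Theorem~\ref{thm::k0_qz}. Since $\alpha$ is an automorphism, the decomposition $\SA(X\times\ZZ,q)=\SA(X\times\ZZ_{\le0},q)\otimes\SA(X\times\ZZ_{>0},q)$ is carried to
\[
\SA(X\times\ZZ_{\le -\ell},q)\otimes\mathcal B\otimes\mathcal B'\otimes\SA(X\times\ZZ_{>\ell},q)=\SA(X\times\ZZ,q).
\]
This is an isomorphism of algebras, obtained by multiplying commuting subalgebras. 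We then cancel the outer factors. Concretely, intersect both sides with $\SA(X\times\ZZ_{(-\ell,\ell]},q)$ and use the basis of Proposition~\ref{prop::free_module}. Equivalently, apply Lemma~\ref{lem::tensor_splitting}, with $A$ the locally matrix algebra $\SA(X\times(\ZZ_{\le-\ell}\cup\ZZ_{>\ell}),q)$, to identify the multiplication map $\mathcal B\otimes\mathcal B'\to\SA(X\times\ZZ_{(-\ell,\ell]},q)$ as a bijection. The main care needed here is injectivity of the multiplication map. We get it by precomposing with $\alpha^{-1}$: that transports the product to $\SA(X\times\ZZ_{\le0})\otimes\SA(X\times\ZZ_{>0})$, where injectivity is clear.

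With this decomposition in hand, flatness follows formally. By Proposition~\ref{prop::free_module}, $M:=\SA(X,q')$ is a free $R$-module. For a map $N_1\to N_2$ of $R$-modules we have $N_i\otimes M\cong (N_i\otimes\mathcal B)\otimes\mathcal B'$. Because $\mathcal B'$ contains $R\cdot 1$ as a unital subalgebra, it is natural to hope that $R$ is a direct summand of $\mathcal B'$. That is not automatic, so we argue differently. The goal is to show that $\mathcal B$ is a direct summand of $M$ as an $R$-module, since a direct summand of a free module is projective, hence flat. For this, pick a conditional-expectation-type retraction $M\to \mathcal B$. One natural choice: $\mathcal B'$ is the centralizer of $\mathcal B$ and $\mathcal B$ is the centralizer of $\mathcal B'$. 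Working locally, on a finite piece $\SA(\Gamma)$, both $\mathcal B_\Gamma$ and $\mathcal B'_\Gamma$ are Azumaya by the argument of Theorem~\ref{thm::k0_qz}, using $B\otimes C=$ matrix algebra. Azumaya algebras are faithfully projective, and in particular $R\to \mathcal B'_\Gamma$ splits as a map of $R$-modules. Hence $\mathcal B_\Gamma=\mathcal B_\Gamma\otimes R$ is a summand of $\mathcal B_\Gamma\otimes\mathcal B'_\Gamma$, which is free. So each $\mathcal B_\Gamma$ is projective, and $\mathcal B=\varinjlim\mathcal B_\Gamma$ is a filtered colimit of flat modules, hence flat.

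The hardest step is producing compatible finite pieces $\mathcal B_\Gamma$, $\mathcal B'_\Gamma$ exhausting $\mathcal B$ and $\mathcal B'$. We would use finite spread to cut $X$ into bounded regions. The sets $\Gamma\times\ZZ_{(-\ell,\ell]}$ are not invariant under $\alpha$, so we plan to localize via Lemma~\ref{lem::tensor_splitting} applied to $\alpha$ restricted near the cylinder over a bounded $Y\subseteq X$, and enlarge $Y$ by $\ell$ as needed.

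Faithfulness comes last. Let $N$ be nonzero. Since $R\cdot1\subseteq\mathcal B$ and $\mathcal B$ contains a copy of some $\mathcal B_\Gamma$ that is faithfully projective over $R$, we get $N\otimes\mathcal B_\Gamma\ne0$. Flatness of $\mathcal B/\mathcal B_\Gamma$ would then give injectivity of $N\otimes\mathcal B_\Gamma\to N\otimes\mathcal B$, but that requires a summand-type statement. A cleaner route: if $N\otimes\mathcal B=0$, then $N\otimes M=N\otimes\mathcal B\otimes\mathcal B'=0$. But $M$ is free and nonzero, so $N\otimes M\ne0$. This contradiction shows that $\mathcal B$ is faithfully flat.
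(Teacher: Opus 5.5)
\textbf{Gap in the flatness step.} Your first step, the isomorphism $\mathcal B\otimes_R\mathcal B'\cong\SA(X,q')$, can be made rigorous independently of this lemma, for instance by applying $\pi^-\otimes\id\otimes\pi^+$ as in the proof of Lemma~\ref{lem::invert_local_preserve_equal}. Your final faithfulness argument is also fine. The flatness step, however, has a genuine gap.

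It relies on exhausting $\mathcal B$ by finite pieces $\mathcal B_\Gamma$ that are Azumaya, with $\mathcal B_\Gamma\otimes\mathcal B'_\Gamma$ a full matrix algebra. You never construct these pieces. The natural candidates $\mathcal B\cap\SA(\Gamma\times\ZZ_{(-\ell,\ell]},q)$ do not work. By invertibility, an element of the block $\SA(\Gamma\times\ZZ_{(-\ell,\ell]},q)$ only factors as $\sum b_ib_i'$ with supports in an $\ell'$-neighborhood of the block, not inside the block. So the two intersections need not generate the block, and the centralizer argument of Theorem~\ref{thm::k0_qz} does not apply. Enlarging $\Gamma$ only moves the boundary. ``Restricting $\alpha$ near a cylinder'' has no meaning, since $\alpha$ does not preserve cylinders. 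It is not clear that such an exhaustion exists at all, and the lemma does not need one.

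The root problem is your choice of decomposition. You paired $\mathcal B$ with $\mathcal B'$, a factor of the same unknown type, so no formal cancellation is available.

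\textbf{How the paper argues.} The paper uses the decomposition built into Construction~\ref{defn::admissible}:
\[
\SA(X\times\ZZ_{\le-\ell},q)\otimes_R\mathcal B=\alpha(\SA(X\times\ZZ_{\le0},q)).
\]
Here the complementary factor $F=\SA(X\times\ZZ_{\le-\ell},q)$ is free and nonzero, and the right-hand side is free by Proposition~\ref{prop::free_module}. For an injection $L\hookrightarrow N$, the map $F\otimes\mathcal B\otimes L\to F\otimes\mathcal B\otimes N$ is injective. Since $F$ is faithfully flat, this injectivity descends to $\mathcal B\otimes L\to\mathcal B\otimes N$. Faithfulness is then your own argument, with $F\otimes\mathcal B$ in place of $\mathcal B\otimes\mathcal B'$.

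\textbf{How to close your own route.} The hope you dismissed is in fact true here. $\mathcal B'$ sits inside the free module $\SA(X,q')$, and the basis of Proposition~\ref{prop::free_module} contains $1$. So the coordinate functional at $1$ (the ``projection to the identity'' used in Lemma~\ref{lem::invert_local_preserve_equal}) restricts to an $R$-linear retraction $\mathcal B'\to R\cdot 1$. Hence $\mathcal B\cong\mathcal B\otimes_R R\cdot 1$ is a direct summand of $\mathcal B\otimes_R\mathcal B'\cong\SA(X,q')$. Therefore $\mathcal B$ is projective, which is slightly stronger than flat, with no local Azumaya pieces needed.
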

\begin{proof}
Note that, by definition, an admissible tensor factor $\mathcal B$ satisfies
\[
\mathcal{A}(X \times \Zbb_{\leq -\ell}, q) \otimes_R \mathcal B
=
\alpha\big(\mathcal{A}(X \times \Zbb_{\leq 0}, q)\big)
\]
for some QCA $\alpha$. The right-hand side is isomorphic to $\mathcal{A}(X \times \Zbb_{\leq 0}, q)$, which is free by Proposition~\ref{prop::free_module}. Similarly, $\mathcal{A}(X \times \Zbb_{\leq -\ell}, q)$ is free.

Let $L \to M$ be an injective map of $R$-modules. Then
\[
\mathcal{A}(X \times \Zbb_{\leq -\ell}, q) \otimes_R \mathcal B \otimes_R L
\longrightarrow
\mathcal{A}(X \times \Zbb_{\leq -\ell}, q) \otimes_R \mathcal B \otimes_R M
\]
is injective, since free modules are flat. Because free modules are in fact faithfully flat, it follows that
\[
\mathcal B \otimes_R L
\longrightarrow
\mathcal B \otimes_R M
\]
is also injective. Hence $\mathcal B$ is flat.

On the other hand, if $\mathcal B \otimes_R L = 0$, then
\[
\alpha\big(\mathcal{A}(X \times \Zbb_{\leq 0}, q)\big) \otimes_R L = 0
\]
as well. Since $\alpha\big(\mathcal{A}(X \times \Zbb_{\leq 0}, q)\big)$ is faithfully flat, this forces $L=0$. 
\end{proof}

Lemma~\ref{lem::admissible_faithfully_flat} gives an extended version of the centralizer theorem (Lemma~\ref{lem::centralizer}).
\begin{lemma}\label{lem::tensor_complement_exist}
    Let $B \subseteq A \subseteq C$ where $C = \mathcal{A}(X, q)$. Suppose $A$ and $B$ are both admissible tensor factors of $C$, then $A = B \otimes (A \cap B')$.
\end{lemma}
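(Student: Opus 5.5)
The plan is to reduce the statement to Corollary~\ref{cor::tensor_factor_splitting}, with the faithful flatness from Lemma~\ref{lem::admissible_faithfully_flat} supplying the injectivity hypothesis. Here $B'$ denotes the complementary tensor factor of $B$ in $C$, i.e.\ the subalgebra with
\[
B\otimes_R B' \xrightarrow{\ \sim\ } C=\mathcal{A}(X,q)
\]
via multiplication. For $B$ coming from Construction~\ref{defn::admissible}, $B'$ is produced by the same construction applied to the opposite half-space $\alpha(\mathcal{A}(X\times\Zbb_{>0},q))$, exactly as $B$ and $C$ arose in the proof of Theorem~\ref{thm::k0_qz}. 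Under this identification $B$ sits as $B\otimes 1$ and $B'$ as $1\otimes B'$. The algebra $C=\mathcal{A}(X,q)$ is locally matrix, being the directed union of the finite tensor products $\mathcal{A}(\Gamma,q)$.

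I would apply Corollary~\ref{cor::tensor_factor_splitting} with the following roles.
\begin{itemize}
\item The corollary's ``$A$'' is our $B$.
\item The corollary's complement ``$A'$'' is our $B'$.
\item The ambient algebra is $C=\mathcal{A}(X,q)$, which is locally matrix.
\item The corollary's second factor ``$C$'' is $R$; after tensoring with $B'$ this recovers the setting of Lemma~\ref{lem::tensor_splitting}.
\item The subalgebra is our $A$, which contains $B\otimes 1$ and shares the unit of $C$.
\end{itemize}
Lemma~\ref{lem::tensor_splitting} is then applied in the locally matrix algebra $B\otimes B'=C$ to the subalgebra $A\supseteq B\otimes 1$, viewed inside $B\otimes B'$. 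This gives $A=B\otimes D$, where
\[
D=\{c\in B' : 1\otimes c\in A\}=A\cap B'.
\]
The last equality holds because $1\otimes B'$ is exactly the copy of $B'$ inside $C$. This is the desired identity $A=B\otimes(A\cap B')$.

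The only hypothesis left to verify is the injectivity condition of Corollary~\ref{cor::tensor_factor_splitting}. I would use criterion (a) listed after that corollary. First, $A$ is admissible, so it is faithfully flat, in particular flat, by Lemma~\ref{lem::admissible_faithfully_flat}. Second, $R\subseteq B'$ holds because $B'$ is a unital subalgebra of the free $R$-module $C$, and $R\cdot 1\to C$ is injective since $\Id$ is a basis element in Proposition~\ref{prop::free_module}. Alternatively, criterion (b) applies directly once $B'$ is known to be admissible and hence faithfully flat.

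I expect the main obstacle to be the bookkeeping that justifies treating $C$ literally as $B\otimes_R B'$. One must check that the multiplication map is an isomorphism rather than merely a surjection, and that the copy of $A\cap B'$ obtained in the corollary really is $A\cap B'$ taken inside $C$. Injectivity of $B\otimes B'\to C$ should follow from Lemma~\ref{lem::tensor_splitting} applied inside $\mathcal{A}(X\times\Zbb,q)$, where both $B$ and $B'$ are defined, combined with flatness of the pieces. If this fails in general, the fallback is to adopt $C\cong B\otimes B'$ as part of what ``admissible tensor factor of $C$'' means, and then run the argument above verbatim.
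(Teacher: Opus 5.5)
This is the paper's argument: Corollary~\ref{cor::tensor_factor_splitting} with your $B$ as the tensor factor and $B'$ as its complement in the locally matrix algebra $C=B\otimes_R B'$, your $A$ as the intermediate subalgebra, injectivity of $A\to B'\otimes_R A$ from criterion (a) ($A$ is flat by Lemma~\ref{lem::admissible_faithfully_flat}, and $R\subseteq B'$), and $D=\{c\in B' : 1\otimes c\in A\}=A\cap B'$. Your anticipated obstacle does not arise, since the paper likewise takes $C\cong B\otimes_R B'$ as part of what ``tensor factor'' means.

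One slip in your role table needs fixing. The corollary's second factor ``$C$'' must be $B'$, not $R$; with $R$, its hypothesis would read $A\subseteq B$. Also, Lemma~\ref{lem::tensor_splitting} is not applied to $A\subseteq B\otimes_R B'$ directly, because your $B$ need not be locally matrix. Instead it is applied to $B'\otimes_R A\subseteq (B'\otimes_R B)\otimes_R B'$ inside the corollary's proof.
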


\begin{proof}
    We have that $B \otimes B' = C$ for some $B'$. We have $B=B\otimes_R 1 \subseteq A \subseteq B \otimes_R B'$. Thus, Corollary~\ref{cor::tensor_factor_splitting} implies that $A = B \otimes_R B''$ for some $B''$. We can identify $B''$, as constructed in the corollary, with $A \cap B'$. This is because $A \cap B'$ is equal to the centralizer of $B$ in $A$. Note we can apply the corollary as all tensor factors contain a copy of $R$, and admissible tensor factors are faithfully flat.
\end{proof}

 For an admissible tensor factor $\mathcal{B}$ arising in Construction~\ref{defn::admissible}, it is not hard to see that the multiplication map $\mathcal B\otimes_R \mathcal B'\rightarrow \mathcal{A}(X, q')$ is an $R$-algebra isomorphism with $\mathcal B'$ the centralizer of $\mathcal B$ (this can be done using Lemma~\ref{lem::tensor_complement_exist}). Lastly, by a symmetric argument,
     \[ \mathcal B'\otimes \mathcal{A}(X \times \Zbb_{>\ell}, q)  = \alpha(\mathcal{A}(X \times \Zbb_{> 0}, q)). \]
 We also have the following characterization of admissible tensor factors. 
\begin{defn}
   A tensor factor $\mathcal B$ of a subalgebra $\mathcal S \subseteq \mathcal{A}(X,q')$ is \emph{invertible} if there exists $\ell' > 0$ such that for every $a \in \mathcal S$ there exist elements $b_i \in \mathcal B$ and $b_i' \in \mathcal B'$, where $\mathcal B'$ denotes the tensor complement of $\mathcal B$ in $\mathcal S$, satisfying
\begin{equation}
    a = \sum_i b_i \otimes b_i',
\end{equation}
and such that the supports of $b_i$ and $b_i'$ are contained in the $\ell'$-neighborhood of the support of $a$.
\end{defn}

\begin{lemma}\label{lem::invert_local_preserve_equal}
    A tensor factor $\mathcal B\subseteq \mathcal{A}(X, q')$ with complement $\mathcal{B}'$ is admissible if and only it is invertible.
\end{lemma}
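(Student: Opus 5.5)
The proof has two directions, each of which reuses a construction from the proof of Theorem~\ref{thm::k0_qz}: the forward direction is a locality computation, and the backward direction builds a QCA that realizes $\mathcal B$.

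\emph{Admissible $\Rightarrow$ invertible.} Let $\mathcal B$ arise from a QCA $\alpha$ on $X\times\ZZ$ of spread $<\ell$. Then
\[
\mathcal{A}(X\times\ZZ_{\le -\ell},q)\otimes\mathcal B=\alpha(\mathcal{A}(X\times\ZZ_{\le 0},q)),
\qquad
\mathcal B'\otimes\mathcal{A}(X\times\ZZ_{>\ell},q)=\alpha(\mathcal{A}(X\times\ZZ_{>0},q)).
\]
Take $a\in\mathcal{A}(X,q')=\mathcal{A}(X\times\ZZ_{(-\ell,\ell]},q)$ supported near $S\subseteq X$. Write $\alpha^{-1}(a)=\sum_i c_i\otimes c_i'$ according to the splitting $\ZZ_{\le0}\,|\,\ZZ_{>0}$. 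Because $\alpha^{-1}$ has finite spread, the $c_i,c_i'$ can be chosen with $X$-support in the $\ell$-neighbourhood of $S$; this uses the product basis of Proposition~\ref{prop::free_module} on bounded regions. Applying $\alpha$ gives $a=\sum_i\alpha(c_i)\alpha(c_i')$, with $\alpha(c_i)\in \mathcal{A}(X\times\ZZ_{\le-\ell},q)\otimes\mathcal B$ and $\alpha(c_i')\in\mathcal B'\otimes\mathcal{A}(X\times\ZZ_{>\ell},q)$, all supported within $2\ell$ of $S$. Since $a$ lives in the middle slab, we then project onto it. Concretely, apply the decomposition of Lemma~\ref{lem::tensor_splitting} with respect to a basis of the outer slabs that contains $\Id$, and keep the coefficient of $\Id$. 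This projection is $R$-linear, commutes with multiplication by elements of the slab $\mathcal{A}(X\times \ZZ_{(-\ell,\ell]},q)$, and does not enlarge $X$-supports. It produces $a=\sum_i b_i\otimes b_i'$ with $b_i\in\mathcal B$ and $b_i'\in\mathcal B'$ supported within $\ell'=2\ell$ of $\operatorname{supp}a$.

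\emph{Invertible $\Rightarrow$ admissible.} Here I mimic Figure~\ref{fig::surjectivity}. On $X\times\ZZ$, put $q'$ on every layer $X\times\{n\}$, so that $\mathcal{A}(X\times\{n\},q')=\mathcal B_n\otimes\mathcal B'_n$ with each $(\mathcal B_n,\mathcal B'_n)$ a copy of $(\mathcal B,\mathcal B')$. Define $\alpha$ as the ``shift'' that sends $\mathcal B_n$ to $\mathcal B_{n+1}$ and fixes every $\mathcal B'_n$. This is an algebra automorphism because the decomposition $\mathcal{A}(X\times\ZZ,q')=\bigotimes_n(\mathcal B_n\otimes\mathcal B'_n)$ holds. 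It has finite spread exactly because of invertibility: an on-site matrix unit $a$ at $(x,n)$ decomposes as $\sum b_i\otimes b_i'$ with supports in the $\ell'$-neighbourhood of $x$, so $\alpha(a)=\sum (b_i)_{n+1}(b_i')_n$ is supported within $\ell'$ in $X$ and within one step in $\ZZ$. The inverse is controlled in the same way. Now compute the admissible tensor factor of $\alpha$ at the cut $0$ with spread $1$:
\[
\alpha(\mathcal{A}(X\times\ZZ_{\le0},q'))=\mathcal{A}(X\times\ZZ_{\le -1},q')\otimes\mathcal B_1\otimes\mathcal B'_0 .
\]
Relabelling the middle slab identifies the resulting factor with $\mathcal B\otimes\mathcal B'\cdot$(shifted) data, which is not yet $\mathcal B$ itself. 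To isolate $\mathcal B$, I would instead place $\mathcal B$ and $\mathcal B'$ on alternating layers as in Figure~\ref{fig::surjectivity}, or apply Construction~\ref{defn::admissible} with a cut chosen so that the middle slab contains exactly $\mathcal B$ coming from one side and $\mathcal B'$ staying. This should yield $\mathcal B\subseteq\mathcal{A}(X,q'')$ with $q''$ a product of copies of $q'$. Finally, a padding step (Example~\ref{exp::quantum_spin_system_example}-style regrouping) shows that this tensor factor agrees with $\mathcal B\subseteq\mathcal{A}(X,q')$ up to the identification of spin systems.

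The main obstacle is the bookkeeping in the backward direction. The shift must be a genuine automorphism of finite spread, which requires the support control of invertibility for both $\mathcal B$ and $\mathcal B'$. The layers must also be arranged so that the factor produced by Construction~\ref{defn::admissible} is literally $\mathcal B$ inside the original $\mathcal{A}(X,q')$, not merely a stabilized version of it. If a stabilized version is all the construction gives, I would invoke Lemma~\ref{lem::tensor_complement_exist} to split off the extra full matrix factors.
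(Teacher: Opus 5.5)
The forward direction is the paper's argument and is fine: split $\alpha^{-1}(a)$ across the cut, apply $\alpha$, and take the $\Id$-coefficient on both outer slabs. The property of that projection you actually need is not commutation with middle-slab multiplication. It is multiplicativity on products $xy$ with $x$ in (left slab)$\otimes$(middle) and $y$ in (middle)$\otimes$(right slab), which holds because the two outer slabs are disjoint tensor factors.

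The gap is in the backward direction. Your displayed computation drops $\mathcal{B}_0$. If $\mathcal{B}_n\mapsto\mathcal{B}_{n+1}$ and every $\mathcal{B}'_n$ is fixed, then
\[
\alpha\bigl(\mathcal{A}(X\times\ZZ_{\le 0},q')\bigr)=\bigotimes_{n\le 1}\mathcal{B}_n\otimes\bigotimes_{n\le 0}\mathcal{B}'_n=\mathcal{A}(X\times\ZZ_{\le 0},q')\otimes\mathcal{B}_1 .
\]
So this shift realizes $\mathcal{A}(X,q')\otimes\mathcal{B}$, that is, $\mathcal{B}$ stabilized by a full matrix algebra, not $\mathcal{B}$ itself. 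In addition, ``spread $1$'' ignores the $X$-direction spread $\ell'$. In the $L^\infty$ metric the window $(-\ell,\ell]$ of Construction~\ref{defn::admissible} has $\ell\ge\ell'$, so with a layer at every integer it absorbs further full layers. Your fallback does not repair this. Lemma~\ref{lem::tensor_complement_exist} assumes that both $B\subseteq A$ are already admissible, so using it to split $\mathcal{B}$ off $\mathcal{A}(X,q')\otimes\mathcal{B}$ presupposes what you want to prove. The other options you list are left at ``should yield''.

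The missing step is the choice of which factor moves, together with the layer spacing. Let $q$ equal $q'$ on $X\times\ell'\ZZ$ and $1$ elsewhere, and write $\mathcal{B}_k\otimes\mathcal{B}'_k$ for the layer at height $k\ell'$. Let $\alpha$ fix every $\mathcal{B}_k$ and send $\mathcal{B}'_k$ to $\mathcal{B}'_{k-1}$, as in Figure~\ref{fig::admissible}. Your support estimate from invertibility shows that $\alpha$ and $\alpha^{-1}$ have $L^\infty$-spread at most $\ell'$, and
\[
\alpha\bigl(\mathcal{A}(X\times\ZZ_{\le 0},q)\bigr)=\bigotimes_{k\le 0}\mathcal{B}_k\otimes\bigotimes_{k\le -1}\mathcal{B}'_k=\mathcal{A}(X\times\ZZ_{\le -\ell'},q)\otimes\mathcal{B}_0 .
\]
Hence the containments in step (2) of the Construction hold with $\ell=\ell'$. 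The window $(-\ell',\ell']$ then meets only the layers at heights $0$ and $\ell'$. The Construction returns $\mathcal{B}_0$ itself, tensored only with the trivial algebra $R$ of the layer at height $\ell'$. There is no full matrix factor to cancel, and the appeal to Lemma~\ref{lem::tensor_complement_exist} is unnecessary.
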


\begin{proof}[Proof of Lemma~\ref{lem::invert_local_preserve_equal}]
    Let $a\in \mathcal{A}(X, q')$ with support $S\subset X$. For the forward direction, we use the notation introduced in Construction~\ref{defn::admissible}. In particular, we remember a lift $a\in \mathcal{A}(X\times \ZZ_{(-\ell, \ell]}, q)\cong \mathcal{A}(X, q')$. Then, the support of this lift is contained in $S\times \ZZ_{(-\ell, \ell]}$. Since the inverse of $\alpha$ also has spread $\ell$, $\alpha^{-1}(a)$ has support contained in $B_\ell(S)\times \ZZ_{(-2\ell,2\ell]}$. Write
    \begin{equation}
        \alpha^{-1}(a)=\sum_i c_i\otimes c_i', 
    \end{equation}
    with the supports of $c_i$ contained in $B_\ell(S)\times \ZZ_{(-2\ell,0]}$ and that of $c_i'$ contained in $B_\ell(S)\times \ZZ_{(0, 2\ell]}$. Then 
    \begin{equation} \label{eqn::pre-projection}
        a=\sum_i \alpha(c_i)  \al(c_i'), 
    \end{equation}
    with the support of $\alpha(c_i)$ contained in $B_{2\ell}(S)\times \ZZ_{(-3\ell,\ell]}$ and that of $\al(c_i')$ contained in $B_{2\ell}(S)\times \ZZ_{(-\ell, 3\ell]}$. Most importantly, 
    \begin{equation}
        \mathcal{A}(X \times \Zbb_{\leq -\ell}, q) \otimes \mathcal B = \alpha(\mathcal{A}(X \times \Zbb_{\leq 0}, q))\ni \al(c_i)
    \end{equation}
    and 
    \begin{equation}
        \mathcal B'\otimes \mathcal{A}(X \times \Zbb_{>\ell}, q)  = \alpha(\mathcal{A}(X \times \Zbb_{> 0}, q))\ni \al(c_i').
    \end{equation}
    In summary, 
    \begin{equation}\label{project_tensor_1}
        \mathcal{A}(B_{2\ell}(S) \times \Zbb_{(-3\ell, -\ell]}, q) \otimes \left(\mathcal{A}(B_{2\ell}(S) \times \Zbb_{(-\ell, \ell]}, q)\cap \mathcal B\right)\ni \al(c_i)
    \end{equation}
    and 
    \begin{equation}\label{project_tensor_2}
        \mathcal{A}(B_{2\ell}(S) \times \Zbb_{(\ell, 3\ell]}, q) \otimes \left(\mathcal{A}(B_{2\ell}(S) \times \Zbb_{(-\ell, \ell]}, q)\cap\mathcal B'\right)\ni \al(c_i').
    \end{equation}
   If only $\al(c_i)$ and $\al(c_i')$ were in their respective latter tensor factors, we would have been done. Nevertheless, taking projections would resolve this issue. Specifically, both $\mathcal{A}(B_{2\ell}(S) \times \Zbb_{(-3\ell, -\ell]}, q)$ and $\mathcal{A}(B_{2\ell}(S) \times \Zbb_{(\ell, 3\ell]}, q)$ are finitely generated free $R$-modules. Let $\pi^-$ and $\pi^+$ be respectively the projections to the identity.\footnote{This is a module homomorphism to $R$ which sends identity to $1\in R$. In particular, it is not a map of $R$-algebras, which is ok in our usage here.} Write $\alpha(c_i) = \sum_j d_j^{-} \otimes e_j$ and $\alpha(c_i') = \sum_k d_k^{+} \otimes e_k'$ according to their tensor decompositions in (\ref{project_tensor_1}) and (\ref{project_tensor_2}). Apply $\pi^-\otimes \id\otimes \pi^+$, where $\id$ is the identity map on $\SA(X\times \ZZ_{(-l, l]})$, to equation~\eqref{eqn::pre-projection} and obtain
    \begin{align*}
        a&= \pi^{-} \otimes \operatorname{id} \otimes \pi^{+}(1 \otimes a \otimes 1) = \sum_{i, j} \pi^{-}(d_j^{-}) \otimes \operatorname{id}(e_j \otimes e_k') \otimes \pi^{+}(d_k^{+})\\
        &= \sum_{i, j} (\pi^{-}(d_j^{-}) \otimes e_j) \otimes (e_k' \otimes \pi^{+}(d_k^{+})).
    \end{align*}
    We now see that
    \begin{equation}
        b_{ij} :=\pi^{-}(d_j^{-}) \otimes e_j \in \mathcal{A}(B_{2\ell}(S) \times \Zbb_{(-\ell, \ell]}, q)\cap \mathcal B,
    \end{equation}
    \begin{equation}
    b_{ij}':=e_k' \otimes \pi^{+}(d_k^{+})\in \mathcal{A}(B_{2\ell}(S) \times \Zbb_{(-\ell, \ell]}, q)\cap\mathcal B'
    \end{equation}
    and they satisfy the support condition with $\ell'=2\ell.$

Conversely, suppose $\mathcal{B}$ is an invertible tensor complement and $\mathcal{B}'$ is its tensor complement. Consider $\mathcal{A}(X \times \Zbb, q)$, outlined by placing matrix algebras on $X \times \Zbb$ as in Figure~\ref{fig::admissible}, and with a QCA $\alpha$ given by shifting the tensor factor $\mathcal{B}'$ to the left. The map $\alpha$ is locality-preserving as $\mathcal{B}$ and $\mathcal{B}'$ can factor any element of $\mathcal{A}(X, q')$ in a way with uniformly bounded supports. It has spread $< \ell'$ since we are taking the $L^{\infty}$-metric. Carrying out the steps in Construction~\ref{defn::admissible}, we also see that $\mathcal{B}$ is produced and is admissible. 
\end{proof}

\begin{figure}
    \centering
\[
\begin{tikzpicture}[x=0.75pt,y=0.75pt,yscale=-1,xscale=1]
%uncomment if require: \path (0,300); %set diagram left start at 0, and has height of 300

%Shape: Circle [id:dp04793188511164159] 
\draw  [fill={rgb, 255:red, 0; green, 0; blue, 0 }  ,fill opacity=1 ] (290,148.75) .. controls (290,146.4) and (291.9,144.5) .. (294.25,144.5) .. controls (296.6,144.5) and (298.5,146.4) .. (298.5,148.75) .. controls (298.5,151.1) and (296.6,153) .. (294.25,153) .. controls (291.9,153) and (290,151.1) .. (290,148.75) -- cycle ;
%Shape: Circle [id:dp12745166225417282] 
\draw  [fill={rgb, 255:red, 0; green, 0; blue, 0 }  ,fill opacity=1 ] (354,148.75) .. controls (354,146.4) and (355.9,144.5) .. (358.25,144.5) .. controls (360.6,144.5) and (362.5,146.4) .. (362.5,148.75) .. controls (362.5,151.1) and (360.6,153) .. (358.25,153) .. controls (355.9,153) and (354,151.1) .. (354,148.75) -- cycle ;
%Straight Lines [id:da347449926497857] 
\draw    (294.25,148.75) -- (358.25,148.75) ;
%Shape: Circle [id:dp27574170466254766] 
\draw  [fill={rgb, 255:red, 0; green, 0; blue, 0 }  ,fill opacity=1 ] (418,148.75) .. controls (418,146.4) and (419.9,144.5) .. (422.25,144.5) .. controls (424.6,144.5) and (426.5,146.4) .. (426.5,148.75) .. controls (426.5,151.1) and (424.6,153) .. (422.25,153) .. controls (419.9,153) and (418,151.1) .. (418,148.75) -- cycle ;
%Straight Lines [id:da8993040431146623] 
\draw    (358.25,148.75) -- (422.25,148.75) ;
%Shape: Circle [id:dp8479760100358686] 
\draw  [fill={rgb, 255:red, 0; green, 0; blue, 0 }  ,fill opacity=1 ] (482,148.75) .. controls (482,146.4) and (483.9,144.5) .. (486.25,144.5) .. controls (488.6,144.5) and (490.5,146.4) .. (490.5,148.75) .. controls (490.5,151.1) and (488.6,153) .. (486.25,153) .. controls (483.9,153) and (482,151.1) .. (482,148.75) -- cycle ;
%Straight Lines [id:da43478792360452734] 
\draw    (422.25,148.75) -- (486.25,148.75) ;
%Shape: Circle [id:dp6952066550730442] 
\draw  [fill={rgb, 255:red, 0; green, 0; blue, 0 }  ,fill opacity=1 ] (228,148.75) .. controls (228,146.4) and (229.9,144.5) .. (232.25,144.5) .. controls (234.6,144.5) and (236.5,146.4) .. (236.5,148.75) .. controls (236.5,151.1) and (234.6,153) .. (232.25,153) .. controls (229.9,153) and (228,151.1) .. (228,148.75) -- cycle ;
%Straight Lines [id:da9060691121679773] 
\draw    (232.25,148.75) -- (296.25,148.75) ;
%Straight Lines [id:da1780443082550054] 
\draw    (186.25,148.75) -- (250.25,148.75) ;
%Straight Lines [id:da5923248296577643] 
\draw    (470.25,148.75) -- (534.25,148.75) ;
%Straight Lines [id:da8981143887591957] 
\draw    (358.31,85.31) -- (358.19,212.19) ;
%Shape: Circle [id:dp6845974769556276] 
\draw  [fill={rgb, 255:red, 0; green, 0; blue, 0 }  ,fill opacity=1 ] (175,148.75) .. controls (175,146.4) and (176.9,144.5) .. (179.25,144.5) .. controls (181.6,144.5) and (183.5,146.4) .. (183.5,148.75) .. controls (183.5,151.1) and (181.6,153) .. (179.25,153) .. controls (176.9,153) and (175,151.1) .. (175,148.75) -- cycle ;
%Straight Lines [id:da8311402749177544] 
\draw    (164,148.75) -- (228,148.75) ;
%Curve Lines [id:da6700215823011] 
\draw [color={rgb, 255:red, 208; green, 2; blue, 27 }  ,draw opacity=1 ]   (281,111) .. controls (239.84,82.58) and (262.07,81.44) .. (230.03,108.7) ;
\draw [shift={(228,110.4)}, rotate = 320.36] [fill={rgb, 255:red, 208; green, 2; blue, 27 }  ,fill opacity=1 ][line width=0.08]  [draw opacity=0] (8.93,-4.29) -- (0,0) -- (8.93,4.29) -- cycle    ;
%Curve Lines [id:da08982039166969036] 
\draw [color={rgb, 255:red, 208; green, 2; blue, 27 }  ,draw opacity=1 ]   (217,112) .. controls (175.84,83.58) and (198.07,82.44) .. (166.03,109.7) ;
\draw [shift={(164,111.4)}, rotate = 320.36] [fill={rgb, 255:red, 208; green, 2; blue, 27 }  ,fill opacity=1 ][line width=0.08]  [draw opacity=0] (8.93,-4.29) -- (0,0) -- (8.93,4.29) -- cycle    ;
%Curve Lines [id:da5382136594963179] 
\draw [color={rgb, 255:red, 208; green, 2; blue, 27 }  ,draw opacity=1 ]   (344,112) .. controls (302.84,83.58) and (325.07,82.44) .. (293.03,109.7) ;
\draw [shift={(291,111.4)}, rotate = 320.36] [fill={rgb, 255:red, 208; green, 2; blue, 27 }  ,fill opacity=1 ][line width=0.08]  [draw opacity=0] (8.93,-4.29) -- (0,0) -- (8.93,4.29) -- cycle    ;
%Curve Lines [id:da7623691769880806] 
\draw [color={rgb, 255:red, 208; green, 2; blue, 27 }  ,draw opacity=1 ]   (413,111) .. controls (371.84,82.58) and (394.07,81.44) .. (362.03,108.7) ;
\draw [shift={(360,110.4)}, rotate = 320.36] [fill={rgb, 255:red, 208; green, 2; blue, 27 }  ,fill opacity=1 ][line width=0.08]  [draw opacity=0] (8.93,-4.29) -- (0,0) -- (8.93,4.29) -- cycle    ;
%Curve Lines [id:da6028908040862447] 
\draw [color={rgb, 255:red, 208; green, 2; blue, 27 }  ,draw opacity=1 ]   (477,111) .. controls (435.84,82.58) and (458.07,81.44) .. (426.03,108.7) ;
\draw [shift={(424,110.4)}, rotate = 320.36] [fill={rgb, 255:red, 208; green, 2; blue, 27 }  ,fill opacity=1 ][line width=0.08]  [draw opacity=0] (8.93,-4.29) -- (0,0) -- (8.93,4.29) -- cycle    ;
%Curve Lines [id:da2314112761102033] 
\draw [color={rgb, 255:red, 208; green, 2; blue, 27 }  ,draw opacity=1 ]   (542,111) .. controls (500.84,82.58) and (523.07,81.44) .. (491.03,108.7) ;
\draw [shift={(489,110.4)}, rotate = 320.36] [fill={rgb, 255:red, 208; green, 2; blue, 27 }  ,fill opacity=1 ][line width=0.08]  [draw opacity=0] (8.93,-4.29) -- (0,0) -- (8.93,4.29) -- cycle    ;

% Text Node
\draw (281,159.4) node [anchor=north west][inner sep=0.75pt]    {$-\ell'$};
% Text Node
\draw (347,158.4) node [anchor=north west][inner sep=0.75pt]    {$0$};
% Text Node
\draw (416,160.4) node [anchor=north west][inner sep=0.75pt]    {$\ell'$};
% Text Node
\draw (480,160.4) node [anchor=north west][inner sep=0.75pt]    {$2\ell'$};
% Text Node
\draw (219,158.4) node [anchor=north west][inner sep=0.75pt]    {$-2\ell'$};
% Text Node
\draw (138,133.4) node [anchor=north west][inner sep=0.75pt]  [font=\large]  {$...$};
% Text Node
\draw (165,159.4) node [anchor=north west][inner sep=0.75pt]    {$-3\ell'$};
% Text Node
\draw (209,115.4) node [anchor=north west][inner sep=0.75pt]  [font=\small,color={rgb, 255:red, 208; green, 2; blue, 27 }  ,opacity=1 ]  {$\underbrace{\mathcal{B} '} \otimes \mathcal{B}$};
% Text Node
\draw (539,140.4) node [anchor=north west][inner sep=0.75pt]    {$\mathbb{Z}$};
% Text Node
\draw (338,78.4) node [anchor=north west][inner sep=0.75pt]    {$X$};
% Text Node
\draw (145,116.4) node [anchor=north west][inner sep=0.75pt]  [font=\small,color={rgb, 255:red, 208; green, 2; blue, 27 }  ,opacity=1 ]  {$\underbrace{\mathcal{B} '} \otimes \mathcal{B}$};
% Text Node
\draw (272,116.4) node [anchor=north west][inner sep=0.75pt]  [font=\small,color={rgb, 255:red, 208; green, 2; blue, 27 }  ,opacity=1 ]  {$\underbrace{\mathcal{B} '} \otimes \mathcal{B}$};
% Text Node
\draw (341,115.4) node [anchor=north west][inner sep=0.75pt]  [font=\small,color={rgb, 255:red, 208; green, 2; blue, 27 }  ,opacity=1 ]  {$\underbrace{\mathcal{B} '} \otimes \mathcal{B}$};
% Text Node
\draw (405,115.4) node [anchor=north west][inner sep=0.75pt]  [font=\small,color={rgb, 255:red, 208; green, 2; blue, 27 }  ,opacity=1 ]  {$\underbrace{\mathcal{B} '} \otimes \mathcal{B}$};
% Text Node
\draw (470,115.4) node [anchor=north west][inner sep=0.75pt]  [font=\small,color={rgb, 255:red, 208; green, 2; blue, 27 }  ,opacity=1 ]  {$\underbrace{\mathcal{B} '} \otimes \mathcal{B}$};
% Text Node
\draw (541,109.4) node [anchor=north west][inner sep=0.75pt]  [font=\large,color={rgb, 255:red, 208; green, 2; blue, 27 }  ,opacity=1 ]  {$...$};
% Text Node
\draw (122,113.4) node [anchor=north west][inner sep=0.75pt]  [font=\large,color={rgb, 255:red, 208; green, 2; blue, 27 }  ,opacity=1 ]  {$...$};

\end{tikzpicture}
\]
\caption{Figure displaying a quantum spin system on $X \times \Zbb$ such that applying the steps in Construction~\ref{defn::admissible} would produce the admissible tensor factor $\mathcal{B}$.}
\label{fig::admissible}
\end{figure}
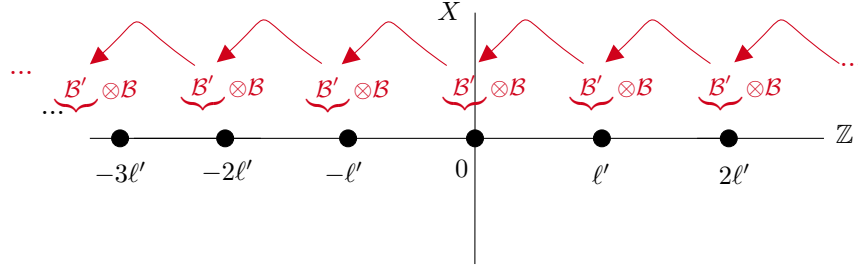

\begin{remark}\label{rmk::not_local}
Here we give an example of a tensor factor $\mathcal{B}$ that is not invertible and see why the map in Figure~\ref{fig::admissible} fails to be locality-preserving. Let $\Rbb$ be the reals and $\mathbb{H}$ be the quaternions. Recall that there is a decomposition $\operatorname{Mat}(\Rbb^2) \otimes \operatorname{Mat}(\Rbb^2) \cong \operatorname{Mat}(\Rbb^4) \cong \mathbb{H} \otimes \mathbb{H}$.

Now let $q: \Zbb \to \Nbb_{> 0}$ be a quantum spin system where $q(0) = 1$ and $q(n) = 2$ for $n \neq 0$, we will build a tensor factor $\mathcal{B}$ of $\mathcal{A}(\Zbb, q)$ as follows. Observe that for $n \neq 0$, $\operatorname{Mat}(\Rbb^{q_n}) \otimes \operatorname{Mat}(\Rbb^{q_{-n}})$ admits a decomposition into $\mathbb{H} \otimes \mathbb{H}$ as explained above, and each $\mathbb{H}$ has support $\{-n, n\}$. We let $\mathcal{B}$ be the tensor factor generated by taking one of these $\mathbb{H}$'s at each $n$ as $n \to \infty$. The map $\alpha$ displayed in Figure~\ref{fig::admissible} cannot be locality-preserving. Indeed, for an element $x \in \mathcal{A}(\{n\}, q)$ that admits a non-trivial decomposition into elements of the two quaternions, the support of $\alpha(x)$ is not uniformly bounded for all $n$.
\end{remark}

Next we prove a property of invertible tensor factors that will be helpful.
\begin{lem}\label{lem::invertible_transitive}
    Let $\mathcal{B}$ be an invertible tensor factor of $\mathcal{A}(X, q')$ and $\mathcal{D}$ an invertible tensor factor of $\mathcal{B}$. Then $\mathcal{D}$ is an invertible tensor factor of $\mathcal{A}(X, q')$.
\end{lem}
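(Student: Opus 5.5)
Throughout, write $\mathcal B'$ for the tensor complement of $\mathcal B$ in $\mathcal{A}(X,q')$ and $\mathcal D'$ for the tensor complement of $\mathcal D$ inside $\mathcal B$. The natural candidate for the complement of $\mathcal D$ in $\mathcal{A}(X,q')$ is $\mathcal D'':=\mathcal D'\otimes\mathcal B'$. Indeed, multiplication gives
\[
\mathcal D\otimes\mathcal D'\otimes\mathcal B'\cong \mathcal B\otimes \mathcal B'\cong\mathcal{A}(X,q'),
\]
so $\mathcal D$ is a tensor factor of $\mathcal{A}(X,q')$ with complement $\mathcal D''$. What remains is to establish the bounded-support factorization property with some uniform constant $\ell''$.

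The plan is simply to compose the two factorizations. Let $\ell_1$ be the invertibility constant of $\mathcal B\subseteq\mathcal{A}(X,q')$ and $\ell_2$ that of $\mathcal D\subseteq\mathcal B$. Take $a\in\mathcal{A}(X,q')$ with support $S$. By invertibility of $\mathcal B$, write
\[
a=\sum_i b_i\otimes b_i',\qquad b_i\in\mathcal B,\ b_i'\in\mathcal B',
\]
with all supports contained in $B_{\ell_1}(S)$. Each $b_i$ lies in $\mathcal B$, which is viewed as a subalgebra of $\mathcal{A}(X,q')$, and its support lies in $B_{\ell_1}(S)$. Applying invertibility of $\mathcal D$ in $\mathcal B$ to $b_i$ gives
\[
b_i=\sum_j d_{ij}\otimes d_{ij}',\qquad d_{ij}\in\mathcal D,\ d_{ij}'\in\mathcal D',
\]
with supports in $B_{\ell_2}(B_{\ell_1}(S))\subseteq B_{\ell_1+\ell_2}(S)$. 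Substituting,
\[
a=\sum_{i,j} d_{ij}\otimes\bigl(d'_{ij}\, b_i'\bigr),
\]
and $d'_{ij}b'_i\in\mathcal D'\otimes\mathcal B'=\mathcal D''$ is supported in the union of the two supports, hence in $B_{\ell_1+\ell_2}(S)$. So $\ell''=\ell_1+\ell_2$ works. The remaining step is to check that the identification $\mathcal D\otimes\mathcal D''\to\mathcal{A}(X,q')$ is compatible with this expression; this is immediate since all the maps are multiplication inside $\mathcal{A}(X,q')$.

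The main obstacle is the meaning of \emph{support} in the definition of invertibility when the ambient algebra $\mathcal S=\mathcal B$ is not $\mathcal{A}(X,q')$. Support must be measured in $\mathcal{A}(X,q')$, via the inclusion $\mathcal B\subseteq\mathcal{A}(X,q')$, for the composition of neighborhoods above to make sense. I will adopt this reading; it is the one implicit in the definition, since $\mathcal S\subseteq\mathcal{A}(X,q')$. Under it, "support of $b_i$ in $B_{\ell_1}(S)$" feeds directly into the hypothesis on $\mathcal D$ with $S$ replaced by $B_{\ell_1}(S)$, and the rest of the argument is bookkeeping.

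If instead one wanted to work with admissibility, one could pass through Lemma~\ref{lem::invert_local_preserve_equal}. In that case I would also verify, via Lemma~\ref{lem::tensor_complement_exist}, that the complement $\mathcal D''$ coincides with the centralizer of $\mathcal D$ in $\mathcal{A}(X,q')$, so that it is the canonical complement.
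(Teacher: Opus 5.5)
Your proposal is correct and follows the paper's proof essentially step for step: you factor $a$ through $\mathcal{B}\otimes\mathcal{B}'$, factor each $b_i$ through $\mathcal{D}\otimes\mathcal{D}'$, regroup as $d_{ij}\otimes(d'_{ij}b'_i)$ with complement $\mathcal{D}'\otimes\mathcal{B}'$, and use the triangle inequality to get the constant $\ell_1+\ell_2$. You also name the complement explicitly and state that supports are measured in the ambient $\mathcal{A}(X,q')$; the paper uses both facts without stating them.
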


\begin{proof}
    Write $\mathcal{B}', \mathcal{D}'$ for the tensor complements of $\mathcal{B}$ and $\mathcal{D}$ respectively. Since $\mathcal{B}$ is invertible, there exists a uniform $\ell > 0$ such that for any $a \in \mathcal{A}(X, q')$, we can write
    \[a = \sum_{i = 1}^N b_i \otimes b_i', b_i \in \mathcal{B}, b_i' \in \mathcal{B}',\]
    and the supports of $b_i, b_i'$ are contained in the $\ell$-ball of $\operatorname{supp}(a)$. Since each $b_i \in \mathcal{B}$, there is some uniform $\ell' > 0$ such that we can write
    \[b_i = \sum_{j = 1}^{N_i} d_{i,j} \otimes d_{i, j}', d_{i, j} \in \mathcal{D}, d_{i, j}' \in \mathcal{D}',\]
    and the supports of $d_{i, j}, d_{i, j}'$ are contained in the $\ell'$-ball of $\operatorname{supp}(b_i)$. We can now write
    \[a = \sum_{i = 1}^N \sum_{j = 1}^{N_i} d_{i, j} \otimes (d_{i,j}' \otimes b_i').\]
    The triangle inequality implies $\operatorname{supp}(d_{i, j})$ is contained in the $\ell+\ell'$-ball of $\operatorname{supp}(a)$. The support of $x \otimes y$ is contained in the union of $\operatorname{supp}(x)$ and $\operatorname{supp}(y)$, and hence $\operatorname{supp}(d'_{i, j} \otimes b_i')$ is contained in the $\ell+\ell'$-ball of $\operatorname{supp}(a)$ as well.
\end{proof}

\begin{defn}\label{def::enlarge_category}
Let $W \in \{*, \Zbb_{\leq 0}, \Zbb_{\geq 0}, \Zbb\}$, we define a symmetric monoidal category $\textbf{C}'_{X}(W)$ as follows.
\begin{enumerate}
    \item Objects are given by assigning each point $w \in W$ an admissible tensor factor of $\mathcal{A}(X, q_w)$. Note that each object comes as a tensor factor on $A(X \times W, q)$ for some $q$.
    \item A morphism between objects $F, E \in \textbf{C}'_X(W)$ is an $R$-algebra isomorphism of finite spread between them as tensor factors. The locality information here is given by the matrix algebras $F, E$ are respectively contained in. Here, it is important to recall we actually use the $L^{\infty}$-metric on $X \times W$.
    \item The symmetric monoidal structure is given by stacking $F$ and $E$ in the stacking of their parent matrix algebras.
\end{enumerate}
As a special case, we write $\textbf{C}'_n(W) \coloneqq \textbf{C}'_{\Zbb^n}(W)$.
\end{defn}

See Figure~\ref{fig:enlargement} for a picture of one such object in $\mathbf{C}'_1(\Zbb)$. Observe also that $\textbf{C}(X)$ is cofinal in $\textbf{C}'_X(*)$. Indeed, if $B, B'$ are admissible tensor complements in $\mathcal{A}(X, q)$, then $B \otimes B'$, sitting in $\mathcal{A}(X, q^2)$, admits a locality-preserving isomorphism to $\mathcal{A}(X, q)$, viewed as an admissible tensor factor of itself. This similarly holds for $\textbf{C}(X \times W)$ in $\textbf{C}'_{X}(W)$.

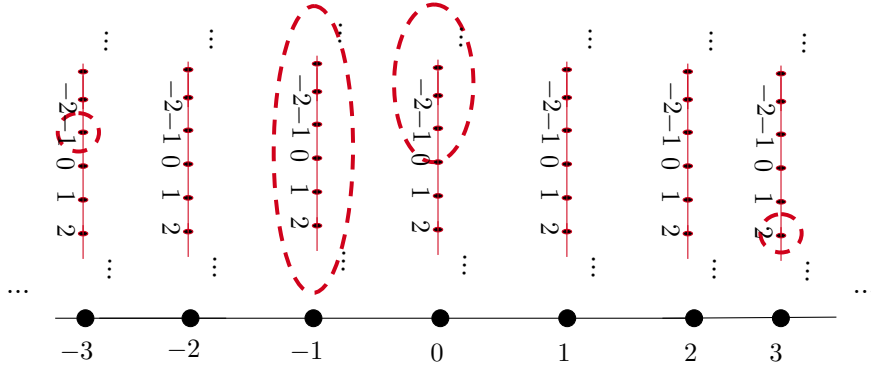
\begin{figure}
    \centering
\[\begin{tikzpicture}[x=0.75pt,y=0.75pt,yscale=-1,xscale=1]
%uncomment if require: \path (0,300); %set diagram left start at 0, and has height of 300

%Shape: Circle [id:dp03820697115113547] 
\draw  [fill={rgb, 255:red, 0; green, 0; blue, 0 }  ,fill opacity=1 ] (267,183.75) .. controls (267,181.4) and (268.9,179.5) .. (271.25,179.5) .. controls (273.6,179.5) and (275.5,181.4) .. (275.5,183.75) .. controls (275.5,186.1) and (273.6,188) .. (271.25,188) .. controls (268.9,188) and (267,186.1) .. (267,183.75) -- cycle ;
%Shape: Circle [id:dp8591703178846204] 
\draw  [fill={rgb, 255:red, 0; green, 0; blue, 0 }  ,fill opacity=1 ] (331,183.75) .. controls (331,181.4) and (332.9,179.5) .. (335.25,179.5) .. controls (337.6,179.5) and (339.5,181.4) .. (339.5,183.75) .. controls (339.5,186.1) and (337.6,188) .. (335.25,188) .. controls (332.9,188) and (331,186.1) .. (331,183.75) -- cycle ;
%Straight Lines [id:da053129181114499824] 
\draw    (271.25,183.75) -- (335.25,183.75) ;
%Shape: Circle [id:dp3950523972908252] 
\draw  [fill={rgb, 255:red, 0; green, 0; blue, 0 }  ,fill opacity=1 ] (395,183.75) .. controls (395,181.4) and (396.9,179.5) .. (399.25,179.5) .. controls (401.6,179.5) and (403.5,181.4) .. (403.5,183.75) .. controls (403.5,186.1) and (401.6,188) .. (399.25,188) .. controls (396.9,188) and (395,186.1) .. (395,183.75) -- cycle ;
%Straight Lines [id:da05877142012542658] 
\draw    (335.25,183.75) -- (399.25,183.75) ;
%Shape: Circle [id:dp47988892390951754] 
\draw  [fill={rgb, 255:red, 0; green, 0; blue, 0 }  ,fill opacity=1 ] (459,183.75) .. controls (459,181.4) and (460.9,179.5) .. (463.25,179.5) .. controls (465.6,179.5) and (467.5,181.4) .. (467.5,183.75) .. controls (467.5,186.1) and (465.6,188) .. (463.25,188) .. controls (460.9,188) and (459,186.1) .. (459,183.75) -- cycle ;
%Straight Lines [id:da011505540059438268] 
\draw    (399.25,183.75) -- (463.25,183.75) ;
%Shape: Circle [id:dp18603722356801067] 
\draw  [fill={rgb, 255:red, 0; green, 0; blue, 0 }  ,fill opacity=1 ] (205,183.75) .. controls (205,181.4) and (206.9,179.5) .. (209.25,179.5) .. controls (211.6,179.5) and (213.5,181.4) .. (213.5,183.75) .. controls (213.5,186.1) and (211.6,188) .. (209.25,188) .. controls (206.9,188) and (205,186.1) .. (205,183.75) -- cycle ;
%Straight Lines [id:da26530926615211736] 
\draw    (209.25,183.75) -- (273.25,183.75) ;
%Straight Lines [id:da3818858924935026] 
\draw    (163.25,183.75) -- (227.25,183.75) ;
%Straight Lines [id:da8288595630296679] 
\draw    (447.25,183.75) -- (535,183.2) ;
%Shape: Circle [id:dp6674841806658004] 
\draw  [fill={rgb, 255:red, 0; green, 0; blue, 0 }  ,fill opacity=1 ] (152,183.75) .. controls (152,181.4) and (153.9,179.5) .. (156.25,179.5) .. controls (158.6,179.5) and (160.5,181.4) .. (160.5,183.75) .. controls (160.5,186.1) and (158.6,188) .. (156.25,188) .. controls (153.9,188) and (152,186.1) .. (152,183.75) -- cycle ;
%Straight Lines [id:da8159822409859336] 
\draw    (141,183.75) -- (205,183.75) ;
%Shape: Circle [id:dp7608921703315183] 
\draw  [fill={rgb, 255:red, 0; green, 0; blue, 0 }  ,fill opacity=1 ] (502.75,183.75) .. controls (502.75,181.4) and (504.65,179.5) .. (507,179.5) .. controls (509.35,179.5) and (511.25,181.4) .. (511.25,183.75) .. controls (511.25,186.1) and (509.35,188) .. (507,188) .. controls (504.65,188) and (502.75,186.1) .. (502.75,183.75) -- cycle ;
%Shape: Ellipse [id:dp5388046174552509] 
\draw  [color={rgb, 255:red, 208; green, 2; blue, 27 }  ,draw opacity=1 ][dash pattern={on 5.63pt off 4.5pt}][line width=1.5]  (271.61,26.2) .. controls (282.66,26.26) and (291.43,58.72) .. (291.21,98.71) .. controls (290.99,138.7) and (281.85,171.06) .. (270.8,171) .. controls (259.76,170.94) and (250.99,138.47) .. (251.21,98.49) .. controls (251.43,58.5) and (260.57,26.14) .. (271.61,26.2) -- cycle ;
%Shape: Ellipse [id:dp04570753595262267] 
\draw  [color={rgb, 255:red, 208; green, 2; blue, 27 }  ,draw opacity=1 ][fill={rgb, 255:red, 0; green, 0; blue, 0 }  ,fill opacity=1 ] (273.06,84.78) .. controls (274.36,84.77) and (275.42,85.27) .. (275.42,85.89) .. controls (275.42,86.52) and (274.36,87.03) .. (273.06,87.04) .. controls (271.75,87.05) and (270.7,86.55) .. (270.7,85.92) .. controls (270.7,85.29) and (271.75,84.78) .. (273.06,84.78) -- cycle ;
%Shape: Ellipse [id:dp06127702957261416] 
\draw  [color={rgb, 255:red, 208; green, 2; blue, 27 }  ,draw opacity=1 ][fill={rgb, 255:red, 0; green, 0; blue, 0 }  ,fill opacity=1 ] (273.06,101.82) .. controls (274.37,101.81) and (275.42,102.31) .. (275.42,102.93) .. controls (275.42,103.56) and (274.37,104.07) .. (273.06,104.08) .. controls (271.76,104.09) and (270.7,103.59) .. (270.7,102.96) .. controls (270.7,102.34) and (271.76,101.82) .. (273.06,101.82) -- cycle ;
%Straight Lines [id:da021561146527679376] 
\draw [color={rgb, 255:red, 208; green, 2; blue, 27 }  ,draw opacity=1 ]   (273.06,85.91) -- (273.06,102.95) ;
%Shape: Ellipse [id:dp7934527104760276] 
\draw  [color={rgb, 255:red, 208; green, 2; blue, 27 }  ,draw opacity=1 ][fill={rgb, 255:red, 0; green, 0; blue, 0 }  ,fill opacity=1 ] (273.07,118.86) .. controls (274.37,118.85) and (275.43,119.35) .. (275.43,119.98) .. controls (275.43,120.6) and (274.37,121.11) .. (273.07,121.12) .. controls (271.76,121.13) and (270.7,120.63) .. (270.7,120) .. controls (270.7,119.38) and (271.76,118.86) .. (273.07,118.86) -- cycle ;
%Straight Lines [id:da7515351842126057] 
\draw [color={rgb, 255:red, 208; green, 2; blue, 27 }  ,draw opacity=1 ]   (273.06,102.95) -- (273.07,119.99) ;
%Shape: Ellipse [id:dp25213042457597024] 
\draw  [color={rgb, 255:red, 208; green, 2; blue, 27 }  ,draw opacity=1 ][fill={rgb, 255:red, 0; green, 0; blue, 0 }  ,fill opacity=1 ] (273.07,135.9) .. controls (274.37,135.89) and (275.43,136.39) .. (275.43,137.02) .. controls (275.43,137.64) and (274.37,138.15) .. (273.07,138.16) .. controls (271.76,138.17) and (270.71,137.67) .. (270.71,137.04) .. controls (270.71,136.42) and (271.76,135.9) .. (273.07,135.9) -- cycle ;
%Straight Lines [id:da6246756459695235] 
\draw [color={rgb, 255:red, 208; green, 2; blue, 27 }  ,draw opacity=1 ]   (273.07,119.99) -- (273.07,137.03) ;
%Shape: Ellipse [id:dp36757439443332085] 
\draw  [color={rgb, 255:red, 208; green, 2; blue, 27 }  ,draw opacity=1 ][fill={rgb, 255:red, 0; green, 0; blue, 0 }  ,fill opacity=1 ] (273.06,68.27) .. controls (274.36,68.26) and (275.42,68.76) .. (275.42,69.39) .. controls (275.42,70.01) and (274.36,70.52) .. (273.06,70.53) .. controls (271.75,70.54) and (270.69,70.04) .. (270.69,69.41) .. controls (270.69,68.79) and (271.75,68.27) .. (273.06,68.27) -- cycle ;
%Straight Lines [id:da7208244174043381] 
\draw [color={rgb, 255:red, 208; green, 2; blue, 27 }  ,draw opacity=1 ]   (273.06,69.4) -- (273.06,86.44) ;
%Straight Lines [id:da14691373846103117] 
\draw [color={rgb, 255:red, 208; green, 2; blue, 27 }  ,draw opacity=1 ]   (273.05,57.15) -- (273.06,74.19) ;
%Straight Lines [id:da3062692673342321] 
\draw [color={rgb, 255:red, 208; green, 2; blue, 27 }  ,draw opacity=1 ]   (273.07,132.77) -- (273.07,149.81) ;
%Shape: Ellipse [id:dp3116730902761943] 
\draw  [color={rgb, 255:red, 208; green, 2; blue, 27 }  ,draw opacity=1 ][fill={rgb, 255:red, 0; green, 0; blue, 0 }  ,fill opacity=1 ] (273.05,54.16) .. controls (274.36,54.15) and (275.42,54.65) .. (275.42,55.27) .. controls (275.42,55.9) and (274.36,56.41) .. (273.05,56.42) .. controls (271.75,56.43) and (270.69,55.93) .. (270.69,55.3) .. controls (270.69,54.68) and (271.75,54.16) .. (273.05,54.16) -- cycle ;
%Straight Lines [id:da24809981185370333] 
\draw [color={rgb, 255:red, 208; green, 2; blue, 27 }  ,draw opacity=1 ]   (273.05,51.23) -- (273.06,68.27) ;
%Shape: Ellipse [id:dp17094667105928762] 
\draw  [color={rgb, 255:red, 208; green, 2; blue, 27 }  ,draw opacity=1 ][dash pattern={on 5.63pt off 4.5pt}][line width=1.5]  (332.14,25.42) .. controls (343.19,25.49) and (352.04,43.34) .. (351.92,65.3) .. controls (351.8,87.27) and (342.75,105.02) .. (331.7,104.96) .. controls (320.65,104.9) and (311.8,87.04) .. (311.92,65.08) .. controls (312.04,43.12) and (321.1,25.36) .. (332.14,25.42) -- cycle ;
%Shape: Ellipse [id:dp8353865226167692] 
\draw  [color={rgb, 255:red, 208; green, 2; blue, 27 }  ,draw opacity=1 ][fill={rgb, 255:red, 0; green, 0; blue, 0 }  ,fill opacity=1 ] (334.06,86.78) .. controls (335.36,86.77) and (336.42,87.27) .. (336.42,87.89) .. controls (336.42,88.52) and (335.36,89.03) .. (334.06,89.04) .. controls (332.75,89.05) and (331.7,88.55) .. (331.7,87.92) .. controls (331.7,87.29) and (332.75,86.78) .. (334.06,86.78) -- cycle ;
%Shape: Ellipse [id:dp4658830805504919] 
\draw  [color={rgb, 255:red, 208; green, 2; blue, 27 }  ,draw opacity=1 ][fill={rgb, 255:red, 0; green, 0; blue, 0 }  ,fill opacity=1 ] (334.06,103.82) .. controls (335.37,103.81) and (336.42,104.31) .. (336.42,104.93) .. controls (336.42,105.56) and (335.37,106.07) .. (334.06,106.08) .. controls (332.76,106.09) and (331.7,105.59) .. (331.7,104.96) .. controls (331.7,104.34) and (332.76,103.82) .. (334.06,103.82) -- cycle ;
%Straight Lines [id:da2848875422237974] 
\draw [color={rgb, 255:red, 208; green, 2; blue, 27 }  ,draw opacity=1 ]   (334.06,87.91) -- (334.06,104.95) ;
%Shape: Ellipse [id:dp9192739524876903] 
\draw  [color={rgb, 255:red, 208; green, 2; blue, 27 }  ,draw opacity=1 ][fill={rgb, 255:red, 0; green, 0; blue, 0 }  ,fill opacity=1 ] (334.07,120.86) .. controls (335.37,120.85) and (336.43,121.35) .. (336.43,121.98) .. controls (336.43,122.6) and (335.37,123.11) .. (334.07,123.12) .. controls (332.76,123.13) and (331.7,122.63) .. (331.7,122) .. controls (331.7,121.38) and (332.76,120.86) .. (334.07,120.86) -- cycle ;
%Straight Lines [id:da22607131041232742] 
\draw [color={rgb, 255:red, 208; green, 2; blue, 27 }  ,draw opacity=1 ]   (334.06,104.95) -- (334.07,121.99) ;
%Shape: Ellipse [id:dp012454965024634057] 
\draw  [color={rgb, 255:red, 208; green, 2; blue, 27 }  ,draw opacity=1 ][fill={rgb, 255:red, 0; green, 0; blue, 0 }  ,fill opacity=1 ] (334.07,137.9) .. controls (335.37,137.89) and (336.43,138.39) .. (336.43,139.02) .. controls (336.43,139.64) and (335.37,140.15) .. (334.07,140.16) .. controls (332.76,140.17) and (331.71,139.67) .. (331.71,139.04) .. controls (331.71,138.42) and (332.76,137.9) .. (334.07,137.9) -- cycle ;
%Straight Lines [id:da3032678921246873] 
\draw [color={rgb, 255:red, 208; green, 2; blue, 27 }  ,draw opacity=1 ]   (334.07,121.99) -- (334.07,139.03) ;
%Shape: Ellipse [id:dp4068640650854174] 
\draw  [color={rgb, 255:red, 208; green, 2; blue, 27 }  ,draw opacity=1 ][fill={rgb, 255:red, 0; green, 0; blue, 0 }  ,fill opacity=1 ] (334.06,70.27) .. controls (335.36,70.26) and (336.42,70.76) .. (336.42,71.39) .. controls (336.42,72.01) and (335.36,72.52) .. (334.06,72.53) .. controls (332.75,72.54) and (331.69,72.04) .. (331.69,71.41) .. controls (331.69,70.79) and (332.75,70.27) .. (334.06,70.27) -- cycle ;
%Straight Lines [id:da7666174032707697] 
\draw [color={rgb, 255:red, 208; green, 2; blue, 27 }  ,draw opacity=1 ]   (334.06,71.4) -- (334.06,88.44) ;
%Straight Lines [id:da4352551414731355] 
\draw [color={rgb, 255:red, 208; green, 2; blue, 27 }  ,draw opacity=1 ]   (334.05,59.15) -- (334.06,76.19) ;
%Straight Lines [id:da6133651985587508] 
\draw [color={rgb, 255:red, 208; green, 2; blue, 27 }  ,draw opacity=1 ]   (334.07,134.77) -- (334.07,151.81) ;
%Shape: Ellipse [id:dp4313303958673753] 
\draw  [color={rgb, 255:red, 208; green, 2; blue, 27 }  ,draw opacity=1 ][fill={rgb, 255:red, 0; green, 0; blue, 0 }  ,fill opacity=1 ] (334.05,56.16) .. controls (335.36,56.15) and (336.42,56.65) .. (336.42,57.27) .. controls (336.42,57.9) and (335.36,58.41) .. (334.05,58.42) .. controls (332.75,58.43) and (331.69,57.93) .. (331.69,57.3) .. controls (331.69,56.68) and (332.75,56.16) .. (334.05,56.16) -- cycle ;
%Straight Lines [id:da07374714387564452] 
\draw [color={rgb, 255:red, 208; green, 2; blue, 27 }  ,draw opacity=1 ]   (334.05,53.23) -- (334.06,70.27) ;
%Shape: Ellipse [id:dp07469787213448087] 
\draw  [color={rgb, 255:red, 208; green, 2; blue, 27 }  ,draw opacity=1 ][dash pattern={on 5.63pt off 4.5pt}][line width=1.5]  (507.12,131.22) .. controls (512.87,131.25) and (517.5,135.61) .. (517.47,140.95) .. controls (517.44,146.3) and (512.76,150.61) .. (507.01,150.57) .. controls (501.27,150.54) and (496.63,146.18) .. (496.66,140.84) .. controls (496.69,135.49) and (501.38,131.19) .. (507.12,131.22) -- cycle ;
%Shape: Ellipse [id:dp7512915285094866] 
\draw  [color={rgb, 255:red, 208; green, 2; blue, 27 }  ,draw opacity=1 ][fill={rgb, 255:red, 0; green, 0; blue, 0 }  ,fill opacity=1 ] (507.06,89.78) .. controls (508.36,89.77) and (509.42,90.27) .. (509.42,90.89) .. controls (509.42,91.52) and (508.36,92.03) .. (507.06,92.04) .. controls (505.75,92.05) and (504.7,91.55) .. (504.7,90.92) .. controls (504.7,90.29) and (505.75,89.78) .. (507.06,89.78) -- cycle ;
%Shape: Ellipse [id:dp34422714045819147] 
\draw  [color={rgb, 255:red, 208; green, 2; blue, 27 }  ,draw opacity=1 ][fill={rgb, 255:red, 0; green, 0; blue, 0 }  ,fill opacity=1 ] (507.06,106.82) .. controls (508.37,106.81) and (509.42,107.31) .. (509.42,107.93) .. controls (509.42,108.56) and (508.37,109.07) .. (507.06,109.08) .. controls (505.76,109.09) and (504.7,108.59) .. (504.7,107.96) .. controls (504.7,107.34) and (505.76,106.82) .. (507.06,106.82) -- cycle ;
%Straight Lines [id:da6936658815825126] 
\draw [color={rgb, 255:red, 208; green, 2; blue, 27 }  ,draw opacity=1 ]   (507.06,90.91) -- (507.06,107.95) ;
%Shape: Ellipse [id:dp08951353997960831] 
\draw  [color={rgb, 255:red, 208; green, 2; blue, 27 }  ,draw opacity=1 ][fill={rgb, 255:red, 0; green, 0; blue, 0 }  ,fill opacity=1 ] (507.07,123.86) .. controls (508.37,123.85) and (509.43,124.35) .. (509.43,124.98) .. controls (509.43,125.6) and (508.37,126.11) .. (507.07,126.12) .. controls (505.76,126.13) and (504.7,125.63) .. (504.7,125) .. controls (504.7,124.38) and (505.76,123.86) .. (507.07,123.86) -- cycle ;
%Straight Lines [id:da34515523870147935] 
\draw [color={rgb, 255:red, 208; green, 2; blue, 27 }  ,draw opacity=1 ]   (507.06,107.95) -- (507.07,124.99) ;
%Shape: Ellipse [id:dp995579682479986] 
\draw  [color={rgb, 255:red, 208; green, 2; blue, 27 }  ,draw opacity=1 ][fill={rgb, 255:red, 0; green, 0; blue, 0 }  ,fill opacity=1 ] (507.07,140.9) .. controls (508.37,140.89) and (509.43,141.39) .. (509.43,142.02) .. controls (509.43,142.64) and (508.37,143.15) .. (507.07,143.16) .. controls (505.76,143.17) and (504.71,142.67) .. (504.71,142.04) .. controls (504.71,141.42) and (505.76,140.9) .. (507.07,140.9) -- cycle ;
%Straight Lines [id:da274034008684084] 
\draw [color={rgb, 255:red, 208; green, 2; blue, 27 }  ,draw opacity=1 ]   (507.07,124.99) -- (507.07,142.03) ;
%Shape: Ellipse [id:dp2131832309135704] 
\draw  [color={rgb, 255:red, 208; green, 2; blue, 27 }  ,draw opacity=1 ][fill={rgb, 255:red, 0; green, 0; blue, 0 }  ,fill opacity=1 ] (507.06,73.27) .. controls (508.36,73.26) and (509.42,73.76) .. (509.42,74.39) .. controls (509.42,75.01) and (508.36,75.52) .. (507.06,75.53) .. controls (505.75,75.54) and (504.69,75.04) .. (504.69,74.41) .. controls (504.69,73.79) and (505.75,73.27) .. (507.06,73.27) -- cycle ;
%Straight Lines [id:da3576193057105205] 
\draw [color={rgb, 255:red, 208; green, 2; blue, 27 }  ,draw opacity=1 ]   (507.06,74.4) -- (507.06,91.44) ;
%Straight Lines [id:da44450153262950143] 
\draw [color={rgb, 255:red, 208; green, 2; blue, 27 }  ,draw opacity=1 ]   (507.05,62.15) -- (507.06,79.19) ;
%Straight Lines [id:da6737892060687923] 
\draw [color={rgb, 255:red, 208; green, 2; blue, 27 }  ,draw opacity=1 ]   (507.07,137.77) -- (507.07,154.81) ;
%Shape: Ellipse [id:dp439524015970865] 
\draw  [color={rgb, 255:red, 208; green, 2; blue, 27 }  ,draw opacity=1 ][fill={rgb, 255:red, 0; green, 0; blue, 0 }  ,fill opacity=1 ] (507.05,59.16) .. controls (508.36,59.15) and (509.42,59.65) .. (509.42,60.27) .. controls (509.42,60.9) and (508.36,61.41) .. (507.05,61.42) .. controls (505.75,61.43) and (504.69,60.93) .. (504.69,60.3) .. controls (504.69,59.68) and (505.75,59.16) .. (507.05,59.16) -- cycle ;
%Straight Lines [id:da3605326932209494] 
\draw [color={rgb, 255:red, 208; green, 2; blue, 27 }  ,draw opacity=1 ]   (507.05,56.23) -- (507.06,73.27) ;
%Shape: Ellipse [id:dp9179617736909164] 
\draw  [color={rgb, 255:red, 208; green, 2; blue, 27 }  ,draw opacity=1 ][dash pattern={on 5.63pt off 4.5pt}][line width=1.5]  (152.75,80.24) .. controls (158.5,80.27) and (163.13,84.63) .. (163.1,89.98) .. controls (163.07,95.32) and (158.39,99.63) .. (152.64,99.6) .. controls (146.9,99.57) and (142.26,95.21) .. (142.29,89.86) .. controls (142.32,84.52) and (147,80.21) .. (152.75,80.24) -- cycle ;
%Shape: Ellipse [id:dp4317711265894655] 
\draw  [color={rgb, 255:red, 208; green, 2; blue, 27 }  ,draw opacity=1 ][fill={rgb, 255:red, 0; green, 0; blue, 0 }  ,fill opacity=1 ] (155.06,88.78) .. controls (156.36,88.77) and (157.42,89.27) .. (157.42,89.89) .. controls (157.42,90.52) and (156.36,91.03) .. (155.06,91.04) .. controls (153.75,91.05) and (152.7,90.55) .. (152.7,89.92) .. controls (152.7,89.29) and (153.75,88.78) .. (155.06,88.78) -- cycle ;
%Shape: Ellipse [id:dp713333034385384] 
\draw  [color={rgb, 255:red, 208; green, 2; blue, 27 }  ,draw opacity=1 ][fill={rgb, 255:red, 0; green, 0; blue, 0 }  ,fill opacity=1 ] (155.06,105.82) .. controls (156.37,105.81) and (157.42,106.31) .. (157.42,106.93) .. controls (157.42,107.56) and (156.37,108.07) .. (155.06,108.08) .. controls (153.76,108.09) and (152.7,107.59) .. (152.7,106.96) .. controls (152.7,106.34) and (153.76,105.82) .. (155.06,105.82) -- cycle ;
%Straight Lines [id:da4083472120645292] 
\draw [color={rgb, 255:red, 208; green, 2; blue, 27 }  ,draw opacity=1 ]   (155.06,89.91) -- (155.06,106.95) ;
%Shape: Ellipse [id:dp550092616480915] 
\draw  [color={rgb, 255:red, 208; green, 2; blue, 27 }  ,draw opacity=1 ][fill={rgb, 255:red, 0; green, 0; blue, 0 }  ,fill opacity=1 ] (155.07,122.86) .. controls (156.37,122.85) and (157.43,123.35) .. (157.43,123.98) .. controls (157.43,124.6) and (156.37,125.11) .. (155.07,125.12) .. controls (153.76,125.13) and (152.7,124.63) .. (152.7,124) .. controls (152.7,123.38) and (153.76,122.86) .. (155.07,122.86) -- cycle ;
%Straight Lines [id:da8343489401963329] 
\draw [color={rgb, 255:red, 208; green, 2; blue, 27 }  ,draw opacity=1 ]   (155.06,106.95) -- (155.07,123.99) ;
%Shape: Ellipse [id:dp5184436878147374] 
\draw  [color={rgb, 255:red, 208; green, 2; blue, 27 }  ,draw opacity=1 ][fill={rgb, 255:red, 0; green, 0; blue, 0 }  ,fill opacity=1 ] (155.07,139.9) .. controls (156.37,139.89) and (157.43,140.39) .. (157.43,141.02) .. controls (157.43,141.64) and (156.37,142.15) .. (155.07,142.16) .. controls (153.76,142.17) and (152.71,141.67) .. (152.71,141.04) .. controls (152.71,140.42) and (153.76,139.9) .. (155.07,139.9) -- cycle ;
%Straight Lines [id:da5113401207060717] 
\draw [color={rgb, 255:red, 208; green, 2; blue, 27 }  ,draw opacity=1 ]   (155.07,123.99) -- (155.07,141.03) ;
%Shape: Ellipse [id:dp5275498038719495] 
\draw  [color={rgb, 255:red, 208; green, 2; blue, 27 }  ,draw opacity=1 ][fill={rgb, 255:red, 0; green, 0; blue, 0 }  ,fill opacity=1 ] (155.06,72.27) .. controls (156.36,72.26) and (157.42,72.76) .. (157.42,73.39) .. controls (157.42,74.01) and (156.36,74.52) .. (155.06,74.53) .. controls (153.75,74.54) and (152.69,74.04) .. (152.69,73.41) .. controls (152.69,72.79) and (153.75,72.27) .. (155.06,72.27) -- cycle ;
%Straight Lines [id:da45808880808160213] 
\draw [color={rgb, 255:red, 208; green, 2; blue, 27 }  ,draw opacity=1 ]   (155.06,73.4) -- (155.06,90.44) ;
%Straight Lines [id:da43650241101234055] 
\draw [color={rgb, 255:red, 208; green, 2; blue, 27 }  ,draw opacity=1 ]   (155.05,61.15) -- (155.06,78.19) ;
%Straight Lines [id:da6001002208139974] 
\draw [color={rgb, 255:red, 208; green, 2; blue, 27 }  ,draw opacity=1 ]   (155.07,136.77) -- (155.07,153.81) ;
%Shape: Ellipse [id:dp41925151520057236] 
\draw  [color={rgb, 255:red, 208; green, 2; blue, 27 }  ,draw opacity=1 ][fill={rgb, 255:red, 0; green, 0; blue, 0 }  ,fill opacity=1 ] (155.05,58.16) .. controls (156.36,58.15) and (157.42,58.65) .. (157.42,59.27) .. controls (157.42,59.9) and (156.36,60.41) .. (155.05,60.42) .. controls (153.75,60.43) and (152.69,59.93) .. (152.69,59.3) .. controls (152.69,58.68) and (153.75,58.16) .. (155.05,58.16) -- cycle ;
%Straight Lines [id:da448778911457817] 
\draw [color={rgb, 255:red, 208; green, 2; blue, 27 }  ,draw opacity=1 ]   (155.05,55.23) -- (155.06,72.27) ;
%Shape: Ellipse [id:dp3710219237530211] 
\draw  [color={rgb, 255:red, 208; green, 2; blue, 27 }  ,draw opacity=1 ][fill={rgb, 255:red, 0; green, 0; blue, 0 }  ,fill opacity=1 ] (208.06,87.78) .. controls (209.36,87.77) and (210.42,88.27) .. (210.42,88.89) .. controls (210.42,89.52) and (209.36,90.03) .. (208.06,90.04) .. controls (206.75,90.05) and (205.7,89.55) .. (205.7,88.92) .. controls (205.7,88.29) and (206.75,87.78) .. (208.06,87.78) -- cycle ;
%Shape: Ellipse [id:dp5797568599180717] 
\draw  [color={rgb, 255:red, 208; green, 2; blue, 27 }  ,draw opacity=1 ][fill={rgb, 255:red, 0; green, 0; blue, 0 }  ,fill opacity=1 ] (208.06,104.82) .. controls (209.37,104.81) and (210.42,105.31) .. (210.42,105.93) .. controls (210.42,106.56) and (209.37,107.07) .. (208.06,107.08) .. controls (206.76,107.09) and (205.7,106.59) .. (205.7,105.96) .. controls (205.7,105.34) and (206.76,104.82) .. (208.06,104.82) -- cycle ;
%Straight Lines [id:da5826784818122462] 
\draw [color={rgb, 255:red, 208; green, 2; blue, 27 }  ,draw opacity=1 ]   (208.06,88.91) -- (208.06,105.95) ;
%Shape: Ellipse [id:dp9036408542117008] 
\draw  [color={rgb, 255:red, 208; green, 2; blue, 27 }  ,draw opacity=1 ][fill={rgb, 255:red, 0; green, 0; blue, 0 }  ,fill opacity=1 ] (208.07,121.86) .. controls (209.37,121.85) and (210.43,122.35) .. (210.43,122.98) .. controls (210.43,123.6) and (209.37,124.11) .. (208.07,124.12) .. controls (206.76,124.13) and (205.7,123.63) .. (205.7,123) .. controls (205.7,122.38) and (206.76,121.86) .. (208.07,121.86) -- cycle ;
%Straight Lines [id:da5939141411610338] 
\draw [color={rgb, 255:red, 208; green, 2; blue, 27 }  ,draw opacity=1 ]   (208.06,105.95) -- (208.07,122.99) ;
%Shape: Ellipse [id:dp6820035957647845] 
\draw  [color={rgb, 255:red, 208; green, 2; blue, 27 }  ,draw opacity=1 ][fill={rgb, 255:red, 0; green, 0; blue, 0 }  ,fill opacity=1 ] (208.07,138.9) .. controls (209.37,138.89) and (210.43,139.39) .. (210.43,140.02) .. controls (210.43,140.64) and (209.37,141.15) .. (208.07,141.16) .. controls (206.76,141.17) and (205.71,140.67) .. (205.71,140.04) .. controls (205.71,139.42) and (206.76,138.9) .. (208.07,138.9) -- cycle ;
%Straight Lines [id:da651772208642059] 
\draw [color={rgb, 255:red, 208; green, 2; blue, 27 }  ,draw opacity=1 ]   (208.07,122.99) -- (208.07,140.03) ;
%Shape: Ellipse [id:dp4889733145830376] 
\draw  [color={rgb, 255:red, 208; green, 2; blue, 27 }  ,draw opacity=1 ][fill={rgb, 255:red, 0; green, 0; blue, 0 }  ,fill opacity=1 ] (208.06,71.27) .. controls (209.36,71.26) and (210.42,71.76) .. (210.42,72.39) .. controls (210.42,73.01) and (209.36,73.52) .. (208.06,73.53) .. controls (206.75,73.54) and (205.69,73.04) .. (205.69,72.41) .. controls (205.69,71.79) and (206.75,71.27) .. (208.06,71.27) -- cycle ;
%Straight Lines [id:da5760321954756246] 
\draw [color={rgb, 255:red, 208; green, 2; blue, 27 }  ,draw opacity=1 ]   (208.06,72.4) -- (208.06,89.44) ;
%Straight Lines [id:da929928422127428] 
\draw [color={rgb, 255:red, 208; green, 2; blue, 27 }  ,draw opacity=1 ]   (208.05,60.15) -- (208.06,77.19) ;
%Straight Lines [id:da9025771145068626] 
\draw [color={rgb, 255:red, 208; green, 2; blue, 27 }  ,draw opacity=1 ]   (208.07,135.77) -- (208.07,152.81) ;
%Shape: Ellipse [id:dp9222273701299334] 
\draw  [color={rgb, 255:red, 208; green, 2; blue, 27 }  ,draw opacity=1 ][fill={rgb, 255:red, 0; green, 0; blue, 0 }  ,fill opacity=1 ] (208.05,57.16) .. controls (209.36,57.15) and (210.42,57.65) .. (210.42,58.27) .. controls (210.42,58.9) and (209.36,59.41) .. (208.05,59.42) .. controls (206.75,59.43) and (205.69,58.93) .. (205.69,58.3) .. controls (205.69,57.68) and (206.75,57.16) .. (208.05,57.16) -- cycle ;
%Straight Lines [id:da362336914917747] 
\draw [color={rgb, 255:red, 208; green, 2; blue, 27 }  ,draw opacity=1 ]   (208.05,54.23) -- (208.06,71.27) ;
%Shape: Ellipse [id:dp16519243440641584] 
\draw  [color={rgb, 255:red, 208; green, 2; blue, 27 }  ,draw opacity=1 ][fill={rgb, 255:red, 0; green, 0; blue, 0 }  ,fill opacity=1 ] (399.06,87.78) .. controls (400.36,87.77) and (401.42,88.27) .. (401.42,88.89) .. controls (401.42,89.52) and (400.36,90.03) .. (399.06,90.04) .. controls (397.75,90.05) and (396.7,89.55) .. (396.7,88.92) .. controls (396.7,88.29) and (397.75,87.78) .. (399.06,87.78) -- cycle ;
%Shape: Ellipse [id:dp8569260807969875] 
\draw  [color={rgb, 255:red, 208; green, 2; blue, 27 }  ,draw opacity=1 ][fill={rgb, 255:red, 0; green, 0; blue, 0 }  ,fill opacity=1 ] (399.06,104.82) .. controls (400.37,104.81) and (401.42,105.31) .. (401.42,105.93) .. controls (401.42,106.56) and (400.37,107.07) .. (399.06,107.08) .. controls (397.76,107.09) and (396.7,106.59) .. (396.7,105.96) .. controls (396.7,105.34) and (397.76,104.82) .. (399.06,104.82) -- cycle ;
%Straight Lines [id:da8485338874969574] 
\draw [color={rgb, 255:red, 208; green, 2; blue, 27 }  ,draw opacity=1 ]   (399.06,88.91) -- (399.06,105.95) ;
%Shape: Ellipse [id:dp35516802619223187] 
\draw  [color={rgb, 255:red, 208; green, 2; blue, 27 }  ,draw opacity=1 ][fill={rgb, 255:red, 0; green, 0; blue, 0 }  ,fill opacity=1 ] (399.07,121.86) .. controls (400.37,121.85) and (401.43,122.35) .. (401.43,122.98) .. controls (401.43,123.6) and (400.37,124.11) .. (399.07,124.12) .. controls (397.76,124.13) and (396.7,123.63) .. (396.7,123) .. controls (396.7,122.38) and (397.76,121.86) .. (399.07,121.86) -- cycle ;
%Straight Lines [id:da09306711891020003] 
\draw [color={rgb, 255:red, 208; green, 2; blue, 27 }  ,draw opacity=1 ]   (399.06,105.95) -- (399.07,122.99) ;
%Shape: Ellipse [id:dp2660375876082033] 
\draw  [color={rgb, 255:red, 208; green, 2; blue, 27 }  ,draw opacity=1 ][fill={rgb, 255:red, 0; green, 0; blue, 0 }  ,fill opacity=1 ] (399.07,138.9) .. controls (400.37,138.89) and (401.43,139.39) .. (401.43,140.02) .. controls (401.43,140.64) and (400.37,141.15) .. (399.07,141.16) .. controls (397.76,141.17) and (396.71,140.67) .. (396.71,140.04) .. controls (396.71,139.42) and (397.76,138.9) .. (399.07,138.9) -- cycle ;
%Straight Lines [id:da9335350679252464] 
\draw [color={rgb, 255:red, 208; green, 2; blue, 27 }  ,draw opacity=1 ]   (399.07,122.99) -- (399.07,140.03) ;
%Shape: Ellipse [id:dp6064313882242107] 
\draw  [color={rgb, 255:red, 208; green, 2; blue, 27 }  ,draw opacity=1 ][fill={rgb, 255:red, 0; green, 0; blue, 0 }  ,fill opacity=1 ] (399.06,71.27) .. controls (400.36,71.26) and (401.42,71.76) .. (401.42,72.39) .. controls (401.42,73.01) and (400.36,73.52) .. (399.06,73.53) .. controls (397.75,73.54) and (396.69,73.04) .. (396.69,72.41) .. controls (396.69,71.79) and (397.75,71.27) .. (399.06,71.27) -- cycle ;
%Straight Lines [id:da8348738702357011] 
\draw [color={rgb, 255:red, 208; green, 2; blue, 27 }  ,draw opacity=1 ]   (399.06,72.4) -- (399.06,89.44) ;
%Straight Lines [id:da17198996362812524] 
\draw [color={rgb, 255:red, 208; green, 2; blue, 27 }  ,draw opacity=1 ]   (399.05,60.15) -- (399.06,77.19) ;
%Straight Lines [id:da8716836155419337] 
\draw [color={rgb, 255:red, 208; green, 2; blue, 27 }  ,draw opacity=1 ]   (399.07,135.77) -- (399.07,152.81) ;
%Shape: Ellipse [id:dp7998372613864952] 
\draw  [color={rgb, 255:red, 208; green, 2; blue, 27 }  ,draw opacity=1 ][fill={rgb, 255:red, 0; green, 0; blue, 0 }  ,fill opacity=1 ] (399.05,57.16) .. controls (400.36,57.15) and (401.42,57.65) .. (401.42,58.27) .. controls (401.42,58.9) and (400.36,59.41) .. (399.05,59.42) .. controls (397.75,59.43) and (396.69,58.93) .. (396.69,58.3) .. controls (396.69,57.68) and (397.75,57.16) .. (399.05,57.16) -- cycle ;
%Straight Lines [id:da6061186235421365] 
\draw [color={rgb, 255:red, 208; green, 2; blue, 27 }  ,draw opacity=1 ]   (399.05,54.23) -- (399.06,71.27) ;
%Shape: Ellipse [id:dp1494342346939096] 
\draw  [color={rgb, 255:red, 208; green, 2; blue, 27 }  ,draw opacity=1 ][fill={rgb, 255:red, 0; green, 0; blue, 0 }  ,fill opacity=1 ] (460.06,88.78) .. controls (461.36,88.77) and (462.42,89.27) .. (462.42,89.89) .. controls (462.42,90.52) and (461.36,91.03) .. (460.06,91.04) .. controls (458.75,91.05) and (457.7,90.55) .. (457.7,89.92) .. controls (457.7,89.29) and (458.75,88.78) .. (460.06,88.78) -- cycle ;
%Shape: Ellipse [id:dp02587659642503537] 
\draw  [color={rgb, 255:red, 208; green, 2; blue, 27 }  ,draw opacity=1 ][fill={rgb, 255:red, 0; green, 0; blue, 0 }  ,fill opacity=1 ] (460.06,105.82) .. controls (461.37,105.81) and (462.42,106.31) .. (462.42,106.93) .. controls (462.42,107.56) and (461.37,108.07) .. (460.06,108.08) .. controls (458.76,108.09) and (457.7,107.59) .. (457.7,106.96) .. controls (457.7,106.34) and (458.76,105.82) .. (460.06,105.82) -- cycle ;
%Straight Lines [id:da5087817473706655] 
\draw [color={rgb, 255:red, 208; green, 2; blue, 27 }  ,draw opacity=1 ]   (460.06,89.91) -- (460.06,106.95) ;
%Shape: Ellipse [id:dp3290973681135345] 
\draw  [color={rgb, 255:red, 208; green, 2; blue, 27 }  ,draw opacity=1 ][fill={rgb, 255:red, 0; green, 0; blue, 0 }  ,fill opacity=1 ] (460.07,122.86) .. controls (461.37,122.85) and (462.43,123.35) .. (462.43,123.98) .. controls (462.43,124.6) and (461.37,125.11) .. (460.07,125.12) .. controls (458.76,125.13) and (457.7,124.63) .. (457.7,124) .. controls (457.7,123.38) and (458.76,122.86) .. (460.07,122.86) -- cycle ;
%Straight Lines [id:da4025519718669739] 
\draw [color={rgb, 255:red, 208; green, 2; blue, 27 }  ,draw opacity=1 ]   (460.06,106.95) -- (460.07,123.99) ;
%Shape: Ellipse [id:dp14115453840475867] 
\draw  [color={rgb, 255:red, 208; green, 2; blue, 27 }  ,draw opacity=1 ][fill={rgb, 255:red, 0; green, 0; blue, 0 }  ,fill opacity=1 ] (460.07,139.9) .. controls (461.37,139.89) and (462.43,140.39) .. (462.43,141.02) .. controls (462.43,141.64) and (461.37,142.15) .. (460.07,142.16) .. controls (458.76,142.17) and (457.71,141.67) .. (457.71,141.04) .. controls (457.71,140.42) and (458.76,139.9) .. (460.07,139.9) -- cycle ;
%Straight Lines [id:da7610331186954356] 
\draw [color={rgb, 255:red, 208; green, 2; blue, 27 }  ,draw opacity=1 ]   (460.07,123.99) -- (460.07,141.03) ;
%Shape: Ellipse [id:dp16645798665771838] 
\draw  [color={rgb, 255:red, 208; green, 2; blue, 27 }  ,draw opacity=1 ][fill={rgb, 255:red, 0; green, 0; blue, 0 }  ,fill opacity=1 ] (460.06,72.27) .. controls (461.36,72.26) and (462.42,72.76) .. (462.42,73.39) .. controls (462.42,74.01) and (461.36,74.52) .. (460.06,74.53) .. controls (458.75,74.54) and (457.69,74.04) .. (457.69,73.41) .. controls (457.69,72.79) and (458.75,72.27) .. (460.06,72.27) -- cycle ;
%Straight Lines [id:da7102032143165505] 
\draw [color={rgb, 255:red, 208; green, 2; blue, 27 }  ,draw opacity=1 ]   (460.06,73.4) -- (460.06,90.44) ;
%Straight Lines [id:da8726857007623475] 
\draw [color={rgb, 255:red, 208; green, 2; blue, 27 }  ,draw opacity=1 ]   (460.05,61.15) -- (460.06,78.19) ;
%Straight Lines [id:da45901311894410934] 
\draw [color={rgb, 255:red, 208; green, 2; blue, 27 }  ,draw opacity=1 ]   (460.07,136.77) -- (460.07,153.81) ;
%Shape: Ellipse [id:dp205456688274672] 
\draw  [color={rgb, 255:red, 208; green, 2; blue, 27 }  ,draw opacity=1 ][fill={rgb, 255:red, 0; green, 0; blue, 0 }  ,fill opacity=1 ] (460.05,58.16) .. controls (461.36,58.15) and (462.42,58.65) .. (462.42,59.27) .. controls (462.42,59.9) and (461.36,60.41) .. (460.05,60.42) .. controls (458.75,60.43) and (457.69,59.93) .. (457.69,59.3) .. controls (457.69,58.68) and (458.75,58.16) .. (460.05,58.16) -- cycle ;
%Straight Lines [id:da8424298272884035] 
\draw [color={rgb, 255:red, 208; green, 2; blue, 27 }  ,draw opacity=1 ]   (460.05,55.23) -- (460.06,72.27) ;

% Text Node
\draw (258,194.4) node [anchor=north west][inner sep=0.75pt]    {$-1$};
% Text Node
\draw (329,195.4) node [anchor=north west][inner sep=0.75pt]    {$0$};
% Text Node
\draw (393,195.4) node [anchor=north west][inner sep=0.75pt]    {$1$};
% Text Node
\draw (457,195.4) node [anchor=north west][inner sep=0.75pt]    {$2$};
% Text Node
\draw (196,193.4) node [anchor=north west][inner sep=0.75pt]    {$-2$};
% Text Node
\draw (115,168.4) node [anchor=north west][inner sep=0.75pt]  [font=\large]  {$...$};
% Text Node
\draw (542,168.4) node [anchor=north west][inner sep=0.75pt]  [font=\large]  {$...$};
% Text Node
\draw (142,194.4) node [anchor=north west][inner sep=0.75pt]    {$-3$};
% Text Node
\draw (500,195.4) node [anchor=north west][inner sep=0.75pt]    {$3$};
% Text Node
\draw (270.47,75.06) node [anchor=north west][inner sep=0.75pt]  [rotate=-89.72]  {$-1$};
% Text Node
\draw (269.93,96.9) node [anchor=north west][inner sep=0.75pt]  [rotate=-89.72]  {$0$};
% Text Node
\draw (269.94,113.94) node [anchor=north west][inner sep=0.75pt]  [rotate=-89.72]  {$1$};
% Text Node
\draw (269.94,130.98) node [anchor=north west][inner sep=0.75pt]  [rotate=-89.72]  {$2$};
% Text Node
\draw (271.02,58.55) node [anchor=north west][inner sep=0.75pt]  [rotate=-89.72]  {$-2$};
% Text Node
\draw (288.72,147.52) node [anchor=north west][inner sep=0.75pt]  [font=\large,rotate=-91.82]  {$...$};
% Text Node
\draw (287.04,32.38) node [anchor=north west][inner sep=0.75pt]  [font=\large,rotate=-91.82]  {$...$};
% Text Node
\draw (331.47,77.06) node [anchor=north west][inner sep=0.75pt]  [rotate=-89.72]  {$-1$};
% Text Node
\draw (330.93,98.9) node [anchor=north west][inner sep=0.75pt]  [rotate=-89.72]  {$0$};
% Text Node
\draw (330.94,115.94) node [anchor=north west][inner sep=0.75pt]  [rotate=-89.72]  {$1$};
% Text Node
\draw (330.94,132.98) node [anchor=north west][inner sep=0.75pt]  [rotate=-89.72]  {$2$};
% Text Node
\draw (332.02,60.55) node [anchor=north west][inner sep=0.75pt]  [rotate=-89.72]  {$-2$};
% Text Node
\draw (349.72,149.52) node [anchor=north west][inner sep=0.75pt]  [font=\large,rotate=-91.82]  {$...$};
% Text Node
\draw (348.04,34.38) node [anchor=north west][inner sep=0.75pt]  [font=\large,rotate=-91.82]  {$...$};
% Text Node
\draw (504.47,80.06) node [anchor=north west][inner sep=0.75pt]  [rotate=-89.72]  {$-1$};
% Text Node
\draw (503.93,101.9) node [anchor=north west][inner sep=0.75pt]  [rotate=-89.72]  {$0$};
% Text Node
\draw (503.94,118.94) node [anchor=north west][inner sep=0.75pt]  [rotate=-89.72]  {$1$};
% Text Node
\draw (503.94,135.98) node [anchor=north west][inner sep=0.75pt]  [rotate=-89.72]  {$2$};
% Text Node
\draw (505.02,63.55) node [anchor=north west][inner sep=0.75pt]  [rotate=-89.72]  {$-2$};
% Text Node
\draw (522.72,152.52) node [anchor=north west][inner sep=0.75pt]  [font=\large,rotate=-91.82]  {$...$};
% Text Node
\draw (521.04,37.38) node [anchor=north west][inner sep=0.75pt]  [font=\large,rotate=-91.82]  {$...$};
% Text Node
\draw (152.47,79.06) node [anchor=north west][inner sep=0.75pt]  [rotate=-89.72]  {$-1$};
% Text Node
\draw (151.93,100.9) node [anchor=north west][inner sep=0.75pt]  [rotate=-89.72]  {$0$};
% Text Node
\draw (151.94,117.94) node [anchor=north west][inner sep=0.75pt]  [rotate=-89.72]  {$1$};
% Text Node
\draw (151.94,134.98) node [anchor=north west][inner sep=0.75pt]  [rotate=-89.72]  {$2$};
% Text Node
\draw (153.02,62.55) node [anchor=north west][inner sep=0.75pt]  [rotate=-89.72]  {$-2$};
% Text Node
\draw (170.72,151.52) node [anchor=north west][inner sep=0.75pt]  [font=\large,rotate=-91.82]  {$...$};
% Text Node
\draw (169.04,36.38) node [anchor=north west][inner sep=0.75pt]  [font=\large,rotate=-91.82]  {$...$};
% Text Node
\draw (205.47,78.06) node [anchor=north west][inner sep=0.75pt]  [rotate=-89.72]  {$-1$};
% Text Node
\draw (204.93,99.9) node [anchor=north west][inner sep=0.75pt]  [rotate=-89.72]  {$0$};
% Text Node
\draw (204.94,116.94) node [anchor=north west][inner sep=0.75pt]  [rotate=-89.72]  {$1$};
% Text Node
\draw (204.94,133.98) node [anchor=north west][inner sep=0.75pt]  [rotate=-89.72]  {$2$};
% Text Node
\draw (206.02,61.55) node [anchor=north west][inner sep=0.75pt]  [rotate=-89.72]  {$-2$};
% Text Node
\draw (223.72,150.52) node [anchor=north west][inner sep=0.75pt]  [font=\large,rotate=-91.82]  {$...$};
% Text Node
\draw (222.04,35.38) node [anchor=north west][inner sep=0.75pt]  [font=\large,rotate=-91.82]  {$...$};
% Text Node
\draw (396.47,78.06) node [anchor=north west][inner sep=0.75pt]  [rotate=-89.72]  {$-1$};
% Text Node
\draw (395.93,99.9) node [anchor=north west][inner sep=0.75pt]  [rotate=-89.72]  {$0$};
% Text Node
\draw (395.94,116.94) node [anchor=north west][inner sep=0.75pt]  [rotate=-89.72]  {$1$};
% Text Node
\draw (395.94,133.98) node [anchor=north west][inner sep=0.75pt]  [rotate=-89.72]  {$2$};
% Text Node
\draw (397.02,61.55) node [anchor=north west][inner sep=0.75pt]  [rotate=-89.72]  {$-2$};
% Text Node
\draw (414.72,150.52) node [anchor=north west][inner sep=0.75pt]  [font=\large,rotate=-91.82]  {$...$};
% Text Node
\draw (413.04,35.38) node [anchor=north west][inner sep=0.75pt]  [font=\large,rotate=-91.82]  {$...$};
% Text Node
\draw (457.47,79.06) node [anchor=north west][inner sep=0.75pt]  [rotate=-89.72]  {$-1$};
% Text Node
\draw (456.93,100.9) node [anchor=north west][inner sep=0.75pt]  [rotate=-89.72]  {$0$};
% Text Node
\draw (456.94,117.94) node [anchor=north west][inner sep=0.75pt]  [rotate=-89.72]  {$1$};
% Text Node
\draw (456.94,134.98) node [anchor=north west][inner sep=0.75pt]  [rotate=-89.72]  {$2$};
% Text Node
\draw (458.02,62.55) node [anchor=north west][inner sep=0.75pt]  [rotate=-89.72]  {$-2$};
% Text Node
\draw (475.72,151.52) node [anchor=north west][inner sep=0.75pt]  [font=\large,rotate=-91.82]  {$...$};
% Text Node
\draw (474.04,36.38) node [anchor=north west][inner sep=0.75pt]  [font=\large,rotate=-91.82]  {$...$};

\end{tikzpicture}
\]
    \caption{Example of an object in $\textbf{C}'_1(W)$ for $W = \Zbb$. Over each integer $i \in \Zbb$ lays a tensor factor of $\mathcal{A}(\Zbb, q_i)$. The dashed ovals indicate the support of these tensor factors.}
    \label{fig:enlargement}
\end{figure}

We are now equipped to prove the following theorem.

\delooping*

\begin{proof}
    Again, due to the obstructions on the $\pi_0$-level, we wish to work with certain enlargements of our original categories. Just as how Azumaya $R$-algebras range over all possible tensor factors of matrix algebras at a single point, we wish to place admissible tensor factors of $\mathcal{A}(\Zbb^{n-1}, q)$ on each point to enlarge the category for $n \geq 0$.\\
    
    For a fixed $n$, for each $W \in \{*, \Zbb_{\geq 0}, \Zbb_{\leq 0}, \Zbb\}$, we use the category $\mathbf{C}'_{n-1}(W)$ as defined in Definition~\ref{def::enlarge_category}. We also note that if $n = 1$, then $\textbf{C}'_{0}(W)$ is constructed by placing Azumaya algebras over each point of $W$. In particular, $\textbf{C}'_{0}(*)$ is $\operatorname{Az}(R)$.\\
    
    Recall that $\mathbf C(W \times \Zbb^{n-1})$ is cofinal and full within $\mathbf{C}'_{n-1}(W)$, so the connected component at identity of their respective K-theory spaces are equivalent (i.e., their higher K-groups above $\pi_0$ are the same). Since we are only proving the statement after applying $\Omega$, we can recast the problem to the setting of $\mathbf{C}'_{n-1}(W)$. Define $u\colon \mathbf{C}'_{n-1}(*) \to \mathbf{C}'_{n-1}(\Zbb_{\geq 0})$ and $v\colon \mathbf{C}'_{n-1}(*) \to \mathbf{C}'_{n-1}(\Zbb_{\leq 0})$ as the usual inclusion functors. Theorem~\ref{thm::thomason_pullback} gives a homotopy pullback diagram
    % https://q.uiver.app/#q=WzAsNCxbMCwwLCJLKFxcbWF0aGNhbHtDfScoKikpIl0sWzEsMCwiSyhcXG1hdGhjYWx7Q30nKFxcWmJiX3tcXGdlcSAwfSkpIl0sWzEsMSwiSyhQKSJdLFswLDEsIksoXFxtYXRoY2Fse0N9JyhcXFpiYl97XFxsZXEgMH0pKSJdLFswLDEsInUiXSxbMCwzLCJ2IiwyXSxbMywyXSxbMSwyXV0=
\[\begin{tikzcd}
	{K(\mathbf{C}'_{n-1}(*))} & {K(\mathbf{C}'_{n-1}(\Zbb_{\geq 0}))} \\
	{K(\mathbf{C}'_{n-1}(\Zbb_{\leq 0}))} & {K(P)}
	\arrow["u", from=1-1, to=1-2]
	\arrow["v"', from=1-1, to=2-1]
	\arrow[from=1-2, to=2-2]
	\arrow[from=2-1, to=2-2]
\end{tikzcd},\]
where $P$ denotes Thomason's simplified double mapping cylinder construction (Definition~\ref{def::cylinder}). A similar argument as in Lemma~\ref{lem::contractible} shows that $K(\mathbf{C}'_{n-1}(\Zbb_{\leq 0}))$ and $K(\mathbf{C}'_{n-1}(\Zbb_{\geq 0}))$ are contractible, as the admissible tensor factors are placed pointwise. Alternatively, we note that since $\textbf{C}(\Zbb_{\leq 0})$, for instance, is cofinal in $\textbf{C}'_{n-1}(\Zbb_{\leq 0})$, Lemma~\ref{lem::contractible} shows it suffices to show $\textbf{C}'_{n-1}(\Zbb_{\leq 0})$ is connected. A construction similar to Lemma~\ref{lem::contractible} or Lemma 1.3 of \cite{pedersen_weibel} then shows it is connected.\\

Hence we have that $K(\mathbf{C}'_{n-1}(*)) \simeq \Omega K(P)$ by Corollary~\ref{cor::thomason_corollary}. We wish to now show that $K(P) \simeq K(\mathbf{C}'_{n-1}(\Zbb))$. By the universal property of $P$, there is a natural strong symmetric monoidal functor
\[F\colon P \to \mathbf{C}'_{n-1}(\Zbb),\quad (A^{-}, A, A^{+}) \mapsto A^{-} \otimes A \otimes A^{+}.\]
It suffices to show that Quillen's Theorem A holds. For $Y \in \mathbf{C}'_{n-1}(\Zbb)$, we  decompose the comma category $Y \downarrow F$ as an increasing union of sub-categories  $\bigcup_{d \geq 0} \operatorname{Fil}_{d}$ where $\operatorname{Fil}_{d}$ is the full subcategory of $Y \downarrow F$ consisting of morphisms $\alpha: Y \to F(A^{-1}, A, A^{+})$ such that $\alpha$ and $\alpha^{-1}$ have spread bounded by distance $d$.\\

We wish to now construct an initial object in $\operatorname{Fil}_{d}$, which will show that $B\operatorname{Fil}_{d}$ is contractible. For an object $Z \in \textbf{C}'_{n-1}(\Zbb)$, we write $Z(n)$ as the factor of $Z$ at $n \in \mathbb{Z}$.  Now we may write $Y = Y^{-}_d \otimes Y_d \otimes Y^{+}_d$ where we have that
\[Y_d(n) = \begin{cases}
    Y(n), -d \leq n \leq d\\
    R, \text{ otherwise }
\end{cases}\hspace{-0.5cm},\ Y_d^{-}(n) = \begin{cases}
    Y(n), n < -d\\
    R, \text{ otherwise }
\end{cases}\hspace{-0.5cm},\ Y_d^{+}(n) = \begin{cases}
    Y(n), n > d\\
    R, \text{ otherwise }
\end{cases}\hspace{-0.4cm}.\]
 We write $\sigma: Y \to Y_d^{-} \otimes f(Y_d) \otimes Y^+_d = F(Y_d^{-}, f(Y_d), Y_d^{+})$ to denote the obvious isomorphism, where $f(Y_d)(p, 0) = \bigotimes_{-d \leq n \leq d} Y_d(p, n)$ for all $p$ and $f(Y_d)(p, n) = R$ otherwise\footnote{\textbf{Note:} The $f(Y_d)$ used in the proof of Theorem~\ref{thm::higher_delooping} here is the notation $Y_d$ in \cite{pedersen_weibel}.}.\\

Now suppose $\alpha: Y \to F(A^{-}, A, A^{+})$ is in $\operatorname{Fil}_{d}$. We take $e_{-}(Y_d)$ as the tensor factor of $Y_d$ that is sent to $\Zbb_{< 0}$ by $\alpha$, $e_0(Y_d)$ as the tensor factor of $Y_d$ that is sent to $\{0\}$ by $\alpha$, and $e_+(Y_d)$ as the tensor factor of $Y_d$ that is sent to $\Zbb_{> 0}$, and in particular we have
\[Y_d = e_{-}(Y_d) \otimes e_0(Y_d) \otimes e_{+}(Y_d).\]
We explain how to construct the components $e_{-}(Y_d), e_0(Y_d),$ and $e_+(Y_d)$ as follows. For $X \subseteq \Zbb$, we let $\mathcal{B}(X, q)$ denote the tensor product (in the colimit sense) of factors assigned at each point $x \in X$ for the configuration $F(A^{-}, A, A^+)$. Observe that
\[\mathcal{B}(\Zbb \cap (-\infty, -2d), q) \subseteq \alpha(Y_d^{-}) \subseteq A^{-}.\]
Note here that the first inclusion makes sense because we are imposing the $L^{\infty}$-metric. Since $\mathcal{B}(\Zbb \cap (-\infty, -2d), q)$ is an admissible tensor factor of a locally matrix algebra, Corollary~\ref{cor::tensor_factor_splitting} implies that we can split
\[\alpha(Y^{-}_d) = \mathcal{B}(\Zbb \cap (-\infty, -2d), q) \otimes D^{-}.\]
Observe that by construction $D^{-} \subseteq \mathcal{B}(\Zbb \cap [-2d, 0), q)$ and clearly $D^{-}$ is a faithfully flat tensor factor of $\mathcal{B}(\Zbb, q)$, as both $\alpha(Y_d^{-})$ and $\mathcal{B}(\Zbb \cap (-\infty, -2d), q)$ are by Lemma~\ref{lem::admissible_faithfully_flat}. Thus Lemma~\ref{lem::tensor_complement_exist} implies that
\[\mathcal{B}(\Zbb \cap [-2d, 0), q) = D^{-} \otimes E^{-}.\]
Note that Lemma~\ref{lem::tensor_complement_exist} is un-necessary if $n = 1$, as it just follows from the centralizer theorem (Lemma~\ref{lem::centralizer}) in that case.\\

Now, we define $e_-(Y_d)$ as $\alpha^{-1}(E^{-})$ and we symmetrically define $e_+(Y_d)$. We also define $e_0(Y_d)$ as $\alpha^{-1}(A)$. This gives a decomposition
\[Y_d = e_-(Y_d) \otimes e_0(Y_d) \otimes e_+(Y_d) \in \mathbf{C}'_{n-1}([-d, d]) \simeq \mathbf{C}'_{n-1}(*).\]
From here, we define a morphism in $P$, as
\[\eta \coloneqq (\psi, \psi^{-}, \psi^{+}, f^{-}(e_{-}(Y_d)), f^+(e_{+}(Y_d))): (Y^{-}_d, f(Y_d), Y^{+}_d) \to (A^{-}, A, A^{+})\]
where $f^{-}$ and $f^{+}$ denotes the obvious projection to the position $0 \in \Zbb$. Here
\[\psi\colon f(Y_d) = f^{-}(e_{-}(Y_d)) \otimes f(e_0(Y_d)) \otimes f^{+}(e_{+}(Y_d)) \to f^{-}(e_{-}(Y_d)) \otimes A \otimes f^{+}(e_{+}(Y_d))\]
is the map $1 \otimes \alpha \circ f^{-1} \otimes 1$. The map $\psi^{-}$ is given by
\[\psi^{-} \colon Y^{-}_d \otimes f^{-}(e_{-}(Y_d)) \xrightarrow{1 \otimes (f^{-})^{-1}} Y^{-}_d \otimes e_{-}(Y_d) \xrightarrow{\alpha} A^{-},\]
and similarly for $\psi^{+}$. This is almost what we want, but we need to show that $f(e_{-}(Y_d)), f(e_0(Y_d)), f(e_+(Y_d))$ are all objects in $\mathbf{C}'_{n-1}(*)$ and the relevant maps constructed here are locality-preserving. Lemma~\ref{lem::technical_admissible}, which we will state and prove below, shows that these factors are admissible. It then follows from Lemma~\ref{lem::invert_local_preserve_equal} that the relevant maps constructed here are locality-preserving. Uniqueness of the morphism can be verified from the fact that the relevant categories other than $P$ are all groupoids and that two morphisms in $P$ are identified if the two objects in their 5-tuple differ by isomorphisms.
\end{proof}

\begin{remark}\label{rmk::more_generally}
The proof above extends to show that $\mathbf{Q}(X) \simeq \Omega \mathbf{Q}(X \times \Zbb)$ for a wide class of metric spaces. This gives a genuinely different $\Omega$-spectrum $$\mathbf{Q}(X), \mathbf{Q}(X\times \ZZ), \dots,  \mathbf{Q}(X\times \ZZ^n), \dots$$ for $X$ unbounded. In the first two versions of this preprint, we suggested a connection with coarse homotopy theory, citing in particular the bornological coarse framework of \cite{bunke2020homotopy}. Subsequently, \cite{ludewig2026quantumcellularautomatacoarse} established such a relation in the unitary case over $\mathbb{C}$, following the strategy of this paper.
\end{remark}

\begin{remark}
As suggested in the proof above, the case for $\mathbf{Q}(*) \simeq \Omega \mathbf{Q}(\Zbb^1)$ (or more generally, $\mathbf{Q}(X) \simeq \Omega \mathbf{Q}(X \times \Zbb)$ for $X$ bounded) can be proven without using Lemma~\ref{lem::tensor_complement_exist}, Lemma~\ref{lem::invert_local_preserve_equal}, or Lemma~\ref{lem::technical_admissible}, or the more discussions on general admissible tensor factors from Construction~\ref{defn::admissible}.
\end{remark}

We now state and prove the technical lemma used in the proof of Theorem~\ref{thm::higher_delooping}.
\begin{lemma}\label{lem::technical_admissible}
    Let $F, E \in \mathbf{C}'_X(\Zbb)$ and $\beta: F \to E$ be an $R$-algebra isomorphism of finite spread $< \ell$. For a subset $S \subseteq \Zbb$, write $F(S)$ and $E(S)$ as the factor of $F$ and $E$ over $S$.
    \begin{enumerate}
        \item $\beta^{-1}(E(0))$ is admissible.
        \item Since we have that 
        \[E(\Zbb \cap (-\infty, -\ell]) \subseteq \beta(F(\Zbb \cap (-\infty, 0)) \subseteq E(\Zbb \cap (-\infty, +\ell)),\]
       Corollary~\ref{cor::tensor_factor_splitting} shows that we can write
        \[\beta(F(\Zbb \cap (-\infty, 0))) = B \otimes E(\Zbb \cap (-\infty, -\ell]).\]
        Here $B$ is also admissible.
    \end{enumerate}
    (1) shows $e_0(Y_d)$ is admissible by taking $\beta = \alpha$. (2) shows $e_-(Y_d)$ is admissible by taking $\beta = \alpha^{-1}$, and this symmetrically verifies $e_+(Y_d)$.
\end{lemma}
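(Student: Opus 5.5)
The plan is to reduce both parts to the characterization of admissible tensor factors as \emph{invertible} ones (Lemma~\ref{lem::invert_local_preserve_equal}), together with transitivity (Lemma~\ref{lem::invertible_transitive}). Throughout, regard $F$ and $E$ as admissible tensor factors of ambient algebras $\mathcal{A}(X\times\Zbb,q)$ and $\mathcal{A}(X\times\Zbb,r)$, pointwise in $\Zbb$. Let $F'$ and $E'$ be their pointwise tensor complements. Extend $\beta$ to the locality-preserving isomorphism
\[
\tilde\beta=\beta\otimes \id_{F'}\colon F\otimes F' \to E\otimes F'.
\]
Stacking with $E'$ and using SWAP, this becomes a QCA-type map between full spin systems on $X\times\Zbb$ of spread $<\ell$. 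The point of this enlargement is to place us back in the setting of Construction~\ref{defn::admissible}, where the relevant factors come from cutting a QCA along a hyperplane.

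For (1), it suffices to show that $E(0)$ is an invertible tensor factor of $\tilde\beta^{-1}$ applied to the ambient algebra. Then $\beta^{-1}(E(0))$ is an invertible factor of $F$ restricted to a bounded window $[-\ell,\ell]$, hence of $\mathcal{A}(X,q')$ after collapsing the $\Zbb$-coordinates in that window as in step (3) of Construction~\ref{defn::admissible}.

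Concretely, take $a$ in the window algebra with support $S$. Write $\beta(a)$ using the pointwise splitting of $E$ into $E(0)\otimes E(\neq 0)$, where each $E(n)$ is admissible and hence invertible in $\mathcal{A}(X,r_n)$. Pull the decomposition back by $\beta^{-1}$. Supports grow by at most $\ell$ from $\beta$, by $\ell'$ from the invertibility constant, and by $\ell$ again from $\beta^{-1}$. So the uniform bound is roughly $2\ell+\ell'$. This is the same support bookkeeping as in the proof of Lemma~\ref{lem::invertible_transitive}.

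A subtle point arises here. We need $E(0)$ to be invertible uniformly across $X$ inside the whole slab, with a single constant, not just pointwise. This holds because $E(0)$ is a single admissible factor of $\mathcal{A}(X,r_0)$, so it carries one constant $\ell'$.

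For (2), first show that $B$ exists and is a tensor factor. Corollary~\ref{cor::tensor_factor_splitting} applies because admissible factors are faithfully flat (Lemma~\ref{lem::admissible_faithfully_flat}). Lemma~\ref{lem::tensor_complement_exist} then gives
\[
E\bigl(\Zbb\cap(-\ell,\ell)\bigr)=B\otimes B'',
\qquad
B''=\beta\bigl(F(\Zbb\cap[0,\infty))\bigr)\cap E\bigl(\Zbb\cap(-\ell,\ell)\bigr).
\]
To show $B$ is admissible, mimic the forward direction of Lemma~\ref{lem::invert_local_preserve_equal}. Given $a$ in the window $E(\Zbb\cap(-\ell,\ell))$, apply $\beta^{-1}$. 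Split the result along the cut at $0$ using the pointwise structure of $F$, which is invertible at each point with a uniform constant. Apply $\beta$ again. Then project away the outer slabs $E(\Zbb\cap(-3\ell,-\ell])$ and $E(\Zbb\cap[\ell,3\ell))$ using $R$-linear projections to the identity.

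This projection step is the main obstacle. In the matrix case those outer slabs were free modules with $1$ as a basis element. Here they are only admissible, hence faithfully flat, tensor factors. My first attempt would be to stabilize with the complements $E'$: this embeds everything in full matrix algebras $\mathcal{A}(\cdot,r)$, where the projection $\pi^{\pm}$ to the identity exists. I would then check that projecting commutes with landing in $B\otimes(\text{complement})$. Failing that, I would use that $R\cdot 1$ is a direct summand of each faithfully projective local piece.

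Once both parts hold, $B$ is invertible in the collapsed window algebra, hence admissible by Lemma~\ref{lem::invert_local_preserve_equal}, with Lemma~\ref{lem::invertible_transitive} passing from the stabilized algebra back to $E$. The final sentence of the statement then follows by substituting $\beta=\alpha$ or $\beta=\alpha^{-1}$.
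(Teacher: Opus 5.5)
Your proposal follows essentially the same route as the paper. Both reduce everything to invertibility via Lemma~\ref{lem::invert_local_preserve_equal}: for (1) you transport the invertibility of $E(0)$ along $\beta^{\pm 1}$, and for (2) you rerun that lemma's projection-to-the-identity argument and conclude with Lemma~\ref{lem::invertible_transitive}. Your fix for the projection obstacle, embedding the outer slabs in the ambient full matrix algebras, is exactly what the paper does: it restricts the identity-projection of the ambient free module to the slab.
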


\begin{proof}
    (1) $E(0)$ is clearly admissible in $E$ (or $E\otimes E'$) by the characterization of Lemma~\ref{lem::invert_local_preserve_equal}. Since a locality-preserving map $\beta^{-1}$ takes admissible to admissible, $\beta^{-1}(E(0))$ is admissible.\\
    
    (2) A similar proof as in Lemma~\ref{lem::invert_local_preserve_equal} shows that $B$ is an invertible tensor factor of $E(\Zbb \cap (-\ell, +\ell))$, and $E(\Zbb \cap (-\ell, +\ell))$ is an invertible tensor factor of some matrix algebra. This is because we can once again linearly project to the identity, by which we mean a surjective $R$-linear map that sends the identity to identity. This follows from a general fact that if $P$ is a sub-module of a free $R$-module $P'$ and let $x \in P$, then we can restrict the projection of $P' \to R \cdot x$ onto the domain of $P$ to achieve the desired map. Lemma~\ref{lem::invertible_transitive} then shows that $B$ is an invertible tensor factor of some matrix algebra, so Lemma~\ref{lem::invert_local_preserve_equal} implies $B$ is admissible.
\end{proof}

Following through the proof of Theorem~\ref{thm::higher_delooping} in the case for $n = 1$ gives the following corollary.
\begin{cor}\label{cor::qz_azumaya}
$\mathbf{Q}(\Zbb) = K(\operatorname{Az}(R))$, and hence $K(\mathbf{C}(\Zbb))$ is a delooping of $K(\operatorname{Az}(R))$.
\end{cor}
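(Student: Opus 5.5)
The plan is to specialize the proof of Theorem~\ref{thm::higher_delooping} to $n=1$ and then undo the loop. For $n=1$ the enlarged categories $\mathbf{C}'_{0}(W)$ place Azumaya algebras at the points of $W$. In particular $\mathbf{C}'_0(*)=\operatorname{Az}(R)$, and $\mathbf{C}'_0(\Zbb)$ contains $\mathbf{C}(\Zbb)$ as a full cofinal subcategory.

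The proof of Theorem~\ref{thm::higher_delooping} gives two facts before any loops are applied. First, Theorem~\ref{thm::thomason_pullback} applies to $u,v$. Second, by the contractibility argument (Lemma~\ref{lem::contractible} adapted to $\mathbf{C}'_0(\Zbb_{\geq 0})$ and $\mathbf{C}'_0(\Zbb_{\leq 0})$), we get
\[K(\operatorname{Az}(R)) = K(\mathbf{C}'_0(*)) \simeq \Omega K(P).\]
The Quillen Theorem A argument with the filtration $\operatorname{Fil}_d$ then identifies $K(P)\simeq K(\mathbf{C}'_0(\Zbb))$. For $n=1$ the splitting step needs only the centralizer theorem (Lemma~\ref{lem::centralizer}), not Lemma~\ref{lem::tensor_complement_exist} or Lemma~\ref{lem::technical_admissible}, because every factor involved is a tensor factor of a finite matrix algebra and is therefore Azumaya. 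This yields $K(\operatorname{Az}(R))\simeq \Omega K(\mathbf{C}'_0(\Zbb))$.

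Next we pass from $\mathbf{C}'_0(\Zbb)$ back to $\mathbf{C}(\Zbb)$. The loop space only sees the basepoint component. By IV.4.11 of \cite{weibel2013k}, the full cofinal inclusion $\mathbf{C}(\Zbb)\subset\mathbf{C}'_0(\Zbb)$ induces isomorphisms on $K_i$ for $i>0$. Since both $K$-spaces are group-like $H$-spaces, this means the inclusion is an equivalence on identity components. So
\[\Omega K(\mathbf{C}'_0(\Zbb))\simeq \Omega K(\mathbf{C}(\Zbb))=\mathbf{Q}(\Zbb),\]
which gives $\mathbf{Q}(\Zbb)\simeq K(\operatorname{Az}(R))$.

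For the delooping statement, Remark~\ref{rmk:coarse_connected} shows $K(\mathbf{C}(\Zbb))$ is connected. Hence $K(\mathbf{C}(\Zbb))$ is a connected space whose loop space is $K(\operatorname{Az}(R))$, i.e.\ a delooping. The main point to check is compatibility with $\pi_0$: the equivalence should induce on $\pi_0$ the map $b$ of Theorem~\ref{thm::k0_qz}. I would verify this by chasing the boundary map of the Thomason square on $\pi_1 K(\mathbf{C}(\Zbb))=\CQ(\Zbb)^{ab}$. That map should send a QCA $\alpha$ to the class of the middle factor $e_0$, which is exactly $B/\SA(\ZZ\cap(\gamma-l,\gamma],q)$. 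This step is the main obstacle, though it is needed only as a consistency check and not for the homotopy equivalence itself.
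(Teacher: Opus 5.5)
Your proposal is correct and follows essentially the paper's own argument: you specialize the proof of Theorem~\ref{thm::higher_delooping} to $n=1$, where $\mathbf{C}'_0(*)=\operatorname{Az}(R)$, and then use that $\mathbf{C}(\Zbb)$ is full and cofinal in $\mathbf{C}'_0(\Zbb)$ to identify $\Omega K(\mathbf{C}'_0(\Zbb))$ with $\mathbf{Q}(\Zbb)$. Your $\pi_0$-compatibility check against the map $b$ of Theorem~\ref{thm::k0_qz} is not needed for the statement; also note that for $n=1$ the first splitting of $\alpha(Y_d^-)$ still uses Corollary~\ref{cor::tensor_factor_splitting}, so it is not only the centralizer theorem that is required.
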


\begin{proof}
  When $n = 1$, Theorem~\ref{thm::higher_delooping} shows $K(\textbf{C}'(\Zbb))$ is a delooping of $\textbf{C}'(*) = K(\operatorname{Az}(R))$. In other words,
  $\textbf{Q}(\Zbb) = \Omega K(\textbf{C}(\Zbb)) = \Omega K(\textbf{C}'_0(\Zbb)) = K(\operatorname{Az}(R))$.
\end{proof}

Theorem~\ref{thm::higher_delooping} also gives a sequence of deloopings of $K(\operatorname{Az}(R)) = \mathbf{Q}(\Zbb)$. In light of how negative K-theory can be constructed \cite{PEDERSEN1984461}, the groups $\mathcal{Q}(\Zbb^n)/\mathcal{C}(\Zbb^n)$ for $n > 1$ can really be thought of as the \textit{negative homotopy groups} for $K(\operatorname{Az}(R))$. We conclude this section by discussing how these negative homotopy groups can be seen as a generalization of the Brauer groups.\\

The proof of Theorem~\ref{thm::higher_delooping} extends more generally to show that $K(\mathbf{C}'_X(*)) \simeq \Omega K(\mathbf{C}(X \times \Zbb))_0$. The space $K(\mathbf{C}'_X(*))$ can be thought of as the K-theory space of invertible/admissible tensor factors on $X$. When we take $X = \Zbb^n$ and apply $\pi_0$ to both sides, we obtain the analog over a ring of Theorem 3.15 of \cite{haah2023invertible} (which was done in the unitary case over $\mathbb{C}$). When we go over the unitary case in Section~\ref{sec::star}, this would recover Theorem 3.15 of \cite{haah2023invertible}. This inspired the following definitions. 
\begin{defn}
    Let $\mathbf{C^{proj}}(X)$ be the full subcategory of $\mathbf{C}'_X(*)$ whose objects are algebras of the form 
    \begin{equation}
        \bigotimes_{x\in X} \End_R(P_x) 
    \end{equation}
     with finitely generated projective $R$-modules $P_x$ and $P_x=R$ for all but a locally finite subset of $X$. In particular, when $R$ is a field, $\mathbf{C^{proj}}(X)=\mathbf{C}(X)$.
\end{defn}
\begin{defn}
     The Brauer group of admissible tensor factors on $X$ is
    \begin{equation}
        \mathrm{Br}_X(R)=\pi_0 K(\mathbf{C}'_X(*))/ \pi_0 K(\mathbf{C^{proj}}(X)). 
    \end{equation}
\end{defn}
\begin{exmp}\label{example::Brauer_ring}
    For $X=*$, this recovers the usual Brauer group $\mathrm{Br}_X(R)=\mathrm{Br}(R)$ by Theorem~\ref{thm::k0_qz}.
\end{exmp}
\begin{exmp}\label{example::higher_Brauer}
    Over a field $\mathbb F$, Corollary~\ref{cor::qca_space_properties}, Theorem~\ref{thm::pi0_is_coarse}, Remark~\ref{rmk:coarse_connected} implies
    \begin{equation}     
    \mathrm{Br}_{\ZZ^n}(\mathbb F)=\pi_0 K(\mathbf{C}'_{\Zbb^n}(*))=\pi_1K(\mathbf C(\ZZ^{n+1}))=\mathcal Q(\ZZ^{n+1})/ \CalC(\ZZ^{n+1}).
    \end{equation}
    for $n\geq 1$. This provides a sequence of higher Brauer groups index by $n$.
\end{exmp}

\section{The QCA Conjecture over $C^*$-Algebras with Unitary Circuits}\label{sec::star}

In this section, we review the definition of QCA from the physics literature~\cite{farrelly2020review}; see also appendix~A of~\cite{yang2025categorifying}. Then we apply the strategy developed in Sections~\ref{sec::space} and~\ref{sec::spectra} to construct the $\Omega$-spectrum $\mathbb{QCA}$, thereby resolving the QCA conjecture. Throughout this section, we fix $R = \CC$. 

To distinguish this notion from the version of QCA considered in earlier sections, we use the term $*$-QCA. Note, however, that $*$-QCA is the standard notion of QCA in the physics literature.

\subsection{The $*$-QCA Group}Let $(X,\rho)$ be a countable, locally finite metric space and let 
$q \in \mathbb{N}^X_{\mathrm{lf}}$. The algebra of local observables $\SA(X,q)$ is a normed $*$-algebra.

Indeed, for every $A \in \SA(X,q)$ there exists a finite subset 
$\Gamma \subset X$ such that 
\[
A \in \SA(\Gamma,q),
\]
where $\SA(\Gamma,q)$ is a finite-dimensional matrix algebra. 
We define
\[
\|A\| := \|A\|_{\mathrm{op}},
\]
the operator norm of $A$ inside $\SA(\Gamma,q)$. The involution $A \mapsto A^*$ is given by matrix adjoint (conjugate transpose).

\begin{prop}
The norm completion $\overline{\SA(X,q)}$ of $\SA(X,q)$ is a $C^*$-algebra.
\end{prop}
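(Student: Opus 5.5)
The plan is to check that the completion inherits the $C^*$-identity, submultiplicativity and the involution from the finite-dimensional pieces. The main point is that the norm on $\SA(X,q)$ is well defined, i.e.\ independent of the finite set $\Gamma$ used to compute it. Once that holds, $\SA(X,q)$ is an increasing union of finite-dimensional $C^*$-algebras $\SA(\Gamma,q)$ with isometric connecting maps. The completion of such an algebraic inductive limit is a $C^*$-algebra, namely the $C^*$-inductive limit.

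First I would verify well-definedness. For $\Gamma\subset\Gamma'$ finite, the inclusion $\iota_{\Gamma,\Gamma'}(A)=A\otimes\Id_{\Gamma'\setminus\Gamma}$ is a unital injective $*$-homomorphism between full matrix algebras over $\CC$. The identity $\|A\otimes \Id\|_{\mathrm{op}}=\|A\|_{\mathrm{op}}$ holds for the Kronecker product on $\CC^{n}\otimes\CC^{m}$: the operator $A\otimes\Id$ is unitarily a block-diagonal direct sum of $m$ copies of $A$. Since $\mathcal P_0(\Lambda)$ is directed, any two finite sets containing the support of $A$ have a common upper bound, so $\|A\|$ does not depend on the choice of $\Gamma$.

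Next I would check that $\|\cdot\|$ is a $C^*$-norm on $\SA(X,q)$. Given $A,B$, choose one finite $\Gamma$ with $A,B\in\SA(\Gamma,q)$. Inside this matrix algebra, the norm axioms, $\|AB\|\le\|A\|\|B\|$, $\|A^*\|=\|A\|$ and $\|A^*A\|=\|A\|^2$ all hold because they hold for the operator norm on $M_N(\CC)$. I would also note that the involution is compatible with the inclusions, since $(A\otimes\Id)^*=A^*\otimes\Id$.

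Finally I would extend everything to the completion $\overline{\SA(X,q)}$. Multiplication is jointly continuous by submultiplicativity, and the involution is isometric, so both extend uniquely. The algebraic identities and the $C^*$-identity then pass to limits by continuity of norm, product and involution. This makes $\overline{\SA(X,q)}$ a Banach $*$-algebra satisfying $\|a^*a\|=\|a\|^2$, which is a $C^*$-algebra. I expect no serious obstacle here; the only nontrivial point is the isometry $\|A\otimes\Id\|=\|A\|$, which is settled by the unitary block-diagonal form above.
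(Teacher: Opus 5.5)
Your argument is correct and is essentially the standard construction the paper relies on. The paper's proof is only a citation to Section~6.2 of Bratteli--Robinson, where the quasi-local algebra is built exactly as you do: as the completion of the increasing union of the finite-dimensional algebras $\SA(\Gamma,q)$ along the isometric unital $*$-embeddings $A\mapsto A\otimes\Id$.

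One small point is worth making explicit. The operator norm on each $\SA(\Gamma,q)$ does not depend on the chosen identification with $\Mat(\CC^{N})$. Different orderings of the tensor factors differ by conjugation by a permutation unitary; alternatively, a finite-dimensional $C^*$-algebra carries only one $C^*$-norm.
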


\begin{proof}
See Section~6.2 of \cite{bratteli2012operator2}.
\end{proof}

\begin{defn}
Given two quantum spin systems $q, q'$ over $(X,\rho)$, a \emph{locality-preserving $*$-isomorphism} is a $C^*$-algebra isomorphism
\[
\alpha: \overline{\SA(X,q)} \longrightarrow \overline{\SA(X,q')}
\]
of finite spread $l>0$. That is, for any $x\in X$ and any
\[
A\in \SA(\{x\}, q) = \mathrm{Mat}(\CC^{q_x}),
\]
we have
\[
\alpha(A)\in \SA(D_\ell(x), q'), 
\qquad 
D_\ell(x)=\{y\in X: d(x,y)\le l\}.
\]
A spin system equipped with a locality-preserving $*$-automorphism $[q,\alpha]$ is called a \emph{$*$-quantum cellular automaton} ($*$-QCA). For fixed $q$, the set of $*$-QCA forms a group under composition, denoted $\CQ^*(X,q)$.
\end{defn}

\begin{lemma}
Let 
\[
\alpha_0: \SA(X,q)\longrightarrow \SA(X,q')
\]
be a locality-preserving isomorphism as in Definition~\ref{def:locality_preserve_iso}. Suppose $\alpha_0$ is \textit{adjoint preserving} in the sense that 
\[
\alpha_0(A^*)=\alpha_0(A)^*
\quad\text{for all } A\in \SA(X,q),
\]
where the superscript $*$ denotes the adjoint (the conjugate transpose). Then $\alpha_0$ extends uniquely to a locality-preserving $*$-isomorphism
\[
\alpha: \overline{\SA(X,q)}\longrightarrow \overline{\SA(X,q')}.
\]
\end{lemma}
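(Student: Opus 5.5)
The plan is to show that $\alpha_0$ is isometric for the operator norms. Then it extends by continuity to the completions, and the extension has the required properties.

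First, fix $A \in \SA(X,q)$ and choose a finite $\Gamma \subset X$ with $A \in \SA(\Gamma,q)$. By finite spread, $\alpha_0(\SA(\Gamma,q))$ lies in $\SA(\Gamma',q')$, where $\Gamma' = \Lambda' \cap D_\ell(\Gamma)$ is finite by local finiteness of $q'$. The restriction
\[
\alpha_0|_{\SA(\Gamma,q)} \colon \SA(\Gamma,q) \to \SA(\Gamma',q')
\]
is then a unital, injective, adjoint-preserving algebra homomorphism between finite-dimensional $C^*$-algebras, so it is a $*$-homomorphism. An injective $*$-homomorphism of $C^*$-algebras is isometric. To see this directly: $\|x\|^2 = \|x^*x\|$ equals the spectral radius of $x^*x$, and since the map is unital and injective between finite-dimensional algebras, the spectrum of $x^*x$ is preserved. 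Moreover, the norm on $\SA(\Gamma',q')$ agrees with the norm on $\SA(X,q')$, because the inclusions $A \mapsto A\otimes \Id$ are isometric for operator norms. Hence $\|\alpha_0(A)\| = \|A\|$ for every $A \in \SA(X,q)$.

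Next, being isometric, $\alpha_0$ is uniformly continuous. It therefore extends uniquely to a bounded linear map
\[
\alpha \colon \overline{\SA(X,q)} \to \overline{\SA(X,q')}.
\]
Multiplicativity, $*$-preservation and isometry pass to the limit by continuity of multiplication and involution. The same argument applied to $\alpha_0^{-1}$ gives a continuous extension $\beta$. Since $\alpha\circ\beta$ and $\beta\circ\alpha$ are the identity on dense subalgebras, they are the identity on the completions, so $\alpha$ is a $C^*$-isomorphism. The spread condition only concerns elements of $\SA(\{x\},q)$, where $\alpha = \alpha_0$, so $\alpha$ has the same spread $\ell$. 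Uniqueness holds because any continuous extension agrees with $\alpha_0$ on a dense subset.

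The main obstacle is the first step: making sure the isometry argument really applies at the finite level. This needs two facts. First, the image of the finite block lands in a finite block, which is where finite spread and local finiteness are used. Second, the operator norm is well defined independently of the chosen $\Gamma$. Injectivity of $\alpha_0$ on each finite block is automatic since $\alpha_0$ is an isomorphism. If spectral preservation looked delicate, I would instead use that a $*$-isomorphism onto its image between finite-dimensional $C^*$-algebras preserves the $C^*$-norm, since that norm is determined by the algebraic $*$-structure.
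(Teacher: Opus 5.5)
Your proposal is correct and follows the paper's route. Both arguments show $\alpha_0$ is isometric via $\|A\|^2=\|A^*A\|$, the spectral radius of $A^*A$, then extend by density and treat $\alpha_0^{-1}$ the same way. The one difference is that the paper gets spectral preservation from Skolem--Noether (the restriction to a finite block is inner), whereas you use that a unital injective homomorphism of finite-dimensional algebras preserves spectra; you also state explicitly that finite blocks map into finite blocks whose norm agrees with the global one.
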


\begin{proof}
For $A\in \SA(X,q)$, the $C^*$-identity gives
\[
\|A\|^2=\|A^*A\|
=\sup\{|\lambda|: A^*A-\lambda I \text{ is not invertible}\}.
\]
The latter is the spectral radius of $A^*A$, and equality holds since $A^*A$ is positive.

Let $S\subset X$ be the support of $A^*A$. The restricted map
\[
\alpha_0|_S: \SA(S,q)\longrightarrow \alpha_0(\SA(S,q))
\]
is an abstract matrix algebra isomorphism and hence inner by the Skolem--Noether theorem. In particular, it preserves the spectral radius. Thus
\begin{align}
\|A\|^2
&= \sup\{|\lambda|: \alpha_0(A^*A)-\lambda I \text{ not invertible}\} \\
&= \|\alpha_0(A)^*\alpha_0(A)\| \\
&= \|\alpha_0(A)\|^2.
\end{align}

Therefore, $\alpha_0$ is isometric and hence continuous. It extends uniquely to the completion. Applying the same argument to $\alpha_0^{-1}$ shows that the extension is a $C^*$-algebra isomorphism.
\end{proof}

\begin{cor}
The group $\CQ^*(X,q)$ is canonically isomorphic to the group
\[
\{\alpha\in \CQ(X,q): \alpha(A^*)=\alpha(A)^* \text{ for all } A\in \SA(X,q)\}.
\] Thus, we will not differentiate between the two. 
\end{cor}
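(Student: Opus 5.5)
The corollary follows once we check two things. First, every element of $\CQ^*(X,q)$ restricts to an element of $\CQ(X,q)$ that commutes with the adjoint. Second, this restriction is a bijective group homomorphism onto the stated subgroup. The inverse is furnished by the preceding lemma.

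For the restriction, let $\alpha\in\CQ^*(X,q)$ have spread $\ell$. By definition $\alpha$ maps $\SA(\{x\},q)$ into $\SA(D_\ell(x),q)$. The algebra $\SA(X,q)$ is generated as an algebra by the single-site algebras $\SA(\{x\},q)$, since any finite tensor product is a product of the images $\iota_{\{x\},\Gamma}$. Hence $\alpha$ maps $\SA(X,q)$ into $\SA(X,q)$. Applying the same argument to $\alpha^{-1}$, which is also a locality-preserving $*$-automorphism (I will argue its spread is finite using the fact that the definition of $\CQ^*(X,q)$ as a group implicitly requires this, or by symmetry of the definition of isomorphism), we get $\alpha(\SA(X,q))=\SA(X,q)$. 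So $\alpha_0:=\alpha|_{\SA(X,q)}$ is an $R$-algebra automorphism of finite spread, i.e.\ an element of $\CQ(X,q)$. Since $\alpha$ is a $*$-map, $\alpha_0$ is adjoint preserving.

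For the inverse, given $\alpha_0\in\CQ(X,q)$ that is adjoint preserving, the preceding lemma extends it uniquely to a locality-preserving $*$-automorphism $\alpha$ of $\overline{\SA(X,q)}$. The spread is the same, because the single-site condition is checked on $\SA(\{x\},q)$, where $\alpha=\alpha_0$.

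The two assignments are mutually inverse. Restriction after extension is the identity by construction. Extension after restriction is the identity because a continuous map on the completion is determined by its values on the dense subalgebra $\SA(X,q)$, and $*$-homomorphisms between $C^*$-algebras are automatically contractive, hence continuous. Restriction clearly respects composition. Adjoint-preserving automorphisms are closed under composition and inverses, so the target is indeed a subgroup of $\CQ(X,q)$.

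The only mildly delicate point is that $\alpha$ preserves the \emph{algebraic} subalgebra $\SA(X,q)$ rather than merely its closure. This is handled by the generation argument above together with the finite-spread hypothesis on both $\alpha$ and $\alpha^{-1}$. Canonicity is clear, since no choices are made.
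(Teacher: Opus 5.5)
Your argument is correct and is essentially the paper's: there the corollary is stated as an immediate consequence of the preceding extension lemma, with restriction to $\SA(X,q)$ as the inverse, and you have filled in the restriction and uniqueness details that the paper leaves implicit. On the point you hedge on, finite spread of $\alpha^{-1}$ does not come from any symmetry of the definition, which only constrains $\alpha$. It is supplied by the paper's assertion that $\CQ^*(X,q)$ is a group, or directly: for $A\in\SA(\{x\},q)$, $\alpha^{-1}(A)$ commutes with every single-site algebra at distance more than $\ell$ from $x$, and the relative commutant of $\overline{\SA(X\setminus D_\ell(x),q)}$ in $\overline{\SA(X,q)}$ is $\SA(D_\ell(x),q)$.
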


Since the $*$-property is invariant under stabilization, we define the \emph{total $*$-QCA group} $\CQ^*(X)$ using the same colimit construction as in Definition~\ref{def::qca_group}.

\begin{defn}
Let $q \in \NN^{X}_{\mathrm{lf}}$ be a spin system.  
A \emph{single-layer unitary circuit} consists of:

\begin{enumerate}
\item A uniformly bounded partition
\[
X = \coprod_{j} X_j,
\qquad 
\sup_j \mathrm{diam}(X_j) < \infty.
\]

\item For each $X_j$, an automorphism
\[
\alpha_j \in \operatorname{PU}_{q(X_j)}, \text{ where }q(X_j):=\prod_{x\in X_j} q_x,
\]
given by conjugation by a unitary matrix. Here $\operatorname{PU}_{n}$ denotes the $n \times n$ projective unitary matrix group.
\end{enumerate}

Then $\bigotimes_j \alpha_j \in \CQ^*(X,q)$.  
Let $\CalC^*(X,q)\subset \CQ^*(X,q)$ denote the subgroup they generate.  
Passing to the colimit yields $\CalC^*(X)\subset \CQ^*(X)$.
\end{defn}

\begin{prop}[Lemma~2.10 and Theorem~2.3 of~\cite{freedman2020classification}]\label{prop::unitary_normal}
$\CalC^*(X)$ is a normal subgroup of $\CQ^*(X)$, and the quotient $\CQ^*(X)/\CalC^*(X)$ is abelian.
\end{prop}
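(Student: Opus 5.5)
The plan is to mirror the arguments for the non-unitary case, Propositions on normality and abelianness in Section~\ref{subsec::circuits}, checking at each step that every auxiliary automorphism we use is a $*$-automorphism and every gate is unitary.

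For normality, take a single-layer unitary circuit $\alpha=\bigotimes_j\alpha_j$, with $\alpha_j=\mathrm{Ad}(U_j)$ and $U_j$ unitary in $\SA(X_j,q)$, and take $\beta\in\CQ^*(X,q)$ of spread $l$. Then
\[
\beta\alpha_j\beta^{-1}=\mathrm{Ad}(\beta(U_j)).
\]
Since $\beta$ preserves adjoints, $\beta(U_j)$ is again unitary, and it is supported in $X_j^l$. Thus $\beta\alpha\beta^{-1}=\prod_j \mathrm{Ad}(\beta(U_j))$ is a product of mutually commuting unitary gates with uniformly bounded supports. Bounded geometry lets us color the index set $J$ with finitely many colors so that same-colored enlarged blocks $X_j^l$ are disjoint. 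Each color class is then a single-layer unitary circuit: pad with identity on the complement, using a partition that refines by points. So the conjugate lies in $\CalC^*(X)$. Stabilization is harmless, because $\iota_{q\to r}$ sends unitary gates to unitary gates.

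For abelianness, I will repeat the SWAP computation from the proof that $\CQ(X)/\CalC^{\mathrm{el}}(X)$ is abelian. The key observation is that $\mathrm{SWAP}_{q,r}$ acts pointwise as conjugation by a permutation matrix, which is unitary, so $\mathrm{SWAP}_{q,r}\in\CalC^*(X)$. Also, $\alpha\otimes\mathrm{id}$ and $\mathrm{id}\otimes\beta$ are $*$-QCA. The displayed identity then expresses $[q,\alpha][r,\beta]$ as $[r,\beta][q,\alpha]$ times a product of two factors. The first is a conjugate of the $*$-circuit $\mathrm{SWAP}$ by the $*$-QCA $\mathrm{id}_q\otimes\beta$, which lies in $\CalC^*(X)$ by the normality step. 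The second is $\mathrm{SWAP}$ itself. Hence commutators lie in $\CalC^*(X)$.

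The main point needing care is the regrouping in the normality step. Conjugated gates overlap, so I need a uniform bound on the number of layers. This follows because each $X_j^l$ meets only boundedly many other $X_{j'}^l$, by bounded geometry and local finiteness of the lattice; a greedy coloring of this intersection graph, which has uniformly bounded degree, uses finitely many colors. Everything else is routine verification of adjoint-preservation.
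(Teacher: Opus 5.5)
Your proposal is correct and follows essentially the same route as the paper, whose proof just notes that SWAP is conjugation by a unitary (permutation) matrix and that the rest of the non-$*$ argument from Section~\ref{subsec::circuits} carries over. You also fill in the details the paper leaves implicit: that $\beta\,\mathrm{Ad}(U_j)\,\beta^{-1}=\mathrm{Ad}(\beta(U_j))$ with $\beta(U_j)$ again unitary, and the bounded-degree coloring that regroups the overlapping conjugated gates into finitely many single-layer unitary circuits.
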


\begin{proof}
A SWAP can be obtained through a conjugation by some unitary matrix. The remainder follows as in the proof of the corresponding statement in the non-$*$ setting in Section~\ref{subsec::circuits}. 
\end{proof}

\begin{theorem}\label{thm::unitary_commutator}
$\CalC^*(X)=[\CQ^*(X),\CQ^*(X)]$.
\end{theorem}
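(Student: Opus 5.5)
The inclusion $[\CQ^*(X),\CQ^*(X)]\subseteq \CalC^*(X)$ is already contained in Proposition~\ref{prop::unitary_normal}: the quotient $\CQ^*(X)/\CalC^*(X)$ is abelian. It therefore remains to show that every single-layer unitary circuit is a product of commutators of elements of $\CQ^*(X)$. We follow the strategy of Lemma~\ref{lem::roots_k_commutator}, adapted to unitary groups.

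First we show that $\operatorname{PU}_n$ is perfect. Every unitary $U\in \operatorname{U}_n$ can be written as $\lambda V$ with $\lambda\in U(1)$ and $V\in \operatorname{SU}_n$, because $\det U$ has an $n$-th root in $U(1)$. Hence $\operatorname{PU}_n=\operatorname{PSU}_n$. Since $\operatorname{SU}_n$ is perfect (a compact connected semisimple Lie group), so is its quotient $\operatorname{PSU}_n$.

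The key point, as in the proof of Lemma~\ref{lem::roots_k_commutator}, is uniformity in $n$. We need a constant $N$, independent of $n$, such that every element of $\operatorname{SU}_n$ is a product of at most $N$ commutators of elements of $\operatorname{SU}_n$ (or of $\operatorname{U}_n$). There is a direct argument with $N=1$. Diagonalize $V=W D W^{*}$ with $D=\operatorname{diag}(e^{i\theta_1},\dots,e^{i\theta_n})$ and $\sum\theta_j=0$. Choose phases $\phi_1,\dots,\phi_n$ with $\phi_{k}-\phi_{k+1}=\sum_{j\le k}\theta_j$. Then $D=P\,\Delta\,P^{-1}\Delta^{-1}$, where $P$ is the cyclic permutation matrix and $\Delta=\operatorname{diag}(e^{i\phi_k})$; this is the standard fact that a diagonal unitary of determinant one is a commutator of a cyclic shift and a diagonal unitary. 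Consequently $V=[WPW^*,\,W\Delta W^*]$, a single commutator in $\operatorname{U}_n$. Passing to $\operatorname{PU}_n$, each gate $\alpha_j$ is a single commutator $[y_j,z_j]$ with $y_j,z_j\in\operatorname{PU}_{q(X_j)}$. Alternatively, one can cite a uniform commutator-width result for compact groups.

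With this in hand, let $\alpha=\bigotimes_j\alpha_j$ be a single-layer unitary circuit and write $\alpha_j=[y_j,z_j]$ as above. Set $y=\bigotimes_j y_j$ and $z=\bigotimes_j z_j$. These are single-layer unitary circuits on the same partition, so they lie in $\CQ^*(X,q)$. Because the tensor factors act on disjoint supports, $[y,z]=\bigotimes_j[y_j,z_j]=\alpha$. Hence every generator of $\CalC^*(X,q)$ is a commutator, and passing to the colimit gives $\CalC^*(X)\subseteq[\CQ^*(X),\CQ^*(X)]$.

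We expect the main obstacle to be the uniform commutator decomposition. If the explicit cyclic-shift argument runs into a phase subtlety (the choice of $\phi_k$ must be consistent modulo $2\pi$, and this is where $\sum\theta_j=0$ is used), the fallback is to allow a bounded number of commutators using a known uniform commutator-width bound for $\operatorname{SU}_n$. The rest of the argument then goes through verbatim, with $N$ layers as in Lemma~\ref{lem::roots_k_commutator}.
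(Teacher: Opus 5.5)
Your argument is correct and is essentially the paper's: the inclusion $[\CQ^*(X),\CQ^*(X)]\subseteq\CalC^*(X)$ comes from Proposition~\ref{prop::unitary_normal}, then each gate is written as a single commutator in $\operatorname{PU}_{q(X_j)}$ and these commutators are tensored across the partition as in Lemma~\ref{lem::roots_k_commutator}. The only difference is that the paper cites Goto's theorem (every element of the compact semisimple group $\operatorname{PU}_n$ is a commutator) where you prove it directly via the cyclic shift; note that there the phases should be partial sums, $\phi_k=\sum_{j<k}\theta_j$, so that $\phi_{k+1}-\phi_k=\theta_k$ cyclically (the wrap-around is where $\sum_j\theta_j=0$ enters), rather than $\phi_k-\phi_{k+1}=\sum_{j\le k}\theta_j$ as you wrote.
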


\begin{proof}
By Proposition~\ref{prop::unitary_normal}, it suffices to show $\CalC^*(X)\subseteq [\CQ^*(X),\CQ^*(X)]$. By the main theorem of \cite{10.2969/jmsj/00130270}, the group $\operatorname{PU}_n$ is perfect for $n\ge2$, and every element in $\operatorname{PU}_n$ can be expressed as a single commutator. Hence a similar proof as in Lemma~\ref{lem::roots_k_commutator} shows $\CalC^*(X)=[\CalC^*(X),\CalC^*(X)]
\subseteq [\CQ^*(X),\CQ^*(X)].$
\end{proof}

\subsection{The $*$-QCA Space and Spectrum}

We now repeat the construction of Section~\ref{sec::space} in the $*$-setting.

\begin{defn}
The category of quantum spin systems under locality-preserving $*$-isomorphisms forms a symmetric monoidal category $\mathbf{C}^*(X)$:
\begin{itemize}
\item Objects: $\mathcal{A}(X,q)$ for $q:X\to \Nbb_{>0}$.
\item Morphisms: locality-preserving $*$-isomorphisms.
\end{itemize}
The symmetric monoidal structure is given by pointwise stacking.
\end{defn}

\begin{defn}
The \emph{space of $*$-QCA} over $X$ is
\[
\mathbf{Q}^*(X)
\coloneqq \Omega K(\mathbf{C}^*(X)).
\]
We write $\mathbf{Q}^*_i(X):=\pi_i\mathbf{Q}^*(X)$. Then
\[
\mathbf{Q}^*_i(X)=K_{i+1}(\mathbf{C}^*(X))
\quad\text{for all } i\ge0.
\]
\end{defn}

When $X = \Zbb^d$, $\mathbf{Q}^*(X)$ is the space $\mathrm{QCA}(\Zbb^d)$ in Section~\ref{subsec::intro}. We can still define the automorphism group of $\textbf{C}^*(X)$ in this case, and $\operatorname{Aut}(\mathbf{C}^*(X)) = \mathcal{Q}^*(X)$. Combining this with Theorem~\ref{thm::unitary_commutator}, a similar proof as in Theorem~\ref{thm::plus_construction} shows that
\begin{theorem}
There is an equivalence $K(\mathbf{C}^*(X))
\simeq
K_0(\mathbf{C}^*(X))
\times B\CQ^*(X)^+$, In particular, we have that
\[
\mathbf{Q}^*(X)\simeq \Omega(B\CQ^*(X)^+),
\qquad
\mathbf{Q}^*_0(X)=\CQ^*(X)/\CalC^*(X).
\]
\end{theorem}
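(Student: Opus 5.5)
The plan is to repeat the argument of Theorem~\ref{thm::plus_construction} with $\mathbf{C}^*(X)$ in place of $\mathbf{C}(X)$, then read off $\pi_0$ of the loop space using Theorem~\ref{thm::unitary_commutator}.

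First, we check that $\mathbf{C}^*(X)$ satisfies the hypotheses of Theorem~\ref{thm::s_inv_s}. It is a groupoid, and tensoring with an identity gives an injective map $\operatorname{Aut}(s)\to\operatorname{Aut}(s\otimes t)$, since $\alpha\otimes\id=\id$ forces $\alpha=\id$. Hence $K(\mathbf{C}^*(X))\simeq B(\mathcal{S}^{-1}\mathcal{S})$ with $\mathcal{S}=\mathbf{C}^*(X)$.

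Next comes the group-theoretic input. Bass's Lemma 1.8 applies to any symmetric monoidal category, so the perfectness argument for $[\CQ^*(X),\CQ^*(X)]$ carries over verbatim. Weak simplicity via Wagoner's criterion needs the element $h=(g\otimes\id)(\id\otimes g)^{-1}$ to be a commutator. As before, $g\otimes\id$ is the conjugate of $\id\otimes g$ by $\mathrm{SWAP}_{q,q}$, and $\mathrm{SWAP}_{q,q}$ is conjugation by a permutation matrix, which is unitary. So $\mathrm{SWAP}_{q,q}$ is a $*$-QCA, and $h=[\mathrm{SWAP}_{q,q},\id\otimes g]$ is a commutator in $\CQ^*(X)$. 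This is the only place where we must make sure every auxiliary automorphism respects adjoints, and it does.

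Then we run the telescope argument. We define $F_q\colon\CQ^*(X,q)\to\mathcal{S}^{-1}\mathcal{S}$ as before and obtain $\phi\colon B\CQ^*(X)\to B(\mathcal{S}^{-1}\mathcal{S})_{(1,1)}$. Showing that $\phi$ is a homology isomorphism requires the cofinality of $\operatorname{obj}(\mathcal{S})\to\pi_0(\mathcal{S})$, which uses only that $\otimes$ is cancellative on isomorphism classes. I expect this to be the main point to verify. I would handle it by pointwise stacking: $b\otimes x'$ and $b'\otimes x$ are both obtained by stacking the same spin systems in different orders, so they are related by a SWAP-type $*$-isomorphism.

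With $\phi$ a homology isomorphism, it induces an equivalence $B\CQ^*(X)^+\simeq B(\mathcal{S}^{-1}\mathcal{S})_{(1,1)}$ between weakly simple spaces by Wagoner's Lemma 1.1. Since an infinite loop space splits as $\pi_0\times$(identity component), this gives $K(\mathbf{C}^*(X))\simeq K_0(\mathbf{C}^*(X))\times B\CQ^*(X)^+$.

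Finally, looping kills the discrete factor, so $\mathbf{Q}^*(X)\simeq\Omega(B\CQ^*(X)^+)$. Therefore $\mathbf{Q}^*_0(X)=\pi_1(B\CQ^*(X)^+)=\CQ^*(X)/[\CQ^*(X),\CQ^*(X)]$, and this equals $\CQ^*(X)/\CalC^*(X)$ by Theorem~\ref{thm::unitary_commutator}.
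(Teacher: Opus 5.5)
Your proposal follows the paper's route: you rerun the proof of Theorem~\ref{thm::plus_construction} for $\mathbf{C}^*(X)$, checking that the SWAPs used in Bass's lemma and in Wagoner's criterion are conjugations by permutation matrices and hence $*$-isomorphisms. You then identify $\pi_1(B\CQ^*(X)^+)=\CQ^*(X)^{\mathrm{ab}}$ with $\CQ^*(X)/\CalC^*(X)$ via Theorem~\ref{thm::unitary_commutator}.

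One small correction: cofinality of $\operatorname{obj}(\mathcal{S})\to\pi_0(\mathcal{S})$ does not rely on cancellativity, which in fact fails here. On $X=\ZZ$, a half-line shift identifies the constant qubit chain with itself plus one extra qubit at the origin. What cofinality actually needs is $b\otimes x'\cong b'\otimes x$, since both are stackings of $q$, $b$, $b'$ in different orders, and that is exactly what your SWAP argument supplies.
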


Finally, we obtain an analogous delooping theorem.
\begin{theorem}
There is a homotopy equivalence
\[
\mathbf{Q}^*(\Zbb^{n-1})
\simeq
\Omega\,\mathbf{Q}^*(\Zbb^n),
\qquad n>0.
\]
\end{theorem}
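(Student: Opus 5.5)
The plan is to run the proof of Theorem~\ref{thm::higher_delooping} verbatim in the $*$-setting, checking at each step that every algebra, tensor factor and isomorphism can be taken to be compatible with the involution. First I would define the $*$-analogue of admissible tensor factors: in Construction~\ref{defn::admissible} use a $*$-QCA $\alpha$ on $\Zbb^{n-1}\times\Zbb$. The factor $\mathcal B$ produced there is then a $*$-subalgebra, because $\alpha$ commutes with $*$ and $\mathcal B=\mathcal{A}(X\times\Zbb_{(-\ell,\ell]},q)\cap\alpha(\mathcal{A}(X\times\Zbb_{\leq 0},q))$ is an intersection of $*$-closed subalgebras. Its complement $\mathcal B'$, being a centralizer, is also $*$-closed. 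Next I would define $\mathbf{C}^{*\prime}_{n-1}(W)$ for $W\in\{*,\Zbb_{\leq 0},\Zbb_{\geq 0},\Zbb\}$ exactly as in Definition~\ref{def::enlarge_category}, with morphisms the finite-spread $*$-isomorphisms. For $n=1$ the objects of $\mathbf{C}^{*\prime}_0(*)$ are finite-dimensional $*$-closed tensor factors of matrix algebras, i.e.\ matrix algebras with $*$-structure.

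Second, I would verify cofinality and fullness of $\mathbf{C}^*(W\times\Zbb^{n-1})$ in $\mathbf{C}^{*\prime}_{n-1}(W)$. Given complementary $*$-factors $B,B'$, the stack $B\otimes B'$ is $*$-isomorphic, locality-preservingly, to the ambient $\mathcal{A}(X,q)$ via multiplication. This isomorphism respects $*$ because $B$ and $B'$ commute. Consequently the positive K-groups agree, and it suffices to prove $K(\mathbf{C}^{*\prime}_{n-1}(*))\simeq\Omega K(\mathbf{C}^{*\prime}_{n-1}(\Zbb))$ on identity components. The Eilenberg swindle of Lemma~\ref{lem::contractible} goes through without change, since the shift $\tau_q$ and infinite stacking are $*$-maps. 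So $K(\mathbf{C}^{*\prime}_{n-1}(\Zbb_{\geq 0}))$ and $K(\mathbf{C}^{*\prime}_{n-1}(\Zbb_{\leq 0}))$ are contractible, and Thomason's Theorem~\ref{thm::thomason_pullback} with Corollary~\ref{cor::thomason_corollary} gives $K(\mathbf{C}^{*\prime}_{n-1}(*))\simeq\Omega K(P)$.

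Third, I would show $F\colon P\to\mathbf{C}^{*\prime}_{n-1}(\Zbb)$ is an equivalence by Quillen's Theorem A, using the filtration $\operatorname{Fil}_d$ and the initial object built from the splitting $Y_d=e_-(Y_d)\otimes e_0(Y_d)\otimes e_+(Y_d)$. The splittings come from Lemma~\ref{lem::tensor_splitting}, Corollary~\ref{cor::tensor_factor_splitting} and Lemma~\ref{lem::tensor_complement_exist}. The subalgebras $D^-$ and $E^-$ are $*$-closed: $\alpha(Y_d^-)$ is $*$-closed, $D^-$ is its intersection with a $*$-closed factor, and $E^-$ is a centralizer. Hence $e_\pm(Y_d)$ and $e_0(Y_d)$ are $*$-factors. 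Lemma~\ref{lem::technical_admissible} and Lemma~\ref{lem::invert_local_preserve_equal} then show they are admissible in the $*$-sense, with the converse direction of Lemma~\ref{lem::invert_local_preserve_equal} realized by the $*$-QCA shifting $\mathcal B'$ as in Figure~\ref{fig::admissible}. The morphisms $\psi,\psi^\pm$ are restrictions and tensor products of $*$-maps, so they lie in $P$.

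The main obstacle is this last point: the identifications "$f$'' that move a factor supported over several points to the point $0$ must be $*$-isomorphisms, and so must the equivalence between $\mathbf{C}^{*\prime}_{n-1}([-d,d])$ and $\mathbf{C}^{*\prime}_{n-1}(*)$. For these I would use canonical relabelings, which are evidently $*$-maps, rather than arbitrary algebra isomorphisms. For $n=1$, where Skolem--Noether is involved, I would argue that a $*$-isomorphism between finite-dimensional $C^*$-matrix factors is unitary conjugation. This is also what makes the swap circuits used in the plus-construction comparison unitary, as in Proposition~\ref{prop::unitary_normal}.
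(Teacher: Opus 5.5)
Your proposal is correct and follows the paper's own route. You rerun the proof of Theorem~\ref{thm::higher_delooping} with $*$-closed admissible factors and check that every splitting ($\mathcal B$, $D^-$, $E^-$, and the factor $B$ of Lemma~\ref{lem::technical_admissible}) is $*$-closed, since each is an image, intersection or centralizer of $*$-closed subalgebras, and that every constructed morphism is a $*$-map. Your checks are more explicit than the paper's, which verifies only $\mathcal B$ and $B$ and says the morphisms ``follow similarly''; the closing remark about Skolem--Noether and unitary swap circuits is not needed for the delooping itself.
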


\begin{proof}
Repeat the proof of Section~\ref{sec::spectra}, with the additional requirement that the admissible tensor factors be $*$-subalgebras (i.e.\ closed under adjoint). The analogous splittings and the morphisms constructed in the proof of Theorem~\ref{thm::higher_delooping} respect the $*$-structure. To see this, in Construction~\ref{defn::admissible}, we have the following \[\mathcal{A}(X \times \Zbb_{\leq -\ell}, q) \otimes \mathcal B = \alpha(\mathcal{A}(X \times \Zbb_{\leq 0}, q)). \] If $b\in \mathcal B$, then $(1\otimes b)^*=1\otimes b^*$ is in the right-hand-side because $\alpha$ commutes with taking adjoint and $\mathcal{A}(X \times \Zbb_{\leq 0}, q)$ is closed under adjoint. This then implies $b^*\in \mathcal B$, hence $\mathcal{B}$ is closed under adjoint. In Lemma~\ref{lem::technical_admissible}, there is a similar expression \[\beta(F(\Zbb \cap (-\infty, 0)) = B \otimes E(\Zbb \cap (-\infty, -\ell]).\] The same reasoning shows $B$ is closed under adjoint, provided $\beta$ commutes with adjoint.  The statement for morphisms follows similarly.
\end{proof}

\begin{remark}
 We note that in this case the admissible tensor factors (or rather equivalently, the invertible tensor factors as in Lemma~\ref{lem::invert_local_preserve_equal}) are exactly the \textit{invertible subalgebras} appearing in~\cite{haah2023invertible}. And Corollary 2.5 of \cite{haah2023invertible} showed that all invertible subalgebras are indeed tensor factors.
\end{remark}

\section{Calculations over Points and Lines}\label{sec::compute}

In this section, we calculate the groups $\mathbf{Q}_i(*)$ and $\mathbf{Q}_i(\Zbb)$ for all $i$. By Theorem~\ref{thm::higher_delooping} and Corollary~\ref{cor::qz_azumaya}, both calculations reduce to knowing the K-theory of Azumaya algebras, which has been calculated over any ring in \cite{c80f3555-900f-35b5-ab05-6d397e6a44e6}. Nevertheless, following \cite{c80f3555-900f-35b5-ab05-6d397e6a44e6}, we explain in detail how to compute them over a field in Section~\ref{subsec::field} for the sake of completeness and concreteness. Table~\ref{table_q_z} records the groups $\textbf{Q}_i(\mathbb{Z})$ over a field $k$, as well as a specialization to the case where $k = \mathbb{C}$.\\

We will also see in Section~\ref{subsec::field} that for $i$ sufficiently large, $\mathbf{Q}_i(\Zbb^n)$ should also be rational, because $K_i(\operatorname{FP}) = K_i(R) \otimes \Qbb$ for $i > 0$. The statement for $\operatorname{FP}$ has been pointed out in \cite{weibel2013k}, pointed out and proven in low degrees for \cite{bass1968algebraic}, and is implicit in \cite{May1977EInfinityRingSpaces}. In Section~\ref{subsec::rationalization}, we explain why this rationalization phenomenon occurs at $i = 1$ over an Euclidean domain, to give the reader some intuition on why rationalization may occur.

\begin{table}[th]
\centering

\caption{The groups $\mathbf{Q}_i(\mathbb{Z})$ over a field $k$ and specifically over the complex numbers $\mathbb{C}$. We explain how to obtain the case for $\Cbb$ in Corollary~\ref{cor::case_for_complex}.}
\label{table_q_z}
\begin{adjustbox}{width=0.6\linewidth}
\begin{tabular}{lcc}
\toprule
The groups $\mathbf{Q}_i(\mathbb{Z})$ & Over a field $k$ & Over $\mathbb{C}$\\
\midrule
$i = 0$ & $K_0(\operatorname{Az}(k))$ & $\Qbb_{>0}$\\
$i = 1$ & $\frac{\Qbb}{\Zbb} \otimes k^{\times}$ & 0 \\
$i = 2$ & $\mu(k) \oplus (K_2(k) \otimes \Qbb)$ & $\frac{\Qbb}{\Zbb} \oplus K_2(\Cbb)$\\
$i \geq 3$ even & $K_i(k) \otimes \Qbb$ & $K_i(\Cbb)$\\
$i \geq  3$ odd & $K_i(k) \otimes \Qbb$ & $K_i(\Cbb) \otimes \Qbb$\\
\bottomrule
\end{tabular}
\end{adjustbox}
\end{table}

\subsection{Homotopy Groups of $\mathbf{Q}(*)$ and $\mathbf{Q}(\Zbb)$}\label{subsec::field}

In this section, we wish to calculate the homotopy groups $\textbf{Q}(*)$ and $\textbf{Q}(\Zbb)$. Note that if the metric space $X$ in our set-up is bounded, then $X$ is coarsely equivalent to a point $*$, and hence $\textbf{Q}(X) \simeq \textbf{Q}(*)$. This includes the case when $X$ is a finite set of points.\\

Theorem~\ref{thm::higher_delooping} implies that $\textbf{Q}(*) = \Omega \textbf{Q}(\Zbb)$ and $\textbf{Q}(\Zbb) = K(\operatorname{Az}(R))$. Weibel computed the K-theory groups of $\operatorname{Az}(R)$ as follows.

\begin{theorem}[Theorem 9 of \cite{weibel2013k}]\label{thm::weibel_azumaya}
    Let $U(R)$ (or $R^\times$) be the units of $R$, $\mu(R)$ be the roots of unity of $R$, $\operatorname{SK}_1(R)$ denote the kernel of the determinant map $K_1(R) \to U(R)$, and $\operatorname{TPic}(R)$ be the torsion subgroup of the Picard group $\operatorname{Pic}(R)$. 
\begin{itemize}
    \item $K_1(\operatorname{Az}(R)) = \operatorname{TPic}(R) \oplus (\Qbb/\Zbb \otimes U(R)) \oplus (\Qbb \otimes \operatorname{SK}_1(R))$.
    \item $K_2(\operatorname{Az}(R)) = \mu(R) \oplus (\Qbb \otimes K_2(R))$.
    \item $K_i(\operatorname{Az}(R)) = \Qbb \otimes K_i(R)$ for $i \geq 3$.
\end{itemize}
There is a description for $K_0 \operatorname{Az}(R)$ in \cite{weibel2013k}, but we omit it here.
\end{theorem}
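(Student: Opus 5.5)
The statement is Weibel's computation of $K_*(\operatorname{Az}(R))$. I would follow the strategy of \cite{c80f3555-900f-35b5-ab05-6d397e6a44e6}: compare $\operatorname{Az}(R)$ with the category $\operatorname{FP}(R)$ of faithfully projective modules under $\otimes$ via the monoidal functor $\operatorname{End}_R\colon \operatorname{FP}(R)\to\operatorname{Az}(R)$. The plan is to compute the two ends separately and then glue them along this functor.

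\textbf{Step 1: $\operatorname{Az}(R)$ versus $\operatorname{FP}(R)$.} By the Skolem--Noether theorem for Azumaya algebras, there is an exact sequence
\[1\to R^\times \to \operatorname{Aut}(P)\to \operatorname{Aut}_{R\text{-alg}}(\operatorname{End}P)\to \operatorname{Pic}(R).\]
Using this, I would identify the fibre of $K(\operatorname{FP})\to K(\operatorname{Az})$. Morally this map is a ``projectivization'' that kills the central units and the Picard contributions. This reduces the problem to understanding $K(\operatorname{FP}(R))$ together with the action of $U(R)$ and $\operatorname{Pic}(R)$.

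\textbf{Step 2: $K(\operatorname{FP}(R))$ via the plus construction.} In analogy with Theorem~\ref{thm::plus_construction}, the identity component of $K(\operatorname{FP})$ is $B(\operatorname{colim}_n \operatorname{GL}_n(R))^+$, where the colimit runs over the multiplicative system $n\mid m$ with transition maps $A\mapsto A\otimes I_{m/n}$. On homology, $A\mapsto A\otimes I_k$ agrees with the $k$-fold direct sum $A\oplus\cdots\oplus A$, hence is multiplication by $k$ on $K_i(R)$ for $i\geq 1$. Taking the colimit over all $k$ therefore rationalizes the positive-degree part, except where the determinant, the $SK_1$ piece, and the roots of unity interact with torsion. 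I expect $K_i(\operatorname{FP})=\Qbb\otimes K_i(R)$ for $i\geq 1$, with a more delicate answer in degree $1$.

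\textbf{Step 3: assemble degree by degree.} In degree $1$, I would combine $\Qbb\otimes SK_1(R)$ with the cokernel of $U(R)\to \Qbb\otimes U(R)$, namely $\Qbb/\Zbb\otimes U(R)$, which comes from $\operatorname{PGL}$ killing scalars. Adding the image of the Skolem--Noether obstruction, which lands in torsion of $\operatorname{Pic}$ because $P^{\otimes n}\cong R^n$ stably after tensoring, gives $\operatorname{TPic}(R)$. In degree $2$, the long exact sequence contributes $\mu(R)$, the torsion of $U(R)$ shifted by the $\Qbb/\Zbb\otimes U(R)$ term via $\operatorname{Tor}(\Qbb/\Zbb,U(R))=\mu(R)$, alongside $\Qbb\otimes K_2(R)$. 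For $i\geq 3$, only the rational part survives.

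\textbf{Main obstacle.} The hard part is degrees $1$ and $2$. There I must show the extensions split, and I must control $B\mathbb{G}_m$-type contributions precisely, since $BU(R)$ enters as a fibre and produces the $\operatorname{Tor}$ term. I would first verify the formulas for a field, where $\operatorname{Pic}=0$ and $SK_1=0$, and then use the splitting given by the determinant in the general case.
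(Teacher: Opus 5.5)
Your architecture is right. It is essentially the paper's field computation run over a general $R$: the fibre $BU(R)$ from the central extension $U(R)\to\operatorname{GL}_{\otimes}(R)\to\operatorname{PGL}(R)$, the cited rationalization $\pi_n B\operatorname{GL}_{\otimes}(R)^+=K_n(R)\otimes\Qbb$, and the long exact sequence, with your Skolem--Noether (Rosenberg--Zelinsky) sequence supplying the Picard term.

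\textbf{The splitting step uses the wrong tool.} This is the step you single out as the main obstacle. The determinant only splits $K_1(R)=U(R)\oplus\operatorname{SK}_1(R)$. You do need that, to compute $\operatorname{coker}\bigl(U(R)\to K_1(R)\otimes\Qbb\bigr)=(\Qbb/\Zbb\otimes U(R))\oplus(\Qbb\otimes\operatorname{SK}_1(R))$, where the map is $u\mapsto[u]\otimes 1$. But it does not split the two extensions that the long exact sequence actually produces:
\[0\to \Qbb\otimes K_2(R)\to K_2(\operatorname{Az}(R))\to \mu(R)\to 0\]
and
\[0\to (\Qbb/\Zbb\otimes U(R))\oplus(\Qbb\otimes \operatorname{SK}_1(R))\to K_1(\operatorname{Az}(R))\to \operatorname{TPic}(R)\to 0.\]
In particular, automorphisms lying over nonzero classes of $\operatorname{TPic}(R)$ are not conjugations by matrices, so no determinant is available there. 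The missing observation is that in both sequences the subgroup is divisible ($\Qbb/\Zbb\otimes U(R)$ is divisible because $\Qbb/\Zbb$ is). Divisible groups are injective $\Zbb$-modules, so both sequences split, non-canonically. This is exactly how the paper splits $K_2$ over a field.

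\textbf{Step~1 needs actual content,} since every torsion phenomenon comes from the fibre. For general $R$ the map $\operatorname{GL}_{\otimes}(R)\to\operatorname{Aut}(\operatorname{Az}(R))$ is not surjective, so there is a second layer:
\begin{enumerate}
\item Berrick's criterion gives $BU(R)\to B\operatorname{GL}_{\otimes}(R)^+\to B\operatorname{PGL}(R)^+$, as in the paper, because $U(R)$ is central.
\item $\operatorname{Aut}(\operatorname{Az}(R))/\operatorname{PGL}(R)$ is abelian, so both groups have the same maximal perfect subgroup. Hence $B\operatorname{PGL}(R)^+\to B\operatorname{Aut}(\operatorname{Az}(R))^+$ is a covering whose fibre is that quotient.
\end{enumerate}
You then have to prove the quotient is exactly $\operatorname{TPic}(R)$; this needs two arguments.
\begin{itemize}
\item Torsion: your reason (``$P^{\otimes n}\cong R^n$ stably'') is not an argument. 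The correct one is that an automorphism of $\operatorname{Mat}(R^n)$ over $[L]$ gives $R^n\otimes L\cong R^n$, and taking top exterior powers gives $L^{\otimes n}\cong R$.
\item Surjectivity needs a construction you omit. If $L^{\otimes n}\cong R$, then $P=\bigoplus_{i=0}^{n-1}L^{\otimes i}$ satisfies $P\otimes L\cong P$. This gives an automorphism of $\operatorname{End}(P)$ over $[L]$, which you transport to a matrix algebra via $\operatorname{End}(P)\otimes\operatorname{End}(Q)\cong\operatorname{Mat}(R^N)$. Without this you only get a subgroup of $\operatorname{TPic}(R)$.
\end{itemize}

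\textbf{Two smaller corrections.} First, $A\mapsto A\otimes I_k$ is multiplication by $k$ on the homotopy groups of the $H$-space $B\operatorname{GL}(R)^+$, not on homology. Second, $K_1(\operatorname{FP}(R))$ is exactly $K_1(R)\otimes\Qbb$, so the degree-one subtleties enter only through the fibre, not through $\operatorname{FP}$.
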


In the remainder of this section, we will discuss in detail on how to obtain the result above for $i \geq 1$ when $R$ is a field. Note that the outputs match exactly that of Table~\ref{table_q_z} because $\operatorname{Pic}(R) = \operatorname{SK}_1(R) = 0$.

Let $R$ be a field. For each $n$, there is an exact sequence
\[0 \to R^\times \hookrightarrow \operatorname{GL}_n(R) \twoheadrightarrow
 \operatorname{PGL}_n(R) \to 0\]
where $r \in R^{\times}$ is sent to $\operatorname{GL}_n(R)$ as diagonal matrices whose diagonal entries are all $r$. This extends well through the colimit to the form
\[0 \to R^{\times} \hookrightarrow \operatorname{GL}_{\otimes}(R) \twoheadrightarrow \operatorname{PGL}(R) \to 0\]
where $\operatorname{GL}_{\otimes}(R)$ is the colimit of $\operatorname{GL}_n(R)$'s over the poset $\Nbb$ under divisibility and a map $\operatorname{GL}_{k}(R) \to \operatorname{GL}_{n}(R)$ by $n = kd$ is given by sending $A \in \operatorname{GL}_{k}(R)$ to the block diagonal matrix with $d$-copies of $A$ (note this is the same as taking the Kronecker product with the identity matrix $I_d$). The group $\operatorname{PGL}(R)$ is defined similarly.\\

Observe we have that
\[\operatorname{GL}_{\otimes}(R) = \operatorname{Aut}(\operatorname{Free}(R)^{f.g, \cong}_{\otimes}) \text{ and } \operatorname{PGL}(R) = \operatorname{Aut}(\operatorname{Mat}_R) = \operatorname{Aut}(\operatorname{Az}(R)),\]
Here, $\operatorname{Free}(R)^{f.g, \cong}_{\otimes}$ is the category of finitely generated non-zero free $R$-modules, morphisms are $R$-linear automorphisms, and is symmetric monoidal under tensor products. The second inequality follows from the Skolem-Noether theorem that automorphisms of matrix algebras over a field are always inner. Proposition 3 of \cite{c80f3555-900f-35b5-ab05-6d397e6a44e6} shows that $\operatorname{GL}_{\otimes}(R), \operatorname{PGL}(R)$ have perfect commutator subgroups, and
\[K(\operatorname{Free}(R)^{f.g, \cong}_{\otimes}) \simeq \Qbb_{>0} \times \operatorname{BGL}_{\otimes}(R)^+ \text{ and } K(\operatorname{Az}(R)) \simeq K_0(\operatorname{Az}(R)) \times \operatorname{BPGL}(R)^+.\]

Thus, it suffices to calculate the homotopy groups of $\operatorname{PGL}(R)^+$. 

\begin{prop}
The fiber sequence $BR^{\times} \to \operatorname{BGL}_{\times}(R) \to \operatorname{BPGL}(R)$ induces a fiber sequence
\[BR^{\times} \to \operatorname{BGL}_{\otimes}(R)^+ \to \operatorname{BPGL}(R)^+,\]
where the plus-constructions are done both at the maximal perfect normal subgroups of $\operatorname{BGL}_{\otimes}(R)$ and $\operatorname{BPGL}(R)^+$ respectively.
\end{prop}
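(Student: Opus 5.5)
We will prove the statement by comparing homotopy fibers. The inclusion $R^\times \hookrightarrow \operatorname{GL}_{\otimes}(R)$ is central, so $B R^\times \to \operatorname{BGL}_{\otimes}(R) \to \operatorname{BPGL}(R)$ is a principal fibration, classified by a map $\operatorname{BPGL}(R) \to B^2 R^\times = K(R^\times,2)$. The plan is to extend this classifying map over the plus-construction and then show that the homotopy fiber of the extended map is $\operatorname{BGL}_{\otimes}(R)^+$.

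\emph{Step 1: Extend the classifying map.} Since $K(R^\times,2)$ has trivial $\pi_1$, the universal property of the plus construction gives a map $c^+\colon \operatorname{BPGL}(R)^+ \to K(R^\times,2)$ extending $c$. Let $F$ be its homotopy fiber. By construction $F$ fits into the principal fibration $BR^\times \to F \to \operatorname{BPGL}(R)^+$, and there is a natural map $\operatorname{BGL}_{\otimes}(R) \to F$ covering $\operatorname{BPGL}(R) \to \operatorname{BPGL}(R)^+$.

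\emph{Step 2: Identify $F$ with $\operatorname{BGL}_{\otimes}(R)^+$.} We show that $\operatorname{BGL}_{\otimes}(R) \to F$ is acyclic and that its $\pi_1$-kernel is exactly the maximal perfect normal subgroup. For acyclicity, compare the Serre spectral sequences of the two fibrations over $\operatorname{BPGL}(R)$ and $\operatorname{BPGL}(R)^+$. They have the same fiber $BR^\times$. The local coefficient systems $H_*(BR^\times)$ are trivial because the action comes from a central extension and the fibrations are principal. The base map is a homology isomorphism, so Zeeman comparison shows that the total map is a homology isomorphism with any local coefficients pulled back from $F$; this is the acyclicity.

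For $\pi_1$, the long exact sequences give
\[\pi_1 F \cong \operatorname{GL}_{\otimes}(R)/E',\]
where $E'$ is the preimage of $[\operatorname{PGL},\operatorname{PGL}]$ intersected suitably. We then check that the kernel of $\operatorname{GL}_{\otimes}(R) \to \pi_1 F$ equals $[\operatorname{GL}_{\otimes},\operatorname{GL}_{\otimes}]$, which is perfect by Proposition 3 of \cite{c80f3555-900f-35b5-ab05-6d397e6a44e6}. The kernel is perfect and normal and contains the commutator subgroup. Moreover, the commutator subgroup of $\operatorname{GL}_{\otimes}$ surjects onto that of $\operatorname{PGL}$, and intersects $R^\times$ only in elements killed appropriately. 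By the characterization of plus constructions (acyclic map, kernel equal to a perfect normal subgroup), $F \simeq \operatorname{BGL}_{\otimes}(R)^+$.

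\emph{Main obstacle.} The main obstacle is the $\pi_1$ bookkeeping: the kernel must be exactly $[\operatorname{GL}_{\otimes},\operatorname{GL}_{\otimes}]$, which requires controlling $R^\times \cap [\operatorname{GL}_{\otimes},\operatorname{GL}_{\otimes}]$, i.e.\ scalars of determinant-compatible roots. A related concern is that $\pi_1(F) \to \pi_1(\operatorname{BPGL}^+)$ might fail to be injective on the $R^\times$ part. We would first try to handle this using the connecting map $\pi_2(\operatorname{BPGL}^+) \to R^\times$, computed via $H_2$ of the perfect group $[\operatorname{PGL},\operatorname{PGL}]$ and its universal central extension. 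If the naive fiber turns out to have $\pi_1$ differing by torsion, we would adjust by working with the fiber sequence as a statement up to this identification.
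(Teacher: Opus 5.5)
Your strategy is sound and differs from the paper's. You extend the classifying map $c\colon \operatorname{BPGL}(R)\to K(R^\times,2)$ over $\operatorname{BPGL}(R)^+$ and then identify the homotopy fiber $F$ of $c^+$. The paper instead checks the two hypotheses of Berrick's criterion: the maximal perfect subgroup of $\operatorname{GL}_{\otimes}(R)$ surjects onto that of $\operatorname{PGL}(R)$, and it acts trivially on $\pi_*(BR^\times)$ because $R^\times$ is central. Your acyclicity step is fine. Most cleanly, the square with $\operatorname{BGL}_\otimes(R)\to F$ over $\operatorname{BPGL}(R)\to \operatorname{BPGL}(R)^+$ is a homotopy pullback, so the homotopy fiber of $\operatorname{BGL}_\otimes(R)\to F$ is that of $\operatorname{BPGL}(R)\to\operatorname{BPGL}(R)^+$, which is acyclic. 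In your Serre spectral sequence version, the coefficient system over the base is trivial only for constant coefficients. For a general local system on $F$ you need instead that acyclic maps induce isomorphisms for all local systems on the target.

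The gap is the $\pi_1$ step, which you leave open and misdiagnose. As written, ``the preimage of $[\operatorname{PGL},\operatorname{PGL}]$'' is $R^\times\cdot[\operatorname{GL}_\otimes,\operatorname{GL}_\otimes]$. That is too large, since non-torsion scalars survive in $\pi_1F$. Your worry that $R^\times\to\pi_1F$ might fail to be injective is not an obstruction, because a fiber sequence does not require injectivity there. Over a field its kernel is $\mu(R)$, which the connecting map from $\pi_2(\operatorname{BPGL}(R)^+)$ accounts for, exactly as in the paper's later exact sequence. So your fallback of proving the statement ``up to this identification'' is unnecessary and would not give the proposition.

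What is missing is a short argument that needs neither $R^\times\cap[\operatorname{GL}_\otimes,\operatorname{GL}_\otimes]$ nor the connecting map. Let $N=\ker(\operatorname{GL}_\otimes(R)\to\pi_1F)$.
\begin{itemize}
\item Since $\operatorname{BGL}_\otimes(R)\to F$ is acyclic, $N$ is a perfect normal subgroup, so $N\subseteq\mathcal{P}(\operatorname{GL}_\otimes(R))$.
\item Conversely, the image of $\mathcal{P}(\operatorname{GL}_\otimes(R))$ in $\pi_1F$ maps into $\mathcal{P}(\operatorname{PGL}(R))$, hence trivially to $\pi_1(\operatorname{BPGL}(R)^+)$. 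By exactness it lies in the image of $R^\times$, which is abelian. A perfect group has trivial image in an abelian group, so $\mathcal{P}(\operatorname{GL}_\otimes(R))\subseteq N$.
\end{itemize}
Thus $N=\mathcal{P}(\operatorname{GL}_\otimes(R))$, which is the commutator subgroup as you note. Hence $F\simeq \operatorname{BGL}_\otimes(R)^+$, compatibly with the maps by uniqueness in the universal property. With this step inserted, your argument is a self-contained proof of the special case of Berrick's theorem for central extensions.
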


\begin{proof}
Let $\mathcal{P}(G)$ denote the maximal perfect normal subgroup of $G$. We use the main theorem of \cite{BERRICK1983172}, which asserts that for a fiber sequence $F \to E \xrightarrow{p} B$ of spaces with homotopy types of connected CW-complexes induces a fiber sequence of plus-constructions at the maximal perfect normal subgroups
\[F^+ \to E^+ \to B^+\]
if and only if the following two conditions hold:
\begin{enumerate}
    \item $\pi_1(p: E \to B)$ sends $\mathcal{P}(\pi_1(E))$ surjectively onto $\mathcal{P}(\pi_1(B))$.
    \item $\mathcal{P}(\pi_1(E))$ acts trivially on $\pi_*(F^+)$. (To be precise, one sends $\mathcal{P}(\pi_1(E))$ to $\mathcal{P}(\pi_1(B))$ first, and let $\mathcal{P}(\pi_1(B))$ act on the fiber)
\end{enumerate}
Now in our context, the map between $\pi_1$ is literally the quotient map $\operatorname{GL}_{\otimes}(R) \to \operatorname{PGL}(R)$ modding out $R^{\times}$, and clearly it sends $E_{\otimes}(R)$, the commutator subgroup of $\operatorname{GL}_{\otimes}(R)$, surjectively onto $\operatorname{PSL}(R)$.\\

Since $R$ is commutative, $R^\times$ is abelian, so its maximal perfect normal subgroup is trivial. Thus, $(BR^{\times})^+ = BR^{\times}$ and its higher homotopy groups being $0$ above degree $1$. To show the second condition, it suffices for us to show that $\operatorname{E}_{\otimes}(R)$, the commutator subgroup of $\operatorname{GL}_{\otimes}(R)$, acts trivially on $\pi_1(BR^{\times}) = R^{\times}$. Indeed, the action is just by conjugation. On the other hand, $R^\times$ lies in the center of $\operatorname{GL}_{\otimes}(R)$, and hence the action is trivial. This concludes the proof. Note the proof holds without needing to assume $R$ is a field.
\end{proof}

Thus for $n > 2$, we see that
\[\pi_n(\operatorname{Az}(R)) = \pi_n(\operatorname{PGL}(R)^+) = \pi_n(\operatorname{BGL}_\otimes (R)^+).\]
It was pointed out in \cite{c80f3555-900f-35b5-ab05-6d397e6a44e6, 951cf383-5a4d-3775-b8b2-0bafd2f162fe} and implicitly in \cite{May1977EInfinityRingSpaces} that the following fact holds.

\begin{prop}
    For $n > 0$, $\pi_n(\operatorname{BGL}_{\otimes}(R)^+) = K_n(R) \otimes \Qbb$ for a general ring $R$.
\end{prop}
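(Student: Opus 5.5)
We need to show that $\pi_n(\operatorname{BGL}_{\otimes}(R)^+) \cong K_n(R)\otimes\Qbb$ for $n>0$. The plan is to compare the tensor-stabilized colimit $\operatorname{GL}_{\otimes}(R)$ with the usual direct-sum-stabilized $\operatorname{GL}(R)$ through homology, and then pass to homotopy using that both plus-constructions are connected simple H-spaces (infinite loop spaces).

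\textbf{Step 1: telescope description.} For each $d\in\Nbb$, the structure map $\operatorname{GL}_k(R)\to\operatorname{GL}_{kd}(R)$, $A\mapsto A\otimes I_d$, is block diagonal $A\oplus\cdots\oplus A$. After stabilizing in $\operatorname{GL}(R)$, the induced self-map of $\operatorname{BGL}(R)^+$ is the $d$-fold H-space sum of the identity, i.e.\ multiplication by $d$ on homotopy groups. Block sums of maps represent H-space addition on $\operatorname{BGL}(R)^+$; this is Quillen's standard fact. We therefore expect
\[
\operatorname{BGL}_{\otimes}(R)^+ \simeq \operatorname{hocolim}\bigl(\operatorname{BGL}(R)^+\xrightarrow{\,2\,}\operatorname{BGL}(R)^+\xrightarrow{\,3\,}\cdots\bigr),
\]
and the homotopy groups of the right-hand side are $\operatorname{colim}(K_n(R)\xrightarrow{2}K_n(R)\xrightarrow{3}\cdots)=K_n(R)\otimes\Qbb$.

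\textbf{Step 2: justifying the telescope.} Homology commutes with filtered colimits, so $H_*(\operatorname{BGL}_{\otimes}(R))=\operatorname{colim}_k H_*(\operatorname{BGL}_k(R))$ along the Kronecker maps. This colimit is taken over the divisibility poset, which is cofinal with the chain $1\mid 2!\mid 3!\mid\cdots$. By homological stability in the stable range, or simply by passing through $H_*(\operatorname{BGL}(R))$ via the inclusions $\operatorname{GL}_k\subset\operatorname{GL}$, the colimit equals $\operatorname{colim}\, H_*(\operatorname{BGL}(R))$ along the maps induced by $d$-fold block sum. The plus construction preserves homology, and that map is $d$-fold H-multiplication on $\operatorname{BGL}(R)^+$. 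Hence the natural map from $\operatorname{BGL}_{\otimes}(R)^+$ to the telescope $T$ is a homology isomorphism. Both spaces are connected H-spaces: $T$ as a colimit of H-maps, and $\operatorname{BGL}_{\otimes}(R)^+$ via the plus-construction model $\Qbb_{>0}\times\operatorname{BGL}_{\otimes}(R)^+\simeq K(\operatorname{Free}(R)^{f.g.,\cong}_{\otimes})$ recalled above. Both therefore have abelian $\pi_1$ and are simple. A homology isomorphism between simple spaces is a weak equivalence, by Whitehead or by Lemma 1.1 of \cite{WAGONER1972349}, as already used in the proof of Theorem~\ref{thm::plus_construction}. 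Finally, $\pi_n$ commutes with sequential homotopy colimits.

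\textbf{Main obstacle.} The delicate point is constructing an actual map $\operatorname{BGL}_{\otimes}(R)\to T$, or a zigzag, compatible with the diagram, so that the homology comparison is induced by a map. The block-sum self-maps are only H-space multiplication up to homotopy, and the tensor-stabilized system runs over the non-linear poset $(\Nbb,\mid)$. I would first restrict to the cofinal chain $n!$, where the maps are strictly defined group homomorphisms $\operatorname{GL}_{n!}\to\operatorname{GL}_{(n+1)!}$. I would then map each $\operatorname{BGL}_{n!}(R)$ into the $n$-th stage of the telescope via the inclusion into $\operatorname{BGL}(R)^+$. The squares commute up to homotopy because conjugation by a permutation matrix relates $A\otimes I_d$ to $I_d\otimes A$, i.e.\ to a block sum, and inner automorphisms act trivially on $\operatorname{BGL}(R)^+$ up to homotopy. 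Since the index category is a sequence, homotopy-commuting squares suffice to induce a map on mapping telescopes. The same argument applies to $\pi_1$, giving $K_1(R)\otimes\Qbb$ there as well.
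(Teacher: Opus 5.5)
Your overall architecture is sound. Step 1 is fine: the $d$-fold block sum acts as $[d]$ on $\operatorname{BGL}(R)^+$, so the telescope $T$ has $\pi_n T=K_n(R)\otimes\Qbb$. The map to $T$ built from homotopy-commuting squares along the chain $m!$ is fine, and so is the Whitehead step for simple spaces. (The paper gives no proof of its own here. It quotes the statement from the literature and treats by hand only $n=1$ over a Euclidean domain, via $f_n(A)=\det(A)\otimes\frac{1}{n}$. A working version of your argument would therefore be a genuinely more general route.)

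The gap is the step everything rests on: identifying $\operatorname{colim}_k H_*(\operatorname{BGL}_k(R))$ along $\delta_d\colon A\mapsto I_d\otimes A$ with $\operatorname{colim}\bigl(H_*(\operatorname{BGL}(R)),[d]_*\bigr)$. Homological stability supplies this only for rings of finite stable rank (van der Kallen), not for a general ring. The phrase ``passing through $H_*(\operatorname{BGL}(R))$'' only names the comparison map; it does not show that map is bijective.

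This identification is not formal. Since $\delta_d(A\oplus I_j)$ is conjugate to $\delta_d(A)\oplus I_{jd}$, $\delta_d$ preserves the proportion of appended identity, so the unstabilized levels are never cofinal in the stabilized system. Concretely:
\begin{itemize}
\item A class of $H_*(\operatorname{BGL}_k)$ that is killed only by $-\oplus I_j$ is not visibly killed by any $\delta_d$.
\item A stable class coming from $\operatorname{GL}_N$ with $N>k$, placed at stage $k$, is carried at stage $kd$ to its image under $[d]_*$. That image is visibly born in $\operatorname{GL}_{Nd}$, never in $\operatorname{GL}_{kd}$.
\end{itemize}

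As written, then, you have proved the statement for rings with homological stability. A general ring needs extra input; one way to supply it is in two steps.

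First, show $\pi_*(\operatorname{BGL}_{\otimes}(R)^+)$ is uniquely divisible. Block sum $\operatorname{GL}_k\times\operatorname{GL}_k\to\operatorname{GL}_{2k}$ is compatible with the $\delta_d$ up to permutation conjugation. It therefore induces $\sigma\colon\operatorname{BGL}_{\otimes}(R)^+\times\operatorname{BGL}_{\otimes}(R)^+\to\operatorname{BGL}_{\otimes}(R)^+$, and $\sigma\circ\Delta$ is induced by $\delta_2$, i.e.\ it is homotopic to the identity. Hence $x=2h_*(x)$ with $h=\sigma(-,*)$, and similarly with $d$ summands.

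Second, compare only rational homology.
\begin{itemize}
\item Injectivity. For $x\in H_i(\operatorname{BGL}_k;\Qbb)$ the reduced iterated coproduct gives $\delta_{d*}x=\sum_{s=1}^{i}\binom{d}{s}\,(-\oplus I_{k(d-s)})_*\,\mu^{(s)}_*\bar\Delta^{(s)}x$, where $\mu^{(s)}$ is the $s$-fold block sum. If $(-\oplus I_j)_*x=0$, then naturality of the diagonal gives $(-\oplus I_{sj})_*\mu^{(s)}_*\bar\Delta^{(s)}x=0$ for each $s$. So $\delta_{d*}x=0$ once $k(d-s)\ge sj$ for all $s\le i$.
\item Surjectivity. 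By Milnor--Moore, $H_*(\operatorname{BGL}(R);\Qbb)$ is the free graded-commutative algebra on $\pi_*(\operatorname{BGL}(R)^+)\otimes\Qbb$, and $[d]_*$ acts on its weight-$s$ part by $d^s$. Hence $[d]_*y$ lifts to a fixed $\operatorname{GL}_N$ independent of $d$.
\end{itemize}
Both spaces are then rational and simple, and the rational homology isomorphism finishes the argument.
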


We now have an exact sequence of the form
\[0 \to K_2(R) \otimes \Qbb \to K_2(\operatorname{Az}(R)) \to R^{\times} \to K_1(R) \otimes \Qbb \to K_1(\operatorname{Az}(R)) \to 0.\]
Since $R$ is a field, $K_1(R) = R^{\times}$, and the map $R^{\times} \to R^{\times} \otimes \Qbb$ is precisely rationalization, whose kernel is $\mu(R)$. This gives a short exact sequence
\[0 \to K_2(R) \otimes \Qbb \to K_2(\operatorname{Az}(R)) \to \mu(R) \to 0,\]
which splits since rational vector spaces are injective $\Zbb$-modules. The group $K_1(\operatorname{Az}(R))$ is the cokernel of rationalization, which is precisely $\Qbb/\Zbb \otimes U(R)$.\\

Let us now discuss how to get the case for $\Cbb$ in the table.

\begin{cor}\label{cor::case_for_complex}
    Over the field $\Cbb$, the groups $\mathbf{Q}_i(\Zbb)$ matches those in Table~\ref{table_q_z}.
\end{cor}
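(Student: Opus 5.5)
The plan is to specialize the field computation just carried out to $k=\Cbb$, using $\mathbf{Q}(\Zbb)=K(\operatorname{Az}(\Cbb))$ from Corollary~\ref{cor::qz_azumaya}. We then go through the rows of Table~\ref{table_q_z} one at a time, using standard facts about $\Cbb$: it is algebraically closed, $\Cbb^\times$ is divisible, $\mu(\Cbb)\cong\Qbb/\Zbb$, and Suslin's computation of the algebraic $K$-theory of algebraically closed fields.

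For $i=0$, we use Example~\ref{exp::matrix_one_pt}. Over an algebraically closed field every Azumaya algebra is a matrix algebra, since the Brauer group vanishes. Hence $\operatorname{Az}(\Cbb)=\operatorname{Mat}_\Cbb$ and $K_0(\operatorname{Az}(\Cbb))=(\Zbb_{>0})^{gp}=\Qbb_{>0}$.

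For $i=1$, the group is $\Qbb/\Zbb\otimes\Cbb^\times$. It vanishes because $\Cbb^\times$ is divisible: for $x\in\Cbb^\times$ and $a/n\in\Qbb/\Zbb$, choose $y$ with $y^n=x$, so that $a/n\otimes x=a/n\otimes y^n=a\otimes y=0$.

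For $i=2$, we have $\mu(\Cbb)\cong\Qbb/\Zbb$. The remaining task is to show that $K_2(\Cbb)\otimes\Qbb\cong K_2(\Cbb)$, i.e. that $K_2(\Cbb)$ is uniquely divisible. This comes from Suslin's rigidity theorem: for an algebraically closed field $F$, $K_i(F;\Zbb/n)\cong \pi_i(ku)\otimes\Zbb/n$ for $n$ prime to the characteristic. Combined with the universal coefficient sequence, this gives the following.
\begin{itemize}
\item For $i\geq 1$ even, $K_i(F)$ is uniquely divisible. In particular $K_i(\Cbb)\otimes\Qbb=K_i(\Cbb)$ for even $i\ge 2$, which settles both $i=2$ and the even rows with $i\ge 3$.
\item For $i$ odd, $K_i(F)$ is divisible with torsion $\Qbb/\Zbb$.
\end{itemize}
For $i=2$ one can alternatively use Bass–Tate/Matsumoto: $K_2$ of an algebraically closed field is uniquely divisible.

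For odd $i\ge 3$, the rational groups $K_i(\Cbb)\otimes\Qbb$ are not simplified further, and the table records exactly that. Since $\Cbb$ is a field, $\operatorname{Pic}=\operatorname{SK}_1=0$, so nothing beyond Theorem~\ref{thm::weibel_azumaya} is needed there.

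The main obstacle is the even-degree identification $K_i(\Cbb)\otimes\Qbb=K_i(\Cbb)$, because it requires Suslin's theorem rather than elementary algebra. I would cite it directly: torsion-freeness follows from the vanishing of $K_{i+1}(\Cbb;\Zbb/n)$ for $i+1$ odd, and divisibility follows from the vanishing of $K_i(\Cbb;\Zbb/n)$ for... wait, $K_i(\Cbb;\Zbb/n)=\Zbb/n$ for even $i$. I therefore need to be careful with the universal coefficient sequence
$$0\to K_i/n\to K_i(\Zbb/n)\to K_{i-1}[n]\to 0.$$
Since $K_{i-1}$ (odd degree) has $n$-torsion $\Zbb/n$ mapping onto $K_i(\Zbb/n)=\Zbb/n$, we get $K_i/n=0$. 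Together with $K_i[n]=0$, which comes from $K_{i+1}(\Zbb/n)=0$, this gives unique divisibility.
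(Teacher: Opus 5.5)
Your proposal is correct and is essentially the paper's argument. Both get $i=0$ from $\operatorname{Br}(\Cbb)=0$. Both get the row $i=2$ and the even rows from $\mu(\Cbb)\cong\Qbb/\Zbb$ together with Suslin's theorem that $K_{2j}(\Cbb)$ is uniquely divisible; the paper cites Theorem VI.1.6 of Weibel's $K$-book, which is exactly this. Both read off the odd rows directly from the table.

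The only genuine difference is at $i=1$. You specialize the field formula $\Qbb/\Zbb\otimes k^\times$ and kill it using divisibility of $\Cbb^\times$. The paper instead reads it off the plus-construction model: $K_1(\operatorname{Az}(\Cbb))=\operatorname{PGL}(\Cbb)^{ab}=0$, since $\operatorname{PGL}(\Cbb)=\operatorname{PSL}(\Cbb)$ is perfect. Both rest on the existence of $n$-th roots in $\Cbb$, and either works.

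One caution about your unpacking of Suslin's theorem. Rigidity together with the sequence $0\to K_i/n\to K_i(\Cbb;\Zbb/n)\to K_{i-1}[n]\to 0$ gives $K_{2j}[n]=0$ and $K_{2j-1}/n=0$. Note that the surjection goes from $K_i(\Cbb;\Zbb/n)$ onto $K_{i-1}[n]$, not the reverse. In degree $2j$, however, these facts only give $|K_{2j}/n|\cdot|K_{2j-1}[n]|=n$.

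For $j=1$ you close this with the elementary fact $K_1(\Cbb)[n]=\mu_n\cong\Zbb/n$, so your $i=2$ argument stands. For $j\ge 2$, the claim that $K_{2j-1}(\Cbb)$ has torsion $\Qbb/\Zbb$ does not follow from rigidity and the universal coefficient sequence. These are equally consistent with $K_{2j}/n\cong\Zbb/n$ and $K_{2j-1}[n]=0$. Given rigidity, that torsion claim is equivalent to the divisibility of $K_{2j}(\Cbb)$ you are trying to deduce, so using it is circular.

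So for even $i\ge 4$ you must cite the full form of Suslin's theorem, as the paper does: either unique divisibility of $K_{2j}$, or equivalently the torsion computation in odd degrees. Do not present it as a consequence of the mod-$n$ computation alone.
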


\begin{proof}
The case for $i = 0$ follows from the Brauer group $\operatorname{Br}(\Cbb) = 0$. The case for $i = 1$ follows from $\operatorname{PGL}(\Cbb) = \operatorname{PSL}(\Cbb)$, and $\operatorname{PSL}(\Cbb)$ is the commutator subgroup of $\operatorname{PGL}(\Cbb)$ obtained from a similar colimit for $\operatorname{PSL}_n(\Cbb)$'s. The case for $i = 2$ follows from (a) $\mu(\Cbb)$ can be identified with $\Qbb/\Zbb$ and (b) a general fact that $K_{2n}(\Cbb)$ is uniquely divisible for $n > 0$ (Theorem VI.1.6 of \cite{weibel2013k}). The case for $i \geq 3$ even follows from the fact before. The case for $i \geq 3$ odd follows directly from rationalization. 
\end{proof}

\subsection{Rationalization of $K_1$}\label{subsec::rationalization}
The reader may be curious as to how rationalization shows up. We give a self-contained proof on why $\pi_1(\operatorname{BGL}_{\otimes}(R)^+)$ is $K_1(R) \otimes \Qbb$ for $R$ an Euclidean domain. For now, we will stick with a general ring and only insert the condition that $R$ is Euclidean later. We observe that the commutator subgroup $E_{\otimes}(R)$ can be equivalently realized as the colimit
 \[E_{\otimes}(R) = \operatorname{colim}_{(\Nbb, |)} E_n(R)\]
 of $E_n(R) \subseteq \operatorname{GL}_n(R)$ in the colimit diagram for $\operatorname{GL}_{\otimes}(R)$. For the ease of notation, we can now define the following.
\begin{defn}
    We write $K_1^{\otimes}(R) = \pi_1(\operatorname{BGL}_{\otimes}(R)^+) = \operatorname{GL}_{\otimes}(R)/E_{\otimes}(R)$.
\end{defn}
 
 As in the case of algebraic K-theory, one may be tempted to construct a determinant map $\operatorname{det}: K_1^{\otimes}(R) \to R^{\times}$ by first constructing a determinant map $\operatorname{det}: \operatorname{GL}_{\otimes}(R) \to R^{\times}$. The issue, however, is that the natural map
\[\operatorname{det}: \operatorname{GL}_n(R) \to R^{\times}\]
does not commute with the maps in the colimits. Indeed, in general, 
\[\det(I_k \otimes A) = \det(A)^k \text{ as opposed to } \det(A).\]

Let us look at an example which motivates a general construction below.
\begin{exmp}
    Let $R = \Rbb$ be the field of real numbers. For each $n$, we may construct a group homomorphism
    \[f_n: \operatorname{GL}_n(\Rbb) \to \Rbb_{>0}, f_n(A) = |\det(A)|^{1/n},\]
    where $\Rbb_{>0}$ has a group structure under multiplication with $1$ being the identity. Observe that 
    \[f_n(A B) = |\det(AB)|^{1/n} = |\det(A)|^{1/n} |\det(B)|^{1/n} = f_n(A) f_n(B),\]
    \[f_{kn}(I_k \otimes A) = |\det(A)^k|^{1/nk} = |\det(A)|^{1/n} = f_n(A).\]
    The definition of colimit then admits a unique morphism from the universal property
    \[f: \operatorname{GL}_{\times}(\Rbb) \to \Rbb_{>0}.\]
    Clearly $f$ is surjective. Furthermore, for any $A \in E_n(R)$, $\det(A) = 1$ and hence $f_n(A) = 1$, so $E_n(R)$ sits in the kernel of $f$. There is then a well-defined map
    \[f': \operatorname{GL}_{\otimes}(\Rbb)/E_{\otimes}(\Rbb) \to \Rbb_{>0},\quad [A] \mapsto f(A).\]
\end{exmp}

Let us now generalize the set-up above. Observe that as an abelian group, $\Rbb^{\times}$ is isomorphic to $\Zbb/2 \times \Rbb_{>0}$, and $\Rbb_{>0}$ is exactly the rationalization of $\Rbb^{\times}$. The ability to exponentiate $1/n$ is exactly saying $\Rbb_{>0}$, as an abelian group under multiplication, is a $\Qbb$-vector space.

\begin{defn}
    For each $n$, we define a map
    \[f_n: \operatorname{GL}_n(R) \to R^{\times} \otimes_{\Zbb} \Qbb, f_n(A) = \det(A) \otimes \frac{1}{n}.\]
\end{defn}

Observe that
\[f_n(AB) = \det(AB) \otimes \frac{1}{n} = \det(A) \det(B) \otimes \frac{1}{n} = \det(A) \otimes \frac{1}{n} + \det(B) \otimes \frac{1}{n} = f_n(A) + f_n(B).\]
\[f_{kn}(A \otimes I_k) = \det(A)^k \otimes \frac{1}{kn} = \det(A) \otimes \frac{k}{kn} = f_n(A).\]
Thus, this gives a well-defined map $f: \operatorname{GL}_{\otimes}(R) \to R^{\times} \otimes_{\Zbb} \Qbb.$\\

Now observe that for any matrix $A$ such that $\det(A) = 1$, we have that $A \in \ker(f_n)$. Indeed, $f_n(A) = \det(A) \otimes \frac{1}{n} = 1 \otimes \frac{1}{n} = (1)^n \otimes \frac{1}{n} = 1 \otimes \frac{n}{n} = 1 \otimes 1 = (0 \cdot 1) \otimes 1 = 1 \otimes 0 \cdot 1 = 1 \otimes 0$. Here, the equality that $1 \otimes 1 = (0 \cdot 1) \otimes 1$ is done by remembering that as a $\Zbb$-module, the action of $0$ on $R^{\times}$ sends everything to the multiplicative identity. On the other hand, for any $A \in E_n(R)$, $\det(A) = 1$. Thus, we have a well-defined map $f': \operatorname{GL}_{\otimes}(R)/E_{\otimes}(R) \to R^{\times} \otimes_{\Zbb} \Qbb$.

\begin{prop}
    $\ker(f_n: \operatorname{GL}_n(R) \to R^{\times} \otimes_{\Zbb} \Qbb)$ is exactly $\{A \in \operatorname{GL}_n(R)\ |\ \det(A) \in \operatorname{Tor}(R^{\times})\}$.
\end{prop}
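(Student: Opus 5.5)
The statement reduces to identifying the kernel of the natural map
\[
\iota\colon R^\times \longrightarrow R^\times\otimes_{\Zbb}\Qbb,\qquad u\mapsto u\otimes 1,
\]
because $f_n$ factors as $A\mapsto \det(A)\mapsto \det(A)\otimes\frac1n$. Concretely, I would first note that $\det(A)\otimes\frac1n$ vanishes if and only if $\det(A)\otimes 1$ vanishes. One direction: $\det(A)\otimes 1 = n\cdot(\det(A)\otimes \frac1n)$. The other: if $\det(A)\otimes 1=0$, then $\det(A)\otimes \frac1n$ is zero too, since $\frac1n\cdot(-)$ is invertible on the $\Qbb$-vector space $R^\times\otimes\Qbb$, and the additive $\Qbb$-action satisfies $u\otimes \frac1n = \frac1n\cdot(u\otimes 1)$. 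So it suffices to prove that $\ker\iota=\operatorname{Tor}(R^\times)$.

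For the inclusion $\operatorname{Tor}(R^\times)\subseteq\ker\iota$: if $u^m=1$, then
\[
u\otimes 1 = u\otimes m\cdot\tfrac1m = u^m\otimes\tfrac1m = 1\otimes\tfrac1m = 0,
\]
where $1\in R^\times$ is the zero of the abelian group (written multiplicatively). This is the same manipulation the paper already carries out for $\det(A)=1$.

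For the reverse inclusion I would use the standard fact that $\Qbb$ is the localization $\Zbb[S^{-1}]$ with $S=\Zbb_{>0}$, so $M\otimes_{\Zbb}\Qbb\cong S^{-1}M$ for any abelian group $M$. Under this identification $m\otimes 1\mapsto m/1$. In a localization, $m/1=0$ exactly when $s\cdot m=0$ for some $s\in S$. Applied to $M=R^\times$, this says $u\otimes 1=0$ iff $u^s=1$ for some positive integer $s$, i.e.\ iff $u$ is torsion.

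If one wants to avoid quoting the localization isomorphism, the alternative is to write $\Qbb=\varinjlim_{(\Nbb,\mid)}\tfrac1n\Zbb$ and use that tensor products commute with filtered colimits. Then $u\otimes 1=0$ already holds in some $R^\times\otimes\tfrac1n\Zbb\cong R^\times$, where it corresponds to $u^n$; hence $u^n=1$.

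Combining the two inclusions, $f_n(A)=0$ iff $\det(A)\in\operatorname{Tor}(R^\times)$, which is the claim. The only step with genuine content is the reverse inclusion, i.e.\ the flatness/localization description of $-\otimes\Qbb$. I expect it to be routine once that standard identification is invoked; the main care needed is keeping the multiplicative notation of $R^\times$ consistent with the additive notation on the tensor product.
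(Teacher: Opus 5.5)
Your proposal is correct and follows the paper's route: you factor $f_n$ as $\det$, then $u\mapsto u\otimes 1$, then the isomorphism $\cdot\tfrac{1}{n}$, and identify the kernel of $u\mapsto u\otimes 1$ on $R^\times$ with $\operatorname{Tor}(R^\times)$. The one difference is that you actually justify the nontrivial inclusion (non-torsion units survive in $R^\times\otimes_{\Zbb}\Qbb$), via $M\otimes_{\Zbb}\Qbb\cong S^{-1}M$ or via filtered colimits, whereas the paper only asserts this step ``by definition of rationalization.''
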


\begin{proof}
    Suppose $A \in \operatorname{GL}_n(A)$ has torsion determinant, and say $\det(A)^k = 1$ then observe that
    \[f_n(A) = f_{kn}(I_k \otimes A) = (\det(A)^k) \otimes \frac{1}{nk} = 1 \otimes \frac{1}{nk} = 1 \otimes 0,\]
    where the last equality follows from the sequence of reductions in the previous definition. Conversely, suppose $A \in \ker(f_n)$. Observe that $f_n$ is the composition of the following maps
    \[\operatorname{GL}_n(A) \xrightarrow{\det} R^{\times} \xrightarrow{r \mapsto r \otimes 1} R^{\times} \otimes_{\Zbb} \Qbb \xrightarrow{\cdot \frac{1}{n}} R^{\times} \otimes_{\Zbb} \Qbb.\]
    If $\det(A)$ is torsion, we are done. Otherwise, $\{\det(A)^n: n \in \Zbb\}$ forms an infinite cyclic subgroup of $R^{\times}$, and by definition of rationalization is not sent to zero under the map $r \mapsto r \otimes 1$. The map $\cdot \frac{1}{n}$ is a $\Qbb$-vector-space isomorphism. Thus, we conclude that $\det(A)$ not being torsion implies $A \notin \ker(f_n)$.
\end{proof}

\begin{prop}
    Write $F_n(R) \coloneqq \ker(f_n)$, then $\operatorname{ker}(f: \operatorname{GL}_{\otimes}(R) \to R^{\times} \otimes_{\Zbb} \Qbb) = \operatorname{colim}_{(\Nbb, |)} F_n(R)$.
\end{prop}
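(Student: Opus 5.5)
The plan is to use that $f$ is induced from the compatible family $(f_n)$ and that filtered colimits of groups are computed on underlying sets. Every element of $\operatorname{GL}_{\otimes}(R)=\operatorname{colim}_{(\Nbb,|)}\operatorname{GL}_n(R)$ is represented by some $A\in\operatorname{GL}_n(R)$, and $f([A])=f_n(A)$ by construction of $f$ through the universal property. So $[A]\in\ker f$ if and only if $A\in F_n(R)=\ker f_n$. This identifies $\ker f$ with the set of classes represented by elements of some $F_n(R)$.

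To make this an identification with $\operatorname{colim}_{(\Nbb,|)} F_n(R)$, I would proceed in three steps.

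\begin{enumerate}
\item Check that the transition maps $A\mapsto I_k\otimes A$ send $F_n(R)$ into $F_{kn}(R)$, so the $F_n(R)$ form a sub-diagram. This follows from the compatibility $f_{kn}(I_k\otimes A)=f_n(A)$ already verified. Alternatively, use the preceding proposition: $\det(I_k\otimes A)=\det(A)^k$ is torsion whenever $\det(A)$ is.
\item Use that filtered colimits in groups are exact, or concretely that they commute with finite limits such as kernels. The inclusions $F_n(R)\hookrightarrow\operatorname{GL}_n(R)$ then induce an injective homomorphism $\operatorname{colim}F_n(R)\to\operatorname{GL}_{\otimes}(R)$.
\item Identify the image of this map with the set of classes having some representative in some $F_n(R)$. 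By the first paragraph, this set is exactly $\ker f$.
\end{enumerate}

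Injectivity in step (2) can be checked by hand. If $A\in F_n(R)$ becomes trivial in $\operatorname{GL}_{\otimes}(R)$, then $I_k\otimes A=I$ for some $k$. Since $I\in F_{kn}(R)$, the element $A$ is already trivial in the colimit of the $F$'s.

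The only mild subtlety is well-definedness: membership in the kernel must not depend on the chosen representative. This is immediate because the $f_n$ are compatible, and the preceding proposition gives the same conclusion directly, since torsionness of $\det$ is preserved under $A\mapsto I_k\otimes A$. I expect no real obstacle; the argument is formal once one recalls that filtered colimits of groups are computed on underlying sets.
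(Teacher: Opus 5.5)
Your proposal is correct and takes essentially the same approach as the paper. The paper writes $\ker f$ as the equalizer of $f$ and $0$ and cites that filtered colimits commute with finite limits in sets. Your argument is that fact made explicit elementwise: compatibility of the $f_n$, the sub-diagram check, and the by-hand injectivity of $\operatorname{colim}_{(\Nbb,|)}F_n(R)\to\operatorname{GL}_{\otimes}(R)$.
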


\begin{proof}
    $\ker(f)$ is the equalizer of two maps $f, 0: \operatorname{GL}_{\otimes}(R) \to R^{\times} \otimes_{\Zbb} \Qbb$, where $0$ sends everything to the zero element. This is a finite limit. In the category of sets, filtered colimit commutes with finite limits, so $\operatorname{colim}_{(\Nbb, |)} F_n(R)$ is the set-theoretic equalizer of $f, 0$. On the other hand, the set-theoretic kernel is the group-theoretic kernel. This concludes the proof.
\end{proof}

\begin{prop}
    The map $f: \operatorname{GL}_{\otimes}(R) \to R^{\times} \otimes_{\Zbb} \Qbb$ is surjective. This descends down to a split surjection:
    \[f': \operatorname{GL}_{\otimes}(R)/E_{\otimes}(R) \to R^{\times} \otimes_{\Zbb} \Qbb.\]
\end{prop}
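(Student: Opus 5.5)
We prove surjectivity by writing an arbitrary element of $R^{\times}\otimes_{\Zbb}\Qbb$ as a pure tensor and exhibiting a preimage directly. Every element of $R^{\times}\otimes_{\Zbb}\Qbb$ is a pure tensor $r\otimes \frac{a}{n}$ with $r\in R^\times$, $a\in\Zbb$, $n\in\Nbb$: a finite sum $\sum_i r_i\otimes\frac{a_i}{n_i}$ can be brought to a common denominator $n$ and then rewritten as $\big(\prod_i r_i^{a_i n/n_i}\big)\otimes\frac1n$. So it suffices to hit $r\otimes\frac1n$ for arbitrary $r\in R^\times$ and $n\ge 1$. Take the diagonal matrix $D=\operatorname{diag}(r,1,\dots,1)\in\operatorname{GL}_n(R)$. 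Then $\det D=r$, so
\[
f_n(D)=r\otimes\tfrac1n .
\]
Hence $f$ is surjective, and so is the induced $f'$, which was shown above to be well defined because $E_{\otimes}(R)$ lies in $\ker f$.

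For the splitting we exploit that the target is a $\Qbb$-vector space. The natural move is to construct a group homomorphism $s\colon R^\times\otimes_{\Zbb}\Qbb\to \operatorname{GL}_{\otimes}(R)/E_{\otimes}(R)$ with $f'\circ s=\id$. The quickest route is to show that the source of $f'$ is abelian, which is immediate since we quotient by the commutator subgroup. Consequently $\ker f'$ is a subgroup of an abelian group, and the short exact sequence
\[
0\to\ker f'\to \operatorname{GL}_{\otimes}(R)/E_{\otimes}(R)\xrightarrow{f'} R^{\times}\otimes\Qbb\to 0
\]
is an extension of abelian groups.

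The main obstacle is that such an extension splits when the \emph{kernel} is injective (divisible), not merely when the quotient is a $\Qbb$-vector space; a $\Qbb$-vector space is not projective as a $\Zbb$-module. Two arguments are available. The first is to show that $K_1^{\otimes}(R)$ itself is divisible and torsion-free, hence a $\Qbb$-vector space. Since $[A]=[I_k\otimes A]$ in the colimit, and in the abelianization $[I_k\otimes A]$ agrees with $[A\oplus\cdots\oplus A]=k[A]$ modulo elementary matrices (the usual Whitehead argument), every class is $k$-divisible. Divisibility then makes $\ker f'$ a direct summand whenever $\ker f'$ is itself divisible, so the first approach needs that extra input. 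The second argument avoids this: choose a $\Qbb$-basis $\{v_j\}$ of $R^\times\otimes\Qbb$, and for each $v_j$ use divisibility to pick compatible $n$-th roots of a preimage $[D_j]$ in the colimit, realized as $[D_j]\in\operatorname{GL}_m$ viewed at level $mN$. I would try the second argument first, checking that the roots can be chosen coherently: the element represented by $D_j$ at level $m$ already equals $\frac1k$ of itself at level $mk$ in $f$, so consistency should come from transfer along the colimit maps. For $R$ Euclidean, $\ker f'$ should be the image of torsion-determinant matrices modulo $E_{\otimes}$, which is trivial after rationalization; this gives injectivity of $f'$ outright, so $f'$ is an isomorphism and the splitting is automatic.
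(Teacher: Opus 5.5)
Your surjectivity argument is correct and is essentially the paper's (the paper hits $r\otimes\frac{p}{q}$ with $\operatorname{diag}(r^p,1,\dots,1)$ at level $q$). The gap is in the splitting. You rightly note that a surjection onto a $\Qbb$-vector space need not split over $\Zbb$, but you never actually produce a section.

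\emph{Your first route} rests on a false identity. The matrix $I_k\otimes A$ \emph{is} the block sum $A\oplus\cdots\oplus A$, so its class is $[A]$, not $k[A]$. Indeed $k[A]=[I_k\otimes A^k]$, whose determinant at level $nk$ is $\det(A)^{k^2}$ rather than $\det(A)^k$. What Whitehead's lemma $\left(\begin{smallmatrix} C&0\\ 0&D\end{smallmatrix}\right)\equiv\left(\begin{smallmatrix} CD&0\\ 0&I\end{smallmatrix}\right)$ modulo elementary matrices actually gives is
$[A]=[I_k\otimes A]=[A^k\oplus I_{n(k-1)}]=k\,[A\oplus I_{n(k-1)}]$.
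This does prove that $K_1^{\otimes}(R)$ is divisible.

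You then stop, saying you also need $\ker f'$ to be divisible, but this is automatic. If $x\in\ker f'$ and $ny=x$, then $n\,f'(y)=0$ in the torsion-free group $R^{\times}\otimes_{\Zbb}\Qbb$, so $y\in\ker f'$. Thus $\ker f'$ is divisible, hence injective, hence a direct summand. Torsion-freeness of $K_1^{\otimes}(R)$ is not needed.

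\emph{Your second route} is only a plan (``consistency should come from transfer''). The Euclidean shortcut does not cover a general commutative ring, which is the setting of the proposition.

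The paper instead writes down the section explicitly: $g(r\otimes\frac{p}{q})=[\operatorname{diag}(r^p,1,\dots,1)]$ at level $q$. The only real content is well-definedness. Suppose $r\otimes\frac{p}{q}=s\otimes\frac{p'}{q'}$. Then $u=r^{pq'}s^{-p'q}$ lies in the kernel of $R^{\times}\to R^{\times}\otimes_{\Zbb}\Qbb$, i.e.\ it is torsion, say $u^k=1$. Stabilizing both representatives to level $kqq'$ yields diagonal matrices with the same determinant $r^{pq'k}=s^{p'qk}$, and diagonal matrices with equal determinant agree modulo elementary matrices. Additivity of $g$ follows from the same computation, and $f'\circ g=\id$ is immediate. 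Your proof needs either this explicit verification or the corrected divisibility argument above in order to close.
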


\begin{proof}
    For any $r \otimes \frac{p}{q} \in R^{\times} \otimes_{\Zbb} \Qbb$ with $r \in R^{\times}$ and $p/q \in \Qbb$, consider the $q \times q$ diagonal matrix $A$ whose first entry is $r^p$ and the rest are $1$'s. Then
    \[f([A]) = f_q(A) = \det(A) \otimes \frac{1}{q} = r^p \otimes \frac{1}{q} = r \otimes \frac{p}{q}.\]
    Evidently, since $E_{\times}(R)$ is contained in the kernel of $f$, $f'$ is still surjective.\\

    We define $g: R^{\times} \otimes_{\Zbb} \Qbb \to \operatorname{GL}_{\otimes}(R)$ by sending $r \otimes \frac{p}{q}$ to the matrix $A$ mentioned above. Now $g$ is defined on the elementary tensors. We then uniquely extend it to the mixed tensors. Note that if $g$ is well-defined, $f'$ is then a split surjection.\\
    
    We check that this is a well-defined map. Indeed, suppose $r \otimes \frac{p}{q} = s \otimes \frac{p'}{q'}$, we seek to show that $g(r \otimes \frac{p}{q}) = g(s \otimes \frac{p'}{q'})$. Indeed, observe that
    \[r \otimes \frac{p}{q} = s \otimes \frac{p'}{q'} \implies r^p \otimes \frac{1}{q} = s^{p'} \otimes \frac{1}{q'} \implies r^{pq'} \otimes 1 = s^{p'q} \otimes 1\]
    where the last implication is given by multiplying $qq'$ to both sides. Thus, we have that $r^{pq'} = s^{p'q}$ up to torsion, so there exists some $u \in \operatorname{Tor}(R)$ such that $r^{pq'} = u s^{p'q}$ with $u^k = 1$.\\
    
    Write $A = g(r \otimes \frac{p}{q})$ and $B = g(s \otimes \frac{p'}{q'})$. $A$ is a $q \times q$-matrix and $B$ is a $q' \times q'$-matrix.\\

    Now $[A] = [I_{q'} \otimes A]$ and $[B] = [I_{q} \otimes B]$. We have that $\det(I_{q'} \otimes A) = r^{pq'}$ and $\det(I_{q} \otimes B) = s^{p' q}$. Tensor both $I_{q'} \otimes A$ and $I_{q} \otimes B$ by $I_k$ on the left (and call the resulting matrices $A_1, B_1$). Recall that, modulo elementary matrices, we have the identification of block diagonal matrices
    \[\begin{pmatrix}
        C & 0\\
        0 & D
    \end{pmatrix} = \begin{pmatrix}
        C D & 0\\
        0 & I
    \end{pmatrix}\]
    when $C$ and $D$ have the same dimensions (this is a consequence of III.1.2.1 of \cite{weibel2013k}). But also, since the image of $g$ are literally diagonal matrices, we can without loss change $I_k \otimes I_{q'} \otimes A$ and $I_k \otimes I_{q} \otimes B$ both to diagonal matrices whose first entry is the determinant and the rest are $1$'s. From here, we see that it suffices to show that $s^{p' q k} = r^{pq'k}$. Now indeed, we have that
    \[s^{p' q k} = (u s^{p'q})^k = (r^{pq'})^k = r^{pq'k}.\]
    This shows that $[A] = [B]$ in $\operatorname{GL}_{\otimes}(R)/E_{\otimes}(R)$. Thus, we have a well-defined map $g: R^{\times} \otimes_{\Zbb} \Qbb \to \operatorname{GL}_{\otimes}(R)$ and clearly $f' \circ g$ is the identity.
\end{proof}
Putting the discussions above together, we have that:
\begin{theorem}
    There is a commutative diagram:
% https://q.uiver.app/#q=WzAsMyxbMCwwLCJcXGZyYWN7XFxvcGVyYXRvcm5hbWV7R0x9X3tcXHRpbWVzfShSKX17RV97XFx0aW1lc30oUil9Il0sWzAsMiwiXFxmcmFje1xcb3BlcmF0b3JuYW1le0dMfV97XFx0aW1lc30oUil9e0YoUil9Il0sWzIsMCwiUl57XFx0aW1lc30gXFxvdGltZXNfe1xcWmJifSBcXG1hdGhiYntRfSJdLFsxLDIsIlxcVGlsZGV7Zn0sXFx0ZXh0eyBpc29tb3JwaGlzbX0iLDJdLFswLDIsImYnIl0sWzAsMSwiXFxQaGkiLDIseyJzdHlsZSI6eyJoZWFkIjp7Im5hbWUiOiJlcGkifX19XV0=
\[\begin{tikzcd}
	{K_1^{\otimes}(R) = \frac{\operatorname{GL}_{\otimes}(R)}{E_{\otimes}(R)}} && {R^{\times} \otimes_{\Zbb} \mathbb{Q}} \\
	\\
	{\frac{\operatorname{GL}_{\otimes}(R)}{F(R)}}
	\arrow["{f'}", from=1-1, to=1-3]
	\arrow["\Phi"', two heads, from=1-1, to=3-1]
	\arrow["{\Tilde{f},\text{ isomorphism}}"', from=3-1, to=1-3]
\end{tikzcd}\]
Furthermore, $f'$ is a split surjection and
\[K_1^{\otimes}(R) = \ker(f') \oplus R^{\times} \otimes_{\Zbb} \mathbb{Q}.\]
\end{theorem}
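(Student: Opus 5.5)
The plan is to assemble the three propositions just proved. The goal is to identify $K_1^{\otimes}(R)/\ker(f')$ with $R^\times\otimes_\Zbb\Qbb$ through the quotient by $F(R)$, and then to split off $\ker(f')$.

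\emph{Step 1 (the map $\Phi$ and the factorization).} Set $F(R) := \operatorname{colim}_{(\Nbb,|)} F_n(R)$. By the previous proposition this equals $\ker(f)$. Each $E_n(R)$ consists of determinant-one matrices, so $E_n(R)\subseteq F_n(R)$, and passing to the colimit gives $E_\otimes(R)\subseteq F(R)$. Hence the identity on $\operatorname{GL}_\otimes(R)$ descends to a surjection $\Phi\colon \operatorname{GL}_\otimes(R)/E_\otimes(R)\twoheadrightarrow \operatorname{GL}_\otimes(R)/F(R)$. By the first isomorphism theorem, $f$ induces an injective homomorphism $\tilde f\colon \operatorname{GL}_\otimes(R)/F(R)\to R^\times\otimes_\Zbb\Qbb$ with $\tilde f\circ\Phi = f'$, since both sides are induced by $f$ on representatives. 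This gives commutativity of the triangle.

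\emph{Step 2 ($\tilde f$ is an isomorphism).} Surjectivity of $\tilde f$ follows from the surjectivity of $f$ established in the preceding proposition, using the explicit diagonal matrices with first entry $r^p$ in $\operatorname{GL}_q(R)$. Injectivity holds by construction, because its kernel is $\ker(f)/F(R)=0$.

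\emph{Step 3 (the splitting).} The preceding proposition supplies a section $g\colon R^\times\otimes_\Zbb\Qbb\to K_1^\otimes(R)$ with $f'\circ g=\id$. Because $K_1^\otimes(R)$ is abelian, a split surjection of abelian groups yields the internal direct sum decomposition $K_1^\otimes(R)=\ker(f')\oplus g(R^\times\otimes_\Zbb\Qbb)\cong \ker(f')\oplus R^\times\otimes_\Zbb\Qbb$, where $x\mapsto (x-gf'(x),\,f'(x))$. Here $K_1^\otimes(R)$ is abelian because $E_\otimes(R)$ is the commutator subgroup, by the remark at the start of the subsection.

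\emph{Main obstacle.} The only delicate point is that $g$ is a group homomorphism on all of $R^\times\otimes\Qbb$, not just well-defined on elementary tensors. I would check additivity modulo $E_\otimes(R)$. Stabilize to a common size, then use the block identity $\mathrm{diag}(C,D)\equiv\mathrm{diag}(CD,I)$ to reduce any diagonal matrix to one concentrated in a single entry. Two such matrices with the same determinant are then equal in the quotient, exactly as in the well-definedness argument.
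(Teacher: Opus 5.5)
Your proposal is correct and follows the paper's route. The paper proves the theorem simply by combining $\ker f=\operatorname{colim}F_n(R)$, the surjectivity of $f$, the first isomorphism theorem, and the explicit section $g$, exactly as in your Steps 1--3. Your check that $g$ is a homomorphism modulo $E_\otimes(R)$ fills in a point the paper leaves implicit; it is even quicker to note that every element of $R^\times\otimes_\Zbb\Qbb$ can be written as $r\otimes\frac{1}{q}$, and then with a common denominator $g(r\otimes\frac{1}{q})\,g(s\otimes\frac{1}{q})=g(rs\otimes\frac{1}{q})$ holds on the nose.
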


Now we specialize to the case when $R$ is an Euclidean domain.
\begin{cor}
    Let $R$ be an Euclidean domain, then
    \[K^{\otimes}_1(R) = R^{\times} \otimes_{\Zbb} \Qbb.\]
\end{cor}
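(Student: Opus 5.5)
By the preceding theorem, $K_1^{\otimes}(R) = \ker(f') \oplus R^{\times}\otimes_{\Zbb}\Qbb$. The corollary therefore reduces to showing $\ker(f') = 0$. Equivalently, the quotient map $\Phi$ should be injective, i.e.\ $F(R) = E_{\otimes}(R)$, where $F(R)=\operatorname{colim}_{(\Nbb,|)} F_n(R)$. The inclusion $E_{\otimes}(R)\subseteq F(R)$ is already known. The work is to show that every $A\in F_n(R)$, meaning $A\in \operatorname{GL}_n(R)$ with $\det(A)$ torsion in $R^\times$, becomes elementary after stabilizing along the divisibility poset.

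\textbf{Step 1: reduce to determinant one.} Let $A\in F_n(R)$ with $\det(A)^k=1$. Pass to $I_k\otimes A\in \operatorname{GL}_{kn}(R)$, which represents the same class in $\operatorname{GL}_{\otimes}(R)$. Its determinant is $\det(A)^k = 1$, so $I_k\otimes A\in \operatorname{SL}_{kn}(R)$. Hence every element of $F(R)$ is represented by a matrix in some $\operatorname{SL}_m(R)$.

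\textbf{Step 2: use the Euclidean hypothesis.} For a Euclidean domain, row reduction via the division algorithm shows $\operatorname{SL}_m(R)=E_m(R)$ for every $m$. Concretely, perform the Euclidean algorithm on the first column using elementary row operations until it becomes $(d,0,\dots,0)^T$ with $d$ a unit. Clear the first row, then induct on $m$. At the end one obtains a diagonal matrix of units with product $1$, and such a matrix is elementary by the standard Whitehead identity $\mathrm{diag}(u,u^{-1})\in E_2(R)$. So $I_k\otimes A\in E_{kn}(R)\subseteq E_{\otimes}(R)$, and therefore $F(R)\subseteq E_{\otimes}(R)$.

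\textbf{Step 3: conclude.} With $F(R)=E_{\otimes}(R)$, the map $\Phi$ is the identity. Then $f'=\tilde f$ is an isomorphism, so $\ker(f')=0$ and $K_1^{\otimes}(R)\cong R^{\times}\otimes_{\Zbb}\Qbb$.

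The only substantive step is Step 2, namely $\operatorname{SL}_m(R)=E_m(R)$ over a Euclidean domain. This is classical (it amounts to $SK_1(R)=0$ at each finite stage), and I would cite it from Weibel's $K$-book rather than reprove it. The care needed is to ensure that the elementary subgroups $E_n(R)$ used in the colimit are compatible with the Kronecker stabilization maps $A\mapsto I_k\otimes A$. They are, because $I_k\otimes E$ is block diagonal with elementary blocks, hence elementary.
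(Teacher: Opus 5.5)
Your proposal is correct and follows the paper's argument: it identifies $\ker(f')$ with $F(R)/E_{\otimes}(R)$, stabilizes a torsion-determinant matrix $A$ to $I_k\otimes A\in\operatorname{SL}_{kn}(R)$, and invokes $\operatorname{SL}_m(R)=E_m(R)$ over a Euclidean domain. You also sketch why that classical fact holds and check that the subgroups $E_n(R)$ are compatible with Kronecker stabilization, both of which the paper leaves implicit.
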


\begin{proof}
    We claim that $E_\otimes(R)$ and $F(R)$ are actually the same. Clearly, $E_\times(R)$ is contained in $F(R)$. Now recall when $R$ is an Euclidean domain, $E_n(R) = \operatorname{SL}_n(R)$. Suppose $[A] \in F(R)$, then it is represented by some matrix $A \in F_n(R)$, and $\det(A) \in R^{\times}$ has finite order say $k$. Now $I_k \otimes A \in \operatorname{SL}_n(R)$, and from the colimit definition we know that $[I_k \otimes A] = [A]$. This shows that $[A] \in E_{\otimes}(R)$.
\end{proof}

\begin{remark}\label{rmk:when_sk_vanish}
    One can also show that $\operatorname{ker}(f') = 0$ whenever $\operatorname{SK}_1(R) = 0$. This is expected given Theorem~\ref{thm::weibel_azumaya} and include many cases, such as when $R$ is a local ring. Rationalized algebraic K-theory is also known to line up with rational Chow groups in algebraic geometry, which may have connections to QCA in this work.
\end{remark}

\appendix 
\section{Coarse homology theory}\label{sec::coarse}
In this section, we give a brief introduction to coarse homology theory. Note that the maps in this section are typically not continuous. For more details, we refer the reader to Section 2 of \cite{block1997large}, Chapter~5 of \cite{roe2003lectures}, and Chapter 7 of \cite{nowak2023large}.
\begin{defn}
Let $(X, \rho)$ be a metric space. Then
\begin{enumerate}
\item $X$ is called \emph{uniformly discrete} if there exists a constant $C > 0$ such that for any two distinct points $x,y \in X$ we have
\[
\rho(x,y) \ge C.
\]

\item A uniformly discrete metric space $X$ is called \emph{locally finite} if for every $x \in X$ and every $r \ge 0$ we have
\[
\# B(x,r) < \infty.
\]

\item A locally finite metric space $X$ is said to have \emph{bounded geometry} if for every $r \in \mathbb{R}$ there exists a number $N(r)$ such that for every point $x \in X$ we have
\[
\# B(x,r) \le N(r).
\]
\end{enumerate}
\end{defn}

\begin{defn}
Let $X$ and $Y$ be metric spaces. A map $f : X \to Y$ is \emph{coarse} if the following two conditions are satisfied:
\begin{enumerate}
\item there exists a function $\eta_{+} : [0,\infty) \to [0,\infty)$ such that
\[
\rho_Y\bigl(f(x), f(y)\bigr) \le \eta_{+}\bigl(\rho_X(x,y)\bigr)
\]
for all $x,y \in X$, and
\item $f$ is metrically proper. That is, for every bounded subset $B \subseteq Y$, the
preimage $f^{-1}(B)$ is a bounded subset of $X$.
\end{enumerate}
\end{defn}
\begin{defn}
Let $(X,\rho_X)$ and $(Y,\rho_Y)$ be metric spaces.
Two maps $f,g \colon X \to Y$ are said to be \emph{close} if
\[
\sup_{x \in X} \rho_Y\bigl(f(x),g(x)\bigr) < \infty.
\]
\end{defn}
\begin{defn}
Let $(X,\rho_X)$ and $(Y,\rho_Y)$ be metric spaces.
A map $f \colon X \to Y$ is called a \emph{coarse equivalence} if there exists
a coarse map $g \colon Y \to X$ such that
\[
g \circ f \text{ is close to } \mathrm{id}_X
\quad\text{and}\quad
f \circ g \text{ is close to } \mathrm{id}_Y.
\]
\end{defn}

\begin{exmp}
    The inclusion $\ZZ\rightarrow \RR$ is a coarse equivalence. The map in the reverse direction could be the floor function. 
\end{exmp}
Coarse homology theories are intended to capture large-scale geometric
information and therefore should be invariant under coarse equivalence.
Since any metric space of interest for us is coarsely equivalent
to a uniformly discrete, locally finite metric space of bounded geometry,
it suffices to define coarse homology for spaces in this class.
For a general metric space $X$, its coarse homology is defined to be the
coarse homology of any space of this class
coarsely equivalent to $X$.

\begin{defn}
Let $(X,\rho)$ be a uniformly discrete metric space of bounded geometry and
let $A$ be an abelian group.
The \emph{coarse $n$-chain group} $CC_n(X,A)$ is defined to be the $A$-module
of formal linear combinations
\[
c = \sum_{\bar{x}} c_{\bar{x}}\, \bar{x},
\]
where $\bar{x} = [x_0,\dots,x_n] \in X^{n+1}$ and $c_{\bar{x}} \in A$, such that
there exists a constant $P_c > 0$ with
\[
c_{\bar{x}} = 0 \quad \text{whenever} \quad
\max_{i,j} \rho(x_i,x_j) \ge P_c.
\]
The number $P_c$ is called the \emph{propagation} of the chain $c$.
\end{defn}
\begin{remark}
     The above definition does agree with our definition earlier in Definition~\ref{def::coarse_diagonal}. The existence of propagation $P_c$ implies that the sum in $c$ is necessarily locally finite. That is, each $x\in X$ appears in finitely many terms in any chain. Additionally, the distance between $\bar{x} = [x_0,\dots,x_n]$ and the diagonal is bounded above by its distance to $[x_0,\dots,x_0]$. The latter is smaller than or equal to $l:=nP_c$.
\end{remark}

As in the classical simplicial setting, we define a boundary operator
\[
\partial_n \colon CC_n(X,A) \to CC_{n-1}(X,A)
\]
on generators by
\[
\partial_n[x_0,\dots,x_n]
= \sum_{i=0}^n (-1)^i [x_0,\dots,\hat{x}_i,\dots,x_n],
\]
where $\hat{x}_i$ denotes omission of the $i$th entry, and extend
$\partial_n$ to all of $CC_n(X,R)$ by linearity.
The propagation condition ensures that $\partial_n$ is well defined.

\begin{prop}
For every $n \ge 0$, one has $\partial_n \circ \partial_{n+1} = 0$.
\end{prop}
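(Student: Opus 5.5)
The plan is to reduce $\partial_n \circ \partial_{n+1} = 0$ to the classical simplicial identity, applied generator by generator. Two checks are needed. First, the composite is a well-defined map on coarse chains, so computing it generator by generator is legitimate. Second, the resulting formal sum vanishes coefficient by coefficient.

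Fix $c = \sum_{\bar{x}} c_{\bar{x}}\,\bar{x} \in CC_{n+1}(X,A)$ with propagation $P_c$. By definition, $\partial_{n+1} c$ is the formal sum
\[
\sum_{\bar{x}} c_{\bar{x}} \sum_{i=0}^{n+1} (-1)^i [x_0,\dots,\hat{x}_i,\dots,x_{n+1}].
\]
Deleting an entry does not increase the maximal pairwise distance, so every face of a generator with diameter less than $P_c$ again has diameter less than $P_c$.

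Next, each coefficient of $\partial_{n+1}c$ is a finite sum. A given $n$-tuple $\bar{y}$ occurs as the $i$-th face of $\bar{x}$ only when $\bar{x}$ is $\bar{y}$ with one extra point $z$ inserted in position $i$. Since $\rho(z, y_0) < P_c$, bounded geometry allows at most $N(P_c)$ choices of $z$ for each of the $n+2$ positions. Hence $\partial_{n+1} c$ lies in $CC_n(X,A)$ with propagation at most $P_c$, and the same argument applies to $\partial_n$.

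With both maps well defined and propagation-nonincreasing, compute the coefficient of a fixed $(n-1)$-tuple $\bar{w}$ in $\partial_n\partial_{n+1}c$. By the finiteness just established, it is a finite sum of contributions $c_{\bar{x}}$ times signs. Each contribution comes from a generator $\bar{x}$ together with an ordered pair of deletions that yields $\bar{w}$. Because everything is finite, we may regroup this sum by $\bar{x}$. The contribution of a single $\bar{x}$ is $c_{\bar{x}}$ times the coefficient of $\bar{w}$ in the classical double boundary of $[x_0,\dots,x_{n+1}]$.

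That classical coefficient is zero. Deleting $x_i$ and then $x_j$, where $j<i$, gives the same tuple as deleting $x_j$ and then $x_{i-1}$ in the reindexed list. The first carries sign $(-1)^{i+j}$ and the second $(-1)^{i+j-1}$, so the terms cancel pairwise.

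No step here is a genuine obstacle. The only delicate point is justifying the rearrangement of an infinite formal sum. The locally-finite bound from bounded geometry handles this: every coefficient is a finite sum, so regrouping is legitimate.
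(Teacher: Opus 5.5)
Your proposal is correct and is essentially the argument the paper intends: the paper states this proposition without proof, deferring to the classical simplicial identity, and you supply exactly that pairwise cancellation. You also add the bounded-geometry finiteness check (each coefficient of $\partial c$ is a finite sum, and propagation does not increase), which makes $\partial$ well defined on infinite chains and justifies the regrouping. The only unstated case is $n=0$: there the convention $CC_{-1}(X,A)=0$, implicit in the paper's $CH_0(X,A)=CC_0(X,A)/\operatorname{im}(\partial)$, makes $\partial_0$ the zero map, so the claim is trivial; your $y_0$-anchoring argument would not apply to an augmented $\partial_0$, but none is intended.
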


As a consequence, the collection $\{CC_n(X,R),\partial_n\}$ forms a chain
complex, called the \emph{coarse chain complex} of $X$.
Its homology groups are defined as follows.

\begin{defn}
Let $(X,\rho)$ be a uniformly discrete metric space of bounded geometry.
The \emph{coarse homology groups} of $X$ with coefficients in $A$ are
\[
CH_n(X,R)
:= \frac{\ker\bigl(\partial_n \colon CC_n(X,A) \to CC_{n-1}(X,A)\bigr)
   }{\operatorname{im}\bigl(\partial_{n+1} \colon CC_{n+1}(X,A) \to CC_n(X,A)\bigr)}.
\]
\end{defn}

Standard arguments from homological algebra show that coarse maps induce
homomorphisms on coarse homology.
Indeed, if $f \colon X \to Y$ is a coarse map, then $f$ induces a chain map
\[
f_\# \colon CC_n(X,A) \to CC_n(Y,A),
\qquad
f_\#[x_0,\dots,x_n] := [f(x_0),\dots,f(x_n)],
\]
which is well defined by the coarse conditions on $f$.

\begin{prop}
Any coarse map $f \colon X \to Y$ induces homomorphisms
\[
f_* \colon CH_n(X,A) \to CH_n(Y,A)
\]
for all $n \ge 0$, satisfying the following properties:
\begin{enumerate}
\item If $g \colon Y \to Z$ is coarse, then $(g \circ f)_* = g_* \circ f_*$.
\item If $f = \mathrm{id}_X$, then $f_*$ is the identity on $CH_n(X,A)$.
\item If $f,g \colon X \to Y$ are close, then $f_* = g_*$.
\end{enumerate}
\end{prop}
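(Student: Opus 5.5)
We prove the three properties by working at the chain level, using the chain map $f_\#[x_0,\dots,x_n]=[f(x_0),\dots,f(x_n)]$.

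\emph{Well-definedness.} First we check that $f_\#$ lands in $CC_n(Y,A)$, extended by linearity. If a chain $c$ has propagation $P_c$, then every simplex $[f(x_0),\dots,f(x_n)]$ in the image has diameter at most $\eta_+(P_c)$, so $f_\# c$ has propagation bounded by $\eta_+(P_c)$.

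We also need the image to be a legitimate formal sum. Each point $y\in Y$ should occur in only finitely many of the image simplices, so that coefficients can be collected. Metric properness makes $f^{-1}(B(y,r))$ bounded. Since $X$ is locally finite, that preimage is a finite set, and only finitely many simplices of bounded propagation touch a finite set. Hence $f_\#c$ is a finite sum near each point.

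\emph{Chain map.} The identity $f_\#\partial=\partial f_\#$ holds because both sides act on generators by deleting an entry after applying $f$. Consequently $f_\#$ induces $f_*$ on homology.

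\emph{Functoriality.} Properties (1) and (2) are immediate at the chain level: $(g\circ f)_\#=g_\#f_\#$ and $(\mathrm{id})_\#=\mathrm{id}$.

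\emph{Invariance under closeness.} The main step is (3): if $f$ and $g$ are close, then $f_*=g_*$. The plan is to build the standard prism chain homotopy
\[
h[x_0,\dots,x_n]=\sum_{i=0}^n(-1)^i[f(x_0),\dots,f(x_i),g(x_i),\dots,g(x_n)],
\]
and verify the identity $\partial h+h\partial=g_\#-f_\#$. That identity is the usual simplicial telescoping computation and needs nothing coarse.

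What needs checking is that $h$ actually maps $CC_n(X,A)$ into $CC_{n+1}(Y,A)$. Let $D=\sup_x\rho_Y(f(x),g(x))<\infty$. Each simplex appearing in $h$ mixes $f$-images and $g$-images of points that are $P_c$-close. The triangle inequality then bounds the diameter of such a simplex by $\eta_+^f(P_c)+D+\eta_+^g(P_c)$, which gives the propagation bound.

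Local finiteness of $h(c)$ follows by the same properness argument as before, applied to both $f$ and $g$. I expect this bookkeeping (bounded propagation together with local finiteness of the homotopy) to be the only genuine obstacle; the algebra of the homotopy itself is routine.
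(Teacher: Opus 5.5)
Your proposal is correct and follows the paper's route. The paper introduces the chain map $f_\#$, notes that the coarse conditions make it well defined, and leaves the three properties to standard arguments; your prism homotopy with $\partial h + h\partial = g_\# - f_\#$ is the usual argument for (3). The one tacit point is that $\eta_+$ should be taken nondecreasing, as is standard, so that $\eta_+(P_c)$ really bounds every $\rho_Y(f(x_i),f(x_j))$ with $\rho_X(x_i,x_j)<P_c$.
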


Applying the last property to a coarse equivalence and its coarse inverse,
one immediately obtains coarse invariance of coarse homology.

\begin{theorem}
Let $X$ and $Y$ be uniformly discrete metric spaces of bounded geometry.
If $X$ and $Y$ are coarsely equivalent, then
\[
CH_n(X,A) \cong CH_n(Y,A)
\quad \text{for all } n \ge 0.
\]
\end{theorem}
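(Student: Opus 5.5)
The statement to prove is the coarse invariance of coarse homology: if $X$ and $Y$ are coarsely equivalent uniformly discrete spaces of bounded geometry, then $CH_n(X,A)\cong CH_n(Y,A)$. The plan is to deduce it formally from the functoriality proposition stated just above. Choose a coarse equivalence $f\colon X\to Y$ and a coarse map $g\colon Y\to X$ with $g\circ f$ close to $\mathrm{id}_X$ and $f\circ g$ close to $\mathrm{id}_Y$.

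By property (1), $(g\circ f)_*=g_*\circ f_*$. By property (3), $(g\circ f)_*=(\mathrm{id}_X)_*$, and by property (2) this is the identity on $CH_n(X,A)$. Symmetrically, $f_*\circ g_*$ is the identity on $CH_n(Y,A)$. Hence $f_*$ is an isomorphism in every degree, with inverse $g_*$.

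The only substantive input is property (3), closeness invariance, so I would verify it rather than just cite it. The plan is the standard prism chain homotopy: given close coarse maps $f,g$ with $\sup_x\rho(f(x),g(x))\le D$, define
\[
h[x_0,\dots,x_n]=\sum_{i=0}^n(-1)^i[f(x_0),\dots,f(x_i),g(x_i),\dots,g(x_n)].
\]
The usual simplicial identity gives $\partial h+h\partial=g_\#-f_\#$.

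The main obstacle is showing that $h$ actually lands in $CC_{n+1}(Y,A)$. Two things must be checked for the image of a chain $c$ of propagation $P_c$.

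\textbf{Bounded propagation.} Every entry of a prism simplex lies within $\eta_+(P_c)+D$ of every other entry, so the image has uniformly bounded propagation.

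\textbf{Well-defined coefficients.} Each target simplex receives contributions from only finitely many source simplices, so its coefficient is a finite sum in $A$. This is where metric properness of $f$ and $g$ is used: the preimage of a bounded set is bounded. Combined with bounded geometry, which makes bounded sets finite, this gives the required finiteness.

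The same finiteness argument shows that $f_\#$ itself is well defined, so I would state it once and reuse it for both $f_\#$ and $h$.
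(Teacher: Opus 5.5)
Your proposal is correct and matches the paper's argument, which likewise deduces coarse invariance directly from properties (1)--(3) applied to $f$ and its coarse inverse $g$. The paper simply cites closeness invariance without proof, so your prism chain homotopy $h$ is a sound supplement rather than a different route; its propagation bound $\eta_+(P_c)+D$ implicitly takes $\eta_+$ non-decreasing, and its finiteness of coefficients comes from metric properness together with bounded geometry.
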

\begin{exmp}[Coarse homology of \(\mathbb R^n\)~\cite{roe1993coarse}]
\label{thm:coarse-Rn}

\[
CH_q(\mathbb{R}^n,A)\cong
\begin{cases}
0, & q\neq n,\\[4pt]
A, & q=n.
\end{cases}
\]
\end{exmp}
\section{Group Completion}\label{sec::group_complete}

\begin{defn}
Let $M$ be a commutative monoid. The \textit{(Grothendieck) group completion} of $M$ is an abelian group $M^{gp}$ with a monoid homomorphism $i: M \to M^{gp}$ characterized by the following universal property. For any monoid homomorphism $f: M \to A$ from $M$ to an abelian group $A$, there is a unique factorization:
\[\begin{tikzcd}[ampersand replacement=\&]
	M \& A \\
	{M^{gp}}
	\arrow["f", from=1-1, to=1-2]
	\arrow["i"', from=1-1, to=2-1]
	\arrow["{\exists ! g}"', dashed, from=2-1, to=1-2]
\end{tikzcd}.\]
More explicitly, $M^{gp}$ may be constructed as the quotient of the free abelian group on elements in $M$ with certain relations as
\[M^{gp} = \Zbb[M]/\langle [m] + [n] = [m + n], \forall m, n \in \Zbb \rangle.\]
The group completion operation is functorial with respect to homomorphisms of commutative monoids.
\end{defn}

\begin{exmp}
    Let $(\mathcal{C}, \otimes)$ be a (small) symmetric monoidal category. We can define a monoid $M$ associated to $\mathcal{C}$ whose elements are given by isomorphism classes of objects in $\mathcal{C}$, and the monoid operation given by $\otimes$. The zeroth K-theory of $\mathcal{C}$, denoted $K_0(\mathcal{C})$, is the group completion of $M$.
\end{exmp}

\begin{defn}
    Let $M$ be a commutative monoid and $R \subset M \times M$ be an equivalence relation such that $M/R$ is a monoid.
\end{defn}

Note that this makes $R$ a submonoid of $M \times M$ under coordinate-wise multiplication. Now, $M/R$ is the coequalizer of the diagram
\[\begin{tikzcd}
	R & M
	\arrow["{\pi_2}"', shift right=2, from=1-1, to=1-2]
	\arrow["{\pi_1}", shift left=2, from=1-1, to=1-2]
\end{tikzcd}.\]
by the two projections in the category of commutative monoids. Since group completion is left-adjoint, it preserves colimits. Thus, we have the following coequalizer diagram in the category of abelian groups
\[\begin{tikzcd}
	R^{gp} & M^{gp}
	\arrow["{\pi_2'}"', shift right=2, from=1-1, to=1-2]
	\arrow["{\pi_1'}", shift left=2, from=1-1, to=1-2]
\end{tikzcd}.\]

Here we prove a technical lemma (which is Lemma~\ref{lem::gp_quotient_commute} in the main text) that will be used in the proof of Theorem~\ref{thm::pi0_is_coarse}.
\begin{lemma}
Let $\iota: M \to M^{gp}$ denote the group completion map, the coequalizer above is $M^{gp}/I$ where $I$ is the subgroup generated by $\iota(a) - \iota(b)$ for all $(a, b) \in R$. 
\end{lemma}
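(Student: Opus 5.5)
The plan is to identify the coequalizer of $\pi_1',\pi_2'\colon R^{gp}\rightrightarrows M^{gp}$ by hand and compare it with $M^{gp}/I$ through the universal property. In the category of abelian groups, the coequalizer of two homomorphisms $f,g\colon A\to B$ is $B/\operatorname{im}(f-g)$. So it suffices to show that
\[
\operatorname{im}(\pi_1'-\pi_2')=I \subseteq M^{gp}.
\]

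\textbf{Step 1: $I\subseteq \operatorname{im}(\pi_1'-\pi_2')$.} Write $j\colon R\to R^{gp}$ for the group completion map of the submonoid $R\subseteq M\times M$. By functoriality, $\pi_k'\circ j=\iota\circ\pi_k$ for $k=1,2$. Hence for $(a,b)\in R$,
\[
(\pi_1'-\pi_2')(j(a,b))=\iota(a)-\iota(b),
\]
so every generator of $I$ lies in the image.

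\textbf{Step 2: $\operatorname{im}(\pi_1'-\pi_2')\subseteq I$.} Every element of $R^{gp}$ has the form $j(r)-j(s)$ with $r,s\in R$; this is the standard description of the Grothendieck group of a commutative monoid. Since $\pi_1'-\pi_2'$ is a homomorphism, its value on $j(r)-j(s)$ is the difference of its values on $j(r)$ and $j(s)$. By Step 1 each of those values lies in $I$, and $I$ is a subgroup, so the difference lies in $I$ as well.

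\textbf{Step 3: conclusion.} The two steps give $\operatorname{im}(\pi_1'-\pi_2')=I$, so the coequalizer is $M^{gp}/I$. Combined with the observation preceding the lemma (group completion is a left adjoint, so it preserves coequalizers), this also shows $(M/R)^{gp}\cong M^{gp}/I$, as used in the main text.

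The only step needing care is the representation of elements of $R^{gp}$ as differences $j(r)-j(s)$. It holds for any commutative monoid: the generators $[m]$ of $\Zbb[M]$ can be collected into a positive part and a negative part using the relation $[m]+[n]=[m+n]$. I expect no genuine obstacle beyond this routine point.
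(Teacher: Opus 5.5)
Your proof is correct and has the same structure as the paper's: both identify the coequalizer as $M^{gp}/J$ with $J=\operatorname{im}(\pi_1'-\pi_2')$, and both prove $I\subseteq J$ exactly as in your Step 1. For $J\subseteq I$, the paper expands $\pi_1'(x)-\pi_2'(x)$ as $\iota(a+d)-\iota(b+c)$ and uses symmetry of $R$ together with the submonoid property to see that $(a+d,b+c)\in R$. You instead use additivity of $\pi_1'-\pi_2'$ and closure of $I$ under subtraction, which is slightly more economical; and since $j(R)$ generates $R^{gp}$, your argument does not even need the $j(r)-j(s)$ normal form.
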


\begin{proof}
By the coequalizer diagram above, we know that the coequalizer is $M^{gp}/J$ where 
\[J = \langle \pi_1'(x) - \pi_2'(x)\ |\ x \in R^{gp} \rangle.\]
Observe there is an explicit way to construct $\pi_1'$ (resp. $\pi_2'$) as follows. The composition $R \xrightarrow{\pi_1} M \xrightarrow{\iota} M^{gp}$ is a morphism into abelian groups, so the universal property induces a map $\pi_1': R^{gp} \to M^{gp}$ such that $\pi_1' \circ \iota_{R} = \iota \circ \pi_1$ (here $\iota_{R}: R \to R^{gp}$ is the group completion map).\\

Now suppose we have $\iota(a) - \iota(b) \in I$ with $(a, b) \in R$. We can write $a = \pi_1(a, b)$ and $b = \pi_2(a, b)$ to see that
\[\iota(a) - \iota(b) = \iota \circ \pi_1(a, b) - \iota \circ \pi_2(a, b) = \pi_1'(\iota_R(a, b)) - \pi_2'(\iota_R(a, b)) \in J.\]
Thus we have that $I \subseteq J$.\\

Conversely, suppose we have $\pi_1'(x) - \pi_2'(x) \in J$ for $x \in R^{gp}$. For any $x \in R^{gp}$, we can write $x = \iota_R(x_1) - \iota_R(x_2)$ for $x_1, x_2 \in R$. It follows that
\begin{align*}
    \pi_1'(x) - \pi_2'(x) &= \pi_1'(\iota_R(x_1)) - \pi_1'(\iota_R(x_2)) - (\pi_2'(\iota_R(x_1)) - \pi_2'(\iota_R(x_2)))\\
    &= \iota \circ \pi_1(x_1) - \iota \circ \pi_1(x_2) - \iota \circ \pi_2(x_1) + \iota \circ \pi_2(x_2)\\
    &= (\iota \circ \pi_1(x_1)+ \iota \circ \pi_2(x_2)) - (\iota \circ \pi_1(x_2) + \iota \circ \pi_2(x_1))\\
    &= \iota (\pi_1(x_1)+\pi_2(x_2)) - \iota (\pi_1(x_2)+\pi_2(x_1))
\end{align*}
Write $x_1 = (a, b)$ and $x_2 = (c, d)$. Showing that the term above is in $I$ amounts to showing that $(a, b), (c, d) \in R$ implies $(a + d, b + c) \in R$, which is true since $R$ is an equivalence relation and a submonoid. Thus, we have that $J \subset I$.
\end{proof}

The group completion described above can be thought of as happening on the level of $\pi_0$. There is also a topological version of group completion, which appears as Definition~\ref{def::k_theory_symmetric} in the main paper for the case of $B\mathcal{C}$. The same definition of group completion can be applied to any homotopy commutative, homotopy associative $H$-space. Under a suitable hypothesis, the group completion is an infinite loop space. A comprehensive discussion of this more generally can be found in \cite{adams}.

%    Insert the bibliography data here.
\printbibliography

\end{document}